\theoremstyle{plain}
\newtheorem{theorem}{Theorem}
\newtheorem{lemma}[theorem]{Lemma}
\newtheorem{corollary}[theorem]{Corollary}
\newtheorem{conjecture}{Conjecture}
\newtheorem{proposition}[theorem]{Proposition}
\theoremstyle{definition}
\newcommand{\un}[1]{\widehat{#1}}
\newcommand{\setbuilder}[2]{\left\{#1\colon#2\right\}}
\newcommand{\set}[1]{\left\{#1\right\}}
\newcommand{\sequencebuilder}[2]{\left(#1\colon#2\right)}
\newcommand{\epsi}{\varepsilon}
\renewcommand{\leq}{\leqslant}
\renewcommand{\geq}{\geqslant}
\newcommand{\norm}[1]{\left\lVert#1\right\rVert}
\newcommand{\ipr}[2]{\left\langle #1, #2 \right\rangle}
\newcommand{\abs}[1]{\left\lvert#1\right\rvert}
\newcommand{\myangle}{\sphericalangle}
\newcommand{\Line}[1]{#1}
\newcommand{\Segment}[1]{#1}
\newcommand{\card}[1]{\left\lvert#1\right\rvert}
\newcommand{\numbersystem}[1]{\mathbb{#1}}
\newcommand{\bE}{\numbersystem{E}}
\newcommand{\bN}{\numbersystem{N}}
\newcommand{\bR}{\numbersystem{R}}
\newcommand{\bZ}{\numbersystem{Z}}
\DeclareMathOperator{\conv}{conv}
\newcommand{\collection}[1]{{\mathcal#1}}
\newcommand{\CO}{\collection{O}}
\newcommand{\CP}{\collection{P}}
\DeclareMathOperator{\diam}{diam}
\newcommand{\bd}{\partial}
\DeclareMathOperator{\interior}{int}
\DeclareMathOperator{\lin}{span}
\newcommand{\vect}[1]{#1}
\newcommand{\va}{\vect{a}}
\newcommand{\vb}{\vect{b}}
\newcommand{\vc}{\vect{c}}
\newcommand{\vd}{\vect{d}}
\newcommand{\ve}{\vect{e}}
\newcommand{\vf}{\vect{f}}
\newcommand{\vo}{\vect{o}}
\newcommand{\vp}{\vect{p}}
\newcommand{\vv}{\vect{v}}
\newcommand{\vw}{\vect{w}}
\newcommand{\vx}{\vect{x}}
\newcommand{\vy}{\vect{y}}
\newcommand{\define}[1]{%
    \emph{#1}%
}
\title{Combinatorial distance geometry in normed spaces}
\author{Konrad J.\ Swanepoel\thanks{Department of Mathematics, London School of Economics and Political Science, Houghton Street, London WC2A 2AE, United Kingdom. Email: 
    \href{mailto:k.swanepoel@lse.ac.uk}{k.swanepoel@lse.ac.uk}
}
}
\date{}
\begin{document}
\maketitle

\begin{abstract}
We survey problems and results from combinatorial geometry in normed spaces, concentrating on problems that involve distances.
These include various properties of unit-distance graphs, minimum-distance graphs, diameter graphs, as well as minimum spanning trees and Steiner minimum trees.
In particular, we discuss translative kissing (or Hadwiger) numbers, equilateral sets, and the Borsuk problem in normed spaces.
We show how to use the angular measure of Peter Brass to prove various statements about Hadwiger and blocking numbers of convex bodies in the plane, including some new results.
We also include some new results on thin cones and their application to distinct distances and other combinatorial problems for normed spaces.
\end{abstract}

\microtypesetup{protrusion=false}
{\small \tableofcontents}
\microtypesetup{protrusion=true}

\section{Introduction}
Paul Erd\H{o}s \cite{Erdos} introduced many combinatorial questions into geometry.
Progress in solving these and many subsequent problems went hand-in-hand with corresponding advances in combinatorics and combinatorial number theory.
Recently, some spectacular results were obtained using the polynomial method, which introduced strong connections to algebra and algebraic geometry.
In this survey, we would like to explore a different direction, and consider combinatorial questions for other norms.
There have been sporadic attempts at generalising geometric questions of Erd\H{o}s to other normed spaces, an early example being a paper of Fullerton \cite{Fullerton1949}.
According to Erd\H{o}s~\cite{Erdos1985}, Ulam was also interested in generalizing certain distance problems to other metrics.
This survey is an attempt at presenting the literature in a systematic way.

We will also present new proofs of known results and give results that have not appeared in the literature before.
Since we will confine ourselves to normed spaces, it is natural that problems involving distances will play a special role.
However, many of these problems have alternative formulations in terms of packings and coverings of balls, or involve packings and coverings in their solutions, so there is some overlap with the general theory of packing and covering, as conceived by L\'aszl\'o Fejes T\'oth \cite{Fejes-Toth-book} and others.
Nevertheless, we make no attempt here to give a systematic treatment of packing and covering, apart from reviewing what is known about Hadwiger numbers (or translative kissing numbers) of convex bodies and some close relatives, as these numbers show up when we consider minimum-distance graphs and minimal spanning trees.

We have left out many topics with a combinatorial flavour,
due to limitations on space and time.
These include results on vector sums in normed spaces (such as in the papers \cite{ABG, Barany2008, Kleitman1965, LM2015, SwanepoelTAMS}), embeddings of metric spaces into normed spaces, a topic with applications in computer science (see \cite[Chapter~15]{Matousek2002} and \cite{Ostrovskii2013}), Menger-type results \cite{Bandelt1996, Bandelt1998, Malitz1992}, and isometries and variants such as unit-distance preserving maps and random geometric graphs (for instance \cite{BBGLW} and \cite{Geher}).
For recent surveys on covering and illumination, see Bezdek and Khan \cite{BK2016b} and Nasz\'odi \cite{Naszodi2016}.
For a recent survey on discrete geometry in normed spaces, see Alonso, Martini, and Spirova \cite{AMS2013}.

\subsection{Outline}
After setting out some terminology in the next subsection, we will survey the Hadwiger number of a convex body, as well as some variants of this notion in Section~\ref{section:Hadwiger}.
In Section~\ref{section:equilateral} we survey recent results on equilateral sets.
Although these two sections may at first not seem central to this paper, Hadwiger and equilateral numbers are often the best known general estimates for various combinatorial quantities.
Then we consider three graphs that can be defined on a finite point set in a normed space: the minimum-distance graph, the unit-distance graph and the diameter graph.
Section~\ref{section:mindist} covers minimum-distance graphs.
Since many results on unit-distance and diameter graphs have a similar flavour, we cover them together in Section~\ref{section:unitdistance}.
We briefly consider some other graphs such as geometric minimum spanning trees, Steiner minimum trees and sphere-of-influence graphs in Section~\ref{section:othergraphs}.
Then in Section~\ref{section:angle}, we present some applications of an angular measure introduced by Brass \cite{Brass1996}, in order to give simple proofs of various two-dimensional results on relatives of the Hadwiger number.
In particular, we prove a result of Zong \cite{Zong1993} that the blocking number of any planar convex disc equals four.
Finally, in Section~\ref{section:thincone} we give a systematic exposition of thin cones, introduced in \cite{Swanepoel1999} and rediscovered and named in \cite{Furedi2005}.
We build on an idea of F\"uredi \cite{Furedi2005} to give an up-to-now best upper bound for the cardinality of a $k$-distance set in a $d$-dimensional normed space when $k$ is very large compared to $d$ (Theorem~\ref{thm:kdist}).
(This bound has very recently been improved by Polyanskii \cite{Polyanskii2016}.)

\subsection{Terminology and Notation}\label{subsection:terminology}
For background on finite-dimensional normed linear spaces from a geometric point of view, see the survey \cite{Martini2001} or the first five chapters of \cite{Thompson1996}.
We denote a normed linear space by $X$, its unit ball by $B_X$ or just $B$, and the unit sphere by $\bd B_X$.
Our spaces will almost exclusively be finite dimensional.
We will usually refer to these spaces as normed spaces or just spaces when there is no risk of confusion.
If we want to emphasize the dimension $d$ of a normed space, we denote the space by $X^d$.
We will measure distances exclusively using the norm.

We write $\un{\vx}$ for the normalization $\frac{1}{\norm{\vx}}x$ of a non-zero $\vx\in X$.
If $A, B\subseteq X$ and $\lambda\in\bR$, then we define, as usual, $A+B := \setbuilder{a+b}{a\in A, b\in B}$, $\lambda A := \setbuilder{\lambda a}{a\in A}$, $-A:= (-1)A$, and $A-B := A + (-B)$.
The interior, boundary, convex hull, and diameter of $A\subseteq X$ are denoted by $\interior A$, $\bd A$, $\conv(A)$, and $\diam(A)$, respectively.
The translate of $A$ by the vector $v\in X$ is denoted by $A+v := A+\{v\}$.

The \define{dual} of the normed space $X$ is denoted by $X^*$.
All finite-dimensional normed spaces are reflexive: $(X^*)^*$ is canonically isomorphic to $X$.
A norm $\norm{\cdot}$ is called \define{strictly convex} if $\norm{\vx+\vy}<\norm{\vx}+\norm{\vy}$ whenever $\vx$ and $\vy$ are linearly independent, or equivalently, if $\bd B_X$ does not contain a non-trivial line segment.
A norm $\norm{\cdot}$ is called \define{smooth} if it is $C^1$ away from the origin $o$, or equivalently, if each boundary point of the unit ball has a unique supporting hyperplane.
Recall that a finite-dimensional normed space is strictly convex if and only if its dual is smooth.

For $p\in[1,\infty)$, we let $\ell_p^d=(\bR^d,\norm{\cdot}_p)$ be the $d$-dimensional $\ell_p$ space with norm \[\norm{(x_1,\dots,x_d)}_p:=(\sum_{i=1}^d\abs{x_i}^p)^{1/p}\] and denote its unit ball by $B_p^d$.
The space $\ell_\infty^d=(\bR^d,\norm{\cdot})$ has norm \[\norm{(x_1,\dots,x_d)}_\infty:=\max\abs{x_i}.\]
We also denote the Euclidean space $\ell_2^d$ by $\bE^d$, the Euclidean unit ball $B_2^d$ by $B^d$, the $d$-cube $B_\infty^d$ by $C^d$, and the $d$-dimensional cross-polytope $B_1^d$ by $O^d$. 
For any two normed spaces $X$ and $Y$ and $p\in[1,\infty]$, we define their $\ell_p$-sum $X\oplus_p Y$ to be the Cartesian product $X\times Y$ with norm $\norm{(\vx,\vy)}_p :=\norm{(\norm{\vx},\norm{\vy})}_p$.

We define $\lambda(X)=\lambda(B_X)$ to be the largest length (in the norm) of a segment contained in $\bd B_X$.
It is easy to see that $0\leq\lambda(X)\leq 2$, that $\lambda(X)=0$ if and only if $X$ is strictly convex, and if $X$ is finite-dimensional, $\lambda(X)=2$ if and only if $X$ has a $2$-dimensional subspace isometric to $\ell_\infty^2$ \cite{Brass1996}.

\section{The Hadwiger number (translative kissing number) and relatives}\label{section:Hadwiger}
In the next five subsections we discuss the Hadwiger number and four of its variants: the lattice Hadwiger number, the strict Hadwiger number, the one-sided Hadwiger number and the blocking number.
See also Section~\ref{section:angle} for a derivation of these numbers for $2$-dimensional spaces.

\subsection{The Hadwiger number}\label{subsection:Hadwiger}
Let $C$ be a convex body in a finite-dimensional vector space.
A \define{Hadwiger family} of $C$ is a collection of translates of $C$, all touching $C$ and with pairwise disjoint interiors.
The \define{Hadwiger number} (or \define{translative kissing number}) $H(C)$ of $C$ is the maximum number of translates in a Hadwiger family of $C$.
(The term \define{Hadwiger number} was introduced by L.~Fejes T\'oth \cite{FT1975}.)

Denote the \define{central symmetral} of $C$ by $B:=\frac12(C-C)$.
By a well-known observation of Minkowski, $\setbuilder{\vv_i+C}{i\in I}$ is a Hadwiger family if and only if $\setbuilder{\vv_i+B}{i\in I}$ is a Hadwiger family.
Also, $\setbuilder{\vv_i+B}{i\in I}$ is a Hadwiger family if and only if $\setbuilder{\vv_i}{i\in I}$ is a collection of unit vectors in the normed space with $B$ as unit ball, such that $\norm{\vv_i-\vv_j}\geq 1$ for all distinct $i,j\in I$.
We define the Hadwiger number $H(X)$ of a finite-dimensional normed space $X$ as the Hadwiger number $H(B_X)$ of the unit ball.

The Hadwiger number of $X$ is known to be a tight upper bound for the maximum degrees of minimum-distance graphs (Section~\ref{section:mindist}) and spanning trees (Section~\ref{section:MST}) in $X$, and this is why we survey what is known about this number, updating the earlier surveys of Zong \cite{ZongSurvey1998, Zong2008} and B\"or\"oczky Jr.~\cite[\S~9.6]{Boroczky2004}.

There is a recent comprehensive survey by Boyvalenkov, Dodunekov and Musin \cite{BDM2015} on the Hadwiger number (also known as kissing number) of Euclidean balls.
We only remind the reader of the following facts.
Wyner \cite[Section~V]{Wyner}, improving on Shannon \cite{Shannon}, determined the lower bound of $H(B^d)\geq (2/\sqrt{3})^{d+\mathrm{o}(d)}$ using a greedy argument.
This lower bound is essentially still the best known (see also the end of this Section~\ref{subsection:Hadwiger} below), as is the upper bound $H(B^d)\leq 2^{0.401d+\mathrm{o}(d)}$ by Kabatiansky and Levenshtein \cite{KL1978}.
The following exact numbers are known: $H(B^3)=12$ (with a long history culminating in Sch\"utte and Van der Waerden \cite{SvdW1953}), $H(B^4)=24$ (Musin \cite{Musin2003}), $H(B^8)=240$ and $H(B^{24})=196560$ (Levenshtein \cite{Levenstein1979} and Odlyzko and Sloane \cite{OS1979}).

Hadwiger \cite{Hadwiger} showed the upper bound $H(C)\leq 3^d-1$ for all $d$-dimensional convex bodies $C$, attained by an affine $d$-cube, and by a result of Groemer \cite{Groemer} only by affine $d$-cubes.
In particular, the Hadwiger number of a parallelogram is $8$.
Gr\"unbaum \cite{Grunbaum1961}, answering a conjecture of Hadwiger \cite{Hadwiger}, showed that $H(C)=6$ for any planar convex body $C$ that is not a parallelogram.
The non-trivial part is showing the upper bound $H(C)\leq 6$.
In Section~\ref{section:angle} we show how this follows from the existence of an angular measure introduced by Brass~\cite{Brass1996}.

Gr\"unbaum \cite{Grunbaum1961} conjectured that $H(C)$ is an even number for all convex bodies, as it is in the plane, but this turned out to be false.
Talata (unpublished) constructed a $3$-dimensional polytope with Hadwiger number $17$, and Jo\'os \cite{Joos2008} constructed one with Hadwiger number $15$.

Robins and Salowe \cite{Robins1995} showed that the octahedron has Hadwiger number $18$ (this was also independently discovered by Larman and Zong \cite{Larman1999} and Talata \cite{Talata1999b}).
Larman and Zong~\cite{Larman1999} showed that the rhombic dodecahedron has Hadwiger number $18$, and also gave results for certain elongated octahedra.
Robins and Salowe \cite{Robins1995} also obtained lower bounds for $\ell_p$-balls, in particular $H(\ell_1^d)\geq 2^{0.0312\dots d-\mathrm{o}(d)}$ and 
$H(\ell_p^d)\geq (2-\epsi_p)^d$ for all $p\in(1,\infty)$, where $\epsi_p\in(0,1)$ and $\epsi_p\to 0$ as $p\to\infty$; the latter was rediscovered by Xu \cite[Theorem~4.2]{Xu2007}, who also obtained some (weaker) constructive bounds from algebraic geometry codes.
Slightly better bounds for $p\leq 2$ and close to $2$ can be found in \cite{Swanepoel1999b}.
Larman and Zong \cite{Larman1999} also showed $H(\ell_p^d)\geq (9/8)^{d+\mathrm{o}(d)}$.
It follows from the main result in Talata \cite{Talata2000} that $H(\ell_1^d)\geq 1.13488^{d+\mathrm{o}(d)}$ (see the next paragraph).

Proving a conjecture of Zong \cite{Zong1996}, Talata \cite{Talata1999} showed that the Hadwiger number of the tetrahedron is $18$.
(This equals the Hadwiger number of the central symmetral of a tetrahedron, which is the affine cuboctahedron.)
Talata \cite{Talata2000} found a lower bound of  $1.13488^{d+\mathrm{o}(d)}$ for the $d$-dimensional simplex and more generally for \define{$d$-orthoplexes}, that is, the intersection of a $(d+1)$-dimensional cube with a hyperplane orthogonal to a diagonal).
Since the difference body of a $d$-dimensional simplex is the hyperplane section of a $(d+1)$-dimensional cross-polytope through its centre parallel to a facet, this also gives the best-known lower bound for the $\ell_1$-norm, as mentioned in the previous paragraph.
Talata \cite{Talata2000} conjectured an upper bound of $1.5^{d-\mathrm{o}(d)}$ for the $d$-dimensional simplex.

The inequality \[H(C_1\times C_2) \geq (H(C_1)+1)(H(C_2)+1)-1\] for the Cartesian product of the convex bodies $C_1$ and $C_2$ is straightforward.
Zong \cite{Zong1997} showed that equality holds if either $C_1$ or $C_2$ is at most $2$-dimensional, and presented some more general conditions where equality holds.
Talata \cite{Talata2005} gave examples of convex bodies $C_1$ and $C_2$ for any dimensions larger than $2$ for which this inequality is strict.
In the same paper he constructed strictly convex $d$-dimensional bodies $C$ such that $H(C)\geq\upOmega(7^{d/2})$ and made the following two conjectures:
\begin{conjecture}[Talata \cite{Talata2005}]
In each pair of dimensions $d_1,d_2\geq 3$ there exist $d_1$-dimensional convex bodies $K_1, K'_1$ and $d_2$-dimensional convex bodies $K_2, K'_2$ such that  $H(K_1)=H(K'_1)$ and $H(K_2)=H(K'_2)$, but $H(K_1\times K_2)\neq H(K'_1\times K'_2)$.
\end{conjecture}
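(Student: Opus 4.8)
The plan is to reduce the conjecture, for all $d_1,d_2\geq 3$, to the single base case $d_1=d_2=3$ by taking Cartesian products with low-dimensional cubes (where Zong's equality case of the product inequality applies), and then to attack that base case by pairing a body that realizes Talata's strict product inequality with a ``tight'' companion body of the same Hadwiger number.

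For the reduction, suppose we are given $3$-dimensional convex bodies $K_1,K_1',K_2,K_2'$ with $H(K_1)=H(K_1')$, $H(K_2)=H(K_2')$, and $H(K_1\times K_2)\neq H(K_1'\times K_2')$ (one may even take $K_2=K_2'$). Given $d_1,d_2\geq 3$, write each of $d_1-3$ and $d_2-3$ as a sum of $1$'s and $2$'s and let $P$, $Q$ be the corresponding Cartesian products of the cubes $C^1$ and $C^2$, so $\dim P=d_1-3$, $\dim Q=d_2-3$ and $H(C^1)=2$, $H(C^2)=8$ by Hadwiger's bound. Put $\tilde K_1:=K_1\times P$, $\tilde K_1':=K_1'\times P$, $\tilde K_2:=K_2\times Q$, $\tilde K_2':=K_2'\times Q$; these are $d_1$- and $d_2$-dimensional. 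Every factor of $P$ and of $Q$ is at most $2$-dimensional, so Zong's equality case of $H(C_1\times C_2)\geq(H(C_1)+1)(H(C_2)+1)-1$ applies repeatedly and gives
\[
H(\tilde K_1)=(H(K_1)+1)3^{d_1-3}-1=(H(K_1')+1)3^{d_1-3}-1=H(\tilde K_1'),
\]
and likewise $H(\tilde K_2)=H(\tilde K_2')$. Rearranging Cartesian factors, $\tilde K_1\times\tilde K_2=(K_1\times K_2)\times(P\times Q)$, every factor of $P\times Q$ is again at most $2$-dimensional, so
\[
H(\tilde K_1\times\tilde K_2)=\bigl(H(K_1\times K_2)+1\bigr)3^{d_1+d_2-6}-1,
\]
and similarly with primes. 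Since $x\mapsto(x+1)3^{m}-1$ is injective, $H(K_1\times K_2)\neq H(K_1'\times K_2')$ forces $H(\tilde K_1\times\tilde K_2)\neq H(\tilde K_1'\times\tilde K_2')$, proving the conjecture for $(d_1,d_2)$.

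For the base case I would take a pair $C_1,C_2$ of $3$-dimensional convex bodies realizing Talata's strict inequality $H(C_1\times C_2)>(H(C_1)+1)(H(C_2)+1)-1$, set $K_1':=C_1$ and $K_2:=K_2':=C_2$, and try to produce a $3$-dimensional body $K_1$ with $H(K_1)=H(C_1)$ for which the product bound is tight, $H(K_1\times C_2)=(H(K_1)+1)(H(C_2)+1)-1$. Natural candidates are: a body $K_1$ lying in one of the classes identified by Zong \cite{Zong1997} for which the product bound is known to be tight; or a body obtained from $C_1$ by a local modification of its boundary that destroys the configurations responsible for the excess in $C_1\times C_2$ while preserving some Hadwiger family of $H(C_1)$ translates and creating no new ones. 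Equivalently one could look for a one-parameter family $C_1^{(t)}$ along which $H(C_1^{(t)})$ is constant while $H(C_1^{(t)}\times C_2)$ is not, and take $K_1,K_1'$ on opposite sides of a jump.

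The hard part is the base case. It requires \emph{comparing} the Hadwiger numbers of two $6$-dimensional convex bodies, and at present there is no technique for producing a convex body whose Hadwiger number is prescribed to equal that of a given body while simultaneously controlling the Hadwiger number of its product with a third body. Moreover, Groemer's rigidity theorem forbids using the extreme value $3^3-1$, since only affine cubes attain it, so the construction must live strictly inside the range of attainable Hadwiger numbers, precisely where exact values and good upper bounds for products are lacking. I would expect that closing this last gap needs a genuinely new construction rather than a refinement of Talata's examples.
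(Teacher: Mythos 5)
This statement is stated in the paper as an open conjecture of Talata; the paper contains no proof of it, and as far as I can tell it remains unresolved. Your submission is likewise not a proof: it consists of a reduction plus a research programme for the remaining case, which you yourself label ``the hard part'' and leave open. The reduction itself is correct and worth recording. Using Zong's equality case of $H(C_1\times C_2)\geq(H(C_1)+1)(H(C_2)+1)-1$ when one factor is at most $2$-dimensional, together with $H(C^1)=2$, $H(C^2)=8$, and the affine invariance of $H$ under permutation of Cartesian factors, you correctly obtain $H(K\times P)=(H(K)+1)3^{\dim P}-1$ for any product $P$ of segments and squares, and the injectivity of $x\mapsto(x+1)3^m-1$ then transports a counterexample pair from $(3,3)$ to any $(d_1,d_2)$ with $d_1,d_2\geq 3$. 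So the conjecture for all pairs of dimensions would indeed follow from the single case $d_1=d_2=3$.

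The genuine gap is that the base case is not established, and nothing in your proposal closes it. You need a $3$-dimensional body $K_1$ with $H(K_1)=H(C_1)$ for which $H(K_1\times C_2)\neq H(C_1\times C_2)$, where $C_1\times C_2$ is a specific $6$-dimensional body. Known results give only the general upper bound $H\leq 3^6-1$ and lower bounds from explicit Hadwiger families; there is no technique for computing, or even separating, the Hadwiger numbers of two $6$-dimensional products whose factors have equal Hadwiger numbers. Your suggested routes (a Zong-tight companion body, a local boundary perturbation killing the excess configurations, a one-parameter family with a jump in $H(C_1^{(t)}\times C_2)$ at constant $H(C_1^{(t)})$) are plausible heuristics, but each requires exactly the kind of exact evaluation that is currently out of reach, and you give no candidate for which either quantity can be certified. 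As it stands the argument proves a conditional statement (the conjecture holds for all $d_1,d_2\geq 3$ if it holds for $d_1=d_2=3$), not the conjecture itself.
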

\begin{conjecture}[Talata \cite{Talata2005}]\label{conj:talata}
There exists a constant $c>0$ such that $H(C)\leq (3-c)^d$ for all strictly convex $d$-dimensional convex bodies.
\end{conjecture}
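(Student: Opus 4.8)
This is a conjecture rather than a theorem, so what follows is a proposed line of attack. The first step is to reduce to the symmetric case: if $C$ is strictly convex then so is its central symmetral $B=\frac12(C-C)$, since the Minkowski sum of two strictly convex bodies is strictly convex — a face of $K+M$ with outer normal $\vn$ is the Minkowski sum of the faces of $K$ and of $M$ in direction $\vn$, and strict convexity makes each of those a single point, so $K+M$ has no segment on its boundary. By Minkowski's observation recalled above, $H(C)=H(B)$, and by the correspondence with unit vectors, $H(B)$ equals the largest $N$ for which there exist $\vu_1,\dots,\vu_N\in\bd B$ with $\norm{\vu_i-\vu_j}\geq 1$ for all $i\neq j$. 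So it suffices to prove $N\leq(3-c)^d$ for every such configuration, with $c>0$ independent both of $d$ and of the strictly convex norm.

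For each \emph{fixed} dimension $d$ there is no real difficulty: Hadwiger's bound gives $N\leq 3^d-1$, and by the theorem of Groemer quoted above equality forces $B$ to be an affine cube, so $N\leq 3^d-2$ when $B$ is strictly convex, which is $\leq(3-c_d)^d$ for some $c_d>0$. The whole content of the conjecture is to keep $c_d$ bounded away from $0$ as $d\to\infty$, and here no easy argument is available, because strict convexity is a purely \emph{qualitative} hypothesis: in the Banach--Mazur sense a strictly convex body can lie arbitrarily close to an affine cube, and can have every two-dimensional coordinate section within $\epsi$ of a square.

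The natural strategy would be a density/peeling argument. The translates $\vu_i+\frac12 B$ have pairwise disjoint interiors and lie in $\frac32 B$, which reproves $N\leq 3^d$; to beat this by an exponential factor one would show that strict convexity forces a fixed proportion of $\frac32 B$ near its boundary to be wasted, and convert this into a recursion $H(B^d)\leq(3-c')\max\setbuilder{H(B')}{B'\text{ strictly convex},\ \dim B'=d-1}$, which iterates to $(3-c')^d$. The model is the cube identity $H(C^d)=3H(C^{d-1})+2$, in which the factor $3$ comes exactly from the flatness of the two end facets of $C^d$; strict convexity removes all such flatness, since a line parallel to a fixed direction meets $\bd B$ in at most two points instead of in a whole facet.

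The hard part — and the reason the conjecture is open — is that strict convexity supplies no \emph{uniform} quantitative control, so the wasted proportion cannot be bounded below by any absolute constant without extra input. Two routes seem conceivable. The first is a dichotomy: if $B$ is Banach--Mazur-far from every affine cube, a refined volume estimate should already yield an exponential saving, while if $B$ is close to a cube one must exploit that a slightly rounded cube cannot support the rigid grid configuration realizing $3^d-1$ points — in effect an exponentially strong stability version of Groemer's theorem, asserting that $H(C)\geq(3-c)^d$ forces $C$ to be an affine cube. This last statement is genuinely false without strict convexity — the product of a regular hexagon with a $(d-2)$-cube has $N\geq\frac79\,3^d-1=(3-\mathrm{o}(1))^d$ — which is precisely why strict convexity is essential here. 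The second route would be a purely combinatorial bound on $N$ that is insensitive to how close $B$ is to a cube; none is currently known. In any case, Talata's strictly convex bodies with $H(C)=\upOmega(7^{d/2})$ show that any admissible $c$ must satisfy $c\leq 3-\sqrt 7\approx 0.354$, and I would expect proving any $c>0$ to require ideas well beyond the volume and angular-measure techniques surveyed here.
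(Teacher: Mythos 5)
This statement is a conjecture of Talata; the paper quotes it as an open problem and contains no proof, so there is nothing to compare your attempt against. You correctly recognise this and do not claim a proof. For the record, the observations you do make are all sound: the reduction to the $o$-symmetric case works because the central symmetral of a strictly convex body is strictly convex and $H(C)=H(\frac12(C-C))$; for each fixed $d$ Groemer's characterisation of equality in Hadwiger's bound does give $H(C)\leq 3^d-2$, hence a dimension-dependent $c_d>0$; the identity $H(C^d)=3H(C^{d-1})+2$ checks out; the product of a regular hexagon with $C^{d-2}$ has Hadwiger number $(6+1)(3^{d-2}-1+1)-1=7\cdot 3^{d-2}-1=\frac79\,3^d-1$ by Zong's product formula, which correctly shows that an exponential stability version of Groemer's theorem fails without strict convexity; and Talata's strictly convex bodies with $H(C)\geq\upOmega(7^{d/2})$ do force any admissible constant to satisfy $c\leq 3-\sqrt7$. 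Your assessment of the central obstacle --- that strict convexity is a qualitative hypothesis giving no uniform quantitative control as $d\to\infty$ --- matches the state of the art; the conjecture remains open.
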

By an old result of Swinnerton-Dyer \cite{Swinnerton-Dyer1953}, $H(B)\geq d^2+d$ for all $d$-dimensional $B$.
For $d=2,3$ the Euclidean ball attains this bound.
However, for sufficiently large $d$ it turns out that the Hadwiger number grows exponentially in $d$, independent of the specific body.
Bourgain (as reported in \cite{Furedi1994}) and Talata \cite{Talata1998} showed the existence of an exponential lower bound by using Milman's Quotient-Subspace Theorem \cite{Milman1985}.
An explicit exponential lower bound of $H(B)\geq \upOmega((2/\sqrt{3})^d)$ for any $d$-dimensional convex body $B$ follows from Theorem~1 in Arias-de-Reyna, Ball, and Villa \cite{Arias-de-Reyna1998}.
Note that this is essentially as large as the best known lower bound for the $d$-dimensional Euclidean ball found by Wyner~\cite{Wyner}.

\subsection{Lattice Hadwiger number}\label{subsection:L}
The \define{lattice Hadwiger number} $H_L(C)$ of a convex body $C$ is defined to be the largest size of a Hadwiger family $\setbuilder{\vv_i+C}{i\in I}$ of $C$ that is contained in a lattice packing $\setbuilder{\vv+C}{\vv\in \Lambda}$, where $\Lambda$ is a full-dimensional lattice.
By the observation of Minkowski mentioned in Section~\ref{subsection:Hadwiger}, $H_L(C)=H_L(B)$ where $B$ is the central symmetral of $C$.
We also define the lattice Hadwiger number of a finite-dimensional normed space $X$ as $H_L(X)=H_L(B_X)$.
The lattice Hadwiger number plays a role in bounding the maximum number of edges of a minimum-distance graph in $X$ (Section~\ref{section:mindistedges}).

Minkowski \cite{Minkowski} already showed that $H_L(C)\leq 3^d-1$ and $H_L(C)\leq 2(2^d-1)$ if $C$ is strictly convex.
It is easily observed that $H(C)=H_L(C)$ for planar convex bodies $C$, and for the $d$-dimensional cube, $H_L(C^d)=H(C^d)=3^d-1$.
The result of Swinnerton-Dyer \cite{Swinnerton-Dyer1953} mentioned earlier, actually shows that $H_L(C)\geq d^2+d$ for all $d$-dimensional convex bodies $C$.
This seems to be the best-known lower bound valid for all convex bodies.
Zong \cite{Zong2008} posed the problem to show that for all $d$-dimensional convex bodies $C$, $H_L(C)\geq\upOmega(c^d)$ for some constant $c>1$ independent of $d$.
The best asymptotic lower bound for the Euclidean ball is $H_L(B^d)\geq 2^{\upOmega(\log^2 d)}$, attained by the Barnes--Wall lattice, as shown by Leech \cite{Leech1964}.

Zong \cite{Zong1996} determined the lattice Hadwiger number of the tetrahedron $T$ in $3$-space: $H_L(T)=18$, and determined a lower bound of $d^2+d+6\lfloor d/3\rfloor$ for simplices.
For Euclidean space these numbers are known up to dimension $9$ (Watson \cite{Watson1971}): $H_L(B^3)=12$, $H_L(B^4)=24$, $H_L(B^5)=40$, $H_L(B^6)=72$, $H_L(B^7)=126$, $H_L(B^8)=240$, $H_L(B^9)=272$.
In particular, $H_L(\bE^9)=272 < 306 \leq H(\bE^9)$ is the smallest dimension where $H$ and $H_L$ differ for a Euclidean ball (although they are equal in dimension $24$).
Zong \cite{Zong1994} showed that in each dimension $d\geq 3$ there exists a convex body $C$ such that $H(C) > H_L(C)$.
His example is a $d$-cube with two opposite corners cut off.
Recall that Talata \cite{Talata2005} constructed $d$-dimensional strictly convex bodies $C$ with $H(C)\geq\upOmega(7^{d/2})$.
When compared with Minkowski's upper bound $H_L(C)\leq 2(2^d-1)$ for all strictly convex bodies $C$, this shows that the gap between $H(C)$ and $H_L(C)$ can be very large, even for strictly convex sets (see also \cite{Talata1998b}).

\subsection{Strict Hadwiger number}
A \define{strict Hadwiger family} of $C$ is a collection of translates of $C$, all touching $C$ and all pairwise disjoint (that is, no two overlap or touch).
The \define{strict Hadwiger number} $H'(C)$ of $C$ is the maximum number of translates in a strict Hadwiger family of $C$.
We also define the strict Hadwiger number of a finite-dimensional normed space $X$ as $H'(X)=H'(B_X)$.
Clearly, $H'(C)\leq H(C)$, and it is not difficult to see that the strict Hadwiger number of the $d$-dimensional cube is $H'(C^d)=2^d$.

Doyle, Lagarias, and Randall \cite{Doyle1992} showed that $H'(C)=5$ if $C$ is a planar convex body that is not a parallelogram.
(Robins and Salowe \cite{Robins1995} observed that $H'(C^2)=4$ for the parallelogram $C^2$).
See Section~\ref{section:angle} for a simple proof of this fact using angular measures.

Robins and Salowe \cite{Robins1995} studied $H'(X)$ in connection to minimal spanning trees in a finite-dimensional normed space $X$; see Section~\ref{section:MST}.
For the $3$-dimensional Euclidean ball $B^3$, $H'(B^3)=12$, as demonstrated by the many configurations of $12$ pairwise non-touching balls, all touching a central ball~\cite{KKLS2017}.
Robins and Salowe \cite{Robins1995} showed that for the regular octahedron $O^3$, $13\leq H'(O^3)\leq 14$, and that for each $d\geq 3$ there exists $p\in(1,\infty)$ such that $H'(\ell_p^d)>2^d=H'(C^d)$.
Talata \cite{Talata1998} showed that there is also an exponential lower bound for $H'$, and the explicit exponential lower bound of $H'(C)\geq \upOmega((2/\sqrt{3})^d)$ also follows from the results of Arias-de-Reyna, Ball, and Villa~\cite{Arias-de-Reyna1998} mentioned at the end of Section~\ref{subsection:Hadwiger}.

Talata \cite{Talata2005} studied $H'$ for Cartesian products of convex bodies.
In particular, he showed that if $C_1,\dots,C_n$ are convex discs, with $k$ parallelograms among them, then \[H'(C_1\times C_2\times\dots\times C_n) = 4^k(4\cdot 6^{n-k}+1)/5.\]
He also showed that there exist $d$-dimensional convex bodies $K_d$ for which $H'(K_d)=\upOmega(7^{d/2})$, from which
his example of a strictly convex body with Hadwiger number $\upOmega(7^{d/2})$ follows.
Indeed, given any convex body with a strict Hadwiger configuration, it is easy to modify the convex body so that it becomes strictly convex and the Hadwiger configuration stays strict.
Hence, Conjecture~\ref{conj:talata} would imply that $H'(C)\leq (3-c)^d$ for any $d$-dimensional convex body $C$.

\subsection{One-sided Hadwiger number}\label{section:onesided}
The \define{one-sided Hadwiger number} $H_{+}(C)$ of a convex body $C$ is the maximum number of translates in a Hadwiger family $\setbuilder{\vv_i+C}{i\in I}$ such that $\setbuilder{\vv_i}{i\in I}$ is contained in a closed half space with the origin on its boundary.
We also define the one-sided Hadwiger number $H_+(X)$ of the normed space $X$ to be the $H_+(B_X)$.
Clearly, for the circular disc $B^2$ we have $H_{+}(B^2)=4$.
It is easy to show that $H_{+}(B)=4$ for any convex disc $B$ except the parallelogram $C^2$, where $H_{+}(C^2)=5$ (see Section~\ref{section:angle} for proofs).
The \define{open one-sided Hadwiger number} $H_{+}^{o}(C)$ of $C$ is defined similarly by replacing `closed half space' by `open half space' in the definition.
We also define the open one-sided Hadwiger number $H_+^o(X)$ of the normed space $X$ to be the $H_+^o(B_X)$.
The open one-sided Hadwiger number bounds the minimum degree of a minimum-distance graph in $X$ (Section~\ref{section:mindistmindeg}).
 It is not hard to show that $H_{+}^{o}(X^2)=3$ for any normed plane $X^2$ with $\lambda(X^2)\leq 1$, and $H_{+}^{o}(X^2)=4$ otherwise (see again Section~\ref{section:angle} for proofs).
G.~Fejes T\'oth \cite{GFT1981} showed that for the $3$-dimensional Euclidean ball $B^3$ we have $H_{+}(B^3)=9$ (see also Sachs \cite{Sachs1986} and A.~Bezdek and K.~Bezdek~\cite{Bezdek1988} for alternative proofs).
Kert\'esz \cite{Kertesz1994} showed that $H_{+}^{o}(B^3)=8$.
Musin~\cite{Musin2006} showed that for the $4$-dimensional Euclidean ball, $H_{+}(B^4)=18$.
(K.~Bezdek~\cite{Bezdek2006} observed that it follows from Musin's determination of $H(B^4)$ \cite{Musin2003} that $18\leq H_{+}(B^4)\leq 19$.)
Bachoc and Vallentin \cite{Bachoc2009} found the exact value $H_{+}(B^8) = 183$ and upper bounds for Euclidean spaces of dimension up to $10$, improving earlier bounds by Musin~\cite{Musin2008}.

K.~Bezdek and Brass \cite{Bezdek2003} showed that $H_{+}(C)\leq 2\cdot 3^{d-1}-1$ for any $d$-dimensional convex body $C$, with equality attained only by the affine $d$-cube $C^d$.
They ask as an open problem for a tight upper bound of $H_{+}^{o}(C)$ valid for all $d$-dimensional convex bodies.
L\'angi and Nasz\'odi \cite{Langi2009} generalized some of the results in \cite{Bezdek2003}.

\subsection{Blocking number}
Zong \cite{Zong1993} introduced the blocking number of a convex body: the minimum number of non-overlapping translates of $C$, all touching $C$, and such that no other translate can touch $C$ without overlapping some of these translates.
Equivalently, the \define{blocking number} $B(C)$ of a convex body $C$ is the minimum number of translates in a maximal Hadwiger family of $C$.
The \define{strict blocking number} $B'(C)$ of $C$ is the minimum size of a maximal strict Hadwiger family of $C$.
Thus clearly, $B(C)\leq H(C)$ and $B'(C)\leq H'(C)$.

Zong \cite{Zong1993} showed that the blocking number of any convex disc equals $4$.
In Section~\ref{section:angle} we give a simple proof of this result, we determine the strict blocking number of all convex discs, and present some related results, all using angular measures.
Dalla, Larman, Mani-Levitska, and Zong \cite{DLMZ} determined the blocking numbers of the $3$- and $4$-dimensional Euclidean balls and of all cubes: $b(B^3)=6$, $b(B^4)=9$, $b(C^d)=2^d$.
For further results on blocking numbers, see Yu \cite{Yu2009}, Yu and Zong \cite{YZ2009}, and Zong \cite{Zong1995, ZongBook, ZongSurvey1998, Zong2008}.

\section{Equilateral sets}\label{section:equilateral}
A set of $S$ of points in a normed space $X$ is \define{equilateral} if $\norm{\vx-\vy}=1$ for any distinct $\vx,\vy\in S$.
Let $e(X)$ denote the largest size of an equilateral set of points in $X$ if it is finite.
Here we emphasize results that appeared after the survey \cite{Swanepoel2004}.

Petty \cite{Petty1971} and Soltan \cite{Soltan1975} observed that it follows from a celebrated result of Danzer and Gr\"unbaum \cite{Danzer1962} that $e(X)\leq 2^d$ for all $d$-dimensional $X$, and that equality holds iff $X$ is isometric to $\ell_\infty^d$ (equivalently, iff the unit ball is an affine $d$-cube).

The following conjecture has been made often \cite{Petty1971, Morgan, Thompson1996} (see also \cite{Grunbaum1961}):
\begin{conjecture}[Petty \cite{Petty1971}]\label{conj:equilateral}
For all $d$-dimensional $X$, $e(X)\geq d+1$.
\end{conjecture}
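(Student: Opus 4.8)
The plan is to proceed by induction on the dimension $d$, with the inductive step asking us to extend an equilateral $d$-point set, placed in a hyperplane, to an equilateral $(d+1)$-point set. For $d=1$ the pair $\{o,\un{\vx}\}$ with $\vx\neq o$ is equilateral, so suppose $d\geq 2$ and that the conjecture holds in dimension $d-1$. Choose any linear hyperplane $H$ through $o$; then $X\cap H$ is a $(d-1)$-dimensional normed space with unit ball $B_X\cap H$, so by the inductive hypothesis it carries an equilateral set $S=\{\vp_1,\dots,\vp_d\}$, which is still equilateral in $X$. It now suffices to produce a point $\vq\in X$ with $\norm{\vq-\vp_i}=1$ for every $i$: since $\norm{\vq-\vp_i}=1\neq 0$ we automatically get $\vq\notin S$, so $S\cup\{\vq\}$ is the desired $(d+1)$-point equilateral set. (We do not need $\vq\notin H$; equilateral sets in normed spaces need not be affinely independent, as the four points $(0,0),(1,0),(0,1),(1,1)$ in $\ell_\infty^2$ already show.)

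Thus the whole question reduces to showing that the equidistant locus $D:=\bigcap_{i=1}^{d}(\vp_i+\bd B_X)=\setbuilder{\vx\in X}{\norm{\vx-\vp_i}=1\text{ for all }i}$ is nonempty. The idea is to impose the constraints one at a time and exploit connectedness. Write $D_k:=\bigcap_{i=1}^{k}(\vp_i+\bd B_X)$, so $D_1=\vp_1+\bd B_X$ and $D=D_d$; each $D_k$ is compact, and $\vp_d\in D_{d-1}$ since $\vp_d$ is at distance $1$ from $\vp_1,\dots,\vp_{d-1}$. On $D_{d-1}$ the continuous function $\vx\mapsto\norm{\vx-\vp_d}$ vanishes at $\vp_d$; if one can join $\vp_d$, inside $D_{d-1}$, to a point where this function is at least $1$, then the intermediate value theorem yields $\vq\in D_{d-1}$ with $\norm{\vq-\vp_d}=1$, i.e.\ $\vq\in D_d$. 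In the Euclidean model this is transparent: $D_{d-1}$ is a genuine circle through $\vp_d$ (the intersection of $d-1$ unit spheres about the vertices of a regular $(d-2)$-simplex of edge $1$), of radius $\sqrt{d/(2(d-1))}$, so the point of $D_{d-1}$ farthest from $\vp_d$ lies at distance $\sqrt{2d/(d-1)}>1$, and the extension works. The same connectedness trick handles the base case $d=2$ — the curve $\vp_2+\bd B_X$ joins $\vp_1$, at distance $0$, to the antipode $2\vp_2-\vp_1$, at distance $2$ — recovering the classical fact that every normed plane contains an equilateral triangle.

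I expect the main obstacle to be that for a general convex body $B_X$ there is essentially no control on the topology of the iterated intersection $\bigcap_{i}(\vp_i+\bd B_X)$: it may be disconnected, have the ``wrong'' dimension, or (for a polytopal unit ball) break up into faces of several dimensions, and there is then no guarantee that $\vp_d$ lies in a piece of $D_{d-1}$ on which $\norm{\cdot-\vp_d}$ attains the value $1$; nor is it clear how to vary the auxiliary hyperplane $H$ continuously so as to dodge the bad configurations. The unconditional bounds that are actually available — growing only like $e^{c\sqrt{\log d}}$, due to Swanepoel and Villa and building on earlier logarithmic-type bounds of Brass and of Dekster, obtained by Borsuk--Ulam-type degree arguments together with a perturbation of an almost-equilateral simplex in a nearly Euclidean subspace — sidestep exactly this issue, producing many equilateral points without ever carrying out a clean ``add one more point'' step. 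Turning such a topological argument into one that reaches the full count $d+1$, or else isolating a convexity property of $B_X$ that forces the locus $D$ to be nonempty, is the crux.
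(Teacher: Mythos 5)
The statement you are trying to prove is not a theorem of the paper at all: it is Petty's conjecture, which the paper records as open. It is known only for $d\leq 4$ (Petty for $d=3$, Makeev for $d=4$), for spaces close to $\bE^d$ or to $\ell_\infty^d$, for permutation-invariant norms, and a few other special families; the best unconditional lower bound is merely $e(X^d)\geq\exp(\upOmega(\sqrt{\log d}))$, and the paper even cites evidence (small maximal equilateral sets, infinite-dimensional spaces with no infinite equilateral set) casting doubt on whether the conjecture is true. So there is no proof in the paper for your proposal to be compared against, and your proposal does not close the gap either --- as you yourself concede in your final paragraph.

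That said, your diagnosis of where the difficulty lies is accurate and matches the literature. The ``add one more point'' step is exactly how Petty handled $d=3$: he showed that in any space of dimension at least $3$, every equilateral triple extends to an equilateral quadruple, using the fact that the punctured plane is not simply connected (V\"ais\"al\"a and Kobos later gave variants using connectedness of the circle and the planar Brouwer fixed-point theorem). The reason this does not iterate is precisely the obstacle you name: for a general unit ball the set $\bigcap_i(\vp_i+\bd B_X)$ can be badly disconnected or of the wrong dimension, and no one knows how to certify it is nonempty once $d-1\geq 3$ constraints are imposed. One smaller caveat about your setup: an induction on dimension would also require that the equilateral set produced in the hyperplane $H$ be positioned so that the extension step applies to it, and since the known extension results only cover very low dimensions, the induction cannot even get started beyond $d=4$. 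In short, your proposal is a correct account of the natural strategy and of why it fails, but it is not a proof, and none currently exists.
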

It is simple to see that this conjecture holds for $d=2$.
Petty \cite{Petty1971} established it for $d=3$.
He in fact proved that in any normed space of dimension at least $3$, any equilateral set of $3$ points can be extended to an equilateral set of $4$ points.
His proof uses the topological fact that the plane with a point removed is not simply connected.
V\"ais\"al\"a \cite{Vaisala2012} gave a more elementary proof that only uses the connectedness of the circle.
(Kobos \cite{Kobos2013} also gave an alternative proof that depends on the $2$-dimensional case of the Brouwer Fixed-Point Theorem.)
Makeev \cite{Makeev2007} showed that the conjecture is true for $d=4$.
Brass~\cite{Brass1999} and Dekster~\cite{Dekster2000} used the Brouwer Fixed-Point Theorem to show that the conjecture holds for spaces sufficiently close to $\bE^d$.
Swanepoel and Villa \cite{Swanepoel2008} used a variant of that argument to show that it holds for spaces sufficiently close to $\ell_\infty^d$.
Kobos \cite{Kobos2014} showed that the conjecture holds for norms on $\bR^d$ for which the norm is invariant under permutation of the coordinates, as well as for $d$-dimensional subspaces of $\ell_\infty^{d+1}$ and spaces sufficiently close to them.
There has also been work on bounding $e(X^d)$ from below in terms of $d$.
Brass \cite{Brass1999} and Dekster \cite{Dekster2000} combined their previously mentioned result on spaces close to Euclidean space with Dvoretzky's Theorem to show that $e(X^d)$ is bounded below by an unbounded function of the dimension.
In fact, their proof, when combined with the best known dimension \cite{Schechtman2006} in Dvoretzky's Theorem gives a lower bound $e(X^d)\geq\upOmega(\sqrt{\log d}/\log\log d)$.
Swanepoel and Villa \cite{Swanepoel2008} showed that $e(X^d)\geq\exp(\upOmega(\sqrt{\log d}))$ by using, instead of Dvoretzky's Theorem, a theorem of Alon and Milman \cite{Alon1983} on subspaces close to $\bE^d$ or $\ell_\infty^d$, together with a version of Dvoretzky's Theorem for spaces not far from Euclidean space, due to Milman \cite{Milman1971}.
Roman Karasev (personal communication), in the hope of finding a counterexample to Conjecture~\ref{conj:equilateral}, asked whether the above conjecture holds for $\bE^a\oplus_1\bE^b$, the $\ell_1$-sum of two Euclidean spaces.
The special case $(a,b)=(1,d-1)$ was considered by Petty \cite{Petty1971} (see also Section~\ref{section:maximal} below).
It is not difficult to show that for Petty's space we have $e(\bR\oplus_1\bE^{d-1})\geq d+1$ \cite{Swanepoel2004}.
Joseph Ling~\cite{Ling2006} has shown that for this space, $e(\bR\oplus_1\bE^{d-1})\leq d+2$ and that equality holds for all $d\leq 10$.
Aaron Lin~\cite{Lin} showed that $e(\bE^a\oplus_1\bE^b)\geq a+b+1$ for all $a\leq b$ such that $a\leq 27$ or $b\equiv 0,1,a\pmod{a+1}$ among other cases, with the open cases of lowest dimension being $\bE^{28}\oplus_1\bE^{40}$ and $\bE^{29}\oplus_1\bE^{39}$.
There are other results that cast some doubt on Conjecture~\ref{conj:equilateral}: the existence of small maximal equilateral sets (Section~\ref{section:maximal}) and the existence of infinite-dimensional normed spaces that do not have infinite equilateral sets, first shown by Terenzi \cite{Terenzi1987, Terenzi1989}; see also Glakousakis and Mercourakis \cite{Glakousakis}.
(For more on equilateral sets in infinite-dimensional space, see \cite{FOSS2014, Koszmider, Mercourakis2015, Mercourakis2014}.)

Gr\"unbaum \cite{Gr} showed that for a strictly convex space of dimension $3$, $e(X)\leq 5$.
Building on his work, Sch\"urmann and Swanepoel \cite{Schurmann2006} determined $e(X)$ for various $3$-dimensional spaces, and in particular showed the existence of a smooth $3$-dimensional space with $e(X)=6$.
They showed that $6$ is the maximum for smooth norms in dimension $3$ and characterized the $3$-dimensional norms that admit equilateral sets of $6$ and $7$ points (see also Bisztriczky and B\"or\"oczky \cite{BB2005} for more general results).

We say that a set $S$ of points in $\bR^d$ is \define{strictly antipodal} if for any two distinct $x,y\in S$ there exist distinct parallel hyperplanes $H_x$ and $H_y$ such that $x\in H_x$, $y\in H_y$, and $A\setminus\set{x,y}$ is contained in the open slab bounded by $H_x$ and $H_y$.
Let $A'(d)$ denote the largest size of a strictly antipodal set in a $d$-dimensional space \cite{martini-soltan-2005}.
It is easy to see that $e(X^d)\leq A'(d)$ for all strictly convex $X^d$, and that there exists a strictly convex and smooth $X^d$ such that $e(X^d)=A'(d)$.
Erd\H{o}s and F\"uredi \cite{ErdosFuredi1983} showed that $A'(d)\geq\upOmega((2/\sqrt{3})^d)$, thus implying that there exist strictly convex $d$-dimensional normed spaces $X^d$ with $e(X^d)\geq \upOmega(2/\sqrt{3})^d$.
Talata improved this by a construction (described in \cite[Lemma~9.11.2]{Boroczky2004}) to $A'(d)\geq\upOmega(3^{d/3})$, and announced that $A'(d)\geq\upOmega(5^{d/4})$ (see \cite[p.~271]{Boroczky2004}).
Subsequently, Barvinok, Lee, and Novik \cite{Barvinok2013} found another construction that shows $A'(d)\geq\upOmega(3^{d/2})$.
This is currently the best-known bound for $e(X^d)$ for strictly convex spaces.
\begin{conjecture}[Erd\H{o}s and F\"uredi \cite{ErdosFuredi1983}]\label{conj:EF}
There exists $c>0$ such that for all $d$-dimensional strictly convex spaces $X^d$, $e(X^d)\leq (2-c)^d$.
\end{conjecture}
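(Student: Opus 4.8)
The plan is to first reduce Conjecture~\ref{conj:EF} to a purely affine statement and then to attack that statement by sharpening the Danzer--Gr\"unbaum volume bound. Since every equilateral set in a strictly convex space is strictly antipodal we have $e(X^d)\le A'(d)$, and conversely there is a strictly convex (even smooth) $X^d$ with $e(X^d)=A'(d)$; hence Conjecture~\ref{conj:EF} is \emph{equivalent} to the assertion that there is $c>0$ with $A'(d)\le(2-c)^d$ for all $d$. It therefore suffices to bound the size $n$ of a strictly antipodal set $S=\{\vp_1,\dots,\vp_n\}\subseteq\bR^d$. Writing $N:=\conv(S)$, the standard argument observes that the homothets $\tfrac12(N+\vp_i)$, $i=1,\dots,n$, have pairwise disjoint interiors and all lie inside $N$, whence $n\cdot 2^{-d}\operatorname{vol}(N)\le\operatorname{vol}(N)$ and so $n\le 2^d$; moreover equality is known to force $N$ to be a parallelepiped, whose vertex set is not strictly antipodal. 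This is the source of optimism that the true bound for strictly antipodal sets is exponentially smaller than $2^d$.

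The crucial step would be to convert strict antipodality into a \emph{multiplicative} improvement, i.e.\ to show that $\bigcup_{i=1}^{n}\tfrac12(N+\vp_i)$ misses a fraction of $\operatorname{vol}(N)$ that is itself exponentially close to $1$, not merely positive. One route is a peeling induction on the dimension: pick a generic functional $f$, let $\vp_1$ be its unique maximiser on $S$, use the vertex structure of $N$ at $\vp_1$ to slice off a cap, pass to a facet of $N$ carrying the remaining extreme points, and show that strictness of the slab inclusions costs a factor strictly below $2$ at each of the $d$ stages. A second, probabilistic route starts from the observation that the normalised differences $\un{\vp_i-\vp_j}$ are $n(n-1)$ distinct points of $\bd B$: one would try to quantify, for a random direction, how small the typical gap between consecutive $f$-values on $S$ must be, and integrate to a bound of the form $n\le(2-c)^d$. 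Morally this is the kind of estimate that the thin-cone machinery of Section~\ref{section:thincone} supplies for mutually close unit vectors, and it would be natural to attempt an adaptation here.

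The genuine obstacle is exactly this multiplicative gain: no current technique extracts an exponential factor from strict convexity alone. A strictly convex body can lie arbitrarily close to the cube $C^d$ in Banach--Mazur distance (round off the corners of $C^d$ by an arbitrarily small amount), so any successful argument must be completely insensitive to such deformations while still detecting, uniformly in the body, that segments are absent at exponentially many scales and directions at once. In particular the naive deficit $\operatorname{vol}(N)\bigl(1-n\,2^{-d}\bigr)$ is of no use on its own, since it can never yield a bound of the form $(2-c)^d$; what one needs is for the uncovered region to \emph{tensorise} across coordinates or scales. Reaching even the best known lower bound $A'(d)=\upOmega(3^{d/2})$ is surely beyond any such plan, so I would be content to establish \emph{some} explicit $c>0$ — and the fact that this has so far resisted proof is precisely why the statement is recorded here as a conjecture rather than a theorem.
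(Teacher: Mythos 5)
The statement you are addressing is recorded in the paper as an open conjecture of Erd\H{o}s and F\"uredi, and the paper contains no proof of it; your proposal, to its credit, does not pretend otherwise. Your reduction is sound: since every equilateral set in a strictly convex space is strictly antipodal, $e(X^d)\le A'(d)$, and since some strictly convex (indeed smooth) $X^d$ attains $e(X^d)=A'(d)$, the conjecture is equivalent to the assertion $A'(d)\le(2-c)^d$. Your account of the Danzer--Gr\"unbaum packing argument and of why it stalls at $2^d$ (equality forcing an affine cube, whose vertex set is antipodal but not strictly antipodal) is also accurate and consistent with the discussion in Section~\ref{section:equilateral}.

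However, neither of your two proposed routes is actually carried out, and the gap is exactly where you yourself locate it: nothing in your sketch converts strict antipodality into a multiplicative loss per dimension. The ``peeling induction'' has no mechanism guaranteeing that each of the $d$ stages costs a factor bounded away from $2$ uniformly over the body --- strict convexity gives a strict inequality at each point but no uniform one, and since the unit ball may be taken arbitrarily close to the cube $C^d$, any per-step constant extracted from local geometry would degenerate. The probabilistic route faces the same obstruction, and the thin-cone machinery of Section~\ref{section:thincone} that you invoke as a model produces separating families of size $5^{d+\mathrm{o}(d)}$, i.e.\ bounds with base larger than $2$, which go in the wrong direction for this problem. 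The paper itself notes that even $e(X^d)\le 2^d-2$ is unknown for strictly convex spaces in dimensions $d\ge 4$, so your text should be read as a correct problem analysis rather than a proof; the statement remains a conjecture.
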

In fact, there is no known proof even that $e(X^d)\leq 2^d-2$ for all strictly convex $X^d$, except in dimensions $d\leq 3$ (Gr\"unbaum \cite{Gr}).
It might also be interesting to look at rounded cubes such as the following.
For small $\epsi>0$, let $X^d$ have as unit ball the rounded $d$-cube $B_{\infty}^d+\epsi B_2^d$.
This space is smooth, but not strictly convex.
Using results from \cite{Schurmann2006} it can be shown that $e(X^3)=5$ for all sufficiently small $\epsi>0$.
Thus, there exist three-dimensional smooth spaces arbitrarily close to $\ell_\infty^3$, and with $e(\ell_\infty^3)-e(X^3)=3$.
It might be that for small $\epsi$, $e(X^d)$ is very far from $e(\ell_\infty^d)=2^d$, possibly even linear in $d$.
\begin{conjecture}
For some constant $C>0$, for each $d\in\bN$ there exists an $\epsi>0$ such that $e(X^d)\leq Cd$, where $X^d$ is the normed space with unit ball $B_{\infty}^d+\epsi B_2^d$.
\end{conjecture}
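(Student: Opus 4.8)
Since the statement is a conjecture, I describe a plan of attack and indicate where I expect it to break down.

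\textbf{Reduction to an almost-equilateral set in $\ell_\infty^d$.} Write $K=B_\infty^d+\epsi B_2^d$ for the unit ball of $X^d$ and $\norm{\cdot}_\epsi$ for its norm. From $B_\infty^d\subseteq K\subseteq(1+\epsi)B_\infty^d$ we get $\tfrac{1}{1+\epsi}\norm{\vx}_\infty\leq\norm{\vx}_\epsi\leq\norm{\vx}_\infty$ for all $\vx$, so if $S$ is equilateral in $X^d$ then every pairwise $\ell_\infty$-distance in $S$ lies in $[1,1+\epsi]$; that is, $S$ is almost equilateral in $\ell_\infty^d$. The catch is that the trivial bound $\card{S}\leq 2^d$ survives this step intact, since the vertices of the cube remain exactly equilateral in $\ell_\infty^d$; so the rounding must be used essentially, not merely as a perturbation. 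We also record that $X^d$ is \emph{smooth}: its dual norm is $\norm{\vy}_1+\epsi\norm{\vy}_2$, which is strictly convex, so every nonzero vector has a unique norming functional. The sphere $\bd K$ decomposes into the $2d$ flat facets $F_k^\pm=\setbuilder{\vx}{x_k=\pm(1+\epsi),\ \abs{x_j}\leq 1\ \text{for}\ j\neq k}$, on which the norming functional is the constant $\pm\ve_k/(1+\epsi)$, together with rounded regions, on which the norming functional points in the direction of the $\ell_2$-component of the vector.

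\textbf{A dimension recursion.} The one genuine structural gift is that for every coordinate $k$ we have $K\cap\setbuilder{\vx}{x_k=0}=\set{0}\times(B_\infty^{d-1}+\epsi B_2^{d-1})$, so the restriction of $X^d$ to any coordinate hyperplane is isometric to the space $X^{d-1}$ built from the \emph{same} $\epsi$. Hence any subset $A\subseteq S$ whose points share a common $k$-th coordinate is equilateral in $X^{d-1}$, so $\card{A}\leq e(X^{d-1})$. The aim is to turn this into a recursion $e(X^d)\leq e(X^{d-1})+O(1)$, which, iterated from the base case $e(X^2)=3$, yields $e(X^d)=O(d)$ and is consistent with the known value $e(X^3)=5$. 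For the pairs $\set{\vx,\vy}\subseteq S$ whose difference lies on a flat facet $F_k^\pm$ one has $(\vx)_k-(\vy)_k=\pm(1+\epsi)$, so both endpoints of such a pair attain the extreme values of the $k$-th coordinate over $S$; the graph of flat-facet-$k$ pairs is therefore bipartite between the ``minimal'' and ``maximal'' classes in coordinate $k$, and each of these classes is equilateral in $X^{d-1}$ by the section property above. For the remaining ``diagonal'' pairs one would use the Lagrange conditions for an optimal decomposition $\vx-\vy=\va+\epsi\vb$ (with $\norm{\va}_\infty\leq 1$, $\norm{\vb}_2\leq 1$): the norming functional is then parallel to $\vb$ and supported on the coordinates where $\abs{a_j}$ attains its maximum, which tightly constrains the shape of every difference vector of $S$. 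As an alternative to the additive recursion, one can note that restricting attention to flat-facet pairs and rounding each point to the nearest scaled cube vertex turns $S$ into a binary equidistant code, whose cardinality is $O(d)$ by classical bounds of Deza type; one would then need a stability argument to reincorporate the diagonal pairs and the rounding error, choosing $\epsi=\epsi(d)$ small enough to collapse the remaining slack.

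\textbf{The obstacle.} Carried out naively, the recursion loses a factor of order $d$ at each step: splitting the pairs through a fixed point over the $d$ coordinates into classes of size at most $e(X^{d-1})$ gives only $e(X^d)\leq 1+d\cdot e(X^{d-1})$, i.e.\ the trivial exponential bound. Upgrading this to an additive $O(1)$ loss -- equivalently, showing that a large equilateral set in $X^d$ must have all but $O(1)$ of its points agreeing in some coordinate -- requires global control of how the $\binom{\card{S}}{2}$ difference vectors distribute over the $2d$ facets and the rounded part, and no such control is known. Indeed, even the question whether $e(X^4)<16$ for all sufficiently small $\epsi$ appears to be open; the only settled case is $d=3$, where $e(X^3)=5$ was obtained by the essentially computational analysis in \cite{Schurmann2006}. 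A further warning is that the extremal configurations are almost certainly not cube-like -- already for $d=3$ the optimal $5$-point set is not a subset of vertices of the cube -- so a verbatim reduction to equidistant codes cannot be expected to work; what seems to be missing is a quantitatively robust stability statement about almost-equilateral sets in $\ell_\infty^d$ that is genuinely sensitive to the rounding term $\epsi B_2^d$.
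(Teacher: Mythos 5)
This statement is a conjecture; the paper contains no proof of it, only the supporting evidence that $e(X^3)=5$ for all sufficiently small $\epsi>0$ (deduced from \cite{Schurmann2006}) and the surrounding heuristic that the dual of $\ell_\infty^d$ satisfies $e(\ell_1^d)=\mathrm{O}(d\log d)$ \cite{Alon2003}. You correctly treat the statement as open and do not claim a proof, so there is nothing to compare against; what you give is a plan of attack together with an honest account of where it fails. Your structural observations are all correct: the two-sided comparison of $\norm{\cdot}_{\epsi}$ with $\norm{\cdot}_\infty$, the smoothness of $X^d$ and the identification of the dual norm as $\norm{\cdot}_1+\epsi\norm{\cdot}_2$, the description of the flat facets and their norming functionals, the fact that coordinate-hyperplane sections of $B_\infty^d+\epsi B_2^d$ reproduce the same construction one dimension down, and the bipartiteness of the flat-facet-$k$ pairs between the points attaining the maximum and minimum of the $k$-th coordinate. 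The obstacle you name -- that distributing the pairs over the $2d$ facets only yields $e(X^d)\leq 1+d\cdot e(X^{d-1})$, and that upgrading this to an additive recursion requires an unavailable stability statement -- is exactly the right diagnosis of why the conjecture is open.

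Two corrections. First, your remark that ``even the question whether $e(X^4)<16$ for all sufficiently small $\epsi$ appears to be open'' is false: by the Petty--Soltan theorem quoted in Section~3, $e(X)\leq 2^d$ with equality only when the unit ball is an affine $d$-cube, and the rounded cube is not one (the space is smooth), so $e(X^4)\leq 15$ is immediate; what is open is any bound improving on $2^d$ by more than this trivial amount. Second, the proposed reduction of the flat-facet pairs to a binary equidistant code does not work as stated: if $\vx-\vy$ lies on the facet $F_k^{\pm}$, the coordinates $j\neq k$ of $\vx-\vy$ are constrained only to have absolute value at most $1$, so rounding the points of $S$ to scaled cube vertices produces a code whose pairwise Hamming distances are uncontrolled rather than equal, and no Deza-type bound applies. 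Any genuine progress would need precisely the quantitatively robust stability statement about almost-equilateral sets in $\ell_\infty^d$ that you identify as missing in your final sentence.
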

Note that the dual of $\ell_\infty^d$ is $\ell_1^d$, for which Alon and Pudl\'ak \cite{Alon2003} has shown $e(\ell_1^d)=\mathrm{O}(d\log d)$.
We propose the following conjecture:
\begin{conjecture}
For any $d$-dimensional normed space $X$ with dual $X^*$, $e(X)e(X^*)\leq 2^{d+\mathrm{o}(d)}$.
\end{conjecture}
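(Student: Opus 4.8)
The plan is to view the conjecture as a joint strengthening of the bound $e(X)\leq 2^d$ of Petty \cite{Petty1971} and Soltan \cite{Soltan1975} (deduced from Danzer and Gr\"unbaum \cite{Danzer1962}). Since $e(X)\leq 2^d$ and $e(X^*)\leq 2^d$ unconditionally, the assertion is equivalent to the trade-off $\log_2 e(X)+\log_2 e(X^*)\leq d+\mathrm{o}(d)$; in particular it is automatic whenever one factor is $2^{\mathrm{o}(d)}$, so all the content lies in the regime where \emph{both} equilateral numbers are superpolynomial in $d$. The dual pair $(\ell_\infty^d,\ell_1^d)$, with $e(\ell_\infty^d)=2^d$ and $e(\ell_1^d)=\mathrm{O}(d\log d)$ by Alon and Pudl\'ak \cite{Alon2003}, is the canonical near-extremal instance to keep in mind.

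The first route I would attempt is volumetric, aimed directly at the product. Let $\{\vx_1,\dots,\vx_m\}\subseteq X$ and $\{\vg_1,\dots,\vg_n\}\subseteq X^*$ be equilateral with common distance $1$, and put $P:=\conv\{\vx_1,\dots,\vx_m\}$ and $Q:=\conv\{\vg_1,\dots,\vg_n\}$. The Danzer--Gr\"unbaum packing argument records that, because for each $i\neq j$ there is $\vf_{ij}\in\bd B_{X^*}$ attaining its maximum over $P$ at $\vx_i$ and its minimum at $\vx_j$ with $\vf_{ij}(\vx_i-\vx_j)=1$, the homothets $\tfrac12(\vx_i+P)$ have pairwise disjoint interiors and all lie in $P$, whence $m\leq 2^d$; symmetrically $n\leq 2^d$. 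To couple the two bounds I would feed in the extra geometry that $\diam P=1$, so $P$ lies in a translate of $B_X$ and $\tfrac12(P-P)\subseteq\tfrac12 B_X$ while $\bd(\tfrac12 B_X)$ contains the $m(m-1)$ points $\tfrac12(\vx_i-\vx_j)$, together with the symmetric statements for $Q\subseteq X^*$, and then combine with the Santal\'o and Bourgain--Milman bounds on the Mahler volume $\operatorname{vol}(B_X)\operatorname{vol}(B_{X^*})$. The difficulty, which I do not see how to get past, is that the Danzer--Gr\"unbaum estimate is blind to $B_X$ (the factor $\operatorname{vol}(P)$ cancels), so as it stands this merely reproves $mn\leq 2^{2d}$; the exponential saving would have to come from showing that the functionals $\vf_{ij}$, which fix the norm of $X$ along $d$ independent directions, together with their dual counterparts for the $\vg_k$, force $P$ and $Q$ to be large enough to turn the Mahler bound into a genuine gain.

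A second, structural route is a stability argument: prove that $e(X)\geq 2^{\alpha d}$ forces $X$ to be quantitatively close to $\ell_\infty^d$ (with Banach--Mazur distance bounded in terms of $\alpha$), hence $X^*$ close to $\ell_1^d$, and then conclude by a robust form of the Alon--Pudl\'ak bound. The mechanism is again the packing above: $m\geq 2^{\alpha d}$ says the homothets $\tfrac12(\vx_i+P)$ fill a $2^{-(1-\alpha)d}$ fraction of $P$, and an efficient packing of $P$ by translates of $\tfrac12 P$ ought to push $P$ --- and, through the boundary points $\tfrac12(\vx_i-\vx_j)$ of $\bd(\tfrac12 B_X)$, also $B_X$ --- toward an affine cube, by a rigidity phenomenon for translational self-packings. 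The obstruction here is quantitative: $2^{-(1-\alpha)d}$ is close to $1$ only when $1-\alpha=\mathrm{o}(1/d)$, so a crude stability statement handles only $e(X)$ extremely near $2^d$, whereas the conjecture demands the whole trade-off curve --- equivalently, a rigidity phenomenon that still yields something at packing density $2^{-(1-\alpha)d}$ for constant $\alpha\in(0,1)$.

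I therefore expect the main obstacle to be precisely that no presently available tool delivers the full trade-off $\log_2 e(X)+\log_2 e(X^*)\leq d+\mathrm{o}(d)$: both natural sufficient inputs --- a stability version of Danzer--Gr\"unbaum tolerating a constant-proportion loss, or an amplification of Alon--Pudl\'ak that survives a $2^{\mathrm{o}(d)}$-distortion of the norm --- are unavailable, consistent with the fact that even the far weaker one-sided relative ``$e(X^d)\leq 2^d-2$ for strictly convex $X^d$'' is open for $d\geq 4$ (it is known only for $d\leq 3$, by Gr\"unbaum \cite{Gr}). A reasonable first step would be to settle the conjecture for $X=\bE^a\oplus_1\bE^b$, already within reach when $a=1$: Ling's bound $e(\bR\oplus_1\bE^{d-1})\leq d+2$ \cite{Ling2006} makes the inequality immediate from $e(X^*)\leq 2^d$.
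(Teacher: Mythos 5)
There is nothing to compare here: the statement you were asked to prove is not a theorem of the paper but a conjecture that the paper itself proposes, with no proof offered. The author explicitly adds that it would already be interesting to establish the much weaker bound $e(X)e(X^*)=\mathrm{o}(4^d)$, which is exactly the trivial threshold you identify as the point where the separate Danzer--Gr\"unbaum bounds $e(X)\leq 2^d$ and $e(X^*)\leq 2^d$ stop giving information. So your proposal does not, and could not, match a proof in the paper; what it does instead is correctly recognize the statement as open and survey plausible attack routes together with the reasons they fall short.

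As a piece of mathematical writing your assessment is accurate and consistent with everything the paper records: the volumetric route indeed only reproduces $mn\leq 4^d$ because the Danzer--Gr\"unbaum packing estimate cancels the volume of $\conv\{\vx_i\}$ and never sees $B_X$; the stability route founders because packing density $2^{-(1-\alpha)d}$ is exponentially small for any fixed $\alpha<1$, so no known rigidity statement applies; and your closing observation that the conjecture holds trivially for $X=\bR\oplus_1\bE^{d-1}$ via Ling's bound $e(X)\leq d+2$ is correct. The only caveat is that you should present this as a discussion of an open problem rather than as a ``proof strategy,'' since no step in it closes the gap between $4^d$ and $2^{d+\mathrm{o}(d)}$, and the paper offers no mechanism for doing so either.
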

It would already be interesting to show that $e(X)e(X^*)=\mathrm{o}(4^d)$.

\subsection[\texorpdfstring{Equilateral sets in $\ell_\infty$-sums and the Borsuk problem}{Equilateral sets in L-infinity-sums and the Borsuk problem}]{Equilateral sets in $\ell_\infty$-sums and the Borsuk problem}\label{section:Borsuk}
We next consider $\ell_{\infty}$-sums of normed spaces.
If $X$ and $Y$ are normed spaces, then the unit ball of $X\oplus_\infty Y$ is the Cartesian product $B_X\times B_Y$.
It is easy to see that $e(X\oplus_\infty Y)\geq e(X)e(Y)$.
For certain $X$ and $Y$ it is possible to show that equality holds.
The \define{Borsuk number} $b(X)$ of $X$ is defined to be the smallest $k$ such that any subset of $X$ of diameter $1$ can be partitioned into $k$ parts, each of diameter strictly smaller than $1$.
This notion was introduced by Gr\"unbaum~\cite{MR21:2209}.
The Borsuk number of Euclidean space received the most attention, ever since Borsuk \cite{Borsuk} conjectured that $b(\bE^d)$ equals $d+1$.
It is known that $b(\bE^d)=e(\bE^d)$ for $d=2$ (Borsuk~\cite{Borsuk}) and $d=3$ (Perkal \cite{Perkal1947} and Eggleston \cite{Eggleston1955}), $b(\bE^d)\geq(1.203\dots+\mathrm{o}(1))^{\sqrt{d}}$ (Kahn and Kalai \cite{KK1993}), $b(\bE^d)\leq 2^{d-1}+1$ (Lassak \cite{Lassak1982}), and $b(\bE^d)\leq (\sqrt{3/2}+\mathrm{o}(1))^{d}$ (Schramm \cite{Schramm1988} and Bourgain and Lindenstrauss \cite{BL1991}).
Currently, the smallest dimensions for which Borsuk's conjecture is known to be false, are $b(\bE^{65})\geq 83$ (Bondarenko \cite{Bondarenko2014}) and $b(\bE^{64})\geq 71$ (Jenrich and Brouwer \cite{Jenrich2014}).
See Raigorodskii \cite{Raigorodskii2007} and Kalai \cite{Kalai2015} for recent surveys.
Clearly, $b(X)\geq e(X)$, although as is shown by the counterexamples to Borsuk's conjecture, these two quantities are very different already for Euclidean spaces.
On the other hand, it is easy to see that $b(\ell_\infty^d)=e(\ell_\infty^d)=2^d$.
Gr\"unbaum~\cite{MR21:2209} showed that $b(X^2)=e(X^2)$ for all $2$-dimensional spaces.

Zong \cite{Zong2008} asked whether $b(X^d)\leq 2^d$ for all $d$-dimensional $X^d$.
It is well known \cite{Rogers-Zong} that a $d$-dimensional convex body $K$ can be covered by $\mathrm{O}(2^d d\log d)$ translates of $-(1-\epsi)K$, where $\epsi>0$ is arbitrarily small.
It follows that $b(X^d)\leq \mathrm{O}(2^d d\log d)$ for all $d$-dimensional~$X^d$.

We define the following variant for finite subsets of $X$.
Let the \define{finite Borsuk number} $b_f(X)$ of $X$ be the smallest number $k$ such that any finite subset of $X$ of diameter $1$ can be partitioned into $k$ parts, each of diameter strictly smaller than $1$.
Then $b(X)\geq b_f(X)$, although we have no evidence either way whether these two quantities can differ for some $X$ or not, although we note that $b_f(\ell_\infty^d)=2^d$ and $b_f(X^2)=b(X^2)=e(X^2)$ for any two-dimensional space $X^2$.
\begin{proposition}\label{prop:equilBorsuk}
For any two finite-dimensional normed spaces $X$ and $Y$,
\[ e(X)e(Y)\leq e(X\oplus_\infty Y)\leq e(X)b_f(Y).\]
\end{proposition}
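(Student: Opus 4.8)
The plan is to prove the two inequalities separately; both are elementary and use only the fact that the norm on $X\oplus_\infty Y$ is the coordinatewise maximum. For the left-hand inequality $e(X)e(Y)\le e(X\oplus_\infty Y)$, I would pick an equilateral set $S\subseteq X$ with $\card{S}=e(X)$ and an equilateral set $T\subseteq Y$ with $\card{T}=e(Y)$, and verify that $S\times T$ is equilateral in $X\oplus_\infty Y$. Indeed, for distinct $(\vs,\vt),(\vs',\vt')\in S\times T$ at least one of $\vs\ne\vs'$, $\vt\ne\vt'$ holds, so $\norm{(\vs,\vt)-(\vs',\vt')}_\infty=\max\set{\norm{\vs-\vs'},\norm{\vt-\vt'}}$ is one of $\max\set{1,1}$, $\max\set{1,0}$, $\max\set{0,1}$, hence equal to $1$. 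This step needs no real idea.

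For the right-hand inequality $e(X\oplus_\infty Y)\le e(X)b_f(Y)$, I would take an equilateral set $W=\setbuilder{(\vx_i,\vy_i)}{i\in I}\subseteq X\oplus_\infty Y$ with $\card{I}=e(X\oplus_\infty Y)$, which is finite by the Danzer--Gr\"unbaum bound $e\le 2^d$. The crucial remark is that for distinct $i,j\in I$ we have $\max\set{\norm{\vx_i-\vx_j},\norm{\vy_i-\vy_j}}=1$; consequently the projected set $Y':=\setbuilder{\vy_i}{i\in I}$ has diameter at most $1$, and whenever $\norm{\vy_i-\vy_j}<1$ we are forced to have $\norm{\vx_i-\vx_j}=1$. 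Applying the definition of the finite Borsuk number to $Y'$ gives a partition of $Y'$ into at most $b_f(Y)$ parts, each of diameter strictly less than $1$; pulling this back through $i\mapsto\vy_i$ yields a partition $I=J_1\cup\dots\cup J_{b_f(Y)}$. For $i,j$ in the same class $J_k$ we have $\norm{\vy_i-\vy_j}<1$, so $\norm{\vx_i-\vx_j}=1$; in particular the points $\vx_i$, $i\in J_k$, are distinct and form an equilateral set in $X$, giving $\card{J_k}\le e(X)$. Summing over the classes, $e(X\oplus_\infty Y)=\card{I}=\sum_k\card{J_k}\le b_f(Y)\,e(X)$.

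The only point needing a little care is that the finite Borsuk number is defined for sets of diameter exactly $1$, whereas $Y'$ may have diameter $r\in[0,1]$. If $0<r\le 1$ one first rescales $Y'$ by $1/r$, partitions the rescaled set into at most $b_f(Y)$ pieces of diameter $<1$, and rescales back, obtaining pieces of diameter $<r\le 1$, which is all the argument above uses. If $r=0$, all $\vy_i$ coincide, so $\setbuilder{\vx_i}{i\in I}$ is itself equilateral and $\card{I}\le e(X)\le e(X)b_f(Y)$ directly. I do not anticipate any further obstacle: the whole argument hinges on the single implication that a colour class of small $\vy$-diameter forces the corresponding $\vx$-coordinates to be at mutual distance exactly $1$, which converts each part of the Borsuk partition into an equilateral set of $X$.
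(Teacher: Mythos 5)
Your proof is correct and follows essentially the same route as the paper: the product construction $S\times T$ for the lower bound, and for the upper bound a Borsuk partition of the $Y$-projection into parts of diameter $<1$, each of which forces the corresponding $X$-coordinates to form an equilateral set. Your extra care about rescaling when $\diam(\pi_Y(E))<1$ (and the degenerate case $\diam=0$) is a reasonable touch that the paper's proof passes over silently.
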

\begin{proof}
If $S$ is an equilateral set in $X$, and $T$ an equilateral set in $Y$, with equal distances, then $S\times T$ is equilateral in $X\oplus_\infty Y$.
This shows the first inequality.
For the second inequality, let $E$ be an equilateral set with distance $1$ in $X\oplus_\infty Y$, and let $\pi_Y\colon X\oplus_\infty Y\to Y$ be the projection onto the second coordinate.
Then $\pi_Y(E)$ has diameter at most $1$ in $Y$, so can be partitioned into $k\leq b_f(Y)$ parts $E_1,\dots,E_k$, each of diameter $<1$.
It follows that $\pi_Y^{-1}(E_1),\dots,\pi_Y^{-1}(E_k)$ is a partition of $E$, and each $\pi_X(\pi_Y^{-1}(E_i))$ is equilateral.
Finally, note that $\card{\pi_Y^{-1}(E_i)} = \card{\pi_X(\pi_Y^{-1}(E_i))}$ for each $i$.
The second inequality follows.
\end{proof}
\begin{corollary}
If $X$ and $Y$ are finite-dimensional normed spaces, and one of $X$ or $Y$ is at most $2$-dimensional or Euclidean $3$-space $\bE^3$, then $e(X\oplus_\infty Y) = e(X)e(Y)$.
\end{corollary}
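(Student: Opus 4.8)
The plan is to obtain this as a quick consequence of Proposition~\ref{prop:equilBorsuk}. Since $X\oplus_\infty Y$ and $Y\oplus_\infty X$ are isometric (interchange the two coordinates), we may assume without loss of generality that $Y$ is the space which is at most $2$-dimensional or isometric to $\bE^3$. Proposition~\ref{prop:equilBorsuk} then gives
\[ e(X)e(Y)\leq e(X\oplus_\infty Y)\leq e(X)\,b_f(Y), \]
so it is enough to prove that $b_f(Y)=e(Y)$ for such $Y$.

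First I would record the general squeeze $e(Z)\leq b_f(Z)\leq b(Z)$, valid for every finite-dimensional normed space $Z$. The right-hand inequality is immediate, since a finite set is in particular a set. For the left-hand one, recall that by Danzer--Gr\"unbaum~\cite{Danzer1962} the number $e(Z)\leq 2^{\dim Z}$ is finite; writing $n:=e(Z)$ and taking an equilateral set of $n$ points at common distance $1$, this set has diameter $1$, and in any partition of it into parts of diameter strictly less than $1$ no part may contain two of the $n$ points, so at least $n$ parts are needed. Hence $e(Z)\leq b_f(Z)$.

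It therefore remains to verify $b(Y)=e(Y)$ in the two cases. If $\dim Y\leq 2$ this is the equality $b(X^2)=e(X^2)$ of Gr\"unbaum~\cite{MR21:2209} recalled above (the cases of dimension $\leq 1$ being elementary), and if $Y$ is isometric to $\bE^3$ it is the equality $b(\bE^3)=e(\bE^3)$ established by Perkal~\cite{Perkal1947} and Eggleston~\cite{Eggleston1955}. In either case the squeeze forces $b_f(Y)=e(Y)$, and substituting this into the displayed chain of inequalities yields $e(X\oplus_\infty Y)=e(X)e(Y)$. There is no genuine obstacle here; the only point that needs care is that it is the \emph{finite} Borsuk number $b_f$, not $b$ itself, that appears on the right in Proposition~\ref{prop:equilBorsuk}, and that $b_f$ is sandwiched between $e$ and $b$, so that the known identities $b=e$ for planar spaces and for $\bE^3$ are exactly what is needed.
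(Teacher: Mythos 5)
Your proof is correct and is exactly the argument the paper intends: the corollary is stated as an immediate consequence of Proposition~\ref{prop:equilBorsuk} together with the facts, recorded earlier in Section~\ref{section:Borsuk}, that $e(Z)\leq b_f(Z)\leq b(Z)$ and that $b(X^2)=e(X^2)$ (Gr\"unbaum) and $b(\bE^3)=e(\bE^3)$ (Perkal, Eggleston). Your explicit verification of the squeeze $e\leq b_f\leq b$ and the reduction by symmetry of the $\ell_\infty$-sum are precisely the right details to supply.
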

Perhaps the simplest $\ell_\infty$-sum for which this corollary does not determine $e(X)$ is the $\ell_{\infty}$-sum of two $4$-dimensional Euclidean spaces, with unit ball the Cartesian product of two $4$-dimensional Euclidean balls.
If $b_f(\bE^4)$ were equal to $5$, then Proposition~\ref{prop:equilBorsuk} would give that $e(\bE^4\oplus_\infty\bE^4)=25$.
Most likely it would be easier to determine the value of $e(\bE^4\oplus_\infty\bE^4)$ than to settle Borsuk's conjecture in Euclidean $4$-space.

\subsection{Small maximal equilateral sets}\label{section:maximal}
Petty \cite{Petty1971} showed that it is not always possible to extend an equilateral set of size at least $4$ to an equilateral set properly containing it.
In particular, he showed that $\bR\oplus_1\bE^{d-1}$ contains a maximal equilateral set of $4$ points for each $d\geq 3$.
Swanepoel and Villa \cite{Swanepoel2013} found many other spaces with the property of having small maximal equilateral sets.
In particular, for any $p\in[1,2)$ there exists a $C_p$ such that $\ell_p^d$ and $\ell_p$ have maximal equilateral sets of size at most $C_p$.
\begin{conjecture}[\cite{Swanepoel2013}]
Any $d$-dimensional normed space has a maximal equilateral set of size at most~$d+1$.
\end{conjecture}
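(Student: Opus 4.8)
The plan is to use the following reformulation of maximality. If $S\subseteq X$ is an equilateral set with common distance $1$, then a point $y$ can be added to $S$ precisely when $\norm{y-s}=1$ for every $s\in S$, that is, when $S\subseteq y+\bd B_X$; hence $S$ is a maximal equilateral set if and only if $S$ lies on no translate of the unit sphere $\bd B_X$. So it suffices to exhibit an equilateral set $S$ with $\card{S}\leq d+1$ that is contained in no translate of $\bd B_X$. One may assume $e(X)\geq d+2$, since otherwise every equilateral set already has at most $d+1$ points and, because $e(X)\leq 2^d<\infty$, at least one of them is maximal.

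The construction I would attempt is a steered greedy process. Put $p_0:=o$ and let $p_1$ be an arbitrary unit vector; having chosen $p_0,\dots,p_k$, let $E_k:=\bigcap_{i=0}^{k}(p_i+\bd B_X)$ be the set of admissible next points, so that the goal is $E_k=\emptyset$ for some $k\leq d$. The guiding heuristic is a dimension count: each new sphere ought to cut one dimension off $E_k$, so that $\dim E_k\leq d-1-k$ and hence $E_d=\emptyset$, producing a maximal set of $d+1$ points --- which is exactly what happens for a generic smooth strictly convex norm and is consistent with $e(\bE^d)=d+1$. To realise this I would, at step $k$, take $p_{k+1}$ in the relative interior of a minimal face of $\conv E_k$, maintaining the invariant $\dim\aff\set{p_0,\dots,p_k}=k$, and, should $E_k$ ever drop inside $\aff\set{p_0,\dots,p_k}$, argue by a separating functional that it cannot survive past $d+1$ points. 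The relative-interior choice is essential: in $\ell_\infty^2$, from $p_0=(0,0)$ and $p_1=(1,0)$ one has $E_1=([0,1]\times\set{1})\cup([0,1]\times\set{-1})$, and taking $p_2=(\tfrac12,1)$, a point interior to an edge, yields $E_2=\emptyset$, whereas taking $p_2=(1,1)$, a vertex, leaves the set extendable.

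The main obstacle is that this dimension-drop heuristic fails badly when the norm is neither smooth nor strictly convex: for $X=\ell_\infty^d$ two translates of $\bd B_X$ can meet in a set of full dimension $d-1$, so the bookkeeping collapses, and one is reduced to tracking how successive points land on ever smaller faces of the translated balls, with no obvious general reason for this to terminate by step $d+1$. Since the statement is in fact an open conjecture, a complete argument presumably needs an idea beyond steered greediness --- for example a degree-theoretic or fixed-point argument in the spirit of Brass and Dekster's approach to Petty's conjecture, but run in reverse so as to \emph{certify} the non-extendability of a carefully chosen $(d+1)$-point equilateral set, or a compactness argument selecting, among all $(d+1)$-point equilateral sets, one extremal for a suitable functional (a width measured in $d$ linearly independent directions of $X^*$, or a simplex volume) for which extendability can be excluded. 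As a preliminary step I would run the plan on the cases $\ell_p^d$ and $\ell_p$ with $p\in[1,2)$ treated by Swanepoel and Villa, to identify which quantity actually decreases there.
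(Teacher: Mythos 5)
This statement is an open conjecture (due to Swanepoel and Villa), and the paper offers no proof of it --- only the supporting evidence that the bound $d+1$ is attained as a maximal equilateral set in every $\ell_p^d$, $p\in[1,\infty]$, and in spaces sufficiently close to these. Your proposal is, by your own admission, not a proof either, so there is no argument to compare against the paper's. What you have that is correct: the reformulation of maximality (an equilateral set $S$ with common distance $1$ is maximal if and only if $S$ lies on no translate of $\bd B_X$), and the reduction to the case $e(X)\geq d+2$. Everything after that is heuristic, and the single load-bearing step --- that each new sphere cuts the dimension of $E_k=\bigcap_{i\leq k}(p_i+\bd B_X)$ by one, or any substitute for this when $B_X$ is neither smooth nor strictly convex --- is precisely the missing idea.

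Two further concrete problems with the plan as written. First, your selection rule contradicts your own worked example: in $\ell_\infty^2$ with $p_0=(0,0)$ and $p_1=(1,0)$, the minimal faces of $\conv E_1=[0,1]\times[-1,1]$ are its four vertices, so ``relative interior of a minimal face of $\conv E_k$'' forces $p_2=(1,1)$ (the extendable choice), whereas the choice $(\tfrac12,1)$ that works lies in the relative interior of an edge; moreover $E_k$ need not be convex, so faces of $\conv E_k$ may contain no admissible points at all. Second, even granting a generic dimension drop, the points $p_i$ are not free parameters --- they are constrained to be pairwise equilateral --- so genericity cannot be invoked, and in a space with $e(X)>d+1$ every $(d+1)$-point subset of a maximum equilateral set is extendable by definition; the entire difficulty is to certify non-extendability of some specially constructed small equilateral set, which no step of your plan accomplishes. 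Treat the statement as open; a writeup should present it as a conjecture, not a theorem.
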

This conjecture holds for all $\ell_p^d$, $p\in[1,\infty]$, and also for all spaces sufficiently close to one of these spaces \cite{Swanepoel2013}.
See also Kobos \cite{Kobos2013}, where smooth and strictly convex spaces with maximal equilateral sets of size $4$ are constructed.

\subsection{Subequilateral sets}
Lawlor and Morgan \cite{Lawlor1994} used the following weakening of equilateral sets.
A polytope $P$ in a normed space $X$ is called \define{subequilateral} if the length of each edge of $P$ equals the diameter of $P$ (in the norm)~\cite{Swanepoel2007b}.
We denote the maximum number of vertices in a subequilateral polytope in $X^d$ by $e_s(X^d)$.
For any equilateral set $S$, $\conv(S)$ is a subequilateral polytope, hence $e(X^d)\leq e_s(X^d)$.
Subequilateral polytopes were used in \cite{Lawlor1994} to construct certain energy-minimizing cones.
These polytopes turn out to be so-called edge-antipodal polytopes, introduced by Talata~\cite{Talata1999b}, who conjectured that an edge-antipodal $3$-polytope has a bounded number of vertices.
This was proved by Csik\'os \cite{Csikos2003}.
K.~Bezdek, Bistriczky and B\"or\"oczky \cite{BBB2005} determined the tight bound of $8$, which implies that $e_s(X^3)\leq 8$ for any $3$-dimensional normed space.
P\'or \cite{Por} proved the generalization of Talata's conjecture to all dimensions, by showing that for each $d$ there exists a $c_d$ such that any edge-antipodal $d$-polytope has at most $c_d$ vertices.
His proof is non-constructive, and only gives $e_s(X^d)<\infty$ for each $d$.
In \cite{Swanepoel2007b} it is shown that $e_s(X^d)\leq (1+d/2)^d$.
This in turn implies the same bound on the number of vertices of an edge-antipodal polytope.
\begin{conjecture}[\cite{Swanepoel2007b}]
A subequilateral set in a $d$-dimensional normed space has size at most $c^d$, where $c\geq 2$ is some absolute constant.
\end{conjecture}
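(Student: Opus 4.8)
The plan is to reduce the conjecture to a purely affine statement and then attack that by a fractional volume argument. By the inequality $e(X^d)\le e_s(X^d)$ recorded above, and since a subequilateral polytope is an edge-antipodal polytope, it suffices to prove: every edge-antipodal $d$-polytope $P$ (one in which the two endpoints of each edge lie on distinct parallel supporting hyperplanes) has at most $c^d$ vertices, $c$ an absolute constant; write $V$ for its vertex set and $n:=\card{V}$. This is precisely the assertion that the known bound $(1+d/2)^d$ of \cite{Swanepoel2007b} should improve to an absolute base: the case $d=3$ gives $8=2^3$ \cite{BBB2005}, and the cube (which is edge-antipodal) shows that $2^d$ vertices are attainable, so the target base satisfies $2\le c$ with $c=2$ not excluded.

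The benchmark is the Danzer--Gr\"unbaum volume argument \cite{Danzer1962}, which gives $n\le 2^d$ when $P$ is \emph{fully} antipodal: the $n$ translates $T_i:=\tfrac12(P+v_i)$, $v_i\in V$, all lie inside $P$, each has volume $2^{-d}\operatorname{vol}(P)$, and for a pair $\{v_i,v_j\}$ in parallel supporting hyperplanes one checks that $\interior T_i\cap\interior T_j=\varnothing$ (an interior point would give $v_j-v_i=p-q$ with $p,q\in\interior P$, contradicting those hyperplanes), so $n\,2^{-d}\operatorname{vol}(P)\le\operatorname{vol}(P)$. In the edge-antipodal setting the disjointness of $T_i$ and $T_j$ is only guaranteed when $\{v_i,v_j\}$ spans an edge of $P$, so I would pass to a \emph{fractional} count: if no point lies in more than $M$ of the $T_i$, then $n\,2^{-d}\operatorname{vol}(P)\le M\operatorname{vol}(P)$, i.e.\ $n\le M\,2^d$. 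Here $x\in T_i$ is equivalent to $v_i\in 2x-P$, so $M$ is exactly the largest number of vertices of $P$ contained in a single translate of $-P$. For the cube, with $x$ its centre, $M=2^d$; thus proving $M\le c_0^d$ would give $n\le(2c_0)^d$, and $M\le 2^d$ (with the cube extremal) would already yield $c=4$.

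To bound $M$ I would again use edge-antipodality, now on the sub-configuration $W:=V\cap(z-P)$ of vertices lying in a fixed translate $z-P$ of $-P$. One observes that $R:=P\cap(z-P)$ is centrally symmetric about $z/2$, that each $w\in W$ is a vertex of $R$, and that for a pair in $W$ spanning an edge of $P$ the parallel supporting hyperplanes of $P$ restrict to parallel supporting hyperplanes of $R$. The hope is to promote this to genuine antipodality of $W$ within $\conv(W)$ --- for instance by controlling the graph that the edges of $P$ induce on $W$, or by a further geometric argument using that $W$ sits inside the symmetric body $R$ --- and then conclude $\card{W}\le 2^d$ by Danzer--Gr\"unbaum once more. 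A second, independent route would be to make P\'or's finiteness argument \cite{Por} quantitative by induction on $d$: project $P$ along a generic direction, bound the number of vertices in a fibre of the projection (this is where edge-antipodality must be spent), and apply the dimension $d-1$ bound to the image.

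The main obstacle is the gap between edge-antipodality and antipodality: a per-edge condition is not visibly preserved --- in any quantitative way --- under the operations one wants to use (restricting to a subset of vertices, intersecting with a translate, projecting), so there is at present no mechanism short of P\'or's non-constructive compactness that upgrades it to a per-pair condition with only an exponential loss. Concretely, even the intermediate claim $M\le 2^{\mathrm{o}(d)}$, which would give $c=2$, reduces to a genuinely new combinatorial question: how many vertices of an edge-antipodal $d$-polytope can be packed into a reflected translate of the polytope? Controlling this quantity, rather than the final volume count, is where the real work lies.
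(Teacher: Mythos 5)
The statement you are trying to prove is presented in the paper as an open \emph{conjecture}: the paper records only the bound $e_s(X^d)\leq(1+d/2)^d$ from \cite{Swanepoel2007b} and P\'or's non-constructive finiteness result for edge-antipodal polytopes, and offers no proof of the exponential bound. So there is no proof in the paper to compare yours against, and your proposal does not supply one either --- as you yourself concede in the final paragraph. The partial steps you do carry out are sound: the reduction to bounding the vertices of an edge-antipodal $d$-polytope is legitimate (every subequilateral polytope is edge-antipodal, and conversely any edge-antipodal polytope can be made subequilateral in a suitable norm, which is why the paper states the two bounds are equivalent); the Danzer--Gr\"unbaum translates $T_i=\tfrac12(P+v_i)$ do lie in $P$, have volume $2^{-d}\operatorname{vol}(P)$, and have disjoint interiors for antipodal pairs; and the fractional count $n\leq M\,2^d$, with $M$ the maximum number of vertices in a translate of $-P$, is a correct inequality.

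The genuine gap is that this inequality is vacuous without a bound on $M$, and no such bound is in sight. Trivially $M\leq n$, and for the cube $M=n=2^d$, so $n\leq M\,2^d$ gives nothing until $M$ is controlled by something substantially smaller than $n$; in effect you have replaced the question ``how many vertices can an edge-antipodal polytope have?'' by the question ``how many of its vertices can lie in a reflected translate of itself?'', which is the same difficulty in different clothing. Your proposed route to bounding $M$ --- ``promote edge-antipodality of $W=V\cap(z-P)$ to genuine antipodality and reapply Danzer--Gr\"unbaum'' --- fails at exactly the point you flag: edge-antipodality is a condition on the pairs that happen to be edges of $P$, and the edge set induced on an arbitrary vertex subset $W$ can be empty, so nothing forces distinct points of $W$ to lie on parallel supporting hyperplanes of $\conv(W)$ or of $R=P\cap(z-P)$. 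This is precisely the obstruction that limits the known arguments to the superexponential $(1+d/2)^d$ bound and to P\'or's non-quantitative compactness proof; the conjecture remains open, and your writeup should be read as a (reasonable) research plan rather than a proof.
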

The results of Bisztriczky and B\"or\"oczky \cite{BB2005} on edge-antipodal $3$-polytopes imply that for a strictly convex $X^3$, $e_s(X^3)\leq 5$.

\section{Minimum-distance graphs}\label{section:mindist}
Given any finite packing $\setbuilder{C+v_i}{i=1,\dots n}$ of non-overlapping translates of a $d$\nobreakdash-\hspace{0pt}dimensional convex body $C$, we define the \define{touching graph} of the packing to be the graph with a vertex for each translate, and with two translates joined by an edge if they intersect (necessarily in boundary points).
By the observation of Minkowski mentioned in Section~\ref{subsection:Hadwiger}, if $\setbuilder{C+v_i}{i=1,\dots n}$ is a packing of non-overlapping translates of $C$, then $\setbuilder{B+v_i}{i=1,\dots n}$ is a packing of non-overlapping translates of the central symmetral $B=\frac12(C-C)$ of $C$.
Since $B$ is $o$-symmetric, it is the unit ball of a $d$-dimensional normed space.
We therefore make the following definition.

Given a finite set $V$ in a normed space $X$ with minimum distance $d=\min_{x,y\in V}\norm{x-y}$, we define the \define{minimum-distance graph} of $V$ to be $G_m(V)=(V,E)$ by taking all \define{minimum distance pairs} $xy$ to be edges, that is, $xy\in E$ whenever $\norm{x-y}=d$.

We next consider a selection of parameters of these minimum-distance graphs.
As a first remark, the maximum clique number of a minimum-distance graph in $X$ equals $e(X)$, the maximum size of an equilateral set.
Note that in any $2$-dimensional normed space in which the unit ball is not a parallelogram, minimum-distance graphs are always planar.
In fact, no edge can intersect another edge in its relative interior \cite{Brass1996}.

\subsection{Maximum degree and maximum number of edges of minimum-distance graphs}\label{section:mindistedges}
The degree of any vertex in a minimum-distance graph is bounded above by the Hadwiger number $H(X)$ of $X$.
This bound is sharp when taken over all minimum-distance graphs, since the minimum-distance graph of a subset of $\bd B$, pairwise at distance at least $1$, together with the origin $o$, has degree exactly $H(X)$ at $o$.

Let $m(n,X)$ denote the maximum possible number of edges of a minimum-distance graph of $n$ points in $X$.
The above observation immediately gives the bound $m(n,X)\leq H(X)n/2$.
Erd\H{o}s~\cite{Erdos} mentioned that $m(n,\bE^2)=3n-\mathrm{O}(\sqrt{n})$.
Harborth \cite{Harborth1974}, answering a question of Reutter \cite{Reutter1972}, found the exact value $m(n,\bE^2)=\lfloor 3n-\sqrt{12n-3}\rfloor$ for all $n\geq 1$.
Brass \cite{Brass1996} showed that the same upper bound holds for all norms on $\bR^2$ except those isometric to $\ell_\infty^2$.
A key tool in his proof is the introduction of an angular measure with various properties mimicking the Euclidean angular measure (Section~\ref{section:angle}).
He also determined the maximum for $\ell_\infty^2$: $m(n,\ell_\infty^2)=\lfloor 4n-\sqrt{28n-12}\rfloor$ for all $n\geq 1$.

K.~Bezdek \cite{Bezdek2006} considered the problem of determining $m(n,\bE^3)$, and calls it the combinatorial Kepler problem.
In \cite{Bezdek2012} he showed that $6n-7.862n^{2/3} \leq m(n,\bE^3) \leq 6n - 0.695n^{2/3}$, and in \cite{Bezdek2013}, K.~Bezdek and Reid improved the upper bound to $6n - 0.926n^{2/3}$.
For more on Euclidean minimum-distance graphs, see the recent survey of K.~Bezdek and Khan \cite{BK2016}.
We next show how an isoperimetric argument gives a slight improvement to the bound $m(n,X)\leq H(X)n/2$.
\begin{proposition}
For any $d$-dimensional normed space $X^d$, $m(n,X^d)\leq H(X)n/2 - c_d n^{1-1/d}$, where $c_d>0$ depends only on $d$.
\end{proposition}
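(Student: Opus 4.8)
Rescale so that the minimum distance in $V$ equals $1$, write $B=B_{X^d}$, and put $m:=\card{E(G_m(V))}$. The translates $\set{v+\tfrac12B:v\in V}$ then have pairwise disjoint interiors, and every vertex of $G_m(V)$ has degree at most $H(X)$, so
\[2m=\sum_{v\in V}\deg(v)=H(X)n-\sum_{v\in V}\bigl(H(X)-\deg(v)\bigr),\]
and it suffices to produce $\Omega_d(n^{1-1/d})$ vertices of degree strictly below $H(X)$.

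The first ingredient is that $H_+(X)\le H(X)-1$. If $\set{\un{u_i}:i\in I}$ is a one-sided Hadwiger family of $B$ with $\fhi(\un{u_i})\le 0$ for all $i$, where $\fhi\in X^*$ and $\norm{\fhi}_{X^*}=1$, I would let $u_0\in\bd B$ maximise $\fhi$, so that $\fhi(u_0)=\norm{\fhi}_{X^*}=1$; then $\norm{u_0-\un{u_i}}\ge\fhi(u_0-\un{u_i})=1-\fhi(\un{u_i})\ge 1$ for each $i$, so $\set{u_0}\cup\set{\un{u_i}:i\in I}$ is a Hadwiger family and $H(X)\ge\card{I}+1$. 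Hence any vertex $v$ all of whose neighbours lie weakly on one side of some hyperplane through $v$ satisfies $\deg(v)\le H_+(X)\le H(X)-1$. A limiting argument over the (Banach--Mazur compact) family of $d$-dimensional norms then yields $\rho=\rho(d)>0$ such that no maximum Hadwiger family of any $d$-dimensional $B$ fits inside a cap $\set{\fhi\le\rho}\cap\bd B$ with $\norm{\fhi}_{X^*}=1$; consequently every vertex of $G_m(V)$ within distance $\rho$ of $\bd\conv(V)$ has degree $<H(X)$, since a supporting hyperplane of $\conv(V)$ near $v$ forces the neighbours of $v$ into such a cap.

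The second, and main, ingredient is to show, via isoperimetry, that there are at least $c_d n^{1-1/d}$ such near-boundary vertices. Since $G_m(V)$, $m$, and the relevant volume and surface-area ratios are invariant under linear maps, I would place $B$ in John position, so that $\operatorname{vol}(\conv(V))\ge\operatorname{vol}\!\bigl(\bigcup_{v\in V}(v+\tfrac12B)\bigr)=n\,2^{-d}\operatorname{vol}(B)\ge c'_d n$ for $n$ large; the isoperimetric inequality then gives that the collar $\conv(V)\setminus\conv(V)_{-\rho}$ has volume at least $c''_d\operatorname{vol}(\conv(V))^{1-1/d}\ge c'''_d n^{1-1/d}$, and if this collar is filled up to a $d$-dependent factor by the packing it contains $\Omega_d(n^{1-1/d})$ of the balls $v+\tfrac12B$, hence that many near-boundary vertices; combined with the first ingredient this gives $2m\le H(X)n-c_d n^{1-1/d}$.

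The hard part is exactly the last clause: ruling out configurations in which the degree deficiency is hidden on the surface of a small, dense sub-cluster lying deep inside $\conv(V)$, so that the collar of $\conv(V)$ is itself nearly empty of points of $V$. I would deal with this either by first reducing to a configuration realising $m(n,X^d)$ and showing such a configuration cannot contain a large $B$-hole, or — more robustly — by iterating the argument on the nested convex layers $\conv(V)\supseteq\conv(V_1)\supseteq\cdots$ obtained by repeatedly deleting the vertices lying on the current convex hull, charging to each layer those of its vertices whose neighbours all stay within the current layer; bounding the contribution of edges joining distinct layers is the technical crux.
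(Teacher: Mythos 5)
Your first ingredient is sound and is in fact the morally correct key observation, but the way you then localise it and count is where the proof breaks, in two places. First, the compactness step: to get a uniform $\rho(d)$ you extract from putative counterexamples $B_n$ (with maximum Hadwiger families in caps of height $1/n$) a limit body $B$ with a one-sided Hadwiger family of size $k=\lim H(B_n)$, so $H_+(B)\geq k$ and hence $H(B)\geq k+1$. That is only a contradiction if $H(B)\leq\lim H(B_n)$, i.e.\ if $H$ is lower semicontinuous on the Banach--Mazur compactum --- and it is not, only upper semicontinuous ($H$ drops from $8$ to $6$ when a parallelogram is rounded), so no contradiction follows and the uniform $\rho(d)$ is not established. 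Second, and more seriously, the gap you flag yourself is fatal to the collar argument: nothing forces the collar of $\conv(V)$ to contain $n^{1-1/d}$ points of $V$. Put $d+1$ points on a huge simplex around a dense cluster of the remaining points: the collar then meets only $d+1$ of the balls $v+\frac12B$, while all the degree deficiency sits on the surface of the inner cluster. The convex-peeling repair is not routine, because a vertex of an inner layer may have neighbours in outer layers, so its neighbours need not lie in a cap determined by the inner hull; this is exactly the step you leave open.

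Both problems disappear if you run the argument on $S=\bigcup_{v\in V}(B+v)$ instead of on $\conv(V)$, which is what the paper does. If $x\in\bd S$ with $x\in\bd B+v_0$, then $x\notin\interior B+v$ for every $v\in V$, so $x$ together with the neighbours of $v_0$ forms a set of $\deg(v_0)+1$ points of $\bd B+v_0$ that are pairwise at distance at least $1$; hence $\deg(v_0)<H(X)$. This is your ``add one more point'' trick applied to the concrete witness $x$, with no cap height and no compactness needed, and it applies to boundaries of internal cavities of $S$ as well, so clustered configurations are handled automatically. Writing $W$ for the set of deficient vertices, one gets $\lambda_{d-1}(\bd S)\leq\card{W}\,\lambda_{d-1}(\bd B)$ and $\lambda_d(S)\geq n\lambda_d(B)/2^d$ from the half-ball packing; the isoperimetric inequality for $S$ together with Ball's reverse isoperimetric inequality in John position (which bounds $\lambda_{d-1}(\bd B)$ by $2d\lambda_d(B)^{1-1/d}$) then gives $\card{W}\geq(\kappa_d^{1/d}/2^d)\,n^{1-1/d}$, where $\kappa_d$ is the volume of the Euclidean unit ball --- the factors $\lambda_d(B)^{1-1/d}$ cancel, so the constant depends only on $d$, as required.
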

\begin{proof}
Consider a set $V$ of $n$ points in $X^d$ with unit ball $B=B_X$.
Let $G=(V,E)$ be the minimum-distance graph of $V$.
Without loss of generality, the minimum distance may be taken as $1$.
We may identify $X^d$ with $\bR^d$ in such a way that the ellipsoid of maximum volume contained in $B$ is the Euclidean ball $B^d$.
Then a reverse isoperimetric inequality of Ball \cite{Ball1991} states that
\begin{equation}\label{eq1}
\lambda_{d-1}(\bd B)\leq 2d\lambda_d(B)^{1-1/d},
\end{equation}
where we denote $k$-dimensional Lebesgue measure in $\bR^d$ by $\lambda_k$.

Let $W\subseteq V$ denote the set of all vertices of degree $<H(X)$.
Let $S=\bigcup_{v\in V} (B+v)$.
Then clearly, $\bd S\subseteq\bigcup_{v\in V} (\bd B+v)$.
We claim that $\bd S\subseteq\bigcup_{v\in W} (\bd B+v)$.
Indeed, let $x\in\bd S$, say $x\in\bd B+v_0$.
Since for each neighbour $v$ of $v_0$, $x\notin\interior B+v$, we have $\norm{x-v}\geq 1$.
It follows that any two points in $\setbuilder{v}{vv_0\in E}\cup\{x\}\subset B+v_0$ are at distance at least $1$.
Therefore, the degree of $v_0$ is strictly smaller than $H(X)$, hence $v_0\in W$.

It follows that
\begin{equation}\label{eq2}
\lambda_{d-1}(\bd S)\leq\card{W}\lambda_{d-1}(\bd B).
\end{equation}
Since the balls $\setbuilder{\frac12 B + v}{v\in V}$ form a packing and are contained in $S$, we have
\begin{equation}\label{eq3}
\lambda_d(S)\geq\card{V}\lambda_d(B)/2^d.
\end{equation}
By the isoperimetric inequality,
\begin{equation}\label{eq4}
\lambda_{d-1}(\bd S)\geq d\kappa_d^{1/d} \lambda_d(S)^{1-1/d},
\end{equation}
where $\kappa_d=\lambda_d(B^d)$ is the volume of the Euclidean unit ball.
If we put \eqref{eq1}--\eqref{eq4} together, we obtain $\card{W}\geq(\kappa_d^{1/d}/2^d) \card{V}^{1-1/d}$, and since $\card{E}\leq \frac12 \left(H(X)\card{V} - \card{W}\right)$, the Proposition follows.
\end{proof}

K.~Bezdek \cite{Bezdek2002} derived an upper bound with an improved $n^{1-1/d}$ term which also involves the density of a densest translative packing of $B_X$.
The main problem though, already in the Euclidean case, is the coefficient of $n$ in this upper bound, even when the Hadwiger number is known.
Indeed, if we consider a lattice packing of the unit ball, we obtain the following obvious lower bound in terms of the lattice Hadwiger number: $m(n,X)\geq H_L(X)n/2 - \upOmega(n^{1-1/d})$.
Therefore, whenever $H(X)=H_L(X)$, we have that $m(n,X)=H(X)n/2- \upTheta(n^{1-1/d})$.
However, when these numbers differ, for instance in $9$-dimensional Euclidean space where $H_L(\bE^9)<H(\bE^9)$, we do not even know the main term.
When $d>2$ and $n$ is large, it is also not clear if point sets that maximize $m(n,X)$ have to be pieces of lattices for which $H_L$ is attained.

\subsection{Minimum degree of minimum-distance graphs}\label{section:mindistmindeg}
Let $\delta(X)$ denote the largest minimum degree of a minimum-distance graph in $X$.
That is, \[\delta(X)=\max\setbuilder{\delta(G)}{\text{$G$ is a minimum-distance graph in $X$}}.\]
We can also define $\delta(X)$ as the largest $k$ such that all minimum-distance graphs have a vertex of degree at most $k$.
Another description found in the literature is the following.
A finite packing of translates of a convex body $C$ is called a \define{$k^+$-neighbour packing} if each translate has at least $k$ neighbours.
Then $\delta(X)$ is the largest $k$ such that there exists a finite $k^+$-neighbour packing of translates of the unit ball $B_X$.

By considering a vertex of the convex hull of the set of points, we see that $\delta(X)\leq H_+^o(X)$.
Even in $2$-dimensional spaces, there may be strict inequality.
For example, if the unit ball is a square with two opposite corners truncated a bit, then $\delta(X^2)=3$ by a result of Talata \cite{Talata2002} (Theorem~\ref{thm:talata} below), but $H_+^o(X^2)=4$ (Proposition~\ref{hadwprop} below).
Also, $\delta(\bE^3)\leq H_+^o(\bE^3)=8$ by the result of Kert\'esz \cite{Kertesz1994} mentioned in Section~\ref{section:onesided}, but it is unknown whether equality holds.
The best known lower bound $\delta(\bE^3)\geq 6$ is due to a construction of G.~Wegner of a $6$-regular minimum-distance graph on $240$ points in $\bE^3$, described in \cite{FTKu1993}.

Most of the results on $\delta(X)$ were obtained by Talata.
In \cite{Talata2002} he showed that $\delta(X^2)=3$ if $X^2$ is not isometric to $\ell_\infty^2$, and $\delta(\ell_\infty^2)=4$.
In Section~\ref{section:angle} we give a simple proof of this fact.
He also determined $\delta(X^2\oplus_\infty\bR)=10$ if $X^2$ is not isometric to $\ell_\infty^2$, and $\delta(\ell_\infty^3)=13$.
In~\cite{Talata2011} he considered $\delta(X)$ for an arbitrary finite-dimensional normed space, and showed that $\delta(X)\geq H_L(X)/2$, which implies the above-mentioned result of Wegner that $\delta(\bE^3)\geq 6$.
Talata also showed that $\delta(X) = H_L(X)/2$ if $X$ is the $\ell_\infty$-sum of spaces of dimension at most $2$, or equivalently, if the unit ball is the Cartesian product of segments and centrally symmetric convex discs.
In \cite{Talata2006} he showed that equality still holds if $X$ is the $\ell_\infty$ sum of spaces of dimension at most $2$ or $\ell_1^3$.
In particular, $\delta(\ell_1^3)=9$ and $\delta(\ell_\infty^d)=(3^d-1)/2$.
As mentioned in Section~\ref{subsection:L}, for high-dimensional Euclidean space the best-known lower bound for the lattice Hadwiger number is not particularly strong: $H_L(\bE^d)\geq 2^{\upOmega(\log^2 d)}$.
Alon~\cite{Alon1997b} improved the corresponding bound for $\delta(\bE^d)$ by showing that $\delta(\bE^d)\geq 2^{\sqrt{d}}$ if $d$ is a power of $4$, hence $\delta(\bE^d)\geq 2^{\sqrt{d}/2}$ in general.
(See also the stronger conjecture of Chen \cite{Chen2016} in Section~\ref{section:mindist-chr} below).
Talata \cite{Talata2011} conjectured that $\delta(X)\leq H(X)/2$, which holds in dimension $2$.
In both papers \cite{Talata2002, Talata2011}, Talata also estimated the smallest number of points in a minimum-distance graph with minimum degree $\delta(X)$.
In \cite{Talata2011} he considered a lattice version of $\delta(X)$.

\subsection{Chromatic number and independence number of minimum-distance graphs}\label{section:mindist-chr}
Let $\chi_m(X)$ denote the largest chromatic number of a minimum-distance graph in $X$ and $\alpha_m(n,X)$ the smallest independence number of a minimum-distance graph on $n$ points in $X$.
Then $\chi_m(X)\alpha_m(n,X)\geq n$.
Also, $\chi_m(X)\leq\delta(X)+1$, hence $\alpha_m(n,X)\geq n/(\delta(X)+1)$ \cite[Theorem~2]{BNV2003}.
Talata's conjecture above in Section~\ref{section:mindistmindeg} would imply the upper bound $\chi_m(X)\leq H(X)/2 + 1$.
We have no better lower bound for the chromatic number of a general $d$-dimensional normed space than $\chi_m(X^d)\geq e(X^d)\geq e^{\upOmega(\sqrt{\log d})}$.
The Euclidean minimum-distance graph in Figure~\ref{spindle} has chromatic number $4$, which gives $\chi_m(\bE^2)\geq 4$ (Maehara \cite{Maehara2007}).
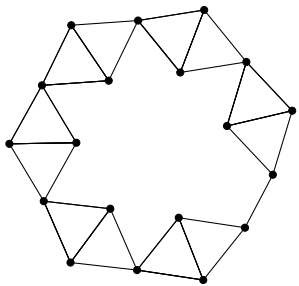
\begin{figure}
\centering
\begin{tikzpicture}[line cap=round,line join=round,scale=0.89]
\clip(0.37192598076702227,6.553322259652919) rectangle (4.809695562001004,10.787835218845641);
\draw(0.49049234362441857,8.687516791086049) -- (1.4903683664357448,8.703262870185439) -- (0.9767938505200114,9.5613078670253) -- cycle;
\draw(0.9767938505200114,9.5613078670253) -- (1.9743416633159308,9.631296160232647) -- (1.4149561170328941,10.460203760999844) -- cycle;
\draw(2.4125039298288113,10.530192054207188) -- (1.9743416633159274,9.631296160232644) -- (1.414956117032892,10.460203760999843) -- cycle;
\draw(0.4904923436244185,8.68751679108605) -- (1.4903683664357448,8.70326287018544) -- (1.0040668595401536,7.8294717942461896) -- cycle;
\draw(2.4125039298288113,10.530192054207188) -- (3.0427631524209464,9.753807283402429) -- (3.4000024757531606,10.687820166539026) -- cycle;
\draw(4.030261698345287,9.911435395734268) -- (3.0427631524209398,9.75380728340243) -- (3.400002475753153,10.68782016653903) -- cycle;
\draw(1.0040668595401536,7.8294717942461896) -- (1.403533675713026,6.912724139339613) -- (1.997727025635496,7.717046377567496) -- cycle;
\draw(2.3971938418083663,6.800298722660919) -- (1.403533675713025,6.912724139339613) -- (1.9977270256354962,7.717046377567497) -- cycle;
\draw(4.030261698345287,9.911435395734268) -- (3.7411063772941744,8.954153201024772) -- (4.714714737028677,9.18237844470981) -- cycle;
\draw(4.425559415977561,8.225096250000318) -- (3.7411063772941735,8.954153201024775) -- (4.714714737028676,9.182378444709814) -- cycle;
\draw(2.3971938418083663,6.800298722660919) -- (3.386061697726927,6.6515023398721285) -- (3.0204892172539735,7.582285215477847) -- cycle;
\draw(4.009357073172531,7.433488832689056) -- (3.386061697726924,6.651502339872128) -- (3.0204892172539703,7.582285215477845) -- cycle;
\draw (0.49049234362441857,8.687516791086049)-- (1.4903683664357448,8.703262870185439);
\draw (0.49049234362441857,8.687516791086049)-- (1.4903683664357448,8.703262870185439);
\draw (1.4903683664357448,8.703262870185439)-- (0.9767938505200114,9.5613078670253);
\draw (0.9767938505200114,9.5613078670253)-- (0.49049234362441857,8.687516791086049);
\draw (0.9767938505200114,9.5613078670253)-- (1.9743416633159308,9.631296160232647);
\draw (0.9767938505200114,9.5613078670253)-- (1.9743416633159308,9.631296160232647);
\draw (1.9743416633159308,9.631296160232647)-- (1.4149561170328941,10.460203760999844);
\draw (1.4149561170328941,10.460203760999844)-- (0.9767938505200114,9.5613078670253);
\draw (2.4125039298288113,10.530192054207188)-- (3.0427631524209464,9.753807283402429);
\draw (2.4125039298288113,10.530192054207188)-- (3.0427631524209464,9.753807283402429);
\draw (3.0427631524209464,9.753807283402429)-- (3.4000024757531606,10.687820166539026);
\draw (3.4000024757531606,10.687820166539026)-- (2.4125039298288113,10.530192054207188);
\draw (1.0040668595401536,7.8294717942461896)-- (1.403533675713026,6.912724139339613);
\draw (1.0040668595401536,7.8294717942461896)-- (1.403533675713026,6.912724139339613);
\draw (1.403533675713026,6.912724139339613)-- (1.997727025635496,7.717046377567496);
\draw (1.997727025635496,7.717046377567496)-- (1.0040668595401536,7.8294717942461896);
\draw (4.030261698345287,9.911435395734268)-- (3.7411063772941744,8.954153201024772);
\draw (4.030261698345287,9.911435395734268)-- (3.7411063772941744,8.954153201024772);
\draw (3.7411063772941744,8.954153201024772)-- (4.714714737028677,9.18237844470981);
\draw (4.714714737028677,9.18237844470981)-- (4.030261698345287,9.911435395734268);
\draw (2.3971938418083663,6.800298722660919)-- (3.386061697726927,6.6515023398721285);
\draw (2.3971938418083663,6.800298722660919)-- (3.386061697726927,6.6515023398721285);
\draw (3.386061697726927,6.6515023398721285)-- (3.0204892172539735,7.582285215477847);
\draw (3.0204892172539735,7.582285215477847)-- (2.3971938418083663,6.800298722660919);
\draw (4.425559415977561,8.225096250000318)-- (4.009357073172531,7.433488832689056);
\draw [fill=black] (0.49049234362441857,8.687516791086049) circle (1.5pt);
\draw [fill=black] (1.4903683664357448,8.703262870185439) circle (1.5pt);
\draw [fill=black] (0.9767938505200114,9.5613078670253) circle (1.5pt);
\draw [fill=black] (1.9743416633159308,9.631296160232647) circle (1.5pt);
\draw [fill=black] (1.4149561170328941,10.460203760999844) circle (1.5pt);
\draw [fill=black] (1.0040668595401536,7.8294717942461896) circle (1.5pt);
\draw [fill=black] (2.4125039298288113,10.530192054207188) circle (1.5pt);
\draw [fill=black] (3.0427631524209464,9.753807283402429) circle (1.5pt);
\draw [fill=black] (3.4000024757531606,10.687820166539026) circle (1.5pt);
\draw [fill=black] (4.030261698345287,9.911435395734268) circle (1.5pt);
\draw [fill=black] (1.403533675713026,6.912724139339613) circle (1.5pt);
\draw [fill=black] (1.997727025635496,7.717046377567496) circle (1.5pt);
\draw [fill=black] (2.3971938418083663,6.800298722660919) circle (1.5pt);
\draw [fill=black] (3.7411063772941744,8.954153201024772) circle (1.5pt);
\draw [fill=black] (4.714714737028677,9.18237844470981) circle (1.5pt);
\draw [fill=black] (4.425559415977561,8.225096250000318) circle (1.5pt);
\draw [fill=black] (3.386061697726927,6.6515023398721285) circle (1.5pt);
\draw [fill=black] (3.0204892172539735,7.582285215477847) circle (1.5pt);
\draw [fill=black] (4.009357073172531,7.433488832689056) circle (1.5pt);
\end{tikzpicture}
\caption{Maehara's minimum-distance graph with chromatic number $4$}\label{spindle}
\end{figure}
Maehara observed that the obvious generalization to higher dimensions gives $\chi_m(\bE^d)\geq d+2$.
Chen \cite{Chen2016} used strongly regular graphs to show that for any $d=q^3-q^2+q$, where $q$ is a prime power, $\chi_m(\bE^{d})\geq q^3+1$.
Chen conjectured that $\chi_m(\bE^d)\geq c^{\sqrt{d}}$ for some constant~$c>1$.

Since any minimum-distance graph for $\ell_\infty^d$ is a subgraph of the minimum-distance graph (in $\ell_\infty^d$) of the lattice $\bZ^d$ (L.~Fejes T\'oth and Sauer \cite{FT-Sauer}; see also Brass \cite{Brass1996}), we obtain $\chi_m(\ell_\infty^d)\leq 2^d$.
Since also $\chi_m(\ell_\infty^d)\geq e(\ell_\infty^d)=2^d$, we obtain the exact value $\chi_m(\ell_\infty^d) = 2^d$.

By Talata's result on the minimum degree of $2$-dimensional spaces mentioned above, we have $\chi_m(X^2)\leq\delta(X^2)+1 = 4$ for any $X^2$ not isometric to $\ell_\infty^2$.
(This also follows from the Four-Colour Theorem, since in this case the minimum-distance graph is planar.)
It is easily seen that Maehara's graph in Figure~\ref{spindle} can be realized in any normed plane.
Since also $\chi_m(\ell_\infty^2)=4$, we obtain $\chi_m(X^2)=4$ for all normed planes.
Consequently, $\alpha_m(n,X^2)\geq n/4$ for all $2$-dimensional $X^2$.
This was observed by Pollack \cite{Pollack1985} for the Euclidean plane.
Csizmadia \cite{Csizmadia1998} improved the Euclidean lower bound to $\alpha_m(n,\bE^2)\geq 9n/35$ and Swanepoel \cite{Swanepoel2002} to $\alpha_m(n,\bE^2)\geq 8n/31$.
Pach and T\'oth \cite{Pach1996} obtained the upper bound $\alpha_m(n,\bE^2)\leq\lceil 5n/16\rceil$.
Swanepoel \cite{Swanepoel2002} also showed the lower bound $\alpha_m(n,X^2)\geq n/(4-\epsi)$, where $\epsi>0$ depends on $X^2$, for each $X^2$ with $\lambda(X^2)\leq 1$.
Most likely this assumption on $X^2$ is unnecessary.
\begin{conjecture}
For each normed plane $X^2$, there exists $\epsi>0$ depending only on $\lambda(X^2)$ such that the independence number of any minimum-distance graph on $n$ points in $X^2$ is at least $\alpha_m(n,X^2)\geq n/(4-\epsi)$.
\end{conjecture}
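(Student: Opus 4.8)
The plan is to remove the hypothesis $\lambda(X^2)\le 1$ from Swanepoel's theorem, keeping the four-colouring bound $\alpha_m(n,X^2)\ge n/4$ as the baseline and harvesting a linear improvement from the interaction between the combinatorial sparsity of minimum-distance graphs and the quantitative geometry of Brass's angular measure from Section~\ref{section:angle}. First I would fix the combinatorial framework. Assume $X^2\not\cong\ell_\infty^2$, so $\lambda:=\lambda(X^2)<2$, scale a given point set $V$ so that the minimum distance is $1$, and let $G=G_m(V)$. Two facts are immediate: $G$ is $K_4$-free, since $e(X^2)=3$; and $G$ is $3$-degenerate, since for every $V'\subseteq V$ the induced graph $G[V']$ is either edgeless or equal to $G_m(V')$, which has a vertex of degree $\le\delta(X^2)=3$ by Talata's theorem. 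This reproves $\chi_m(X^2)\le 4$ without the Four-Colour Theorem and, more to the point, excludes the combinatorially worst configurations for the independence ratio: a minimum-distance graph cannot contain an octahedron or an icosahedron, the icosahedron being exactly the obstruction that keeps $n/4$ tight among general $K_4$-free planar graphs.

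The heart of the argument should be a local, discharging-type analysis of the plane graph $G$, drawn with its points in place and its edges as segments, so that no two edges cross in their relative interiors. The aim is to produce an independent set $I$ together with an assignment of each $v\notin I$ to a neighbour in $I$ whose fibres have average size at most $4-\epsi$; equivalently, to build, along a degeneracy ordering, a feasible fractional independent set $x\colon V\to[0,1]$ with $\sum_v x(v)\ge(\tfrac14+\epsi)n$, the surplus over $\tfrac14$ being collected at vertices whose local picture is geometrically forced. Brass's angular measure $\mu$ is what makes the local pictures rigid: around any vertex the $\mu$-angles between cyclically consecutive neighbours sum to $2\pi$ and each is at least $\pi/3$ (the inequality behind $H(X^2)=6$), so a degree-$6$ vertex must be surrounded by its neighbours with all six of these angles equal to $\pi/3$. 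I would complement this with a quantitative converse: an isosceles configuration $u,v,w$ with $\|u-v\|=\|v-w\|=1\le\|u-w\|$ and $\mu(\sphericalangle uvw)=\pi/3$ is within $\eta(\lambda)$ of being equilateral, where $\eta(\lambda)>0$ whenever $\lambda<2$ and $\eta(\lambda)\to0$ only as $\lambda\to 2$. This rigidifies the degree-$6$ part of $G$ into an approximate affine-triangular lattice, which is $3$-chromatic and so lies comfortably above the $n/4$ bar, and it confines the remaining defect regions, which is where the extra independent vertices have to be found and where the bulk of the discharging bookkeeping will live.

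To make $\epsi$ depend only on $\lambda(X^2)$, I would phrase the whole estimate as a statement about plane graphs equipped with an angular measure obeying the Brass axioms together with a strict-convexity modulus $\eta(\lambda)$, and then run a compactness argument: if some fixed $\lambda_0<2$ admitted minimum-distance graphs with independence ratio tending to $1/4$, one could pass to bounded-size witness patches and to a limiting norm with $\lambda\le\lambda_0$, producing a minimum-distance configuration whose local combinatorics and geometry coincide with those of $\ell_\infty^2$, which is impossible since $\lambda_0<2$. The degradation of $\eta(\lambda)$, hence of $\epsi(\lambda)$, as $\lambda\to 2$ is genuine and not an artefact: the king grid gives $\alpha_m(n,\ell_\infty^2)=\lceil n/4\rceil$, so no bound uniform in $\lambda$ up to $2$ can hold.

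The hard part is the middle step: actually performing the discharging and exhibiting the linear gain. This is already the nontrivial content of the known Euclidean estimates $\alpha_m(n,\bE^2)\ge 9n/35$ and $\ge 8n/31$ and of Swanepoel's bound for $\lambda(X^2)\le 1$, and the range $1<\lambda(X^2)<2$ is unlikely to yield to a mere tracking of constants: the $\lambda\le 1$ argument appears to exclude certain short-base isosceles triangles, hence certain short cycles, outright rather than approximately, so new combinatorial-geometric lemmas about the faces incident to degree-$5$ and degree-$6$ vertices will probably be needed. A secondary difficulty is the witness-patch reduction itself: the limiting object must again be a genuine minimum-distance configuration, which is delicate because the minimum distance may be attained only along thin or elongated sub-configurations, so the patches must be chosen to retain enough of the realising structure. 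Finally, it is not clear to me whether $3$-degeneracy and $K_4$-freeness alone force any linear gain over $n/4$; if they do not, the angular-measure input is indispensable.
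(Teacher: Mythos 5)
This statement is posed in the paper as an open conjecture: the paper offers no proof, only the partial result of \cite{Swanepoel2002} for $\lambda(X^2)\le 1$ and the remark that the hypothesis is ``most likely unnecessary''. So there is no argument of the author's to compare yours against, and what you have written is, by your own account, a research programme rather than a proof: the central step --- the discharging argument that is supposed to extract a linear gain over $n/4$ --- is explicitly deferred, and without it nothing is established beyond the known $\alpha_m(n,X^2)\ge n/4$. Your preparatory observations are correct ($K_4$-freeness from $e(X^2)=3$; $3$-degeneracy from Theorem~\ref{thm:talata} applied to those induced subsets whose minimum distance equals that of $V$, the others inducing edgeless subgraphs), but they only recover $\chi_m(X^2)\le 4$, and, as you yourself concede, it is unclear whether $3$-degeneracy plus $K_4$-freeness alone forces any linear surplus. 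The entire content of the conjecture therefore sits in the geometric input you have not supplied.

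Two of the auxiliary steps would moreover fail as stated. First, the compactness reduction: $\lambda$ is not upper semicontinuous under Hausdorff convergence of unit balls --- strictly convex bodies with $\lambda=0$ converge to the square with $\lambda=2$ (bulge each edge of $[-1,1]^2$ outward into a circular arc of height $1/k$) --- so a sequence of norms with $\lambda\le\lambda_0<2$ whose minimum-distance graphs have independence ratio tending to $1/4$ may have unit balls converging to that of $\ell_\infty^2$, where the king grid realises ratio $1/4$ and no contradiction arises. Since the conjecture demands that $\epsi$ depend only on $\lambda(X^2)$ and not on the distance of $B_X$ to the square, this discontinuity is exactly the difficulty, and a limiting argument cannot see it; there is also the separate problem that an upper bound on the independence ratio is a global quantity and does not pass to bounded witness patches. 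Second, your rigidity lemma is imprecise and its dependence on $\lambda$ points the wrong way: Lemma~\ref{brasslemma1} shows that $\mu(\myangle uvw)=\pi/3$ with $\norm{u-v}=\norm{v-w}=1\le\norm{u-w}$ forces $u-v$ and $w-v$ into a common long segment of $\bd B_X$, whence only $\norm{u-w}\le\max\{1,\lambda(X^2)\}$; for $\lambda$ near $2$ this is nowhere near equilateral, while for $\lambda\le 1$ the long-segment alternative is vacuous --- which is presumably why the known argument stops at $\lambda\le 1$. A genuinely new quantitative lemma for the range $1<\lambda<2$ is needed, and supplying it is the open problem, not a detail to be tracked.
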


K.~Bezdek, Nasz\'odi and Visy \cite{BNV2003} introduced a quantity that they call the \define{$k$-th Petty number for packings} $P_m(k,X)$:
this is the largest $n$ such that there exists a minimum-distance graph on $n$ points in $X$ with independence number $<k$.
Thus, $P_m(2,X)=e(X)$, $P_m(k,X)\geq (k-1)e(X)$, and by Ramsey's Theorem, $P_m(k,X) < R(e(X)+1,k)\leq\binom{e(X)+k-1}{k-1}$ \cite[Proposition~1]{BNV2003}.
Also, $P_m(k,X)\leq k(\delta(X)+1)-1$ \cite[Corollary~2]{BNV2003} and $P_m(k,\ell_\infty^d)=(k-1)2^d$ \cite[Theorem~3]{BNV2003}.

\section{Unit-distance graphs and diameter graphs}\label{section:unitdistance}
We consider unit-distance graphs and diameter graphs together, as they have similar extremal behaviour in high dimensions.
Given a finite set $V$ of points from a normed space $X$, we define the \define{unit-distance graph} on $V$ to be the graph with vertex set $V$ and edge set \[E=\setbuilder{ab}{a,b\in V, \norm{a-b}=1}.\]
We also define the \define{diameter graph} on $V$ to be the graph with vertex set $V$ and edge set \[E=\setbuilder{ab}{a,b\in V, \norm{a-b}=\diam(V)}.\]

We again consider a selection of parameters of unit-distance and diameter graphs.
Note that, as in the case of minimum-distance graphs, the maximum clique number of a unit-distance graph or a diameter graph in $X$ equals $e(X)$, the maximum size of an equilateral set.

\subsection{Maximum number of edges of unit-distance and diameter graphs}
Let $U(n,X)$ denote the maximum number of edges in a unit-distance graph on $n$ points in $X$, and let $D(n,X)$ denote the maximum number of edges in a diameter graph on $n$ points in $X$.
It is a difficult problem of Erd\H{o}s \cite{Erdos} to show that $U(n,\bE^2)=\mathrm{O}(n^{1+\epsi})$ for all $\epsi>0$, with the best upper bound known $U(n,\bE^2)=\mathrm{O}(n^{4/3})$ due to Spencer, Szemer\'edi and Trotter~\cite{MR86m:52015}, and the best known lower bound $U(n,\bE^2)=\upOmega(n^{1+c/\log\log n})$ due to Erd\H{o}s \cite{Erdos}.
Erd\H{o}s~\cite{Erdos1985} stated that $U(n,\ell_1^2)=(n^2+n)/4$ for all $n>4$ divisible by $4$.
Brass~\cite{Brass1996} determined $D(n,X^2)$ for all two-dimensional normed spaces $X^2$ and $U(n,X^2)$ whenever $X^2$ is not strictly convex:
\begin{enumerate}
\item $D(n,X^2)=n$ if $\lambda(X^2)=0$,
\item $U(n,X^2)=D(n,X^2)=\lfloor n^2/4\rfloor$ if $0<\lambda(X^2)\leq 1$, 
\item $U(n,X^2)=\lfloor (n^2+n)/4\rfloor$ and $D(n,X^2)=\lfloor n^2/4\rfloor+1$ if $1<\lambda(X^2)<2$, and
\item $U(n,X^2)=\lfloor (n^2+n)/4\rfloor$ and $D(n,X^2)=\lfloor n^2/4\rfloor+2$ if $\lambda(X^2)=2$ (that is, for $X^2$ isometric to $\ell_\infty^2$ and $\ell_1^2$).
\end{enumerate}
Brass observed that the same proofs from geometric graph theory that give the bounds $U(n,\bE^2)=\mathrm{O}(n^{4/3})$ and $D(n,\bE^2)=n$ for the Euclidean norm, still go through for all strictly convex norms.
Valtr~\cite{Valtr} constructed a strictly convex norm and examples of $n$ points with $\upOmega(n^{4/3})$ unit-distance pairs (improving earlier results of Brass \cite{Brass1998}).
This norm has a simple description: $\norm{(x,y)} = \abs{y}+\sqrt{x^2+y^2}$.
Its unit ball is bounded by two parabolic arcs with equations $y=\pm\frac12(1-x^2)$, $-1\leq x\leq 1$.
For this norm, the set \[\setbuilder{\left(\frac{i}{k},\frac{j}{2k^2}\right)}{i,j\in\bN, -k<i\leq k, -k^2<j\leq k^2}\]
of $4k^3$ points has $\upOmega(k^2)$ unit-distance pairs.
The existence of such a piecewise quadratic norm suggests that improving the $\mathrm{O}(n^{4/3})$ bound for the Euclidean norm will depend on subtler number-theoretic properties of the Euclidean norm.
(Another phenomenon pointing to the difficulty is the existence of $n$ points on the $2$-sphere of radius $1/\sqrt{2}$ in $\bE^3$ with $\upOmega(n^{4/3})$ unit-distance pairs \cite{EHP1989}.)

Matou\v{s}ek \cite{Matousek} showed the surprising result that for almost all two-dimensional $X^2$, $U(n,X^2)=\mathrm{O}(n\log n\log\log n)$.
Here, \define{almost all} means that the result holds for all norms except a meager subset of the metric space of all norms, metrized by the Hausdorff distance between their unit balls.
This bound is almost best possible, as for any $2$-dimensional normed space $X^2$, a suitable projection of the vertices and edges of a $k$-dimensional cube onto the plane gives a set of $2^k$ points with $k2^{k-1}$ unit-distance pairs, thus implying $U(n,X^2)=\upOmega(n\log n)$.

In \cite[\S~5.2, Problem~4]{BMP}, Brass, Moser, and Pach asks whether there is a general construction of $n$ points with strictly more than $\upOmega(n\log n)$ unit-distance pairs that can be carried out in all normed spaces of a given dimension $\geq 3$.
It might even be that in each dimension $d\geq 2$, for almost all $d$-dimensional norms, the number of unit-distance pairs is $\mathrm{O}_d(n\log n)$.

The determination of $U(n,\bE^3)$ seems to be as difficult as the planar case, with the best known bounds being $\mathrm{O}(n^{3/2})$ by Kaplan, Matou\v{s}ek, Safernov\'a, and Sharir \cite{KMSS2012} and Zahl \cite{Zahl2013}, and $\upOmega(n^{4/3}\log\log n)$ (Erd\H{o}s~\cite{Erdos2}), although $D(n,\bE^3)=2n-2$ is an old result of Gr\"unbaum, Heppes, and Straszewicz \cite{Grmax, Heppesmax, Strasmax}.

For $d\geq 4$, Erd\H{o}s \cite{Erdos2} determined $U(n,\bE^d)$ and $D(n,\bE^d)$ asymptotically.
By an observation of Lenz \cite{Erdos2}, in Euclidean space of dimension $d\geq 4$, the maximum number of unit-distance pairs in a set of $n$ points is at least $\frac12(1-1/\lfloor d/2\rfloor)n^2 + n - \lfloor d/2\rfloor$.
By an application of the Erd\H{o}s--Stone Theorem and some geometry, Erd\H{o}s found asymptotically matching upper bounds.
In \cite{Erdos1967} he found exact values for even $d\geq 4$ and all sufficiently large $n$ divisible by $2d$, showing that for such $n$, $U(n,\bE^d) = \frac12(1-1/\lfloor d/2\rfloor)n^2 + n$.
Brass \cite{Brass3} determined $U(n,\bE^4)$ for all $n\geq 1$.
Erd\H{o}s and Pach \cite{EP90} showed that $U(n,\bE^d) = \frac12(1-1/\lfloor d/2\rfloor)n^2 + \upTheta(n^{4/3})$ for odd $d\geq 5$.
In \cite{sw-lenz}, $U(n,\bE^d)$ is determined exactly for all even $d\geq 6$ and $D(n,\bE^d)$ for all $d\geq 4$, both for sufficiently large $n$ depending on $d$.
The Lenz construction can be adapted to give the same lower bound $U(n,\ell_p^d)\geq \frac12(1-1/\lfloor d/2\rfloor)n^2+n-\lfloor d/2\rfloor$ for all $p\in[1,\infty]$.
For $p\in(1,\infty)$, this lower bound is most likely the right value asymptotically, but for $p=1$ and $p=\infty$ the Lenz construction can be modified to give a larger lower bound.
To simplify the discussion of analogues of the Lenz construction in general, we introduce the following notion.
We say that a family of $k$ sets $A_1,\dots,A_k\subset X$ is an \define{equilateral family} in $X$ if for any two distinct $i,j\in\{1,\dots,k\}$ and $x\in A_i$, $y\in A_j$, $\norm{x-y}=1$.
Define $a(X)$ to be the largest $k$ such that for all $m\in\bN$, there exists an equilateral family of $k$ sets $A_1,\dots,A_k\subset X$, each of cardinality at least $m$.
Note that $U(n,X)\geq\frac12(1-1/a(X))n^2+\mathrm{O}(1)$.
\begin{proposition}\label{prop:aX}
Let $d\geq 2$.
Then $a(\ell_2^d)=\lfloor d/2\rfloor$, $a(\ell_1^d)\geq d$, $a(\ell_\infty^d) = 2^{d-1}$, and for each $p\in(1,\infty)$, $a(\ell_p^d)\geq\lfloor d/2\rfloor$.
For any $d$-dimensional normed space $X^d$, $a(X^d)\leq 2^d-1$.
\end{proposition}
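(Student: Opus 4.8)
The statement breaks into lower bounds, to be proved by (variants of) the Lenz construction, and upper bounds, which I treat separately; I expect the sharp value for $\ell_\infty^d$ to be the hardest point. Two preliminary remarks are used throughout: an equilateral family of $k$ sets of size $m$ produces on its $km$ points a complete $k$-partite unit-distance graph, and $a(X)\leq e(X)$ for every normed space $X$ (choose one point from each set of an equilateral family). \emph{Euclidean and $\ell_p$, $p<\infty$:} decompose $\bR^d$ as an orthogonal direct sum of $\lfloor d/2\rfloor$ coordinate $2$-planes $P_1,\dots,P_{\lfloor d/2\rfloor}$ (dropping one coordinate if $d$ is odd), and in $P_i$ let $A_i$ be any $m$ points of the $\ell_p$-sphere $\setbuilder{z\in P_i}{\norm{z}_p=2^{-1/p}}$ (the Euclidean circle of radius $1/\sqrt2$ when $p=2$), which is infinite. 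For $i\ne j$, a point $x\in A_i$ and a point $y\in A_j$ have disjoint supports, so $\norm{x-y}_p^p=\norm{x}_p^p+\norm{y}_p^p=1$ (the Pythagorean theorem when $p=2$); hence $a(\ell_p^d)\geq\lfloor d/2\rfloor$. \emph{$\ell_\infty^d$:} index by $\varepsilon\in\set{-1,1}^{d-1}$ and let $A_\varepsilon$ consist of $m$ points of $\setbuilder{(\tfrac12\varepsilon_1,\dots,\tfrac12\varepsilon_{d-1},t)}{t\in[0,\tfrac12]}$; if $\varepsilon\ne\varepsilon'$, the difference of a point of $A_\varepsilon$ and a point of $A_{\varepsilon'}$ has an entry $\pm1$ among its first $d-1$ coordinates while every entry is at most $1$ in modulus, so its $\ell_\infty$-norm is exactly $1$, and $a(\ell_\infty^d)\geq 2^{d-1}$.

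For $\ell_1^d$ the Lenz construction must be modified. I would take $A_i$ to be $m$ points on a short segment $\setbuilder{q_i+tv_i}{\abs{t}\leq\delta}$ placed near the vertex $\tfrac12 e_i$ of $\tfrac12 O^d$, arranged so that for every $i\ne j$ the difference segment $\setbuilder{(q_i-q_j)+tv_i-sv_j}{\abs{t},\abs{s}\leq\delta}$ lies in the relative interior of a single facet $F_{\varepsilon^{ij}}$ of $O^d$; then every such difference has $\ell_1$-norm exactly $1$. Concretely, one chooses facet normals $\varepsilon^{ij}\in\set{-1,1}^d$ with $\varepsilon^{ji}=-\varepsilon^{ij}$ such that (i) they are additively consistent around triangles, so that base points with $q_i-q_j\in\rel\interior F_{\varepsilon^{ij}}$ exist, and (ii) for each $i$ the $d-1$ normals $\setbuilder{\varepsilon^{ij}}{j\ne i}$ fail to span $\bR^d$, so that a common orthogonal direction $v_i\ne 0$ exists (this automatically gives $v_j\perp\varepsilon^{ij}$ as well, which is what keeps the difference segment in the facet's hyperplane); for $\delta$ small the relative-interior condition on $q_i-q_j$ then keeps the whole difference segment in the facet. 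For $d=3$ the choice $\varepsilon^{12}=(1,1,1)$, $\varepsilon^{13}=(1,1,-1)$, $\varepsilon^{23}=(1,-1,-1)$ works, with, say, $q_3=0$, $q_1=(\tfrac12,\tfrac14,-\tfrac14)$, $q_2=(\tfrac14,-\tfrac14,-\tfrac12)$ and $v_1\parallel(-1,1,0)$, $v_2\parallel(0,-1,1)$, $v_3\parallel(1,0,1)$. Producing such a family of normals for every $d$ is the combinatorial core of this part; it yields $a(\ell_1^d)\geq d$.

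Next the easier upper bounds. For $a(\ell_2^d)\leq\lfloor d/2\rfloor$: let $A_1,\dots,A_k$ be an equilateral family in $\ell_2^d$ with each $\card{A_i}\geq 3$ (permissible, since $a$ requires sets of every size) and put $W_i=\lin(A_i-A_i)$. Expanding $\norm{x-y}^2=1$ and subtracting the identity for two points of $A_i$ against two points of $A_j$ yields $\ipr{x-x'}{y-y'}=0$, so the $W_i$ are pairwise orthogonal; moreover $A_i$ lies on a Euclidean sphere centred at any point of any other set, and a sphere meets a line in at most two points, so $\dim W_i\geq 2$. Pairwise orthogonality then gives $2k\leq\sum_i\dim W_i\leq d$, so $a(\ell_2^d)=\lfloor d/2\rfloor$. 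For $a(X^d)\leq 2^d-1$: if $a(X^d)=2^d$ then $e(X^d)\geq 2^d$, so by Petty--Soltan $X$ is isometric to $\ell_\infty^d$, and by the Danzer--Gr\"unbaum characterisation of the extremal case a $2^d$-point equilateral set in $\ell_\infty^d$ is, up to isometry, a translate of the cube vertices $\set{0,1}^d$; choosing representatives (the cube vertices) $p_i$ and a second point $q_i\ne p_i$ of $A_i$, the point $q_i$ is at $\ell_\infty$-distance $1$ from every cube vertex except $p_i$. Normalising $p_i=\mathbf 0$, testing $q_i$ against the vertices $e_l$ and $\mathbf 1$ shows $q_i\in[0,1]^d$, and testing against $\mathbf 1$, $\mathbf 1-e_{l_1}$, $\mathbf 1-e_{l_1}-e_{l_2},\dots$, and finally a single $e_l$ forces $q_i=\mathbf 0=p_i$, contradicting $q_i\ne p_i$. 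Hence $a(X^d)\leq 2^d-1$.

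The main obstacle is the sharp bound $a(\ell_\infty^d)\leq 2^{d-1}$. Here the plan is: given an equilateral family in $\ell_\infty^d$ with all sets arbitrarily large, apply a Ramsey-type (Gallai-type) argument for colourings of grids to pass to a still-large sub-family in which, for every pair $i\ne j$, a fixed coordinate $l_{ij}$ and sign $\sigma_{ij}\in\set{-1,1}$ realise the $\ell_\infty$-distance on all of $A_i\times A_j$, i.e.\ $x_{l_{ij}}-y_{l_{ij}}=\sigma_{ij}$ throughout. This forces each $A_i$ to be constant in coordinate $l_{ij}$, with value $\pi_i(l_{ij})$ satisfying $\pi_i(l_{ij})-\pi_j(l_{ij})=\sigma_{ij}$, while $\abs{\pi_i(l)-\pi_j(l)}\leq 1$ at any coordinate $l$ on which both $A_i$ and $A_j$ are constant. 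Encoding the frozen values (after a per-coordinate normalisation) attaches to each $i$ a pattern $\phi_i\in\set{0,1,\star}^d$ carrying at least one $\star$, and distinct indices give distinct patterns differing at a coordinate frozen in both. The crux --- the step I expect to be hardest --- is to exploit the $\pm1$-constraints (reducing, if necessary, to a configuration where the counting can be pushed through, e.g.\ where each $A_i$ varies in a single coordinate) so as to bound the number of admissible patterns by $2^{d-1}$ rather than the $3^d$ or $2^d$ that naïve counting gives. The case $d=2$ can instead be read off Brass's value of $U(n,X^2)$. Once this is established, $a(\ell_\infty^d)=2^{d-1}$, and the general bound above follows more transparently too, since $a(X^d)=2^d$ would force $X\cong\ell_\infty^d$ and hence $a(X^d)\leq 2^{d-1}<2^d$.
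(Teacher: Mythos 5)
Your proposal is incomplete at two of the five assertions, and in both cases the missing step is the substantive one. First, the upper bound $a(\ell_\infty^d)\leq 2^{d-1}$ --- needed both for the stated equality and for your own derivation of $a(X^d)\leq 2^d-1$ --- is not proved: you set up a Ramsey-type reduction to patterns in $\set{0,1,\star}^d$ and then explicitly defer ``the crux'', namely how to get the count down from $3^d$ or $2^d$ to $2^{d-1}$. The paper's argument avoids this entirely. Since $\diam(A_i)\leq 2$, each $A_i$ with more than $3^d$ points contains, by covering a ball of radius $1$ with $3^d$ balls of radius $1/3$ and pigeonholing, two points at distance $<1$; replacing each $A_i$ by such a pair $A_i'$, the union $\bigcup_i A_i'$ has diameter $1$ and so lies in $[0,1]^d$ after translation. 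The smallest face $F_i$ of the cube containing $A_i'$ has dimension at least $1$, and the $F_i$ are pairwise disjoint (a common point would force two points from different sets to be at distance $<1$), so their vertex sets are disjoint subsets of the $2^d$ cube vertices, each of size at least $2$, whence $k\leq 2^{d-1}$. Second, your construction for $a(\ell_1^d)\geq d$ is only verified for $d=3$: you yourself flag the existence of the required family of facet normals $\varepsilon^{ij}$ for general $d$ as ``the combinatorial core'' and leave it open, so as written this is a claim, not a proof. A far simpler route works: pair up the coordinates and, in each coordinate $2$-plane, take two of the sets to be $m$ points on each of two opposite edges of the $\ell_1$-circle of radius $1/2$ (a square); two points on opposite edges are at $\ell_1$-distance exactly $1$, and points from different $2$-planes have disjoint supports, hence are at distance $\tfrac12+\tfrac12=1$. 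For odd $d$ one takes a $3$-dimensional block and three pairwise disjoint edges of the octahedron of radius $1/2$, giving $d$ sets in all.

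The remaining parts are correct and essentially the paper's: the Lenz construction gives $a(\ell_p^d)\geq\lfloor d/2\rfloor$ and the parallel-edge construction gives $a(\ell_\infty^d)\geq 2^{d-1}$; the identity $\ipr{x-x'}{y-y'}=0$ together with $\dim\lin(A_i-A_i)\geq 2$ (from three non-collinear points on a sphere) gives $a(\ell_2^d)\leq\lfloor d/2\rfloor$; and the general bound $a(X^d)\leq 2^d-1$ follows from $e(X^d)\leq 2^d$ with equality only for $\ell_\infty^d$, where the sharper bound $2^{d-1}$ applies. But note that this last reduction, as you observe yourself, again leans on the unproved $\ell_\infty$ upper bound, and your alternative direct argument for it relies on an uncited characterisation of the $2^d$-point equilateral sets in $\ell_\infty^d$ as cube vertex sets, which itself would need justification.
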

\begin{proof}
First let $1\leq p<\infty$.
We describe the Lenz construction \cite{Erdos2}.
Let $e_1,\dots,e_d$ be the standard basis of $\bR^d$.
Represent $\bR^d$ as the direct sum of subspaces $V_1,\dots,V_k$, where $k=\lfloor d/2\rfloor$, $V_i=\lin\{e_{2i-1},e_{2i}\}$, $i=1,\dots,k-1$, and $V_k = \lin\{e_{d-1},e_d\}$ if $d$ is even and $V_k = \lin\{e_{d-2},e_{d-1},e_d\}$ if $d$ is odd.
For each $i=1,\dots,k$, let $C_i=V_i\cap \bd (2^{-1/p}B_p^d)$, the $\ell_p$-circle (or sphere if $d$ is odd and $i=k$) in $V_i$ around the origin and with radius $2^{-1/p}$.
Let $A_i$ consist of any $m$ points on $C_i$.
Then it is easy to see that $A_1,\dots,A_k$ form an equilateral family, and we obtain $a(\ell_p^d)\geq \lfloor d/2\rfloor$.

The upper bound $a(\ell_2^d)\leq\lfloor d/2\rfloor$ is well known \cite{Erdos2}.
Suppose $A_1,\dots,A_k\subset\ell_2^d$ form an equilateral family with three points in each set.
Then a simple calculation shows that the affine hulls of the $A_i$ are $2$-dimensional, pairwise orthogonal, and have a point in common.
It follows that $2k\leq d$.

We next consider the case $p=1$.
For $i=1,\dots,k-1$, let $A_{2i-1}$ consist of any $m$ points on a segment on $\bd C_i$ (which is the square $\bd O^2$), and $A_{2i}$ any $m$ points on the opposite segment on $C_i$.
If $d$ is even, do the same for $i=k$ to obtain an equilateral family of $2k=d$ sets, each of size at least $m$.
If $d$ is odd, then it is easy to find three edges of the octahedron $C_k=O^3$, such that the distance between any two points from different edges equals $1$.
(Any three pairwise disjoint edges will work.)
Again we obtain an equilateral family of $2k+1=d$ sets, each of size $m$, which gives $a(\ell_1^d)\geq d$.

Next we consider $\ell_\infty^d$.
As already observed by Gr\"unbaum \cite[p.~421]{Grunbaum-Polytopes} and Makai and Martini~\cite{Makai1991}, if we choose $m$ points on each of $2^{d-1}$ parallel edges of a ball of radius $1/2$ in $\ell_\infty^d$, we obtain $a(\ell_\infty^d)\geq2^{d-1}$.
For the upper bound, let $A_1,\dots,A_k$ be an equilateral family with each $\card{A_i}>3^d$.
Since the diameter of each $A_i$ is at most $2$, we can cover each $A_i$ with a ball of radius $1$.
Each such ball can be tiled with $3^d$ balls of radius $1/3$, and by the pigeon-hole principle, there are two points of $A_i$ inside one of these balls of radius $1/3$.
Therefore, we may replace each $A_i$ by an $A_i'$ consisting of two points at distance $<1$.
It follows that $\bigcup_{i=1}^k A_i'$ has diameter $1$, so is contained in $[0,1]^d$, without loss of generality.
For each $A_i'$, the $d$-cube $[0,1]^d$ has a smallest face $F_i$ that contains $A_i'$.
(Each $F_i$ is the join in the face lattice of $[0,1]^d$ of the unique faces that contain the two elements of $A_i'$ in their relative interiors.)
Any two of these faces are disjoint, otherwise there would be points from different $A_i$ that are at distance $<1$.
Since $\card{A_i'}\geq 2$, the dimension of each $F_i$ is at least $1$.
Therefore, the vertex sets of $F_1,\dots,F_k$ partition the $2^d$ vertices of $[0,1]^d$ into parts of size at least $2$.
It follows that $k\leq 2^{d-1}$.

For a general $X^d$, if $\{A_1,\dots,A_k\}$ is an antipodal family of $k=a(X^d)$ non-empty sets, then for any choice of $p_i\in A_i$, $\setbuilder{p_i}{i=1,\dots,k}$ is an equilateral set, hence $k\leq 2^d$.
If $k=2^d$, then $X^d$ is isometric to $\ell_\infty^d$, and $a(X^d)\leq 2^{d-1}$ as shown above.
Otherwise $a(X^d)<2^d$.
\end{proof}

\begin{theorem}\label{thm:UD0}
The maximum number of edges $U(n,X)$ in a unit-distance graph and the maximum number of edges $D(n,X)$ in a diameter graph on $n$ points in a $d$-dimensional normed space $X$ satisfy the following asymptotics: \[ \lim_{n\to\infty}\frac{U(n,X)}{n^2}=\lim_{n\to\infty}\frac{D(n,X)}{n^2}=\frac12\left(1-\frac{1}{a(X)}\right).\]
\end{theorem}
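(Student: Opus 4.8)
The plan is to pin down both limits by establishing the upper bound $U(n,X)\le\frac12(1-\frac1r)n^2+\mathrm o(n^2)$ together with the lower bound $D(n,X)\ge\frac12(1-\frac1r)n^2-\mathrm O(n)$, where $r:=a(X)$ (note $1\le r\le 2^d-1$ by Proposition~\ref{prop:aX}, since any large set of points is an equilateral family of one set). Everything else then follows: rescaling an $n$-point set of diameter $\delta>0$ by $1/\delta$ turns every diameter pair into a unit-distance pair, so $D(n,X)\le U(n,X)$, and hence the two bounds sandwich both quantities between $\frac12(1-\frac1r)n^2$ and that plus $\mathrm o(n^2)$, forcing both limits to equal $\frac12(1-\frac1r)$.

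For the upper bound, I would first observe that there is an integer $m^\ast=m^\ast(X)$ such that \emph{no} unit-distance graph in $X$ contains a copy of $K_{r+1}(m^\ast)$, the complete $(r+1)$-partite graph with all parts of size $m^\ast$. Indeed, if no such $m^\ast$ existed, then for every $m$ some unit-distance graph in $X$ would contain $K_{r+1}(m)$, and its $r+1$ colour classes would be an equilateral family of $r+1$ sets each of size $\ge m$, contradicting the maximality of $a(X)=r$ — this is precisely where the \emph{for all $m$} in the definition of $a(\cdot)$ is used. Thus every unit-distance graph on $n$ vertices is $K_{r+1}(m^\ast)$-free, and since $\chi(K_{r+1}(m^\ast))=r+1$, the Erd\H{o}s--Stone--Simonovits theorem gives $U(n,X)\le\mathrm{ex}(n,K_{r+1}(m^\ast))=(1-\frac1r+\mathrm o(1))\binom n2=\frac12(1-\frac1r)n^2+\mathrm o(n^2)$. (When $r=1$ this is just the K\H{o}v\'ari--S\'os--Tur\'an bound $\mathrm{ex}(n,K_{m^\ast,m^\ast})=\mathrm O(n^{2-1/m^\ast})=\mathrm o(n^2)$.) By $D(n,X)\le U(n,X)$ the same upper bound holds for $D(n,X)$.

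For the lower bound, I would fix $n$ and take an equilateral family $A_1,\dots,A_r$ in $X$ with each $\card{A_i}$ as large as we please; for the diameter graph I also need to shrink each $A_i$ to a subset of diameter strictly less than $1$ without losing too much. When $r\ge 2$ this is possible: picking any $y\in A_j$ with $j\ne i$ forces $A_i\subseteq y+\bd B_X$, so $A_i$ lies in a translate of $B_X$, and covering $B_X$ by a fixed finite number $N=N(X)$ of sets of diameter $<1$ (for instance by translates of $\frac13 B_X$, using compactness) and applying the pigeonhole principle yields $A_i'\subseteq A_i$ with $\diam(A_i')<1$ and $\card{A_i'}\ge\card{A_i}/N$, still arbitrarily large. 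The family $A_1',\dots,A_r'$ is again equilateral; choosing $\lceil n/r\rceil$ or $\lfloor n/r\rfloor$ points from each, their union $V$ is an $n$-point set in which every cross pair has distance exactly $1$ and every other pair has distance $<1$, so $\diam(V)=1$ and the diameter graph of $V$ is the complete $r$-partite graph on these parts, which has $\binom n2-r\binom{\lceil n/r\rceil}2=\frac12(1-\frac1r)n^2-\mathrm O(n)$ edges; this same set also realises that many unit-distance pairs. (For $r=1$ the bound $\ge 0$ is trivial.) Combining with the upper bound yields both limits. I expect the main obstacle to be recognising the right forbidden configuration — one must rule out $K_{r+1}(m^\ast)$ for a suitably large \emph{fixed} $m^\ast$ rather than merely $K_{r+1}$ — after which Erd\H{o}s--Stone does the work; the only remaining subtlety is the diameter-reduction step needed for the lower bound on $D(n,X)$.
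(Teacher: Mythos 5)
Your proposal is correct and follows essentially the same route as the paper: the lower bound comes from an equilateral family whose parts are shrunk to diameter less than $1$ by a covering-and-pigeonhole argument, and the upper bound comes from the Erd\H{o}s--Stone theorem applied to the forbidden complete $(a(X)+1)$-partite configuration. Your write-up merely makes explicit the step the paper leaves implicit, namely that the definition of $a(X)$ (with its ``for all $m$'' quantifier) yields a fixed $m^\ast$ for which $K_{a(X)+1}(m^\ast)$ cannot occur in any unit-distance graph.
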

\begin{proof}
Consider an equilateral family $A_1,\dots,A_k$ in $X$, with each $\card{A_i}$ large.
Since the diameter of each $A_i$ is at most $2$, and we can cover a ball of radius $2$ by a finite number of balls of radius $0.49$, it follows there exist subsets $A_i'\subset A_i$ such that $\diam(A_i')<1$ and $\card{A_i'} \geq c_X\card{A_i}$.
It follows that $a(X)$ is also the largest $k$ such that there exists an equilateral family of $k$ sets such that each set has arbitrarily large cardinality and diameter $<1$.
By choosing $n/k$ points from each $A_i'$, we obtain that $U(n,X)\geq D(n,X)\geq\frac12(1-1/a(X))n^2$ for all $n$.
It follows that \[\liminf_{n\to\infty} \frac{U(n,X)}{n^2} \geq \liminf_{n\to\infty} \frac{D(n,X)}{n^2} \geq \frac12\left(1-\frac{1}{a(X)}\right).\]
Next, it follows from the Erd\H{o}s--Stone Theorem that for all $\epsi>0$ there exists $n_0$ such that $D(n,X)\leq U(n,X)\leq\frac12(1-a(X)^{-1}+\epsi)n^2$ for all $n>n_0$, that is,
\[\limsup_{n\to\infty} \frac{D(n,X)}{n^2} \leq \limsup_{n\to\infty} \frac{U(n,X)}{n^2} \leq \frac12\left(1-\frac{1}{a(X)}\right),\]
and the theorem follows.
\end{proof}
\begin{corollary}\label{cor:aX}
For any $d\geq 1$, the maximum number of edges in a unit-distance graph or a diameter graph on $n$ points in $\ell_\infty^d$ is asymptotically $U(n,\ell_\infty^d) = \frac12(1-2^{1-d})n^2 + \mathrm{o}(n^2)$ and $D(n,\ell_\infty^d) = \frac12(1-2^{1-d})n^2 + \mathrm{o}(n^2)$.
\end{corollary}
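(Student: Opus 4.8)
The plan is to obtain the corollary as a direct specialisation of Theorem~\ref{thm:UD0}. That theorem already establishes, for every $d$-dimensional normed space $X$, the asymptotics
\[
\lim_{n\to\infty}\frac{U(n,X)}{n^2}=\lim_{n\to\infty}\frac{D(n,X)}{n^2}=\frac12\left(1-\frac{1}{a(X)}\right),
\]
where $a(X)$ is the largest number of sets that can appear in an equilateral family of arbitrarily large sets. So the only thing left to do is to plug in $X=\ell_\infty^d$ and insert the value of $a(\ell_\infty^d)$.

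That value is supplied by Proposition~\ref{prop:aX}, which gives $a(\ell_\infty^d)=2^{d-1}$: the lower bound comes from placing $m$ points on each of the $2^{d-1}$ mutually parallel edges of a cube of side $1$, and the matching upper bound comes from the pigeonhole step (replacing each $A_i$ by a two-point set of diameter $<1$) followed by the face-lattice argument showing that the minimal faces carrying these two-point sets partition the $2^d$ vertices of a unit $d$-cube into parts of size at least $2$. Substituting $a(\ell_\infty^d)=2^{d-1}$, hence $1/a(\ell_\infty^d)=2^{1-d}$, into the displayed limit yields
\[
\lim_{n\to\infty}\frac{U(n,\ell_\infty^d)}{n^2}=\lim_{n\to\infty}\frac{D(n,\ell_\infty^d)}{n^2}=\frac12\bigl(1-2^{1-d}\bigr),
\]
which is exactly the claim; the additive $\mathrm{o}(n^2)$ term is simply the difference between $U(n,\ell_\infty^d)$ (resp.\ $D(n,\ell_\infty^d)$) and its leading term, and is part of the assertion rather than something that needs a separate estimate.

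There is no real obstacle in this argument: all of the substance has already been discharged in Theorem~\ref{thm:UD0} (whose lower bound uses the equilateral-family/Lenz-type construction and whose upper bound uses the Erd\H{o}s--Stone theorem) and in Proposition~\ref{prop:aX} (the determination $a(\ell_\infty^d)=2^{d-1}$). The write-up of the corollary will therefore be a one- or two-sentence deduction. If one wished to avoid citing the general theorem, the only alternative would be to re-run the Erd\H{o}s--Stone argument directly in $\ell_\infty^d$ together with the explicit $2^{d-1}$-edge construction, but this would merely reproduce the proof of Theorem~\ref{thm:UD0} in a special case and is not worth doing.
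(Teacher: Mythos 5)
Your proposal is correct and is exactly the paper's intended derivation: the corollary is stated immediately after Theorem~\ref{thm:UD0} with no separate proof, precisely because it follows by substituting the value $a(\ell_\infty^d)=2^{d-1}$ from Proposition~\ref{prop:aX} into the limit formula of that theorem. Nothing further is needed.
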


Brass conjectured that $\ell_{\infty}^d$ attains the maximum number of unit-distance pairs among $n$ points in a $d$-dimensional normed space, for sufficiently large $n$.
We introduce the following terminology for this maximum number.
Let $U_d(n)$ denote the maximum number of edges in a unit-distance graph of $n$ points in a $d$-dimensional normed space, where the maximum is taken over all norms.
Let $D_d(n)$ denote the analogous quantity for the maximum number of edges in a diameter graph.
\begin{conjecture}[{Brass, Moser, Pach \cite[\S~5.2, Conjecture~6]{BMP}}]\label{conj:brass}
For each $d\in\bN$ there exists $n_0(d)$ such that for all $n\geq n_0(d)$, $U_d(n)=U(n,\ell_\infty^d)$.
\end{conjecture}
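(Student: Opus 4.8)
\emph{Remark on the approach.} This is an open problem, so what follows is a strategy rather than a proof. The plan is to reduce the conjecture to a purely finite-dimensional extremal question about the invariant $a(X^d)$ of Proposition~\ref{prop:aX}, and then to a second-order Tur\'an-type analysis. \emph{Step~1 (leading coefficient).} By Theorem~\ref{thm:UD0}, $U(n,X^d)/n^2\to\frac12(1-1/a(X^d))$, so the first target is to show that $a(X^d)$ is maximised over all $d$-dimensional norms by $\ell_\infty^d$; that is, to improve the bound $a(X^d)\le 2^d-1$ of Proposition~\ref{prop:aX} to $a(X^d)\le 2^{d-1}=a(\ell_\infty^d)$. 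I would exploit the rigidity already visible in the proof of Proposition~\ref{prop:aX}: given an equilateral family $A_1,\dots,A_k$ one may (as in the proof of Theorem~\ref{thm:UD0}) assume each $A_i$ has diameter $<1$ and then shrink it to a pair $\{a_i,a_i'\}$, obtaining $2k$ points of diameter exactly $1$ in which the only pairs at distance $<1$ are the $k$ designated pairs, while every transversal is an equilateral $k$-set. The plan is to feed this ``$k$ disjoint close pairs inside a diameter-$1$ set'' configuration into the Danzer--Gr\"unbaum argument underlying $e(X^d)\le 2^d$, in a stable form, to force (up to a linear image) the $2k$ points to sit at vertices of the cube $[0,1]^d$ with each pair spanning an edge --- exactly the $\ell_\infty^d$ extremiser --- whence $k\le 2^{d-1}$.

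\emph{Step~2 (from the leading term to the exact value).} Granting $\max_{Y^d}a(Y^d)=2^{d-1}$, any $X^d$ with $a(X^d)\le 2^{d-1}-1$ has $U(n,X^d)=(\frac12(1-\tfrac1{2^{d-1}-1})+\mathrm{o}(1))n^2$, which is eventually strictly below $U(n,\ell_\infty^d)=(\frac12(1-2^{1-d})+\mathrm{o}(1))n^2$, so only the norms attaining $a(X^d)=2^{d-1}$ matter for large $n$. For those one must pass to the second-order term: determine $U(n,\ell_\infty^d)$ exactly for large $n$ --- the construction behind Corollary~\ref{cor:aX}, namely $2^{d-1}$ nearly-equal clusters placed on parallel edges of a large cube, should be optimal, giving $\frac12(1-2^{1-d})n^2$ plus an explicit linear correction --- using the theorem of L.~Fejes T\'oth and Sauer \cite{FT-Sauer} that a unit-distance graph in $\ell_\infty^d$ embeds in the $\bZ^d$-lattice unit-distance graph; and then show the analogous exact count for every competing norm is no larger. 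The planar case settled by Brass is the template, and it warns that $\ell_\infty^d$ need not be the unique extremiser (in the plane $\ell_1^2$ and all spaces with $1<\lambda(X^2)<2$ tie), so the goal is only that $\ell_\infty^d$ lies among the maximisers.

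\emph{Step~3 (uniformity).} The conjecture asks for a single $n_0(d)$, while Steps~1--2 produce a threshold depending on $X^d$. To make it uniform I would work on the Banach--Mazur compactum of $d$-dimensional norms: a limiting argument on equilateral families shows that $\{X^d:a(X^d)\ge k\}$ is closed, so outside any fixed neighbourhood of $\ell_\infty^d$ one has $a(X^d)\le 2^{d-1}-1$ uniformly, and there the Erd\H{o}s--Stone / K\H{o}v\'ari--S\'os--Tur\'an bound (the unit-distance graph being $K_{m_0,\dots,m_0}$-free with $2^{d-1}$ parts) can be made uniform after controlling the parameter $m_0$ by a further compactness argument; inside the neighbourhood one invokes a stability version of Step~2 together with continuity of the lattice/cluster combinatorics near $\ell_\infty^d$.

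The hard part is Step~1 together with the comparison in Step~2: even the purely asymptotic statement $a(X^d)\le 2^{d-1}$ is unknown for general $d$, with the available bound $2^d-1$ exponentially off, and pinning down the exact second-order count for $\ell_\infty^d$ and for every norm tying it in $a(X^d)$ is precisely the delicate Tur\'an-type stability problem that has so far been carried through only in low dimensions or for very special norms. The same plan would simultaneously yield $D_d(n)=D(n,\ell_\infty^d)$ for diameter graphs, since $D(n,X^d)$ has the same leading term by Theorem~\ref{thm:UD0}.
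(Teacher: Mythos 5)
This statement is a conjecture of Brass, Moser and Pach which the paper records as open and does not prove, so there is no proof to compare against; you correctly present a research programme rather than a proof. Your Step~1 is consistent with the paper's remark that Conjecture~\ref{conj:brass} would imply Conjecture~\ref{conj:GMM}: matching leading coefficients via Theorems~\ref{thm:UD0} and~\ref{thm:UD} and Corollary~\ref{cor:aX} forces $a(d)\leq 2^{d-1}$, so any proof must at least resolve the Gr\"unbaum--Makai--Martini conjecture, which is open even for $d=3$ (Csik\'os et al.\ give only $a(3)\leq 5$ against the conjectured $4$). But the mechanism you propose for Step~1 does not work as stated: after shrinking each $A_i$ to a close pair $\{a_i,a_i'\}$, the $2k$ points do \emph{not} form an antipodal set, because the two points of a single pair need not be separated by parallel supporting hyperplanes of the whole configuration; the Danzer--Gr\"unbaum bound applies only to transversals and yields $k\leq 2^d$, exactly as in the last paragraph of the proof of Proposition~\ref{prop:aX}. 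Extracting the factor $2$ from the fact that each class has at least two elements is precisely the missing idea, and no ``stable'' version of Danzer--Gr\"unbaum achieving this is known.

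There is also a concrete error in Step~2: the theorem of L.~Fejes T\'oth and Sauer cited in Section~\ref{section:mindist-chr} concerns \emph{minimum-distance} graphs in $\ell_\infty^d$, and the analogous statement for unit-distance graphs is false. Any subgraph of the $\bZ^d$-lattice unit-distance graph has maximum degree at most $3^d-1$, hence $\mathrm{O}_d(n)$ edges on $n$ vertices, whereas $U(n,\ell_\infty^d)=\frac12(1-2^{1-d})n^2+\mathrm{o}(n^2)$ by Corollary~\ref{cor:aX}; indeed $K_{m,m}$ is already realised as a unit-distance graph in $\ell_\infty^2$ for every $m$ by placing points on two opposite edges of a unit square. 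So the exact second-order determination of $U(n,\ell_\infty^d)$ cannot be reduced to the lattice and remains open for $d\geq 3$. Your Step~3 claim that $\setbuilder{X^d}{a(X^d)\geq k}$ is closed in the Banach--Mazur compactum also needs care, since in a limit of norms the points within one class of an equilateral family may collapse, so upper semicontinuity of $a$ is not automatic. You are candid that Steps~1 and~2 are the crux, and your warning that $\ell_\infty^d$ need not be the unique extremiser is well taken; but as written the programme contains one false ingredient and otherwise reduces the conjecture to problems that are themselves open.
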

We also write $U'_d(n)$ and $D'_d(n)$ for the analogues where we take the maximum only over all strictly convex $d$-dimensional spaces.
The asymptotics as $n\to\infty$ of the four quantities $U_d(n)$, $D_d(n)$, $U'_d(n)$, $D'_d(n)$ can be described in terms of antipodal families.
We say that a family $\setbuilder{A_i}{i=1,\dots,k}$ of subsets of $\bR^d$ is an \define{antipodal family} if for any pair $j,k$ of distinct indices and any $x\in A_j$, $y\in A_k$ there exist two distinct parallel hyperplanes $H_x$ and $H_y$, such that $x\in H_x$, $y\in H_y$, and $\bigcup_{i} A_i$ is in the closed slab bounded by $H_x$ and $H_y$.
We denote by $a(d)$ the largest $k$ such that for each $m\in\bN$ there exists an antipodal family $A_1,\dots,A_k$ in $\bR^d$ with at least $m$ points in each $A_i$.
We also say that the family $A_i$ is a \define{strictly antipodal family} if for any pair $j,k$ of distinct indices and any $x\in A_j$, $y\in A_k$ there exist two distinct parallel hyperplanes $H_x$ and $H_y$, such that $x\in H_x$, $y\in H_y$, and $\bigcup_{i} A_i\setminus\set{x,y}$ is in the open slab bounded by $H_x$ and $H_y$.
We denote by $a'(d)$ the largest $k$ such that for each $m\in\bN$ there exists a strictly antipodal family $A_1,\dots,A_k$ in $\bR^d$ with at least $m$ points in each $A_i$.
Note that $a'(d)\leq A'(d)$, the largest size of a strictly antipodal set (Section~\ref{section:equilateral}).
The following two results can be proved similarly to Theorem~\ref{thm:UD0}, using the following two observations:
An equilateral family $A_1,\dots,A_k$ with each $\diam(A_i)<1$, is an antipodal family, and if the norm is strictly convex, a strictly antipodal family.
Conversely, for any (strictly) antipodal family $A_1,\dots,A_k$ with $k\geq 2$ there exists a (strictly convex) norm that turns the antipodal family into an equilateral family with each $\diam(A_i)\leq 2$.
As in Section~\ref{section:equilateral}, $\conv(A_i)$ can be covered by $\mathrm{O}(3^d d\log d)$ translates of $-\frac12\conv(A_i)$ \cite[Eq.~(6)]{Rogers-Zong}, so by replacing the original $m$ by $m/\mathrm{O}(3^d d\log d)$, we may assume that each $\diam(A_i)\leq 1$.
\begin{theorem}\label{thm:UD}
The maximum number of edges $U_d(n)$ in a unit-distance graph and maximum number of edges $D_d(n)$ in a diameter graph on $n$ points in a $d$-dimensional normed space satisfy the following asymptotics: \[ \lim_{n\to\infty}\frac{U_d(n)}{n^2}=\lim_{n\to\infty}\frac{D_d(n)}{n^2}=\frac12\left(1-\frac{1}{a(d)}\right).\]
\end{theorem}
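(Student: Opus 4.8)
\emph{Proof proposal.}
The plan is to follow the proof of Theorem~\ref{thm:UD0}, with $a(X)$ replaced by $a(d)$; the only new feature is that the extremal norm is now allowed to depend on $n$, which forces the upper-bound argument to be uniform over all $d$-dimensional norms. For the lower bound, I would start from an antipodal family $A_1,\dots,A_{a(d)}$ in $\bR^d$ with at least $m$ points in each part, $m$ arbitrarily large, and manufacture a norm turning it into an equilateral family. Concretely, let $D=\bigcup_i A_i-\bigcup_i A_i$ and $B=\conv(D)$; after restricting to (and, if necessary, re-embedding isometrically) the span of $\bigcup_i A_i$, this is an $o$-symmetric convex body. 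Antipodality says precisely that for $\vx\in A_i$, $\vy\in A_j$ with $i\neq j$ there is a linear functional attaining its maximum over $D$ at $\vx-\vy$, so $\vx-\vy\in\bd B$ and hence $\norm{\vx-\vy}_B=1$, while every other difference lies in $B$; thus $\diam\bigl(\bigcup_i A_i\bigr)=1$ in this norm. Choosing about $n/a(d)$ points from each $A_i$ then yields, for each large $n$, a set of $n$ points whose diameter graph (a fortiori its unit-distance graph) has at least $\frac12(1-1/a(d))n^2-\mathrm{O}(n)$ edges, so $\liminf_n D_d(n)/n^2\geq\frac12(1-1/a(d))$, and the same for $U_d$ since $D_d(n)\leq U_d(n)$ by rescaling the diameter to~$1$.

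For the upper bound I would run the Erd\H{o}s--Stone Theorem uniformly. Fix $\epsi>0$. Since $a(d)$ is by definition the largest size of an antipodal family in $\bR^d$ with arbitrarily large parts, there is an $m_d$ such that no antipodal family of size $a(d)+1$ has all parts of size $\geq m_d$; moreover a ball of radius $2$ in any $d$-dimensional norm is covered by a number $N_d$, depending only on $d$, of sets of diameter $<1$. Put $t:=N_d m_d$. By the Erd\H{o}s--Stone Theorem there is $n_0=n_0(d,\epsi)$ so that every $n$-vertex graph with $n\geq n_0$ and at least $\frac12(1-1/a(d))n^2+\epsi n^2$ edges contains $K_{a(d)+1}(t)$. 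If some unit-distance graph on $n\geq n_0$ points in a $d$-dimensional space $X$ has that many edges, it contains an equilateral family $B_1,\dots,B_{a(d)+1}$ with $\card{B_i}=t$. Using a point of a different part and the triangle inequality, $\diam(B_i)\leq 2$; covering the enclosing radius-$2$ ball by $N_d$ sets of diameter $<1$ and applying pigeonhole gives $B_i'\subseteq B_i$ with $\card{B_i'}\geq m_d$ and $\diam(B_i')<1$. Then $B_1',\dots,B_{a(d)+1}'$ is an equilateral family with parts of diameter $<1$, hence (by the observations preceding the theorem) an antipodal family of size $a(d)+1$ in $\bR^d$ with all parts of size $\geq m_d$ --- contradicting the choice of $m_d$. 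Therefore $U_d(n)<\frac12(1-1/a(d))n^2+\epsi n^2$ for all $n\geq n_0$, so $\limsup_n U_d(n)/n^2\leq\frac12(1-1/a(d))$, and likewise for $D_d$. Combining the two bounds gives the theorem.

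The step I expect to be the crux is precisely this uniformity in the upper bound: because the optimal norm varies with $n$, one cannot simply invoke $a(X)\leq a(d)$ for a fixed $X$, and instead one must pull a large, well-structured equilateral family out of the Erd\H{o}s--Stone subgraph and then shrink its diameter using a covering number $N_d$ that is allowed to depend on $d$ but not on the norm. The only other point requiring care is the norm construction in the lower bound --- checking that $B=\conv(D)$ is bounded and full-dimensional (handled by passing to the span and re-embedding) and that antipodality is exactly the supporting-hyperplane condition forcing $\vx-\vy\in\bd B$. Everything else is routine arithmetic together with the Erd\H{o}s--Stone Theorem invoked as a black box.
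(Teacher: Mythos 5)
Your proposal is correct and follows essentially the same route as the paper, which proves Theorem~\ref{thm:UD} by running the argument of Theorem~\ref{thm:UD0} through the stated correspondence between equilateral families with small-diameter parts and antipodal families (Erd\H{o}s--Stone for the upper bound, a norm with unit ball $\conv\bigl(\bigcup_iA_i-\bigcup_iA_i\bigr)$ for the lower bound, and a norm-independent covering number to shrink diameters). You have merely filled in the details of the sketch, including the uniformity of the Erd\H{o}s--Stone step over all $d$-dimensional norms, which is exactly the point the paper leaves implicit.
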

\begin{theorem}[{Swanepoel and Valtr \cite[Theorem~8]{Swanepoel2010}}]
The maximum number of edges $U'_d(n)$ in a unit-distance graph and maximum number of edges $D'_d(n)$ in a diameter graph on $n$ points in a strictly convex $d$-dimensional normed space satisfy the following asymptotics: \[ \lim_{n\to\infty}\frac{U'_d(n)}{n^2}=\lim_{n\to\infty}\frac{D'_d(n)}{n^2}=\frac12\left(1-\frac{1}{a'(d)}\right).\]
\end{theorem}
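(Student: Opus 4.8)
I would mimic the proof of Theorem~\ref{thm:UD0}, with strictly antipodal families playing the role that equilateral families play there; the two observations recorded just before the statement supply the translation between the two notions, and the combinatorial engine is again the Erd\H{o}s--Stone--Simonovits theorem. It is convenient to fix once and for all the covering constant $c_d>0$ coming from the argument in Section~\ref{section:equilateral}: any set of diameter at most $2$ in $\bR^d$ decomposes into $\mathrm{O}(3^d d\log d)$ pieces of diameter $<1$, so every $A\subset\bR^d$ has a subset $A'$ with $\diam(A')<1$ and $\card{A'}\geq c_d\card{A}$. The point of isolating $c_d$ is that it does not depend on any norm, which matters since the extremal norms behind $U'_d(n)$ and $D'_d(n)$ are allowed to vary with~$n$.

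For the lower bounds, fix $n$ and set $k:=a'(d)$. Using the definition of $a'(d)$ together with the covering step, I would obtain a strictly antipodal family $A_1,\dots,A_k\subset\bR^d$ with $\diam(A_i)<1$ and $\card{A_i}\geq\lceil n/k\rceil$ for each $i$. By the second observation there is a strictly convex norm on $\bR^d$ turning $\set{A_1,\dots,A_k}$ into an equilateral family with common cross-distance $1$; since the parts have diameter $<1$, the union $\bigcup_i A_i$ has diameter exactly $1$, so every cross-pair is at once a unit-distance pair and a diameter pair. Taking $\lfloor n/k\rfloor$ points from each $A_i$ (and padding up to $n$ points) yields an $n$-point set in this strictly convex space with at least $\binom k2\lfloor n/k\rfloor^2=\frac12\bigl(1-\tfrac1{a'(d)}\bigr)n^2-\mathrm{O}(n)$ pairs of each type; hence $U'_d(n)\geq\frac12(1-1/a'(d))n^2-\mathrm{O}(n)$ and $D'_d(n)\geq\frac12(1-1/a'(d))n^2-\mathrm{O}(n)$.

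For the upper bounds, I would first note that rescaling a diameter-extremal configuration so that its diameter is $1$ makes every diameter pair a unit-distance pair, whence $D'_d(n)\leq U'_d(n)$; so it is enough to bound $U'_d(n)$ from above. Suppose, for contradiction, that $\limsup_{n\to\infty}U'_d(n)/n^2\geq\frac12(1-1/a'(d))+\delta$ for some $\delta>0$. Given an arbitrary target $m\in\bN$, put $t:=\lceil m/c_d\rceil$. By the Erd\H{o}s--Stone--Simonovits theorem there is an $n_0$ such that every graph on $n\geq n_0$ vertices with at least $\frac12(1-1/a'(d)+\delta/2)n^2$ edges contains a copy of $K_{a'(d)+1}(t)$, the complete $(a'(d)+1)$-partite graph with $t$ vertices in each class. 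Choosing such an $n\geq n_0$ among those with $U'_d(n)\geq\frac12(1-1/a'(d)+\delta/2)n^2$ produces a strictly convex norm and an $n$-point set whose unit-distance graph contains $K_{a'(d)+1}(t)$; its $a'(d)+1$ vertex classes form an equilateral family (all cross-distances equal $1$), and shrinking each class by the covering step to diameter $<1$ and size at least $c_dt\geq m$, the first observation --- this is where strict convexity enters --- turns it into a strictly antipodal family of $a'(d)+1$ sets in $\bR^d$, each of cardinality at least $m$. Since $m$ was arbitrary, this contradicts the maximality in the definition of $a'(d)$. Hence $\limsup_{n\to\infty}U'_d(n)/n^2\leq\frac12(1-1/a'(d))$, and together with the lower bounds and $D'_d(n)\leq U'_d(n)$ the stated limits follow.

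The Tur\'an-type part and the bookkeeping are routine; the real content, and the main obstacle, is the pair of observations --- the strictly convex refinement of the dictionary between antipodality and equilateral distances. The delicate direction is that an equilateral family whose parts have diameter \emph{strictly} less than the common cross-distance, sitting in a strictly convex space, is a \emph{strictly} antipodal family. For $x\in A_i$, $y\in A_j$ the difference $x-y$ lies on the unit sphere, and one picks a supporting functional $\fhi$ of the unit ball there, so $\fhi(x-y)=1$; strict convexity forces $x-y$ to be the only point of the unit ball on the hyperplane $\set{\fhi=1}$, so $\fhi(u-v)<1$ for every cross-difference $u-v\neq x-y$, while $\fhi(u-v)<1$ for every within-difference simply because such a difference has norm $<1$. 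Since $\fhi(x)-\fhi(y)=1$, a two-line computation (checking $\fhi(x-w)<1$ and $\fhi(w-y)<1$, whence also $\fhi(x-w)>0$ and $\fhi(w-y)>0$) then places every remaining point $w$ of $\bigcup_\ell A_\ell$ strictly between the parallel hyperplanes $\set{\fhi=\fhi(x)}$ and $\set{\fhi=\fhi(y)}$. The converse observation --- building from a strictly antipodal family a strictly convex norm realising it as equilateral --- is the standard construction: take the convex hull of the set of cross-differences $\bigcup_{i\neq j}(A_i-A_j)$, which is symmetric about the origin, as the unit ball, verify that it is a bounded convex body on whose boundary each cross-difference $x-y$ sits (this uses that $x-y$ is exposed, by strict antipodality), and round the body slightly to make it strictly convex without pushing any cross-difference off the boundary. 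Neither step is long; getting the strictness right in both is where the care lies.
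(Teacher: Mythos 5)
Your proof is correct and follows essentially the same route the paper indicates: run the argument of Theorem~\ref{thm:UD0} with the Erd\H{o}s--Stone theorem, using the two stated observations translating between equilateral families in strictly convex norms and strictly antipodal families (plus the Rogers--Zong covering step to control diameters). Your fleshing-out of the two observations --- in particular the use of strict convexity to make the supporting hyperplane touch the unit ball in a single point, and the exposedness of cross-differences in the converse construction --- is exactly the content the paper leaves implicit by citing Swanepoel and Valtr.
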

In \cite{Swanepoel2010} it is also shown that $W_d(n) = \frac12(1-1/a'(d))n^2 + \mathrm{o}(n^2)$, where $W_d(n)$ is the largest number of pairwise non-parallel unit distance pairs in a set of $n$ points in some strictly convex $d$-dimensional normed space.
Gr\"unbaum \cite[p.~421]{Grunbaum-Polytopes} and Makai and Martini \cite{Makai1991} showed that $2^{d-1}\leq a(d)\leq 2^d-1$ (see also Proposition~\ref{prop:aX}).
Makai and Martini \cite{Makai1991} also showed that $a(2)=2$.
(This also follows from Theorem~\ref{thm:UD0} applied to the determination of $D(n,X^2)$ for all $2$-dimensional $X^2$ of Brass \cite{Brass1996}.)
\begin{conjecture}[{Gr\"unbaum \cite[p.~421]{Grunbaum-Polytopes}, Makai and Martini \cite{Makai1991}}]
\label{conj:GMM}
For all $d\geq 1$, $a(d)\leq 2^{d-1}$.
That is, for each $d\in\bN$ there exists $m$ such that $\min_i\card{A_i}\leq m$ for any antipodal family $\setbuilder{A_i}{i=1,\dots,2^{d-1}+1}$ in $\bR^d$.
\end{conjecture}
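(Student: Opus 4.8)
The plan is to follow the template of the proof that $a(\ell_\infty^d)\leq 2^{d-1}$ given in Proposition~\ref{prop:aX} and to pinpoint where the general case requires something genuinely new. Fix a putative antipodal family $A_1,\dots,A_k$ in $\bR^d$ with each $\card{A_i}$ at least some large $m$, and suppose towards a contradiction that $k\geq 2^{d-1}+1$. Passing to large finite subsets we may assume each $A_i$ is finite, and (replacing $d$ by a smaller value if necessary) that $K=\conv\bigl(\bigcup_i A_i\bigr)$ is full-dimensional. Pass to the normed space whose unit ball is the central symmetral $B=\frac12(K-K)$. Then $\diam(K)=2$, every point of every $A_i$ is a vertex of $K$, and antipodality says exactly that each cross-distance $\norm{x-y}$ (with $x\in A_i$, $y\in A_j$, $i\neq j$) equals $2$, equivalently $x-y\in\bd(K-K)$. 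Next, as in Proposition~\ref{prop:aX}, apply the Rogers--Zong covering bound \cite{Rogers-Zong}: cover $\conv A_i$ by $\mathrm{O}(3^d d\log d)$ translates of $-\frac12\conv A_i$, each of diameter $<2$, so once $m$ is large enough each part contains two points $p_i,q_i$ at distance $<2$.

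In the model case $K=[0,1]^d$ one finishes purely combinatorially: the $2^d$ vertices of the cube form a pairwise antipodal set, the minimal face $F_i$ containing $\{p_i,q_i\}$ has dimension at least $1$ and hence at least two vertices, and the $F_i$ are pairwise disjoint, so $2k\leq 2^d$. Two ingredients make this work --- a distinguished \emph{antipodal set of bounded size} ($\leq 2^d$ by Danzer--Gr\"unbaum) into which the parts inject two-to-one, and the disjointness of the faces $F_i$ --- and for a general $K$ neither is available off the shelf. The set of vertices of $K$ is unbounded (it contains all the $A_i$), so "vertices of $K$" is the wrong distinguished set; and the $F_i$ need not be disjoint, because antipodality forces only the \emph{extreme points} of $\conv A_i-\conv A_j$, not the whole difference, onto $\bd(K-K)$ --- already when $A_i,A_j$ lie on non-parallel edges the parallelogram $\conv A_i-\conv A_j$ can protrude into $\interior(K-K)$. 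One should also note that the naive volume argument does not help: the bodies $\frac12(\conv A_i+K)\subseteq K$ would, even if their interiors were disjoint, only give $k\leq 2^d$, since adjoining a short segment to $K$ does not double its volume; the extra factor of $2$ must come from combinatorics, exactly as in the cube.

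I therefore expect the heart of a proof to be a rigidity statement for extremal families: that one may assume each $A_i$ lies on a single proper face $G_i$ of $K$ with $G_i-G_j\subseteq\bd(K-K)$ for all $i\neq j$ --- the higher-dimensional analogue of "$k$ parallel edges of a cube" --- after which the faces are genuinely disjoint and a count of how many mutually antipodal faces $K$ can carry should yield $k\leq 2^{d-1}$. This is essentially the assertion that $\ell_\infty^d$ is the extremal space, which is also the content of Conjecture~\ref{conj:brass} for $d\geq 3$; and the structure needed must be subtle, since the tempting claim "each $A_i$ contains two points that are antipodal to one another" is already false in $\bR^2$ (take two parts sitting in the relative interiors of opposite edges of a square). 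A complementary attack is induction on $d$ from the known base cases $d=1$ (where $a(1)=1$ is elementary) and $d=2$ (Makai and Martini \cite{Makai1991}): choose a direction $u$, project $\bigcup_i A_i$ onto a hyperplane orthogonal to $u$, verify that the images form an antipodal family in $\bR^{d-1}$, and arrange that at most two parts collapse into each image part, so that $a(d-1)\leq 2^{d-2}$ gives $k\leq 2^{d-1}$. The obstacle is the same one: the direction witnessing the antipodality of a cross-pair depends on the pair, so no single $u$ is visibly compatible with all of them, and making the projection work would again require first proving that, for an extremal configuration, all cross-pairs between two given parts are witnessed by one common direction. Both routes thus reduce the conjecture to the rigidity of antipodal families all of whose parts are large, which is where the genuine difficulty lies.
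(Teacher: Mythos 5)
There is nothing here to check a proof against: the statement you were given is Conjecture~\ref{conj:GMM}, an \emph{open} conjecture of Gr\"unbaum and of Makai and Martini, and the paper contains no proof of it. The only supporting facts recorded there are $a(2)=2$ (Makai and Martini), $a(3)\leq 5$ (Csik\'os et al.), the general bounds $2^{d-1}\leq a(d)\leq 2^d-1$, and the observation that Conjecture~\ref{conj:brass} would imply Conjecture~\ref{conj:GMM} via Corollary~\ref{cor:aX} and Theorem~\ref{thm:UD}. Your write-up is, to your credit, honest about this: it is an analysis of obstacles rather than a proof, and it should not be presented as a proof attempt that merely has a gap --- the gap is the entire theorem.

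That said, your diagnosis is sound and consistent with what the paper says. You correctly set up the reduction to the normed space with unit ball $\frac12(K-K)$, correctly identify why the two ingredients of the $\ell_\infty^d$ argument in Proposition~\ref{prop:aX} (a bounded distinguished antipodal set, and disjointness of the minimal faces $F_i$) both fail for general $K$, and correctly observe that the volume/packing route only reaches $2^d$. Your proposed ``rigidity'' reduction --- that one should be able to assume each $A_i$ sits on a face $G_i$ with $G_i-G_j\subseteq\bd(K-K)$ --- is essentially the assertion that $\ell_\infty^d$ is extremal, i.e.\ a form of Conjecture~\ref{conj:brass}, which the paper notes is itself open for $d\geq 3$; so both of your routes reduce the conjecture to statements at least as hard. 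One small caution: your claim that antipodality of the family is \emph{equivalent} to all cross-distances equalling $2$ in the norm of $\frac12(K-K)$ uses that the slab in the definition must contain $\bigcup_i A_i$ rather than just the two parts involved, which is how the paper defines it, so that part is fine; but be careful that $K$ need not equal $\conv(\bigcup_i A_i)$ after your finiteness reduction unless you fix it to be so. In summary: no error of reasoning, but no proof either --- and none exists in the literature cited by the paper.
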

Csik\'os et al.\ \cite{Csikos2009} showed that $a(3)\leq 5$.
In the light of Corollary~\ref{cor:aX} and Theorem~\ref{thm:UD}, Conjecture~\ref{conj:brass} would imply Conjecture~\ref{conj:GMM}.
Makai and Martini \cite{Makai1991} showed that $a'(3)\geq 3$, and conjectured that equality holds.
Barvinok, Lee, and Novik \cite{Barvinok2013} showed that $a'(d)\geq \upOmega(3^{d/2})$.
The best upper bound known is the almost trivial $a'(d)\leq 2^d-1$.
Conjecture~\ref{conj:EF} would imply that $a'(d)\leq (2-c)^d$ for some constant $c>0$.

\subsection{Chromatic number}
Denote the maximum chromatic number of all the unit-distance graphs in the normed space $X$ by $\chi_u(X)$ and the maximum chromatic number of all the diameter graphs in $X$ by $\chi_D(X)$.

Recall from Section~\ref{section:Borsuk} that the finite Borsuk number $b_f(X)$ of $X$ is defined to be the smallest $k$ such that any finite subset of $X$ of diameter $1$ can be partitioned into $k$ parts of diameter smaller than $1$.
It is clear that $\chi_u(X)\geq\chi_D(X)=b_f(X)\geq e(X)$.
In particular, as implied by the observations in Section~\ref{section:Borsuk}, the chromatic number of any diameter graph in a $d$-dimensional normed space is at most $(2+\mathrm{o}(1))^d$.
The space $\ell_\infty^d$ is an example where $2^d$ is attained.
Also, since $b(X^2)=b_f(X^2)=e(X^2)$ for all $X^2$, the maximum chromatic number of a diameter graph in $X^2$ is $3$ if $X^2$ is not isometric to $\ell_\infty^2$.

By the De Bruijn--Erd\H{o}s Theorem, $\chi_u(X)$ equals the chromatic number of the infinite unit-distance graph of the whole space $X$.
Clearly, $\chi_u(X)\geq\chi_m(X)$.
We are not aware of any lower bound for $\chi_u(X)$ valid for all $d$-dimensional norms, other than those for $\chi_m(X)$ stated in Section~\ref{section:mindist-chr}.
The chromatic number of the Euclidean plane is a famously difficult problem, with the easy bounds $4\leq\chi_u(\bE^2)\leq 7$ still the best known estimates more than $60$ years after this problem was first formulated by Nelson and Hadwiger \cite{Gardner1960, Hadwiger1961, Soifer2009}.

Chilakamarri \cite{Chilakamarri1991} considered general two-dimensional normed spaces, and showed that the bounds $4\leq\chi_u(X^2)\leq 7$ hold for all $X^2$.
The lower bound follows since the so-called Moser spindle (Fig.~\ref{spindle2}) still occurs as a unit-distance graph for any norm, and the upper bound comes from an appropriate tiling of the plane by a hexagon of sides lengths $1/2$ inscribed in the circle of radius $1/2$.
\begin{figure}
\centering
\begin{tikzpicture}[line cap=round,line join=round,x=1.0cm,y=1.0cm,scale=1.2]
\clip(0.37192598076702243,8.865366335372173) rectangle (2.0403240866889574,10.787835218845663);
\draw(0.4735542917876478,9.839304315986471) -- (1.473430314598974,9.855050395085861) -- (0.9598557986832407,10.713095391925723) -- cycle;
\draw(0.9598557986832407,10.713095391925723) -- (1.0374736269737017,9.716112206144) -- (1.8620774788613839,10.281822810120978) -- cycle;
\draw(1.939695307151843,9.284839624339256) -- (1.0374736269737013,9.716112206144) -- (1.8620774788613832,10.281822810120978) -- cycle;
\draw(0.47355429178764774,9.839304315986473) -- (1.4734303145989738,9.855050395085863) -- (0.9871288077033828,8.981259319146611) -- cycle;
\draw (0.4735542917876478,9.839304315986471)-- (1.473430314598974,9.855050395085861);
\draw (0.4735542917876478,9.839304315986471)-- (1.473430314598974,9.855050395085861);
\draw (1.473430314598974,9.855050395085861)-- (0.9598557986832407,10.713095391925723);
\draw (0.9598557986832407,10.713095391925723)-- (0.4735542917876478,9.839304315986471);
\draw (0.9598557986832407,10.713095391925723)-- (1.0374736269737017,9.716112206144);
\draw (0.9598557986832407,10.713095391925723)-- (1.0374736269737017,9.716112206144);
\draw (1.0374736269737017,9.716112206144)-- (1.8620774788613839,10.281822810120978);
\draw (1.8620774788613839,10.281822810120978)-- (0.9598557986832407,10.713095391925723);
\draw (0.9871288077033828,8.981259319146611)-- (1.939695307151843,9.284839624339256);
\begin{scriptsize}
\draw [fill=black] (0.4735542917876478,9.839304315986471) circle (1.5pt);
\draw [fill=black] (1.473430314598974,9.855050395085861) circle (1.5pt);
\draw [fill=black] (0.9598557986832407,10.713095391925723) circle (1.5pt);
\draw [fill=black] (1.0374736269737017,9.716112206144) circle (1.5pt);
\draw [fill=black] (1.8620774788613839,10.281822810120978) circle (1.5pt);
\draw [fill=black] (0.9871288077033828,8.981259319146611) circle (1.5pt);
\draw [fill=black] (1.939695307151843,9.284839624339256) circle (1.5pt);
\end{scriptsize}
\end{tikzpicture}
\caption{The Moser spindle is a unit-distance graph for any norm in the plane}\label{spindle2}
\end{figure}
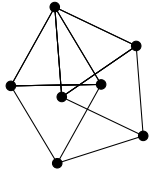
Chilakamarri notes that the chromatic number is exactly $4$ if the unit ball is a parallelogram or a hexagon, and at most $6$ if the unit ball is an octagon.
There is no known example of a normed plane for which the chromatic number is known to be more than $4$, and Brass, Moser, and Pach ask as a problem to find such a plane \cite[\S 5.9, Problem~4]{BMP}.

For Euclidean space, Larman and Rogers \cite{Larman1972} showed the exponential upper bound $\chi_u(\bE^d)\leq (3+\mathrm{o}(1))^d$, which is still the best known.
Frankl and Wilson \cite{Frankl1981} were the first to find a lower bound exponential in $d$: $\chi_u(\bE^d)\geq ((1+\sqrt{2})/2+\mathrm{o}(1))^d$.
The currently best known lower bound of $(1.239\dots+\mathrm{o}(1))^d$ is due to Raigorodskii \cite{Raigorodskii2001}.
There are many specific upper and lower bounds for low-dimensional $\bE^d$; see Raigorodskii's survey \cite{Raigorodskii2013}.

The lower bound of Frankl and Wilson uses $\set{0,1}$-vectors, and so also gives a lower bound for all $\ell_p^d$, or more generally, for any space $X^d$ with a norm that is invariant  under permuting and changing the signs of coordinates: $\chi_u(X^d)\geq ((1+\sqrt{2})/2+\mathrm{o}(1))^d$.
It is easy to see that $\chi_u(\ell_\infty^d)=e(\ell_\infty^d)=2^d$.
For $\ell_1^d$, the best known lower bound is $\chi_u(\ell_1^d)\geq(1.365+\mathrm{o}(1))^d$ due to Raigorodskii \cite{Raigorodskii2004}.
For other papers on $\chi_u(\ell_p^d)$, see Broere \cite{Broere1994} and F\"uredi and Kang \cite{FurediKang2004}.

F\"uredi and Kang \cite{FurediKang2008} showed that $\chi_u(X^d)\leq 5^{d+\mathrm{o}(d)}$ for any $d$-dimensional $X^d$.
This was improved by Kupavskiy \cite{Kupavskiy2011} to $\chi_u(X^d)\leq 4^{d+\mathrm{o}(d)}$.
He also showed $\chi_u(\ell_p^d)\leq 2^{(1+c_p+\mathrm{o}(1))d}$ for all $p>2$, where $0<c_p<1$ and $c_p\to0$ as $p\to \infty$.

\subsection{Independence number and minimum degree}
We define $\delta_u(X)$ to be the maximum over all minimum degrees of unit-distance graphs in $X$, if this maximum exists (otherwise we write $\delta_u(X)=\infty$).
Similarly, let $\delta_D(X)$ be the maximum over all minimum degrees of diameter graphs in $X$, if this maximum exists (otherwise $\delta_m(X)=\infty$).
In contrast to the case of minimum-distance graphs, very little is known about $\delta_u(X)$ or $\delta_D(X)$ in a normed space $X$.
We only make the following general remarks.

If $U(n,X)=\upOmega(n^2)$, or (by Theorem~\ref{thm:UD0}) equivalently, $D(n,X)=\upOmega(n^2)$, then the Erd\H{o}s--Stone Theorem implies that there is no upper bound for $\delta_u(X)$ or $\delta_D(X)$: if $U(n,X)=\frac12(1-1/a(X)+\mathrm{o}(1))n^2$, equivalently, if $D(n,X)=\frac12(1-1/a(X)+\mathrm{o}(1))n^2$, then there are diameter graphs on $n$ vertices with minimum degree $((a-1)/a+\mathrm{o}(1))n$ (and this is sharp).

K.~Bezdek, Nasz\'odi, and Visy \cite{BNV2003} considered the smallest independence numbers $\alpha_u(n,X)$ of a unit-distance graph on $n$ points in $X$.
We can also define $\alpha_D(n,X)$ to be the smallest independence number of a diameter graph on $n$ points in $X$.
Then $\alpha_D(n,X)\geq\alpha_u(n,X)$, and similarly to the case of the minimum-distance graph, $\alpha_D(n,X)\geq n/b_f(X)\geq n/(\delta_D(X)+1)$, $\chi_u(X)\alpha_u(n,X)\geq n$, $\chi_u(X)\leq\delta_u(X)+1$ and $\alpha_u(n,X)\geq n/(\delta_u(X)+1)$. 

They \cite{BNV2003} introduced the \define{$k$-th Petty number} $P(k,X)$ of $X$:
the largest $n$ such that there exists a unit-distance graph on $n$ points in $X$ with independence number $<k$.
This is closely related to their $k$-th Petty number for packings (discussed in Section~\ref{section:mindist-chr} above).
Thus, $P(2,X)=P_m(2,X)=e(X)$, $P_m(k,X)\leq P(k,X)$, hence $(k-1)e(X)\leq P(k,X) < R(e(X)+1,k)\leq\binom{e(X)+k-1}{k-1}$ \cite[Proposition~1]{BNV2003}.
They showed that $P(3,X^d)\leq 2\cdot 3^d$, $P(k,X^d)\leq (k-1)((k-1)3^d-(k-2))$ for all $k\geq 4$, $P(k,X^d)\leq (k-1)4^d$, $P(k,X^2)\leq 8(k-1)$, $P(k,\ell_\infty^d)=(k-1)2^d$, and $P(k,\bE^d)\leq(k-1)3^d$ for all $k\geq 2$; also $P(k,\ell_p^d)\leq(k-1)3^d$ for all $1<p<\infty$, $k\geq 2$ and $d\leq 2^p$.
They ask whether $P(k,X^d)\leq(k-1)2^d$ for all $d$-dimensional $X^d$ and $k\geq 3$ (which would be sharp).

\section{Other graphs}\label{section:othergraphs}
Here we briefly mention three other graphs that are defined for finite sets of points in a finite-dimensional normed space.
\subsection{Minimum spanning trees}\label{section:MST}
For any finite subset $S$ of a normed space $X$, any tree $T$ with vertex set $S$ and minimum total length (with the length of an edge measured in the norm) is called a \define{minimum spanning tree} of $S$.
For any finite subset $S$ of a normed space $X$ with minimum spanning tree $T$ (where distances between points are measured in the norm), 
let $\Delta(T)$ denote the maximum degree of $T$.
Define $\Delta(S)=\max\Delta(T)$ and $\Delta'(S)=\min\Delta(T)$, where the maximum and minimum is taken over all minimum spanning trees $T$ of $S$.
Finally, let $\Delta(X)=\max\Delta(S)$ and $\Delta'(X)=\max\Delta'(S)$, where the maxima are taken over all finite subsets $S$ of the normed space $X$.
Thus, all minimum spanning trees in $X$ have maximum degree at most $\Delta(X)$, and for each finite subset of $X$ there exists a minimum spanning tree with maximum degree at most $\Delta'(X)$.
Cieslik \cite{Cieslik} showed that $\Delta(X)=H(X)$ for all normed spaces $X$.
This was rediscovered by Robins and Salowe \cite{Robins1995}, who also showed that $\Delta'(\ell_p^d)=H'(\ell_p^d)$ for $1\leq p<\infty$.
Martini and Swanepoel \cite{MS2006} generalized the last result to all normed spaces: $\Delta'(X)=H'(X)$ for all finite-dimensional $X$.
The proof needs a general position argument that is made exact by means of the Baire Category Theorem.

\subsection{Steiner minimal trees}
For any finite subset $S$ of a finite-dimensional normed space $X$, any tree $T=(V,E)$ with $S\subseteq V\subset X$ and with each vertex in $V\setminus S$ of degree at least $3$, is called a \define{Steiner tree} of $S$.
The vertices in $V\setminus S$ are called the \emph{Steiner points} of $T$.
A Steiner tree of $S$ of minimum total length is called a \define{Steiner minimal tree} (SMT) of $S$.
(Since there are always at most $\card{S}-2$ Steiner points in a Steiner tree, there will always exist a shortest one by compactness.)
Steiner minimal trees are well studied, especially in the Euclidean plane.
An overview of the extensive literature on them can be found in the monographs of Hwang, Richards and Winter \cite{HRW}, Cieslik \cite{Cieslik2}, Pr\"omel and Steger \cite{PS}, and Brazil and Zachariasen \cite{BZ}.
For their history, see Boltyanski, Martini, and Soltan \cite{BMS} and Brazil, Graham, Thomas, and Zachariasen \cite{BGTZ}.

Denote the maximum degree of a Steiner point in the SMT $T$ by $\Delta_s(T)$ (and set it to $0$ if there are no Steiner points).
Also, denote the maximum degree of a non-Steiner points in $T$ by $\Delta_n(T)$.
Let $\Delta_s(X)=\max\Delta_s(T)$ and $\Delta_n(X)=\max\Delta_n(T)$, where both maxima are taken over all SMTs $T$ in the normed space $X$.
If $T$ is an SMT of $S$, then $T$ is clearly still an SMT of any $S'$ 
such that $S\subseteq S'\subseteq V(T)$.
It follows that $\Delta_s(X)\leq \Delta_n(X)$.

It is well known that $\Delta_s(\bE^d)=\Delta_n(\bE^d)=3$ for all $d\geq 2$ \cite[Section~6.1]{HRW}.
Since a Steiner minimal tree is a minimal spanning tree of its set of vertices, $\Delta_n(X)\leq\Delta(X)=H(X)$ (Cieslik~\cite{Cieslik}).
Since any edge joining two points in an SMT can be replaced by a 
piecewise linear path consisting of segments parallel to the vectors 
pointing to the extreme points of the unit ball, we obtain the following 
well-known lemma, going back to Hanan \cite{Hanan} for $X^2=\ell_1^2$.
\begin{lemma}\label{polyhedral}
If the unit ball of $X^d$ is a polytope with $v$ vertices, then 
$\Delta_s(X^d)\leq\Delta_n(X^d)\leq v$.
\end{lemma}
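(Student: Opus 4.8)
The plan is to prove the nontrivial inequality $\Delta_n(X^d)\leq v$ by a local rerouting argument; combined with the already-noted inequality $\Delta_s(X^d)\leq\Delta_n(X^d)$ this gives the lemma. Let $B$ be the unit ball of $X^d$, with vertex set $\{p_1,\dots,p_v\}$; since $B=\conv\{p_1,\dots,p_v\}$ and the vertices of a polytope are its extreme points, each $p_i$ lies on $\bd B$, so $\norm{p_i}=1$. The first ingredient is the \emph{geodesic decomposition} of a vector: for nonzero $z\in X^d$ write $z/\norm{z}=\sum_{i=1}^{v}\mu_i p_i$ with $\mu_i\geq 0$ and $\sum_i\mu_i=1$ (possible since $z/\norm{z}\in\bd B\subseteq B$), so that $z=\sum_i\lambda_i p_i$ with $\lambda_i:=\norm{z}\mu_i\geq 0$ and $\sum_i\lambda_i=\norm{z}$. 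Listing the indices with $\lambda_i>0$ in any order $i_1,\dots,i_r$ (here $r\geq 1$), the polygonal path $x\to x+\lambda_{i_1}p_{i_1}\to\cdots\to x+\sum_{s=1}^r\lambda_{i_s}p_{i_s}=x+z$ has total length $\sum_s\lambda_{i_s}\norm{p_{i_s}}=\norm{z}$, hence is a shortest path from $x$ to $x+z$, and its initial edge is a nondegenerate segment in the direction of the vertex $p_{i_1}$.

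Now let $T$ be an SMT of a finite set $S$ (so $T$ has at most $2\card S-2$ vertices) and let $x$ be any vertex of $T$, with neighbours $y_1,\dots,y_k$. For each $j$ apply the above with $z=y_j-x$, obtaining a shortest polygonal path $P_j$ from $x$ to $y_j$ of length $\norm{y_j-x}$ whose initial edge has some length $\ell_j>0$ and points in the direction of a vertex $q_j\in\{p_1,\dots,p_v\}$. Assume, for contradiction, $k\geq v+1$. By pigeonhole there are $a\neq b$ with $q_a=q_b=:q$; fix $0<\epsi<\min(\ell_a,\ell_b)$ and set $x':=x+\epsi q$, so $[x,x']$ is a common initial subsegment of both $P_a$ and $P_b$. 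Replace in $T$ the edges $xy_a$ and $xy_b$ by the segment $[x,x']$, the portion of $P_a$ from $x'$ to $y_a$ (length $\norm{y_a-x}-\epsi$), and the portion of $P_b$ from $x'$ to $y_b$ (length $\norm{y_b-x}-\epsi$); adding the bend points of these arcs as vertices, one checks that deleting $xy_a,xy_b$ from the tree $T$ leaves three components containing $x$, $y_a$, $y_b$, and that the three inserted arcs reconnect them into a single tree $T'$ with $S\subseteq V(T')$ and total length $\operatorname{length}(T)-\epsi$.

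Finally, repeatedly suppressing each degree-$2$ vertex of $T'$ not in $S$ (replacing its two edges by one segment, which does not increase length by the triangle inequality) and deleting each degree-$1$ vertex not in $S$ produces a Steiner tree of $S$ of length at most $\operatorname{length}(T)-\epsi<\operatorname{length}(T)$, contradicting minimality of $T$. Hence every vertex of an SMT in $X^d$ — in particular every non-Steiner vertex — has degree at most $v$, so $\Delta_n(X^d)\leq v$ and therefore $\Delta_s(X^d)\leq\Delta_n(X^d)\leq v$. The step needing the most care is the bookkeeping of the rerouting — that $T'$ is genuinely a tree spanning $S$ whose length drops by exactly $\epsi$ — together with the harmless possibility that a bend point of a rerouted arc coincides with an existing vertex, which is handled by simply working with the resulting finite union of segments and extracting from it a spanning Steiner tree of no greater length. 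There is no deeper obstacle.
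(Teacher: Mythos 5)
Your proof is correct and is exactly the argument the paper has in mind: the paper's one-sentence justification ("any edge joining two points in an SMT can be replaced by a piecewise linear path consisting of segments parallel to the vectors pointing to the extreme points of the unit ball") is precisely your geodesic decomposition, and the pigeonhole-plus-merging step you spell out is the standard (left implicit) way to conclude that a vertex of degree exceeding $v$ would contradict minimality. You have simply written out in full the details the paper omits.
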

This gives the upper bounds in $\Delta_s(\ell_1^d)=\Delta_n(\ell_1^d)= 2d$ and $\Delta_s(\ell_\infty^d)=\Delta_n(\ell_\infty^d)= 2^d$.
For the lower bound for $\ell_1^d$, note that the vertex set $S$ of its unit ball $O^d$ is an equilateral set with distance $2$, and that $\setbuilder{O^d+v}{v\in S}$ is a packing in $2O^d$.
It follows that for any Steiner tree of $S$, the total length of edges or parts of edges in $\interior(O^d+v)$ has to be at least $1$ for each $v\in S$, hence the tree that joins each vertex in $S$ to $o$ is a SMT with $o$ a Steiner point of degree $2d$.
The lower bound $\Delta_s(\ell_\infty^d)\geq 2^d$ is shown similarly.

We denote the $d$-dimensional normed space on $\bR^d$ with unit ball $\conv([0,1]^d\cup[-1,0]^d)$ by $H_d$.
The unit ball of $H^2$ is an affine regular hexagon and that of $H^3$ 
an affine rhombic dodecahedron.
Cieslik \cite{Cieslik}, \cite[Conjecture~4.3.6]{Cieslik2} made the 
following conjecture:
\begin{conjecture}[Cieslik \cite{Cieslik, Cieslik2}]\label{conj:Cieslik}
The maximum degree of a vertex in an SMT in a $d$\nobreakdash-\hspace{0pt}dimensional 
normed space $X^d$ satisfies $\Delta_n(X^d)\leq 2^{d+1}-2$, with equality 
if and only if $X^d$ is isometric to the space~$H^d$.
\end{conjecture}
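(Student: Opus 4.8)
The plan is to treat the two halves of the conjecture separately: the upper bound $\Delta_n(X^d)\leq 2^{d+1}-2$ for every $d$-dimensional normed space $X^d$, and the characterisation of the equality case. I would first address the more concrete ingredient, namely that $H^d$ attains the bound. The unit ball of $H^d$ is $B=\conv([0,1]^d\cup[-1,0]^d)$, and its vertices are precisely the $2^{d+1}-2$ vectors $\pm v$ with $v$ a nonzero $\set{0,1}$-vector in $\bR^d$ (the origin is the midpoint of the two vertices $(1,\dots,1)$ and $(-1,\dots,-1)$, hence not a vertex). Lemma~\ref{polyhedral} therefore already gives $\Delta_n(H^d)\leq 2^{d+1}-2$, so only a matching lower bound is needed. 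The natural candidate is to take the origin together with all vertices of $B$ and join every vertex to $o$, making the terminal $o$ have degree $2^{d+1}-2$; one must then show this star is a Steiner minimal tree of that set. For $\ell_1^d$ and $\ell_\infty^d$ the analogous construction works because the vertices of the unit ball form an equilateral set at distance $2$, so the unit balls centred at those vertices are non-overlapping and a crude length count forces the star to be minimal (as in the discussion of $\ell_1^d$ above). Here the vertices of $B$ are \emph{not} equilateral --- indeed $2^{d+1}-2>2^d\geq e(H^d)$ when $d\geq 2$ --- so the ball-packing argument is unavailable; instead I would exhibit an explicit dual (cut/flow) certificate for the minimum Steiner tree in the polyhedral norm $H^d$, choosing functionals $\varphi_v$ norming each generator $v$ with $\varphi_{-v}=-\varphi_v$ (so $\sum_v\varphi_v=o$) and a compatible routing that uses the tiling of $\bR^d$ by translates of $B$, and verifying that the lower bound it certifies equals the star length. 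I expect the question whether this candidate star is genuinely minimal to be a technical point separable from the main difficulty.

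For the upper bound, let $v$ be a vertex of degree $k$ in an SMT $T$; normalise $v=o$ and write its neighbours as $p_i=r_iu_i$, $u_i\in\bd B$. Two families of necessary local conditions are available. \emph{Edge swaps}: replacing the edge $op_j$ by $p_ip_j$ keeps $T$ connected, so $\norm{r_iu_i-r_ju_j}\geq\max(r_i,r_j)$ for all $i\neq j$; this is the condition underlying Cieslik's theorem $\Delta(X)=H(X)$ for spanning trees, and by itself it yields only the weak bound $k\leq H(X^d)$, which for the conjectured extremal spaces is already too large (for example $\Delta_n(H^3)\leq H(H^3)=18$, the Hadwiger number of the rhombic dodecahedron, whereas $2^{d+1}-2=14$). \emph{Steiner pull-outs}: for every subset $I$ of the edges at $v$ with $\card I\geq 2$, detaching the corresponding subtrees and reconnecting them through a new Steiner point near $o$ must not shorten $T$; taking the one-sided directional derivative of length at $o$ shows that there exist functionals $\varphi_i$ norming $u_i$ ($i\in I$) with $\norm{\sum_{i\in I}\varphi_i}\leq 1$ in $X^*$ --- equivalently, the Minkowski sum of the faces $\Phi_i$ of $B^*$ exposed by the $u_i$ meets $B^*$ --- and, in the irreducible case, that $\varphi_i(u_j)<1$ for $i\neq j$; if $v$ is itself a Steiner point one also gets $o\in\sum_{i=1}^k\Phi_i$. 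The plan is to combine these conditions, reducing first to polyhedral norms (the local data at $v$ involves only finitely many directions, and a standard approximation lets one assume the $u_i$ are vertices of $B$ and the $\Phi_i$ facets of $B^*$), so that the problem becomes a purely combinatorial incidence statement between $k$ vertices of a $d$-polytope and its facets, and then proving by a packing/counting argument in that structure that $k\leq 2^{d+1}-2$, with equality forcing $B$ to be affinely equivalent to $\conv([0,1]^d\cup[-1,0]^d)$, i.e.\ to $B_{H^d}$.

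The main obstacle is exactly this last step. The weak bound $k\leq H(X^d)$ has to be upgraded to the sharp $k\leq 2^{d+1}-2$ by genuinely exploiting the Steiner pull-out data --- the faces $\Phi_i$ and the cancellation they must permit --- and then the equality analysis has to produce an affine copy of the cube-union structure. This rigidity statement --- that among all $d$-dimensional norms only $H^d$ lets the local data at an SMT vertex be maximal --- is precisely what has kept Cieslik's conjecture open, and I would expect a genuinely new idea to be needed here. A sensible first target is the planar case: verify directly, by a case analysis over hexagonal unit balls, that $\Delta_n(X^2)=6$ holds only for the affine-regular-hexagon norm $H^2$, and then look for a dimension-inductive mechanism --- perhaps one that tracks how the edge directions $u_i$ must distribute relative to the facets of $B$ incident to a fixed vertex --- to lift it to all $d$.
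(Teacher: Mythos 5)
The statement you are trying to prove is stated in the paper as a \emph{conjecture}, with no proof offered --- and, more importantly, the surrounding text records that it is already partially \emph{disproved}. By the result of \cite{Swanepoel2007} quoted immediately after Conjecture~\ref{conj:Cieslik}, one has $\Delta_n(H^d)=\binom{d+2}{\lfloor (d+2)/2\rfloor}<2^{d+1}-2$ for all $d\geq 3$ (e.g.\ $\Delta_n(H^3)=10$ while $2^4-2=14$), so the ``equality iff $X^d\cong H^d$'' half of the statement is simply false in every dimension $d\geq 3$. This kills the first half of your plan outright: the star joining the origin to all $2^{d+1}-2$ vertices of $B_{H^d}$ is \emph{not} a Steiner minimal tree for $d\geq 3$, so no dual/flow certificate for it can exist. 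Your own observation --- that these vertices are not an equilateral set, since $2^{d+1}-2>2^d\geq e(H^d)$, so the ball-packing argument that works for $\ell_1^d$ and $\ell_\infty^d$ breaks down --- was the correct warning sign; the obstruction is not a missing technique but a false claim. The conjecture is genuine only for $d=2$, where Cieslik \cite{Cieslik4} proved it and $\binom{4}{2}=6=2^3-2$.

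The upper-bound half, $\Delta_n(X^d)\leq 2^{d+1}-2$ for all $d$-dimensional norms, remains open; the best known general bound is $\mathrm{O}(2^d d^2\log d)$ \cite{Swanepoel2005}, and your proposal for closing the gap (combining edge swaps with Steiner pull-out conditions and reducing to a polyhedral incidence problem) is a reasonable description of the known local machinery but, as you yourself concede, does not contain the new idea that would be needed. Since the conjectured extremal space does not in fact attain $2^{d+1}-2$ (the $H^d$ hold the record only for $2\leq d\leq 6$, with $\ell_\infty^d$ taking over from $d\geq 7$ at the value $2^d$), even the correct target for a sharp upper bound is unclear. In short: there is no proof to reconstruct here, and the specific programme you outline would fail at its very first concrete step for every $d\geq 3$.
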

By Lemma~\ref{polyhedral}, $\Delta_n(H^d)\leq 2^{d+1}-2$.
Cieslik \cite{Cieslik4} proved the case $d=2$ of Conjecture~\ref{conj:Cieslik}.
In \cite{Swanepoel2000} the exact values of $\Delta_n(X^2)$ and $\Delta_s(X^2)$ are determined for all $2$-dimensional spaces (see also Martini, Swanepoel and de Wet \cite{MSO2009}).
In particular, up to isometry, $H^2$ is the only $2$-dimensional space that attains $\Delta_n(X^2)=6$, with all others satisfying $\Delta_n(X^2)\leq 4$.
In \cite{Swanepoel2007} it is shown that $\binom{d+1}{\lfloor (d+1)/2\rfloor}\leq\Delta_s(H^d)\leq \Delta_n(H^d)=\binom{d+2}{\lfloor (d+2)/2\rfloor} < 2^{d+1}-2$ for all $d\geq 3$, thus partially disproving the conjecture.
The spaces $H^d$ give the largest known degrees of SMTs in Minkowski spaces 
of dimensions $2$ to $6$, with $\Delta_n(H^2)=6$, $\Delta_n(H^3)=10$, $\Delta_n(H^4)=20$, 
$\Delta_n(H^5)=35$, and $\Delta_n(H^6)=70$, while $\Delta_n(\ell_\infty^d)=2^d$ is larger for $d\geq 7$.
It is not clear whether $H^d$ maximises $\Delta_n(X^d)$ for $2\leq d\leq 6$, and $\ell_\infty^d$ maximises $\Delta_n(X^d)$ for $d\geq 7$.

Morgan \cite[Section~3]{Morgan}, \cite[Chapter~10]{Morganbook} made a related conjecture.
\begin{conjecture}[Morgan \cite{Morgan, Morganbook}]
The maximum degree of a Steiner point in an SMT in any $d$-dimensional 
normed space $X^d$ satisfies $\Delta_s(X^d)\leq 2^d$.
\end{conjecture}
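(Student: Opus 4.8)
The plan is to reduce the conjecture, via the first variation of tree length at a Steiner point of maximal degree, to a purely local statement about configurations of norm-subgradients on the dual sphere; to settle that statement for smooth norms by a cotype estimate; and to isolate the non-smooth case --- the one in which the value $2^d$ is actually attained --- as the genuine obstacle.

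Concretely, I would let $s$ be a Steiner point of degree $k$ in an SMT $T$ of a finite set $S\subseteq X^d$, with edges running to $p_1,\dots,p_k$, and set $u_j:=\un{p_j-s}\in\bd B_X$ (the $u_j$ may be taken distinct), with $F_j$ the subdifferential of the norm at $u_j$ --- equivalently, the face of the dual ball $B_{X^*}$ exposed by $u_j$. Near $s$ the tree is a star with these $k$ rays, which licenses two families of variations. Moving $s$ to $s+\epsi w$ may not decrease length, forcing the Fermat--Torricelli condition $0\in\sum_{j=1}^kF_j$; fix $\fhi_j\in F_j$ with $\sum_j\fhi_j=0$. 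Splitting $s$ into two Steiner points $s\pm\epsi w$ joined by a short edge, routing the edges indexed by $S_1$ through the first and those indexed by $S_2$ through the second (for any partition $\set{1,\dots,k}=S_1\sqcup S_2$ with both parts of size $\geq2$), also may not decrease length, forcing the first-order change $\epsi\bigl(2\norm{w}-\sum_{j\in S_1}\mu_j(w)+\sum_{j\in S_2}\nu_j(w)\bigr)$ to be non-negative for every $w\in X$, where $\mu_j(w)=\min_{\fhi\in F_j}\ipr{\fhi}{w}$ and $\nu_j(w)=\max_{\fhi\in F_j}\ipr{\fhi}{w}$. (A separate elementary argument --- truncate each edge to the common length $\min_j\norm{p_j-s}$ and verify that the resulting star is still an SMT of its leaves --- gives $\norm{u_i-u_j}\geq1$ for $i\neq j$ and hence only the known bound $k\leq H(X^d)$; the splitting move is what must improve on this.)

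I would first dispose of the smooth case, where each $F_j=\set{\fhi_j}$ is the single norming functional of $u_j$, so $\norm{\fhi_j}_*=1$. Then the splitting inequality becomes $\norm{\sum_{j\in S_1}\fhi_j-\sum_{j\in S_2}\fhi_j}_*\leq2$, and together with $\sum_j\fhi_j=0$ it yields $\norm{\sum_j\epsi_j\fhi_j}_*\leq2$ for every sign vector $\epsi\in\set{-1,1}^k$. Averaging over random signs and using that the cotype-$2$ constant $C_2(X^*)$ of the $d$-dimensional dual space is at most $\sqrt{d}$ (John's theorem) gives \[k=\sum_{j=1}^k\norm{\fhi_j}_*^2\leq C_2(X^*)^2\;\mathbb{E}_{\epsi}\Bigl\lVert\sum_{j=1}^k\epsi_j\fhi_j\Bigr\rVert_*^2\leq 4d,\] hence $k\leq2^d$ for $d\geq4$; the finitely many small dimensions can be handled directly. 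So for smooth norms the conjectured bound holds comfortably, with a polynomial bound to spare.

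The difficulty is the general case, where no single choice of the $\fhi_j$ satisfies all splitting inequalities at once: for each $w$ and each partition one is free to reselect the subgradients, so the genuine data is the family of exposed faces $F_j$ of $B_{X^*}$ subject to $0\in\sum_jF_j$ and, for every $S$, the existence of a selection $(\fhi_j)_{j\in S}\in\prod_{j\in S}F_j$ with $\norm{\sum_{j\in S}\fhi_j}_*\leq1$. This is exactly the regime that attains the bound: the centre of a cube joined to its $2^d$ vertices is an SMT in $\ell_\infty^d$, and there the $F_j$ are precisely the $2^d$ facets of the cross-polytope $O^d$, the dual ball of $\ell_\infty^d$. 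The cotype trick collapses here (for $d\geq3$ one checks directly that $\ell_\infty^d$ admits no common selection), so one must instead bound how many exposed faces of a $d$-dimensional symmetric convex body can be combined under the selection constraints above. I would attempt this by induction on $d$: project $X^*$ along a direction extreme for $B_{X^*}$, estimate how many of the $F_j$ collapse under the projection, and show that in the extremal situation the constraints force a Cartesian-product structure on $B_X$ --- which is also the mechanism by which one would hope to tie the bound to, or run it in parallel with, the Petty--Soltan bound $e(X^d)\leq2^d$ for equilateral sets. This inductive step, and in particular the bookkeeping of the collapsing faces, is where I expect the real obstacle to lie, and it is presumably the reason the statement is still only a conjecture.
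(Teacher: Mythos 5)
This statement is a conjecture in the paper, not a theorem: no proof is given, and the text records that the best known general bound is $\Delta_s(X^d)\leq \Delta_n(X^d)\leq \mathrm{O}(2^d d^2\log d)$, still a polynomial factor away from $2^d$. So there is no paper proof to compare against, and your proposal must be judged on its own terms as an attempted resolution of an open problem. It does not resolve it, and you say so yourself: the entire non-smooth case is deferred to an inductive ``project along an extreme direction and bookkeep the collapsing faces'' step that is only sketched as a hope. Since the non-smooth case is exactly where the bound $2^d$ is attained (the cube centre in $\ell_\infty^d$), and since the whole difficulty of the conjecture is that the subdifferentials $F_j$ are faces rather than points --- so that the splitting inequalities allow a different selection of subgradients for each partition and each direction $w$, destroying the single fixed family $(\fhi_j)$ that your cotype argument needs --- the proposal leaves the conjecture exactly as open as it was. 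That is the genuine gap, and it is not a repairable detail but the substance of the problem.

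The parts you do carry out are essentially sound but also essentially known. The first-variation conditions (the Fermat--Torricelli condition $0\in\sum_j F_j$ and the splitting inequality $\lVert\sum_{j\in S_2}\fhi_j\rVert_*\leq 1$ for every subset, hence $\lVert\sum_j\epsi_j\fhi_j\rVert_*\leq 2$ for all signs in the smooth case) are the standard local calibration conditions from the Lawlor--Morgan line of work cited in the paper. Your cotype-$2$ estimate $k\leq C_2(X^*)^2\cdot 4\leq 4d$ is correct as far as it goes, but for differentiable norms the sharp bound $\Delta_s(X^d)\leq\Delta_n(X^d)\leq d+1$ is already a theorem (cited in the paper from Alfaro et al., Lawlor--Morgan, and Swanepoel), so the smooth case you settle was not in doubt; note also that $4d>2^d$ for $d=2,3$, where you would anyway have to fall back on those known results. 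In short: a reasonable framing of why the conjecture is hard, but not a proof, and the paper does not claim one either.
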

The space $\ell_\infty^d$ shows that this conjecture would be best possible.
The asymptotically best known upper bound for both conjectures is 
$\Delta_s(X^d)\leq \Delta_n(X^d)\leq \mathrm{O}(2^d d^2\log d)$ \cite{Swanepoel2005}.
It is known that $\Delta_s(X^2)\leq 4$ for all $X^2$ \cite{Swanepoel2000}.
There are many two-dimensional spaces attaining $\Delta_s(X^2)=4$, some of them with a unit circle that is piecewise $C^\infty$ \cite{Alfetal}.
They are characterised in \cite{Swanepoel2000}.

The sharp upper bound for differentiable norms is 
$\Delta_s(X^d)\leq \Delta_n(X^d)\leq d+1$ \cite{Alfetal, Lawlor1994, Swanepoel1999c}.
For the $\ell_p$ norm, $1<p<\infty$, we have 
$3\leq \Delta_s(\ell_p^d)\leq \Delta_n(\ell_p^d)\leq 7$ if $p>2, d\geq 2$, and 
$\min\{d,\frac{p}{(p-1)\ln 2}\}\leq \Delta_s(\ell_p^d)\leq\Delta_n(\ell_p^d)\leq 2^{p/(p-1)}$ if $1<p<2$ and $d\geq 3$; see \cite{Swanepoel1999c} for more detailed estimates.

Conger \cite{Conger} showed that 
$\Delta_s(\bR^3, \norm{\cdot}_1+\lambda\norm{\cdot}_2)\geq 6$ for all 
$0<\lambda\leq 1$.
In \cite{Alfetal} it is shown that 
$\Delta_s(\bR^2, \norm{\cdot}_1+\lambda\norm{\cdot}_2)=4$ for all 
$0<\lambda\leq 2+\sqrt{2}$.
The value $\lambda=2+\sqrt{2}$ is sharp, since it follows from the 
results in \cite{Swanepoel2000} that 
$\Delta_s(\bR^2, \norm{\cdot}_1+\lambda\norm{\cdot}_2)=3$ for all 
$\lambda>2+\sqrt{2}$.
In \cite{Swanepoel2007} it was shown that
for the space $X^d=(\bR^d,\norm{\cdot}_1+\lambda\norm{\cdot}_2)$,
$\Delta_s(X^d)=\Delta_n(X^d)=2d$ if $0<\lambda\leq 1$.
Conger made the following conjecture
\cite[Section~3]{Morgan}, \cite[Chapter~10]{Morganbook}.
\begin{conjecture}[Conger]
For any $X^d=(\bR^d,\norm{\cdot})$ such that for some $\epsi>0$, $\norm{\cdot}-\epsi\norm{\cdot}_2$ is still a norm, 
$\Delta_s(X^d)\leq 2d$.
\end{conjecture}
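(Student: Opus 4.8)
It suffices to bound the degree $k$ of an arbitrary Steiner point $c$ of an SMT of a finite subset of $X^d$, since $\Delta_s(X^d)$ is the supremum of all such degrees. Write $\norm{\cdot}=\norm{\cdot}'+\epsi\norm{\cdot}_2$ with $\norm{\cdot}'$ a norm, let $p_1,\dots,p_k$ be the neighbours of $c$, put $w_i:=p_i-c\ne o$, and let $u_i:=w_i/\norm{w_i}$ and $e_i:=w_i/\norm{w_i}_2$ be the $X$- and Euclidean-unit vectors in the direction of $w_i$. The starting point is a structural fact: since both convex bodies have support function $\norm{\cdot}'+\epsi\norm{\cdot}_2=\norm{\cdot}$, the dual unit ball is $B_{X^*}=B'_*+\epsi B^d$, where $B'_*$ is the dual unit ball of $\norm{\cdot}'$; consequently $\partial\norm{\cdot}(x)=\partial\norm{\cdot}'(x)+\epsi\,x/\norm{x}_2$ for $x\ne o$, so \emph{every} norming functional of $X^d$ at $x$ carries the same canonical Euclidean summand $\epsi\,x/\norm{x}_2$. (For smooth $X^d$ the stronger bound $\Delta_s(X^d)\le d+1$ is known, so we may assume $X^d$ is not smooth; note that $B_{X^*}$ is an outer parallel body, hence $X^d$ is automatically strictly convex, so this is really a statement about non-smooth strictly convex norms.)

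Now identify $X^*$ with $\bR^d$ via the Euclidean inner product and extract two local optimality conditions at $c$. Holding the $p_i$ fixed, $c$ minimises $x\mapsto\sum_i\norm{p_i-x}$; the first-order condition is the \emph{force balance}: there exist norming functionals $\phi_i\in\partial\norm{\cdot}(w_i)$, so $\ipr{\phi_i}{u_i}=1$, with $\sum_i\phi_i=o$. Secondly, for $k\ge 4$ and any pair $i\ne j$, the Steiner tree obtained by detaching the edges $cp_i$ and $cp_j$ through a new degree-$3$ Steiner point $c'$ cannot be shorter; comparing lengths (with $c'$ arbitrary and the rest of the tree fixed) forces $c$ to minimise $x\mapsto\norm{x-p_i}+\norm{x-p_j}+\norm{x-c}$, so $o\in -\partial\norm{\cdot}(w_i)-\partial\norm{\cdot}(w_j)+B_{X^*}$, i.e.\ there are norming functionals $\phi_i^{ij},\phi_j^{ij}$ with $\norm{\phi_i^{ij}+\phi_j^{ij}}_*\le 1$. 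Pairing against $u_i$ gives $\ipr{\phi_j^{ij}}{u_i}\le 0$, and symmetrically $\ipr{\phi_i^{ij}}{u_j}\le 0$.

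Suppose for now that a \emph{single} family $\phi_1,\dots,\phi_k$ realises the force balance and also satisfies $\ipr{\phi_j}{u_i}\le 0$ for all $i\ne j$ (this is automatic when $X^d$ is smooth, since then each $\partial\norm{\cdot}(w_i)$ is a singleton). Form $M:=(\ipr{\phi_j}{u_i})_{i,j}\in\bR^{k\times k}$: it has $M_{ii}=1>0$, $M_{ij}\le 0$ for $i\ne j$, vanishing row sums (since $\sum_j\phi_j=o$), and $\operatorname{rank}(M)\le d$ (as $M=U^{\mathsf T}\Phi$ with $U=[u_1\,\cdots\,u_k]$, $\Phi=[\phi_1\,\cdots\,\phi_k]$). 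Thus $M=\operatorname{diag}(A\mathbf{1})-A$ is the Laplacian of the weighted digraph with nonnegative weights $A_{ij}:=-M_{ij}$ ($A_{ii}:=0$), and a standard fact on digraph Laplacians gives $\operatorname{rank}(M)=k-c$, where $c$ is the number of terminal strongly connected components of this digraph. A terminal component with a single vertex $i$ would have $A_{ij}=0$ for all $j$, hence $M_{ii}=(A\mathbf{1})_i=0$, contradicting $M_{ii}=1$; so every terminal component has at least two vertices, whence $c\le k/2$ and $\operatorname{rank}(M)\ge k/2$. Therefore $k\le 2\operatorname{rank}(M)\le 2d$, which is exactly the asserted bound and is tight (the $2d$-star of $\ell_1^d$ corresponds to $d$ terminal components of size~$2$).

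The main obstacle is the coordination hypothesis of the previous paragraph when $X^d$ is not smooth. Force balance supplies one family with $\sum_i\phi_i=o$, while the no-split test supplies, for each pair, a possibly different pair of norming functionals; in general these cannot be fused into one admissible family, and indeed they must not be for $\ell_\infty^d$ with $d\ge 3$, where the degree-$2^d$ star is an SMT and $\Delta_s(\ell_\infty^d)=2^d>2d$. What has to be shown is that the Euclidean summand forces the fusion: writing each norming functional at $w_i$ as $\psi+\epsi e_i$ with $\psi\in\partial\norm{\cdot}'(w_i)$, the fact that $\epsi e_i$ is common to all of them should allow the force-balance functionals to be adjusted within the convex sets $\partial\norm{\cdot}'(w_i)$ so as to meet every pairwise sign condition simultaneously. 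Smoothing $\norm{\cdot}'$ does not help here, since a high-degree Steiner point that is stable for the non-smooth $\norm{\cdot}$ will typically split into lower-degree ones under any perturbation (as already for degree-$4$ points in $\bE^d$); the needed input must come from $B_{X^*}=B'_*+\epsi B^d$ more directly, e.g.\ through a simultaneous second-order perturbation argument at $c$ or a feasibility/duality argument in the space of admissible families. Everything else above is routine, and the cases $d\le 2$ are in any case already covered (cf.\ \cite{Swanepoel2000}, where $\Delta_s(X^2)\le 4$).
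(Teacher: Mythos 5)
This statement is an open conjecture (attributed to Conger) which the survey records without proof; the best general bound stated in the paper is $\Delta_s(X^d)\leq\mathrm{O}(2^d d^2\log d)$, and even the special family $(\bR^d,\norm{\cdot}_1+\lambda\norm{\cdot}_2)$ with $0<\lambda\leq 1$ required a separate argument in \cite{Swanepoel2007}. So there is no proof in the paper to compare yours against, and your text must stand on its own as an attempted resolution of an open problem.

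As such it has a genuine gap, which you yourself flag in your final paragraph. The preparatory material is sound: $B_{X^*}=B'_*+\epsi B^d$ and hence $\partial\norm{\cdot}(x)=\partial\norm{\cdot}'(x)+\epsi x/\norm{x}_2$ is correct; the force-balance condition and the pairwise splitting condition are the standard first-order facts; and the Laplacian rank computation correctly shows that \emph{if} a single family $\phi_1,\dots,\phi_k$ of norming functionals satisfies both $\sum_i\phi_i=o$ and $\ipr{\phi_j}{u_i}\leq 0$ for all $i\neq j$, then $k\leq 2\operatorname{rank}(M)\leq 2d$. But the splitting test only yields, for each pair $\{i,j\}$ separately, \emph{some} choice of norming functionals with the sign condition, and nothing forces these pair-dependent choices to be mutually consistent or consistent with the force-balance family. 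The entire content of the conjecture is concentrated in this selection problem, as your own $\ell_\infty^d$ remark shows: there the degree-$2^d$ star satisfies every one of your per-pair conditions with pair-dependent functionals, yet no coordinated family exists and the conclusion fails. You offer no mechanism by which the common Euclidean summand $\epsi e_i$ forces the fusion --- only the hope that it ``should allow'' an adjustment within the convex sets $\partial\norm{\cdot}'(w_i)$. Until that step is supplied, the argument proves nothing beyond the smooth case, where the sharper bound $\Delta_s(X^d)\leq d+1$ is already known. What you have is a clean reduction of the conjecture to a concrete feasibility question about subdifferentials, which may well be a useful reformulation, but it is not a proof.
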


\subsection{Sphere-of-influence graphs}
Toussaint \cite{T88} introduced the sphere-of-influence graph of a finite set of points in Euclidean space for application to pattern analysis and image processing.
See Toussaint \cite{T2014} for a recent survey.
This notion was later generalized to so-called closed sphere-of-influence graphs by Harary et al.\ \cite{HJLM93} and to $k$-th closed sphere-of-influence graphs by Klein and Zachmann \cite{KZ2004}.
Some of their properties have been considered in normed spaces; see \cite{Furedi1994, GPS1994, MQ1994, MQ1999, MQ2003, NPS2016}.

Given $k\in\bN$ and a finite set $S$ in $X$, we define the \define{$k$-th closed sphere-of-influence graph} with vertex set $S$ as follows.
For each $p\in S$, let $r_k(p)$ be the smallest $r$ such that 
$\setbuilder{q\in S}{q\neq p, \norm{p-q}\leq r}$ has at least $k$ elements.
Then join two points $p,q\in S$ whenever the closed balls $p+r_k(p)B_X$ and $q+r_k(q)B_X$ intersect.
Although there is no upper bound on the maximum degree of a $k$-th closed sphere-of-influence graph, Nasz\'odi et al.\ \cite{NPS2016} showed that the minimum degree is bounded above by $k\vartheta(X)$, where $\vartheta(X)$ is the largest size of a set of points in $2B_X$ such that the distance between any two points is at least $1$ and one of the points is $o$.
A simple packing argument gives the upper bound of $\vartheta(X^d)\leq 5^d$, attained by $\ell_\infty^d$.
This then also gives the upper bound of $k\vartheta(X)\card{S}/2$ on the number of edges.

\section{Brass angular measure and applications}\label{section:angle}
\subsection{Angular measures}
Brass \cite{Brass1996} introduced a certain angular measure in any normed plane not isometric to $\ell_\infty^2$, and used it to determine the maximum number of edges in a minimum-distance graph on a set of $n$ points in that plane (see Section~\ref{section:mindistedges}).
Here we demonstrate how some other combinatorial results on translative packings of a planar convex body can be deduced with minimal effort using this measure.

An \define{angular measure} on $X^2$ is a measure $\mu$ on the unit circle $\bd B$ of $X^2$ such that $\mu(\bd B) = 2\pi$, $\mu(A) = \mu(-A)$ for all measurable $A\subseteq\bd B$, and $\mu(\{\vp\}) =  0$ for all $\vp\in \bd B$.
An angular measure $\mu$ is called \define{proper} if $\mu(A)>0$ for any non-trivial arc $A$ of $\bd B$.
We measure an angle in the obvious translation invariant way.
The following is a list of easily proved properties of angular measures.
\begin{lemma}\label{lemma:angular}
Let $\mu$ be an angular measure in any normed plane.
\begin{enumerate}
\item The sum of the measures of the interior angles of a simple closed $n$-gon equals $\pi(n-2)$.
\item Two parallel lines are cut at equal angles by a transversal.
The converse is also true if the measure is proper.
\item Let $\va\vb\vc\vd$ be a simple quadrilateral with $\norm{\va-\vb}=\norm{\vb-\vc}=\norm{\vc-\vd}<\norm{\vd-\va}$.
Then $\mu(\myangle\vb)+\mu(\myangle\vc)\geq\pi$, with strict inequality if the measure is proper.
\end{enumerate}
\end{lemma}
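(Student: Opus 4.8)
The plan is to prove the three items in order, using only that $\mu$ is a non-atomic measure on $\bd B$ of total mass $2\pi$ with $\mu(A)=\mu(-A)$, together with countable additivity of $\mu$ over arcs; properness enters only for the strict inequality in (3) and the converse in (2). The heart of (1) is the triangle case: the three interior angles of a triangle $\va\vb\vc$ have $\mu$-sum $\pi$. I would draw through $\va$ the line $\ell$ parallel to $\vb\vc$. Since parallel lines carry the same pair of antipodal directions on $\bd B$, the angle between $\ell$ and $\va\vb$ at $\va$ is spanned by the same arc of $\bd B$ (up to the central reflection, which $\mu$ ignores) as $\myangle\va\vb\vc$, so these alternate angles are $\mu$-equal; likewise at $\va$ and $\vc$. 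The three angles at $\va$ produced this way tile a half-plane bounded by $\ell$, so their measures sum to a straight angle, which equals $\pi$ because an antipodal pair of points splits $\bd B$ into arcs $A,-A$ with $\mu(A)=\mu(-A)=\tfrac12\mu(\bd B)$. This already contains the forward direction of (2): a transversal meets two parallel lines in angles spanned by identical arcs of $\bd B$.

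For general $n$ in (1), I would triangulate the simple $n$-gon by $n-3$ pairwise non-crossing diagonals into $n-2$ triangles (the classical polygon triangulation, which creates no new vertices and is valid for non-convex polygons). At each vertex the diagonals issuing from it subdivide the interior angle into consecutive sub-angles whose spanning arcs concatenate, so additivity of $\mu$ makes the interior angle the sum of the triangle angles at that vertex; summing $\pi$ over the $n-2$ triangles and regrouping by vertex gives $(n-2)\pi$. For the converse in (2), assume $\mu$ proper and that a transversal $t$ meets lines $\ell_1,\ell_2$ in equal angles but $\ell_1\nparallel\ell_2$; fixing a direction of $t$, the two equal angles are spanned by arcs of $\bd B$ sharing that endpoint and lying on the same side, and since $\ell_1,\ell_2$ have distinct directions these arcs are distinct, hence differ by a non-trivial arc of positive measure --- a contradiction.

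Item (3) is where I expect real work, and I would reduce it to (2) by completing $\va,\vb,\vc$ to a rhombus. Normalise so that $\norm{\va-\vb}=\norm{\vb-\vc}=\norm{\vc-\vd}=1$. If $\va,\vb,\vc$ are collinear then $\mu(\myangle\vb)=\pi$ and we are done (similarly if $\vb,\vc,\vd$ are collinear), and if $\myangle\vb$ or $\myangle\vc$ is reflex the inequality is immediate, so assume none of these. Put $\vp:=\va+\vc-\vb$; then $\norm{\vp-\va}=\norm{\vc-\vb}=1$ and $\norm{\vp-\vc}=\norm{\va-\vb}=1$, so $\va\vb\vc\vp$ is a non-degenerate unit rhombus whose diagonals $\va\vc,\vb\vp$ bisect each other, and by (2) applied to the transversal $\vb\vc$ of the parallel sides $\va\vb,\vc\vp$ its two co-interior angles satisfy $\mu(\myangle\va\vb\vc)+\mu(\myangle\vb\vc\vp)=\pi$. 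It therefore suffices to show $\mu(\myangle\vb\vc\vd)\geq\mu(\myangle\vb\vc\vp)$, where $\myangle\vb\vc\vd$ is the interior angle of $\va\vb\vc\vd$ at $\vc$. Now $\vb=\vc+\un{\vb-\vc}$, $\vp=\vc+\un{\vp-\vc}$ and $\vd=\vc+\un{\vd-\vc}$ all lie on the unit circle $\bd(B+\vc)$ about $\vc$; since $\va-\vc=(\vb-\vc)+(\vp-\vc)$ is a sum of two unit vectors, $\un{\va-\vc}$ lies strictly between $\un{\vb-\vc}$ and $\un{\vp-\vc}$ on $\bd B$, and the point $\vc+\un{\va-\vc}$ lies strictly inside $B+\va$ (as $0<\norm{\va-\vc}<2$), whereas $\vd$ lies strictly outside $B+\va$ since $\norm{\va-\vd}>1$; using the standard fact that the boundaries of two distinct translates of a planar convex body meet in at most two arcs, this pins the cyclic order of the four directions on $\bd B$ down to $\un{\vb-\vc},\un{\va-\vc},\un{\vp-\vc},\un{\vd-\vc}$. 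Because both $\myangle\vb\vc\vp$ and $\myangle\vb\vc\vd$ are the arcs from $\un{\vb-\vc}$ to $\un{\vp-\vc}$, resp.\ $\un{\vd-\vc}$, that pass through $\un{\va-\vc}$ (as $\va$ is a vertex of both the rhombus and --- $\vc$ being non-reflex --- the quadrilateral), this cyclic order gives that the arc of $\myangle\vb\vc\vp$ is contained in that of $\myangle\vb\vc\vd$, the difference being the arc from $\un{\vp-\vc}$ to $\un{\vd-\vc}$, which is non-trivial because $\vd\neq\vp$ (since $\norm{\va-\vp}=1<\norm{\va-\vd}$). Hence $\mu(\myangle\vb\vc\vd)\geq\mu(\myangle\vb\vc\vp)$, strictly if $\mu$ is proper, and combining with the rhombus identity yields $\mu(\myangle\vb)+\mu(\myangle\vc)\geq\pi$, strictly under properness.

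The remaining point --- and the main obstacle --- is to make the last paragraph airtight: deducing from the ``at most two arcs'' fact that $\vd$ really does fall on the arc of $\bd(B+\vc)$ cut off from $\vc+\un{\va-\vc}$ by $\vb$ and $\vp$; confirming that $\un{\va-\vc}$ lies in the interior angle of the (simple, $\vc$-non-reflex) quadrilateral at $\vc$; handling degenerate boundary cases such as $\norm{\va-\vc}=2$ by approximation; and disposing of the one configuration in which the argument as stated does not apply, namely $\vd$ being the reflex vertex of $\va\vb\vc\vd$ --- which is in fact impossible here, since a point $\vd$ strictly inside a triangle $\va\vb\vc$ with $\norm{\va-\vb}=\norm{\vb-\vc}=\norm{\vc-\vd}=1$ necessarily satisfies $\norm{\va-\vd}<1$ by a short estimate using convexity of the norm, contradicting $\norm{\va-\vd}>1$.
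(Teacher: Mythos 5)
Your parts (1) and (2) are fine (the paper treats them as easily proved and only writes out (3)), and your strategy for (3) is genuinely different from the paper's: you complete $\va,\vb,\vc$ to the parallelogram $\va\vb\vc\vp$ with $\vp=\va+\vc-\vb$, get $\mu(\myangle\va\vb\vc)+\mu(\myangle\vb\vc\vp)=\pi$ from co-interior angles, and then try to show that the ray $\vc\vp$ lies inside the interior angle $\myangle\vb\vc\vd$. The paper instead argues affinely: in the non-convex case it rules out $\va$ and $\vd$ as the reflex vertex by a triangle-inequality argument (also using the auxiliary point $\vd+\vb-\vc$), so that $\mu(\myangle\vb)$ or $\mu(\myangle\vc)$ is already $\geq\pi$; in the convex case it shows the lines $\va\vb$ and $\vd\vc$ meet at a point $\vv$ on the same side of $\va\vd$ as $\vb$, so that $\myangle\vb$ and $\myangle\vc$ are exterior angles of $\triangle\vb\vv\vc$ and sum to $\pi+\mu(\myangle\vb\vv\vc)$. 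That route never has to locate points on intersecting unit circles, which is exactly where your difficulty lies.

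The gap you flag is real and is not merely bookkeeping. The ``at most two arcs'' fact gives that $\bd(B+\va)\cap\bd(B+\vc)$ has at most two components $J_1,J_2$ separating the arc $I\ni\vc+\un{\va-\vc}$ from the arc $O\ni\vd$ on $\bd(B+\vc)$, and that $\vb,\vp\in J_1\cup J_2$; but it does not exclude $\vb$ and $\vp$ lying in the \emph{same} component, in which case $\vd$ could sit on the arc from $\un{\vb-\vc}$ to $\un{\vp-\vc}$ through $\un{\va-\vc}$ and your angle containment fails. This can be repaired: the point reflection $x\mapsto\va+\vc-x$ swaps $B+\va$ and $B+\vc$, hence permutes the components of their boundaries' intersection and swaps $\vb$ with $\vp$; since its unique fixed point $\frac12(\va+\vc)$ does not lie on $\bd(B+\vc)$ when $\norm{\va-\vc}<2$, it cannot fix a component, so it swaps $J_1$ and $J_2$ and $\vb,\vp$ fall in different ones. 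You must still treat $\norm{\va-\vc}=2$ separately (then $\vc+\un{\va-\vc}\in\bd(B+\va)$ and $I$ may be empty), and ``by approximation'' is not straightforward because perturbing the quadrilateral destroys the exact equalities among the side lengths; this case needs its own short direct argument. Finally, the reason $\un{\va-\vc}$ lies in the interior angle of the quadrilateral at $\vc$ is not that $\vc$ is non-reflex but that neither $\vb$ nor $\vd$ is reflex (so the diagonal $\va\vc$ is interior); your case analysis does cover this, since you dispose of a reflex $\vb$ at the outset and your convexity estimate showing $\norm{\va-\vd}\leq1$ for $\vd\in\conv\set{\va,\vb,\vc}$ correctly excludes a reflex $\vd$.
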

\begin{proof}
Only the last statement needs proof.
We first consider the case where $\va\vb\vc\vd$ is not convex.
If $\va$ or $\vd$ is in the convex hull of the remaining points, say $\va\in\triangle\vb\vc\vd$, let $\ve=\vd+\vb-\vc$.
Then $\vb\ve\vd\vc$ is a parallelogram.
Since $\norm{\vb-\vc}=\norm{\vb-\va}=\norm{\vb-\ve}$, we have $\va\not\in\interior\triangle\vb\vc\ve$.
Then $\va\in\triangle\vc\vd\ve$, and $\norm{\va-\vd}\leq\max(\norm{\vc-\vd},\norm{\vd-\ve})=\norm{c-d}$, a contradiction.
Therefore, $\vb$ or $\vc$ is in the convex hull of the remaining points, say $\vb\in\triangle\va\vc\vd$.
Then clearly $\mu(\myangle\vb)\geq\pi$, and also $\mu(\myangle\vc)>0$ if $\mu$ is proper.

Next we consider the case where $\va\vb\vc\vd$ is convex.
If $\va\vb\parallel\vc\vd$, then $\va\vb\vc\vd$ is a parallelogram and $\norm{\vb-\vc}=\norm{\va-\vd}$, a contradiction.
Therefore, the lines $\Line{\va\vb}$ and $\Line{\vc\vd}$ intersect (Fig.~\ref{fig1}).
Suppose that $\va\vb\cap\vc\vd$ and $\vb$ are on opposite sides of the line $\Line{\va\vd}$.
Then the lines $\Line{\va\vd}$ and $\Line{\vb\vc}$ intersect, otherwise $\va\vd\parallel\vb\vc$ and $\norm{\va-\vd}<\norm{\vb-\vc}$, a contradiction.
Assume without loss of generality that $\va\vd\cap\vb\vc$ and $\va$ are on opposite sides of $\Line{\vc\vd}$ (as in Fig.~\ref{fig1}).
\begin{figure}
\centering
\begin{tikzpicture}[line cap=round,line join=round,scale=0.8]
\clip(0.1,-0.1) rectangle (11,5.04);
\draw [dash pattern=on 5pt off 5pt] (1.935999022064666,2.8614265631685103)-- (6.195999022064666,2.90142656316851);
\draw [dash pattern=on 5pt off 5pt] (6.195999022064666,2.90142656316851)-- (4.9,0.24);
\draw [dash pattern=on 5pt off 5pt] (0.64,0.2)-- (4.173729034333038,1.8795027739808639);
\draw [dash pattern=on 5pt off 5pt] (4.173729034333038,1.8795027739808639)-- (8.433729034333039,1.9195027739808639);
\draw (8.433729034333039,1.9195027739808639)-- (4.9,0.24);
\draw (0.64,0.2)-- (2.88,4.8);
\draw (2.88,4.8)-- (4.9,0.24);
\draw (1.935999022064666,2.8614265631685103)-- (7.846971464859728,0.26767109356675794);
\draw (0.64,0.2)-- (7.846971464859728,0.26767109356675794);
\draw [fill=black] (0.64,0.2) circle (1.5pt);
\draw[color=black] (0.4,0.4) node {$\vb$};
\draw [fill=black] (4.9,0.24) circle (1.5pt);
\draw[color=black] (4.6,0.4) node {$\vc$};
\draw [fill=black] (1.935999022064666,2.8614265631685103) circle (1.5pt);
\draw[color=black] (1.6,2.9) node {$\va$};
\draw [fill=black] (4.173729034333038,1.8795027739808639) circle (1.5pt);
\draw[color=black] (4.4,2.16) node {$\vd$};
\draw [fill=black] (6.195999022064666,2.90142656316851) circle (1.5pt);
\draw[color=black] (7.3,3.2) node {$\ve=\va+\vc-\vb$};
\draw [fill=black] (8.433729034333039,1.9195027739808639) circle (1.5pt);
\draw[color=black] (9.6,2.3) node {$\vp=\vd+\vc-\vb$};
\draw [fill=black] (5.429941342925107,1.3282724006497713) circle (1.5pt);
\draw[color=black] (5.1,1.1) node {$\vf$};
\end{tikzpicture}
\caption{\protect Proof of Lemmas~\ref{lemma:angular} and \ref{brasslemma2}}\label{fig1}
\end{figure}
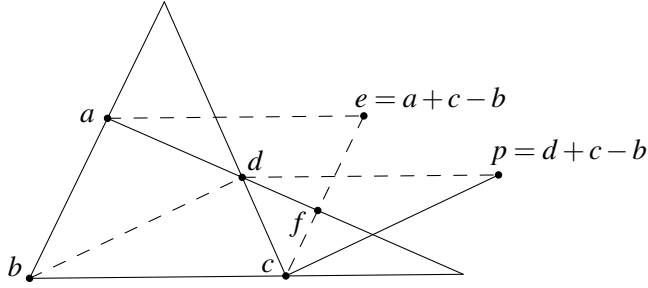
Let $\ve=\va+\vc-\vb$.
Then $\ve\vc\vb\va$ is a parallelogram with $\vd$ in its interior.
Let $\vf=\va\vd\cap\vc\ve$.
Then by the triangle inequality, $\norm{\va-\vd}+\norm{\vd-\vc}\leq\norm{\va-\vf}+\norm{\vf-\vc}\leq\norm{\va-\ve}+\norm{\ve-\vc}=\norm{\vb-\vc}+\norm{\va-\vb}$, a contradiction.

Therefore, lines $\Line{\va\vb}$ and $\Line{\vc\vd}$ intersect in a point on the same side of line $\Line{\va\vd}$ as $\vb$.
Then clearly $\mu(\myangle\vb)+\mu(\myangle\vc)\geq\pi$, with strict inequality if $\mu$ is proper.
\end{proof}

An angular measure is called a \define{Brass measure} if equilateral triangles (in the norm) are equiangular in the measure, that is, $\mu(\myangle\va\vb\vc)=\mu(\myangle\vb\vc\va)=\mu(\myangle\vc\va\vb)=\pi/3$ whenever $\norm{\va-\vb}=\norm{\vb-\vc}=\norm{\vc-\va}>0$.
Clearly, $\ell_\infty^2$ does not have a Brass measure, since in this plane we can find $8$ points $\va_1,\va_2,\dots,\va_8$ on $\bd B$ such that $\triangle\vo\va_i\va_{i+1}$ is equilateral for each $i=1,\dots,8$ (with $a_9=a_1$), and a Brass measure would give $8$ angles of measure $\pi/3$ around the origin.
Remarkably, any normed plane not isometric to $\ell_\infty^2$ has a Brass measure.
\begin{theorem}[Brass \cite{Brass1996}]\label{brass:measure}
A normed plane with unit ball $B$ admits a Brass measure iff $B$ is not a parallelogram.
\end{theorem}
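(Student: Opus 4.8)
The implication ``$B$ a parallelogram $\Rightarrow$ no Brass measure'' is the observation recorded just above the statement, so only the converse requires an argument. Since angles are translation invariant and unchanged by scaling, it suffices, after normalising, to build an angular measure $\mu$ on $\bd B$ for which every equilateral triangle $\vo\va\vb$ with $\norm{\va}=\norm{\vb}=\norm{\va-\vb}=1$ has all three angles equal to $\pi/3$. The central object is, for $\vp\in\bd B$, the set of \emph{equilateral partners} $S_\vp:=\setbuilder{\vq\in\bd B}{\norm{\vq-\vp}=1}$ (so $\vo\vp\vq$ is an equilateral triangle of side $1$ exactly when $\vq\in S_\vp$). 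Two elementary facts will drive everything: (i) the affine reflection $\vx\mapsto\vp-\vx$ in the point $\vp/2$ carries $S_\vp$ into itself, because $\norm{\vp-\vq}=\norm{(\vp-\vq)-\vp}=\norm{\vq}=1$ when $\vq\in S_\vp$; and (ii) along each of the two closed arcs of $\bd B$ joining $\vp$ to $-\vp$, the continuous function $\vq\mapsto\norm{\vq-\vp}$ increases from $0$ to $2$, so $S_\vp$ meets both of the corresponding open arcs, which we denote $\alpha(\vp)$ and $\beta(\vp)$; in particular $\card{S_\vp}\geq 2$.

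The heart of the proof --- and the step I expect to cost the most effort --- is the following claim: \emph{if $B$ is not a parallelogram, then $\card{S_\vp}=2$ for every $\vp\in\bd B$, with exactly one partner in $\alpha(\vp)$ and one in $\beta(\vp)$.} Here is where the hypothesis enters: for a parallelogram $S_\vp$ can be an entire sub-segment of $\bd B$. I would prove the claim by contradiction. If, say, $\alpha(\vp)$ contained two distinct partners $\vq_1,\vq_2$, then $\vq_1,\vq_2,\vq_1-\vp,\vq_2-\vp$ are four boundary points of $B$ spanning a parallelogram $P\subseteq B$ two of whose sides are translates of the segment $[\vo,\vp]$; an analysis of the lines supporting $B$ at the four vertices of $P$ (each of which must also support $P$ there) should force $\bd B$ to contain a segment of norm-length $2$, i.e.\ $\lambda(X^2)=2$, which by the fact recalled in Section~\ref{subsection:terminology} means $B$ is a parallelogram --- a contradiction. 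The remaining possibility, $\card{S_\vp}\geq 3$ with the partners spread over both arcs, reduces to this one using the reflection symmetry (i).

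Granting the claim, define $\varphi\colon\bd B\to\bd B$ by letting $\varphi(\vp)$ be the unique partner of $\vp$ in $\alpha(\vp)$. Uniqueness makes this well defined, and a routine check (the graph of $\varphi$ is closed, and $\vq\in\alpha(\vp)\iff\vp\in\beta(\vq)$) shows that $\varphi$ is an orientation-preserving homeomorphism of $\bd B$ whose inverse sends $\vp$ to its partner in $\beta(\vp)$; moreover $\varphi$ commutes with $\vx\mapsto-\vx$, and since $\varphi(\vp)$ lies strictly inside $\alpha(\vp)$ the rotation number of $\varphi$ lies strictly between $0$ and $\tfrac12$. Now combine uniqueness with (i): the reflection in $\vp/2$ must swap the two partners of $\vp$ (it fixes neither, since $\vp/2\notin\bd B$), so they sum to $\vp$; that is, $\varphi(\vp)+\varphi^{-1}(\vp)=\vp$, equivalently $\varphi^2(\vp)=\varphi(\vp)-\vp$ for all $\vp$. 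Iterating this identity gives $\varphi^3=-\mathrm{id}$, hence $\varphi^6=\mathrm{id}$; as $\varphi^3\neq\mathrm{id}$ the rotation number must equal $\tfrac16$, and every $\varphi$-orbit consists of six points.

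Finally, a periodic orientation-preserving circle homeomorphism with rotation number $\tfrac16$ is topologically conjugate to the rotation of $S^1$ by $2\pi/6$: fix a homeomorphism $h\colon\bd B\to S^1$ with $h\varphi h^{-1}=R$, where $R$ denotes that rotation (built by sending one $\varphi$-orbit to the six sixth roots of unity and interpolating arc by arc). Put $\mu(A):=2\pi\,m(h(A))$, where $m$ is the uniform probability measure on $S^1$. Then $\mu(\bd B)=2\pi$ and $\mu$ is non-atomic; and $\mu(-A)=\mu(A)$ because $h\circ(-\mathrm{id})\circ h^{-1}=h\varphi^3h^{-1}=R^3$, a rotation of $S^1$, under which $m$ is invariant. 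For the Brass property, let $\vo\va\vb$ be equilateral of side $1$; then $\vb\in S_\va=\set{\varphi(\va),\varphi^{-1}(\va)}$, so the arc between $\va$ and $\vb$ subtending the angle at $\vo$ is one of the two ``$\varphi$-arcs'' at $\va$ and hence has $\mu$-measure $2\pi/6=\pi/3$; since $\mu(\alpha(\va))=\pi$, this is indeed the arc inside the angle. The angles at $\va$ and $\vb$ are identified in the same way, using $\varphi^2(\va)=\vb-\va$ and $\varphi^3=-\mathrm{id}$ to recognise which $\varphi$-arcs subtend them. Hence $\mu$ is a Brass measure, completing the converse.
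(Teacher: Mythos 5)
Your reflection identity, the relation $\varphi^3=-\mathrm{id}$, the conjugation to a rotation of order six, and the pull-back of the uniform measure are all sound, and this is essentially the route the paper takes. The genuine gap is your central claim that $\card{S_\vp}=2$ for every $\vp\in\bd B$ whenever $B$ is not a parallelogram: this is false. Since $S_\vp=\bd B\cap(\vp+\bd B)$, it contains a non-degenerate segment as soon as $\bd B$ contains a maximal segment parallel to $\vp$ of norm-length $\ell>1$, and such long segments occur for many non-parallelograms ($B$ is a parallelogram only in the extreme case $\ell=2$). Concretely, let $B$ be the hexagon with vertices $(\pm3,0)$, $(\pm1,1)$, $(\pm1,-1)$. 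The edge from $(1,-1)$ to $(3,0)$ has direction $(2,1)$ and norm-length $\norm{(2,1)}=4/3>1$; taking $\vp=(3/2,3/4)$, the unit vector in direction $(2,1)$, the set $S_\vp$ contains the entire subsegment from $(5/2,-1/4)$ to $(3,0)$, all of which lies in one of your two arcs. Your proposed route to a contradiction cannot be repaired here: the four points $\vq_1,\vq_2,\vq_1-\vp,\vq_2-\vp$ are collinear (they all lie on that long edge), so they span no parallelogram, and $\lambda(X^2)=4/3<2$, so no segment of norm-length $2$ can be extracted.

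Consequently your argument proves the theorem only when $\lambda(X^2)\leq 1$ (in particular for strictly convex norms), which the paper explicitly flags as the easy case. The substance of Brass's theorem is the remaining case $1<\lambda(X^2)<2$, which the paper handles via Lemma~\ref{brass:lemma}: one deletes from $\bd B$ the exceptional set $S$ consisting of the ends of the long segments and the unit vectors parallel to long segments; on $\bd B\setminus S$ the positively oriented partner is again unique, $f\circ f\circ f=-\mathrm{id}$ holds there, the measure is transported from a fundamental arc exactly as you do, and $S$ is assigned measure zero. (This is also why the paper works with good, rather than proper, Brass measures.) You would need to add this entire reduction, together with a correct analysis of when two partners can share an arc, for your proof to be complete.
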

It is not difficult to construct such a measure if the norm is strictly convex, or more generally, if $\lambda(X)\leq 1$ (where $\lambda$ is as defined in Section~\ref{subsection:terminology}), since then for any given point on $\bd B$ there are exactly two points on $\bd B$ at distance $1$ in the norm from the given point.
We sketch the proof of the slightly stronger Theorem~\ref{brass:proper} below.

We call a maximal segment contained in $\bd B$ of length strictly greater than $1$ a \define{long segment}.
Thus, a normed plane $X^2$ has a long segment iff $\lambda(X^2)>1$.
L.~Fejes T\'oth \cite{FejesToth1973} calls the direction of a long segment a \emph{critical direction} of the unit ball.
\begin{lemma}[Brass \cite{Brass1996}]\label{segmentlemma}
Let $X^2$ be a normed plane with unit ball $B$. Then
\begin{enumerate}
\item $\bd B$ contains at most two parallel pairs of long segments, and
\item any long segment on $\bd B$ has length at most $2$, with equality iff $B$ is a parallelogram.
\end{enumerate}
\end{lemma}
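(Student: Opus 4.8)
The plan is to prove the two parts separately; part~(2) is elementary, while part~(1) reduces to a small fact about centrally symmetric hexagons.

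\emph{Part~(2).} Let $S=[\vp,\vq]$ be a long segment; its endpoints lie on $\bd B$, so $\norm{\vp}=\norm{\vq}=1$, and the triangle inequality gives $\norm{\vp-\vq}\leq 2$. If $\norm{\vp-\vq}=2$, then equality holds in $\norm{\vp+(-\vq)}=\norm{\vp}+\norm{-\vq}$; by the standard analysis of the equality case (an interior point of the segment joining two unit vectors lies on $\bd B$ only if the whole segment does, by convexity of $B$) the segment $[\vp,-\vq]$ lies on $\bd B$. Together with $-S=[-\vp,-\vq]\subseteq\bd B$ and $[-\vp,\vq]=-[\vp,-\vq]\subseteq\bd B$, this puts all four sides of the parallelogram $\Pi=\conv\set{\vp,\vq,-\vp,-\vq}$ on $\bd B$ (note $\vp\neq\pm\vq$, so $\Pi$ is nondegenerate). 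Since $o\in\interior\Pi$ and $B$ is convex, any point of $B$ outside $\Pi$ would lie strictly beyond a boundary point of $\Pi$ and hence have norm $>1$; so $B=\Pi$. Conversely, if $B$ is the parallelogram $\conv\set{\va,\vb,-\va,-\vb}$, the side $[\va,\vb]$ has norm-length $\norm{\va-\vb}=\norm{\va+(-\vb)}=2$, since $[\va,-\vb]$ is also a side of $B$ and hence lies on $\bd B$; being a maximal boundary segment of length $>1$, it is a long segment.

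\emph{Part~(1).} Suppose, for contradiction, that $\bd B$ carries long segments $S_1,S_2,S_3$ in three pairwise non-parallel directions. Then $-S_1,-S_2,-S_3$ are also long segments in those directions, and after relabelling (and replacing some $S_i$ by $-S_i$) we may assume the six segments meet $\bd B$ in the cyclic order $S_1,S_2,S_3,-S_1,-S_2,-S_3$. The key two-dimensional fact is: \emph{no centrally symmetric convex hexagon $H$ has all three pairs of opposite sides of norm-length exceeding~$1$ in its own norm.} Indeed, write $H=[-\vv_1,\vv_1]+[-\vv_2,\vv_2]+[-\vv_3,\vv_3]$, where $2\vv_1,2\vv_2,2\vv_3$ are three consecutive edge vectors of $H$; then the $i$-th side has norm-length $\leq 1$ iff $2\vv_i\in H$, which (solving the membership with the most favourable scalar on $\vv_i$) amounts to a bound on the coefficients when $\vv_i$ is written in the basis of the other two. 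Since the three directions $\vv_1,\vv_2,\vv_3$ lie within a half-turn, those coefficients have fixed signs and satisfy reciprocal relations, and writing $x,y>0$ for the two free parameters the three conditions ``side $i$ is not short'' become $\set{x>1\text{ or }y>1}$, $\set{x<1\text{ or }y>x}$, $\set{y<1\text{ or }x>y}$; a four-way case split on whether $x$ and $y$ are $\geq 1$ shows these cannot all hold. Hence some pair of opposite sides of $H$ has norm-length $\leq 1$.

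The remaining step — manufacturing, from a body $B$ with three long directions, a centrally symmetric hexagon all three of whose opposite-side pairs are genuinely long in its own norm — is the one I expect to be the main obstacle. The hexagon cut out by the six supporting lines through $S_1,S_2,S_3$ and their antipodes contains $B$, so all its norms are \emph{smaller} than those of $B$ and the longness of the $S_i$ does not transfer; the hexagon inscribed on the midpoints of the $S_i$ has the opposite defect. I would instead start from the (at most) $12$-gon $Q\subseteq B$ whose vertices are the endpoints of $S_1,S_2,S_3$ and their antipodes: here each $S_i$ is a side of $Q$, so $\norm{S_i}_Q\geq\norm{S_i}_B>1$, and it remains either to upgrade the hexagon fact to $12$-gons carrying three distinguished long opposite-side pairs, or to delete the six ``gap'' sides of $Q$ while controlling the change of norm, using that each $S_i$ lies on a supporting line of $B$ through its midpoint and combining this with the (strict, for a proper angular measure) inequality of Lemma~\ref{lemma:angular}(3) to bound the angular turn between consecutive long segments. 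A cleaner alternative may be to dualise, exchanging long segments of $B$ for vertices of $B^*$ and applying a dual form of the hexagon fact.
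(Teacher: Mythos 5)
The paper states this lemma without proof, citing Brass's original article, so there is nothing internal to compare against; judged on its own terms, your proposal is half a proof. Part~(2) is complete and correct: the equality case of the triangle inequality puts $[\vp,-\vq]$ and $[-\vp,\vq]$ on $\bd B$, the radial argument gives $B=\conv\set{\vp,\vq,-\vp,-\vq}$, and the converse computation for a parallelogram is right. Your ``hexagon fact'' in part~(1) is also correct, and your reduction of the three longness conditions to $\set{x>1\text{ or }y>1}$, $\set{x<1\text{ or }y>x}$, $\set{y<1\text{ or }x>y}$ (with $\vv_2=x\vv_1+y\vv_3$, $x,y>0$) checks out, as does the four-way case split.

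The genuine gap is exactly the one you flag yourself: part~(1) is about an arbitrary unit ball, and you never get from $B$ to a centrally symmetric hexagon to which the fact applies. The two hexagons you mention really do fail for the reasons you give (circumscribing shrinks the norm, so longness of the $S_i$ is lost; inscribing on midpoints loses the $S_i$ as edges), the $12$-gon $Q$ preserves longness but then needs a new combinatorial statement about $12$-gons with three distinguished long opposite-side pairs that you do not prove, and the duality suggestion is only a hope. Since the entire content of part~(1) beyond the hexagon case lives in this reduction, the argument as written does not establish the lemma. Note also that you cannot appeal to Lemma~\ref{lemma:angular} or to a proper Brass measure here without circularity, since the construction of Brass measures in Theorem~\ref{brass:proper} sits downstream of Lemma~\ref{segmentlemma}. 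To repair part~(1) you need either a direct argument (e.g.\ showing that a segment $[\va,\vb]\subseteq\bd B$ with $\norm{\va-\vb}>1$ forces the unit vector $\widehat{\vb-\va}$ and an adjacent unit-length piece of boundary into a constrained position, so that three non-parallel such segments overcrowd $\bd B$), or a completed version of your $Q$-based reduction.
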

We define the \define{ends} of a long segment $ab$ to be the two closed subsegments $aa'$ and $bb'$ of $ab$, where $a'$ and $b'$ are the points on $ab$ such that $\norm{a-b'}=\norm{b-a'}=1$.
The following lemma is easy to prove.
\begin{lemma}\label{lemma:long}
For any Brass measure $\mu$ on a normed plane, any long segment has $\mu$-measure $\pi/3$, and the ends of any long segment have $\mu$-measure $0$.
\end{lemma}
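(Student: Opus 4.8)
The plan is to realise the angles in question as interior angles of equilateral triangles with apex at the origin $o$. Let $\va\vb$ be any long segment on $\bd B$, and let $\va',\vb'$ be the points of $\va\vb$ with $\norm{\vb-\va'}=\norm{\va-\vb'}=1$; these exist because $\norm{\va-\vb}>1$, and both lie strictly between $\va$ and $\vb$ since $\norm{\va-\vb'}=\norm{\vb-\va'}=1\neq\norm{\va-\vb}$. Because $\va,\vb,\va',\vb'$ all lie on $\bd B$, each is at distance $1$ from $o$, so the triangles $\triangle o\va\vb'$ and $\triangle o\va'\vb$ are equilateral in the norm (all three sides of each have length $1$). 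By the defining property of a Brass measure, every interior angle of each of these two triangles has $\mu$-measure $\pi/3$.

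First I would compute $\mu$ of the whole segment. Since $\vb'$ lies strictly between $\va$ and $\vb$, the ray from $\va$ through $\vb'$ coincides with the ray from $\va$ through $\vb$, so the interior angle $\myangle o\va\vb$ of the triangle $o\va\vb$ equals the interior angle $\myangle o\va\vb'$ of the equilateral triangle $o\va\vb'$, hence has $\mu$-measure $\pi/3$; symmetrically, using $\triangle o\va'\vb$, the interior angle $\myangle o\vb\va$ has $\mu$-measure $\pi/3$. The points $o,\va,\vb$ are not collinear, since the line through $\va$ and $\vb$ contains the nondegenerate boundary segment $\va\vb$ and therefore cannot pass through the interior point $o$ of $B$; so $o\va\vb$ is a genuine triangle, and Lemma~\ref{lemma:angular}(1) with $n=3$ gives that its interior angle $\myangle\va o\vb$ has measure $\pi-\tfrac{\pi}{3}-\tfrac{\pi}{3}=\tfrac{\pi}{3}$. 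Finally, this angle at $o$ is spanned by the rays $o\va$ and $o\vb$, and the convex cone they span meets $\bd B$ in exactly the segment $\va\vb$ (this uses only that $o\in\interior B$, that $B$ is convex, and that $\va\vb\subseteq\bd B$), so its $\mu$-measure equals the $\mu$-measure of the arc $\va\vb$ of $\bd B$. Thus the long segment $\va\vb$ has $\mu$-measure $\pi/3$.

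For the ends I would argue by subtraction. By exactly the same cone argument, the interior angle at $o$ of the equilateral triangle $o\va\vb'$, which has measure $\pi/3$, equals the $\mu$-measure of the sub-arc $\va\vb'$ of $\bd B$; since $\va\vb'\subseteq\va\vb$ and both arcs have $\mu$-measure $\pi/3$, the complementary arc $\vb'\vb$ — which is exactly the end $\vb\vb'$ — has $\mu$-measure $0$. Applying the same argument to $\triangle o\va'\vb$ gives $\mu(\va'\vb)=\pi/3$, whence the end $\va\va'$ has $\mu$-measure $\mu(\va\vb)-\mu(\va'\vb)=0$, which proves both assertions. I do not expect any real obstacle here: the only point needing a moment's care is the geometric fact, used three times, that the $\mu$-measure of an interior angle at $o$ equals the $\mu$-measure of the corresponding boundary arc, and this is immediate once one observes that $\va\vb\subseteq\bd B$ forces the two rays bounding such an angle to meet $\bd B$ precisely along the relevant sub-segment. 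The one idea worth isolating is to apply the Brass property to the equilateral triangles $\triangle o\va\vb'$ and $\triangle o\va'\vb$, recovering the angles of the non-equilateral triangle $\triangle o\va\vb$ only indirectly via Lemma~\ref{lemma:angular}(1). (One could also note at the outset that, by Theorem~\ref{brass:measure}, the existence of $\mu$ forces $B$ not to be a parallelogram, so by Lemma~\ref{segmentlemma}(2) the long segment has length $<2$ and $\va'\neq\vb'$; but the argument above does not need this.)
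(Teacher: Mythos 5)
Your proof is correct and complete; the paper states this lemma without proof (``easy to prove''), and your argument is exactly the intended one: the sub-segments $\va\vb'$ and $\va'\vb$ are sides of norm-equilateral triangles with apex $\vo$, the base angles of $\triangle\vo\va\vb$ are therefore each $\pi/3$, so by the angle-sum property the apex angle --- whose cone meets $\bd B$ precisely in $\va\vb$ --- also measures $\pi/3$, and the ends have measure $0$ by subtraction. The points you take care over (non-collinearity of $\vo,\va,\vb$, the identification of the angle at $\vo$ with the boundary arc, and the coincidence of the rays $\va\vb'$ and $\va\vb$) are exactly the ones that need checking, and you handle them correctly.
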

We call a Brass measure \define{good} if all its non-trivial angles of measure $0$ are contained in the ends of long segments.
We note the following straightforward lemma.
\begin{lemma}\label{lemma:good}
All proper Brass measures on a normed plane $X^2$ are good.
All good Brass measures on $X^2$ are proper if $\lambda(X^2)\leq 1$.
\end{lemma}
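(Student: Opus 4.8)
The plan is to reduce both assertions to one observation: for a Brass measure $\mu$ on $X^2$, properness is equivalent to the statement that $\bd B$ contains no non-trivial arc of $\mu$-measure zero. By definition $\mu$ is proper exactly when $\mu(A)>0$ for every non-trivial arc $A\subseteq\bd B$, and since $\mu$ is non-negative, the failure of this is precisely the existence of a non-trivial arc with $\mu(A)=0$. On the other hand, goodness of a Brass measure says that every non-trivial angle of $\mu$-measure zero lies in the union of the ends of the long segments of $\bd B$; unwinding the meaning of $\myangle$, such an angle is nothing but a non-trivial arc of $\mu$-measure zero, since its two endpoint directions cannot be antipodal (an arc joining antipodal points has $\mu$-measure $\pi$ by the symmetry of $\mu$). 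So goodness is equivalent to the statement that every non-trivial arc of $\mu$-measure zero is contained in the union of the ends of long segments.

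Granting this, the first sentence is immediate: if $\mu$ is proper, there is no non-trivial arc of $\mu$-measure zero at all, so the condition defining goodness holds vacuously, and no hypothesis on $\lambda(X^2)$ is needed. For the second sentence, suppose $\lambda(X^2)\leq 1$. By the definition of $\lambda$, no segment contained in $\bd B$ has length exceeding $1$, hence $\bd B$ has no long segment and the union of the ends of its long segments is empty. If $\mu$ is a good Brass measure on $X^2$, then by the reformulation every non-trivial arc of $\mu$-measure zero must be contained in this empty set; since a non-trivial arc is non-empty, there is no such arc, and therefore $\mu$ is proper.

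I do not expect a substantive obstacle: the argument is entirely a matter of unwinding the two definitions, with the hypothesis $\lambda(X^2)\leq 1$ precisely removing the one possible source of non-trivial zero-measure arcs. The single point deserving a moment's care is the identification of a non-trivial angle of $\mu$-measure zero with a non-trivial arc of $\mu$-measure zero, which rests on $\mu(\bd B)=2\pi$, the symmetry of $\mu$, and the fact that $\mu$ has no atoms, so that such an arc cannot reach from a direction to its antipode. (Conversely, when $\lambda(X^2)>1$, Lemma~\ref{lemma:long} makes the ends of a long segment into non-trivial arcs of $\mu$-measure zero, which is why the hypothesis $\lambda(X^2)\leq 1$ in the second sentence cannot be dropped.)
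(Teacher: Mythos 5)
Your proof is correct, and since the paper states Lemma~\ref{lemma:good} without proof (calling it straightforward), your unwinding of the definitions is exactly the intended argument: properness is the absence of non-trivial zero-measure arcs, goodness confines any such arcs to the ends of long segments, and $\lambda(X^2)\leq 1$ eliminates long segments altogether. The identification of zero-measure angles with zero-measure arcs, and your closing remark (via Lemma~\ref{lemma:long}) that the hypothesis on $\lambda(X^2)$ is genuinely needed, are both handled correctly.
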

Brass's proof of Theorem~\ref{brass:measure} actually gives the following strengthening.
\begin{theorem}[Brass \cite{Brass1996}]\label{brass:proper}
Any normed plane $X^2$ for which the unit ball is not a parallelogram, admits a good Brass measure.
\end{theorem}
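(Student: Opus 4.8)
We must build a good Brass measure on an arbitrary normed plane $X^2$ whose unit ball $B$ is not a parallelogram; the plan is to rephrase the Brass condition in terms of the arcs of $\bd B$ cut off by unit-length chords, to extract from those chords a rotation-like map on $\bd B$, and to obtain $\mu$ by averaging a length measure over that map.

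\textbf{Reduction to unit chords.} Suppose $\norm{\va-\vb}=\norm{\vb-\vc}=\norm{\vc-\va}=1$. Put $\vu=\va-\vb$ and $\vv=\vc-\vb$; then $\vu,\vv\in\bd B$ and $\norm{\vu-\vv}=\norm{\va-\vc}=1$, so $\vu\vv$ is a \emph{unit chord} of $\bd B$, and the angle $\myangle\va\vb\vc$ is measured by the sub-arc of $\bd B$ running from $\vu$ to $\vv$ inside the cone $\pos\set{\vu,\vv}$, that is, the one of the two arcs between $\vu$ and $\vv$ that avoids both $-\vu$ and $-\vv$ (these differ from $\vu,\vv$ because $\norm{\vu-\vv}=1\neq2$). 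Conversely each unit chord $\vu\vv$ comes from the equilateral triangle $\vu\vo\vv$. So an angular measure $\mu$ is a Brass measure precisely when it assigns $\pi/3$ to the cone-arc of every unit chord of $\bd B$, and we must produce such a $\mu$ with $\mu(\bd B)=2\pi$, $\mu(\{\vp\})=0$, $\mu(-A)=\mu(A)$, which is moreover good.

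\textbf{A rotation-like map.} For $\vp\in\bd B$ let $R(\vp)$ be the first point of $\bd B$ reached on travelling counterclockwise from $\vp$ with $\norm{\vp-R(\vp)}=1$. Then $R$ is a weakly monotone degree-one circle map; it commutes with the antipodal map $A\colon\vp\mapsto-\vp$ by central symmetry of $B$; and $R(\vp)$ lies strictly between $\vp$ and $-\vp$ in the counterclockwise order, so in any angular measure a single step of $R$ has measure in $(0,\pi)$. If $\lambda(X^2)\leq1$ then each point of $\bd B$ has exactly two points of $\bd B$ at distance $1$, one on either side, and $\bd B$ has no nontrivial interval of constancy for $R$, so $R$ is an orientation-preserving homeomorphism whose inverse is the analogous clockwise map. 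If $\lambda(X^2)>1$ then $R$ may be constant on intervals, but by Lemma~\ref{segmentlemma} (at most two parallel pairs of long segments, each of length $<2$ since $B$ is not a parallelogram) together with Lemma~\ref{lemma:long} these intervals — hence also the arcs where the semi-conjugacy used below is constant — lie inside the ends of the long segments of $\bd B$.

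\textbf{The cycle closes in six steps.} The heart of the matter, and the only place that uses that $B$ is not a parallelogram, is: \emph{$R^3(\vp)=-\vp$ for every $\vp\in\bd B$}, whence $R^6=\mathrm{id}$ and (for $\lambda(X^2)\leq1$) $R$ is conjugate, by a homeomorphism that also sends $A$ to the antipodal rotation, to the rigid rotation by $2\pi/6$. For a parallelogram this is false: around $\ell_\infty^2$ one returns to the start only after eight unit steps ($\rho(R)=1/8$), which is exactly why $\ell_\infty^2$ admits no Brass measure. My intended argument combines the bound ``one step lies in $(0,\pi)$'' with Lemma~\ref{segmentlemma}(2) (no unit chord can have both endpoints buried in a common long segment once $\lambda<2$) and an intermediate-value argument applied to the continuous map $\vp\mapsto(\text{signed arc from }-\vp\text{ to }R^3(\vp))$, forcing the winding of $R$ around $\bd B$ to be completed in exactly six unit steps. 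I expect this to be the main obstacle; everything else is bookkeeping.

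\textbf{Building $\mu$.} Assume first $\lambda(X^2)\leq1$. Choose any atomless measure $\nu$ on $\bd B$ that is positive on every arc, set $\sigma:=\tfrac16\sum_{k=0}^{5}R^{k}_{*}\nu$ and $\ell:=\tfrac12(\sigma+A_{*}\sigma)$; since $R^{6}=\mathrm{id}$ and $AR=RA$, the measure $\ell$ is atomless, positive on every arc, and invariant under both $R$ and $A$. Rescale to $\mu:=\tfrac{2\pi}{\ell(\bd B)}\,\ell$, so $\mu(\bd B)=2\pi$, $\mu(\{\vp\})=0$ and $\mu(-A)=\mu(A)$. As $R$ is an orientation-preserving, fixed-point-free $\mu$-isometry with $R^{6}=\mathrm{id}$ and $R^{3}=A$, it advances every point by one common $\mu$-amount $\alpha$ with $6\alpha=2\pi$ and $3\alpha=\pi$, so $\alpha=\pi/3$; the two endpoints of any unit chord form an $R^{\pm1}$-pair whose cone-arc is precisely one such step, so $\mu$ assigns $\pi/3$ to the cone-arc of every unit chord. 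Hence $\mu$ is a Brass measure, and it is proper because $\ell$ is positive on arcs, so by Lemma~\ref{lemma:good} it is good. When $\lambda(X^2)>1$, perform the analogous construction through the monotone semi-conjugacy $h$ of $R$ onto the rotation by $2\pi/6$ (which also intertwines $A$ with the antipodal rotation), defining $\mu([\vp,\vq])$ to be the increment of $h$ from $\vp$ to $\vq$; this $\mu$ still gives every cone-arc measure $\pi/3$, its only null arcs are those collapsed by $h$, and by Lemmas~\ref{segmentlemma} and~\ref{lemma:long} those lie inside the ends of long segments, so $\mu$ is good.
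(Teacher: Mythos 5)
Your construction in the regime $\lambda(X^2)\leq 1$ is sound and is a nice variant of Brass's argument: there the step-by-one map $R$ really is a homeomorphism with $R^3=-\mathrm{id}$, and averaging an atomless measure over the six iterates produces a proper Brass measure (the paper instead puts a measure of mass $\pi/3$ on one fundamental arc and transports it by $f$; the two devices are interchangeable). The genuine gap is in the case $\lambda(X^2)>1$, and it sits exactly at the step you flag as the main obstacle. The claim ``$R^3(\vp)=-\vp$ for every $\vp\in\bd B$'' is \emph{false} once $\bd B$ contains a long segment. Take $B$ to be the hexagon with vertices $(1,0)$, $(3/4,1)$, $(-3/4,1)$, $(-1,0)$, $(-3/4,-1)$, $(3/4,-1)$; the top edge has length $3/2$, with ends $[1/4,3/4]\times\{1\}$ and $[-3/4,-1/4]\times\{1\}$. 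For $\vp=(1/2,1)$ one computes $R(\vp)=(-1/2,1)$, $R^2(\vp)=(-1,0)$, $R^3(\vp)=(-3/4,-1)\neq-\vp$, and $R^6(\vp)=(3/4,1)\neq\vp$: the orbit merely falls into a $6$-cycle. Worse, $R$ maps the entire end $[-3/4,-1/4]\times\{1\}$ to the single point $(-1,0)$, so pushforwards of atomless measures acquire atoms and no averaging scheme can be rescued directly. The intermediate-value argument you sketch could at best produce \emph{one} point with $R^3(\vp)=-\vp$, not the identity $R^3=-\mathrm{id}$ you need.

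The correct statement, which is what the paper proves in Lemma~\ref{brass:lemma}, is that $f^3=-\mathrm{id}$ holds only after deleting the exceptional set $S$ of ends of long segments together with the directions parallel to long segments; on $\bd B\setminus S$ the proof is a two-line algebraic identity ($f(f(x))=f(x)-x$, because $f(x)-x$ is a unit vector at distance $\norm{x}=1$ from $f(x)$ and uniqueness holds off $S$), not a topological one. Your fallback of pulling the measure back through a monotone semi-conjugacy $h$ onto the rotation by $\pi/3$ is essentially the right repair and is in effect what the paper does, but the existence of such an $h$ whose fibres are nontrivial only inside the ends of long segments is precisely equivalent to the statement $f^3=-\mathrm{id}$ on $\bd B\setminus S$ together with Lemmas~\ref{segmentlemma} and~\ref{lemma:long}; you assert this (in a false pointwise form) rather than prove it. You would also still need to check that a unit chord $\vu\vv$ with $\vu$ parallel to a long segment receives cone-arc measure exactly $\pi/3$ even though $\vv$ is then not unique, which is where the nullity of the ends is used and why ``good'' rather than ``proper'' is the right conclusion. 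As it stands, the single step on which the whole theorem rests is both misstated and unproved.
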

Before we sketch the proof of Theorem~\ref{brass:proper}, we state the following technical result.
\begin{lemma}\label{brass:lemma}
Suppose that the unit ball $B$ of $X^2$ is not a parallelogram.
Let $S$ be the union of the ends of the long segments of $\bd B$ and the vectors parallel to long segments.
Then for each $x\in\bd B\setminus S$ there exists a unique $y=f(x)\in\bd B\setminus S$ such that $\norm{x-y}=1$ and the orientation of $\myangle xoy$ is positive.
Furthermore, $f$ is a bijection and satisfies $f\circ f\circ f(x)=-x$ for all $x\in\bd B\setminus S$.
\end{lemma}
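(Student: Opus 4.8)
The plan is to read $f$ as a ``rotation by $60^\circ$'' of the unit circle. Since for $\vx,\vy\in\bd B$ the relation $\norm{\vx-\vy}=1$ says precisely that $\set{o,\vx,\vy}$ is a unit equilateral triangle, $f(\vx)$ should be the third vertex of the unit equilateral triangle on the edge $o\vx$ lying on the positive side of the line through $o$ and $\vx$. I would prove the statement in four steps: (i) existence and uniqueness of $f(\vx)$ for $\vx\in\bd B\setminus S$; (ii) the map $g$ defined as $f$ but with ``positive'' replaced by ``negative'' is well defined and also sends $\bd B\setminus S$ to itself; (iii) $g=f^{-1}$, so $f$ is a bijection; (iv) the functional equation $f(\vx)+g(\vx)=\vx$, from which $f\circ f\circ f=-\mathrm{id}$ drops out.

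For (i), fix the counter-clockwise orientation and let $\gamma$ parametrise $\bd B$ with $\gamma(0)=\vx$ and $\gamma$ reaching $-\vx$ at the half-way parameter. The map $t\mapsto\norm{\vx-\gamma(t)}$ is continuous, equals $0$ at $\vx$ and $2$ at $-\vx$, and is non-decreasing on the half-arc from $\vx$ to $-\vx$ (a standard monotonicity property of convex curves); since $\norm{\vx-(\pm\vx)}\in\set{0,2}$, every $\vy\in\bd B$ with $\norm{\vx-\vy}=1$ and $\myangle\vx o\vy$ positively oriented lies strictly on the positive side, and the intermediate value theorem produces such a $\vy$. For uniqueness I would argue contrapositively: by the monotonicity the set of such $\vy$ is a sub-arc of $\bd B$, so if it is not a single point it has positive length, whence a non-degenerate arc $A\subseteq\bd B$ satisfies $A\subseteq\vx+\bd B$; that is, $A$ and $A-\vx$ are both arcs of $\bd B$ and translates of one another. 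At corresponding points the supporting lines of $\bd B$ are then parallel, so (a convex curve having at most a segment's worth of boundary points with a given outer normal) $A$ and $A-\vx$ are straight, and one locates, using Lemma~\ref{segmentlemma}, the edges of $\bd B$ containing them --- they coincide or form a parallel pair --- after which a short computation forces $\vx$ to be parallel to such an edge or to lie in one of its ends, i.e.\ $\vx\in S$. Thus $f(\vx)$ is unique for $\vx\in\bd B\setminus S$, and from the same description of $S$ (ends of long segments, together with the vectors of $\bd B$ parallel to long segments) one checks that $f(\vx)$ again lies in $\bd B\setminus S$. I expect this ``a plateau at distance $1$ forces $\vx\in S$'' analysis, together with the verification that $f$ preserves $\bd B\setminus S$, to be the main obstacle; it is the local counterpart of Brass's treatment of segments on the unit circle.

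Step (ii) is the mirror image of (i), run on the clockwise half-arc and using $-S=S$. For (iii), observe that whenever $\vp,\vq\in\bd B$ satisfy $\norm{\vp-\vq}=1$ with $\myangle\vp o\vq$ positively oriented, the same pair has $\myangle\vq o\vp$ negatively oriented, so $\vq$ is a positive-orientation unit-distance neighbour of $\vp$ and $\vp$ a negative-orientation one of $\vq$. Applying this with $\vq=f(\vp)$ and with $\vp=g(\vq)$, and invoking the uniqueness from (i) and (ii), gives $g\circ f=\mathrm{id}$ and $f\circ g=\mathrm{id}$ on $\bd B\setminus S$; hence $f$ is a bijection of $\bd B\setminus S$ with inverse $g$.

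Finally (iv). Let $\vx\in\bd B\setminus S$ and put $\vy=g(\vx)$, so $\set{o,\vx,\vy}$ is equilateral. Then $\vx-\vy$ satisfies $\norm{\vx-\vy}=1$, $\norm{\vx-(\vx-\vy)}=\norm{\vy}=1$ and $\norm{o-(\vx-\vy)}=\norm{\vy-\vx}=1$, and, because $\vy$ lies on the negative side of the line through $o$ and $\vx$, the point $\vx-\vy$ lies on the positive side (the orientation of $\myangle\vx o(\vx-\vy)$ is the opposite of that of $\myangle\vx o\vy$). So $\vx-\vy$ is a positive-orientation unit-distance neighbour of $\vx$, and by the uniqueness in (i), $\vx-\vy=f(\vx)$; that is, $f(\vx)+g(\vx)=\vx$ for all $\vx\in\bd B\setminus S$. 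Since $f$ and $g$ preserve $\bd B\setminus S$ and $g=f^{-1}$, applying this equation at $\vx$ and at $f(\vx)$ yields $f(f(\vx))=f(\vx)-g(f(\vx))=f(\vx)-\vx$, and then $f(f(f(\vx)))=f(f(\vx))-g(f(f(\vx)))=(f(\vx)-\vx)-f(\vx)=-\vx$, as claimed.
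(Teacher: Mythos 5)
Your proposal is correct and follows essentially the same route as the paper's (very terse) proof: existence by the intermediate value theorem and monotonicity of the chord-length function, uniqueness by showing that a non-degenerate arc of solutions forces a long segment parallel to $\vx$ (hence $\vx\in S$), and the identity $f\circ f(\vx)=f(\vx)-\vx$ -- equivalent to your $f(\vx)+f^{-1}(\vx)=\vx$ -- to get $f\circ f\circ f=-\mathrm{id}$. Your explicit introduction of the clockwise map $g$ to certify bijectivity is a small but welcome refinement (the paper only alludes to $f^{-1}$ later), and you correctly identify the plateau analysis as the one step needing real work.
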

\begin{proof}
For each $x\in\bd B$ there exists $y\in\bd B$ such that $\norm{x-y}=1$ and the orientation of $\myangle xoy$ is positive.
If $x\notin S$, then $x$ is not parallel to a long segment, and $y=:f(x)$ is unique.
It also follows from $x\notin S$ that $x$ is not on an end of a long segment.
Therefore, different $x\in\bd B\setminus S$ give different $y$.
Thus, $f$ is a strictly monotone function such that $f\circ f(x)=f(x)-x$, hence $-x=f(f(x))-f(x)=f\circ f\circ f(x)$.
\end{proof}

\begin{proof}[Proof sketch of Theorem~\ref{brass:proper}]
Choose a unit vector $x$ not parallel to a long segment and not on an end of a long segment.
(This is possible iff the unit ball $B$ is not a parallelogram.)
Consider the set $S$ and the function $f$ from Lemma~\ref{brass:lemma}.
Let $A$ be the open arc from $x$ to $f(x)$.
Choose any measure $\mu$ on $A\setminus S$ such that $\mu$ and the usual length measure on $A\setminus S$ are mutually absolutely continuous with respect to each other (thus each singleton has measure $0$ and each non-trivial subarc has positive measure) and with total measure $\pi/3$.
Note that $f$ yields not only injections, but also surjections among the six parts of $\bd B\setminus S$, as can be seen by considering $f^{-1}$.
Use the defining property of $f$ to extend this measure to the rest of $\partial B\setminus S$.
Finally, define the measure of $S$ to be $0$.
\end{proof}

We already mentioned the result of Petty \cite{Petty1971} and Soltan \cite{Soltan1975} that a $d$-dimensional space has an equilateral set of size at most $2^d$, with equality iff the unit ball is an affine $d$-cube.
The $2$-dimensional case follows easily from the existence of a Brass measure.
\begin{lemma}\label{lemma:4points}
If the unit ball of a normed plane is not a parallelogram, then there do not exist $4$ equidistant points.
\end{lemma}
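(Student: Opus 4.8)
The plan is to invoke the Brass measure. Suppose for contradiction that $\va,\vb,\vc,\vd$ are four distinct points that are pairwise at distance $1$ (rescaling the norm if necessary). Since the unit ball $B$ is not a parallelogram, Theorem~\ref{brass:measure} provides a Brass measure $\mu$ on $X^2$; recall this means that every triangle all of whose sides have norm-length $1$ has all three angles of $\mu$-measure $\pi/3$. First I would record an elementary observation: no three of the four points are collinear, since three collinear points at mutual distance $1$ would force two of them to be at distance $2$.

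Next I would split into two cases according to whether the four points are in convex position. If they are, relabel so that $\va\vb\vc\vd$ is a convex quadrilateral in this cyclic order. The interior angle at $\vb$ is $\myangle\va\vb\vc$, which is an angle of the triangle $\va\vb\vc$; since $\norm{\va-\vb}=\norm{\vb-\vc}=\norm{\vc-\va}=1$, this triangle is equilateral, so $\mu(\myangle\vb)=\pi/3$, and likewise at each of the other three vertices. But Lemma~\ref{lemma:angular}(1) gives that the four interior angles have total $\mu$-measure $\pi(4-2)=2\pi$, while the computation above makes the total $4\cdot\pi/3=4\pi/3$, a contradiction.

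If the four points are not in convex position, then one of them, say $\vd$, lies in the triangle $\triangle\va\vb\vc$. In fact it lies in the interior: a point on a side, say on $\va\vb$, would give $\norm{\va-\vb}=\norm{\va-\vd}+\norm{\vd-\vb}=2$. The three rays from $\vd$ through $\va,\vb,\vc$ partition the directions at $\vd$ into the three angles $\myangle\va\vd\vb$, $\myangle\vb\vd\vc$, $\myangle\vc\vd\va$, whose $\mu$-measures sum to $2\pi$ by translation invariance and additivity. Each of these is again an angle of an equilateral triangle (for instance $\triangle\va\vd\vb$ has all sides of norm-length $1$), so each has measure $\pi/3$ and the total is $\pi\neq 2\pi$, again a contradiction. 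Since both cases are impossible, four equidistant points cannot exist.

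I do not anticipate a real obstacle here: the argument is short once the Brass measure is in hand, and the only points needing a little care are excluding three collinear points, excluding a point lying on a side of the triangle in the non-convex case, and checking that the relevant angle sums ($2\pi$ for a quadrilateral, $2\pi$ around an interior point) are applied to the correct configuration.
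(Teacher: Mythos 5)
Your proof is correct and follows essentially the same route as the paper's: both split into the convex-position case (four interior angles of measure $\pi/3$ summing to $2\pi$ by Lemma~\ref{lemma:angular}) and the case of a point inside the triangle of the other three (three angles of measure $\pi/3$ around an interior point summing to $2\pi$). The extra care you take in excluding collinear triples and a point on a side is the same observation the paper makes when it notes that no three of the points are collinear.
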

\begin{proof}
Suppose that $\{a,b,c,d\}$ is an equilateral set in a normed plane with a Brass measure $\mu$.
Then no $3$ of the points are collinear.

If one of the points, say $d$, is in the convex hull of the other $3$, then on the one hand we would have $\mu(\myangle adb)+\mu(\myangle bdc)+\mu(cda)=2\pi$ from the definition of an angular measure, and on the other hand $\mu(\myangle adb)=\mu(\myangle bdc)=\mu(cda)=\pi/3$, because $\mu$ is a Brass measure.
This is a contradiction.

Otherwise, the $4$ points form a convex quadrilateral $abcd$, say.
Then the interior angle at each vertex equals $\pi/3$, but the sum of the $4$ interior angles has to equal $2\pi$ by Lemma~\ref{lemma:angular}, again a contradiction.
\end{proof}
The following are some useful properties of Brass measures.
\begin{lemma}\label{brasslemma1}
Let $X^2$ be a normed plane with a Brass measure $\mu$.
In $\triangle\vo\va\vb$ let $\norm{\vo-\va}=\norm{\vo-\vb}=1$.
\begin{enumerate}
\item\label{a} If $\norm{\va-\vb}>1$, then $\mu(\myangle\va\vo\vb)\geq\pi/3$.
If $\mu(\myangle\va\vo\vb)=\pi/3$ and $\mu$ is a good Brass measure, then $\va\vb$ is contained in a long segment of $\bd B_X$, with $a$ in one end and $b$ in the other end of the long segment, both different from the inner endpoints of the ends.
\item\label{b} If $\norm{\va-\vb}<1$, then $\mu(\myangle\va\vo\vb)\leq\pi/3$.
If $\mu(\myangle\va\vo\vb)=\pi/3$ and $\mu$ is a good Brass measure, then $\va\vb$ is contained in the relative interior of a long segment of $\bd B_X$, with $a$ in one end and $b$ in the other end of the long segment.
\end{enumerate}
\end{lemma}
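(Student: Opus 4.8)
The plan is to compare $\mu(\myangle\va\vo\vb)$ with the vertex angle at $\vo$ of a norm-equilateral triangle built on $\vo$ and $\va$, which a Brass measure must assign the value $\pi/3$, together with the monotonicity of $\mu$ along nested angular sectors. After translating we may assume $\vo$ is the origin, so $\va,\vb\in\bd B_X$. Fix an orientation so that, going around $\bd B_X$, $\vb$ is reached from $\va$ before $-\va$, and parametrise the unit vectors on the corresponding arc as $\vc(t)$, $t\in[0,1]$, with $\vc(0)=\va$, $\vc(1)=-\va$, and $\vc(t_{\vb})=\vb$ for some $t_{\vb}\in(0,1)$. The map $t\mapsto\norm{\va-\vc(t)}$ is continuous, with value $0$ at $t=0$ and $\norm{\va-(-\va)}=2$ at $t=1$; and whenever $\norm{\va-\vc(t)}=1$ the triangle $\triangle\vo\va\vc(t)$ has all three sides of length $1$, so $\mu(\myangle\va\vo\vc(t))=\pi/3$ because $\mu$ is a Brass measure. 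Finally, the sectors $\myangle\va\vo\vc(t)$ increase with $t$, so $t\mapsto\mu(\myangle\va\vo\vc(t))$ is non-decreasing.

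These facts already give the two inequalities, and need neither Lemma~\ref{brass:lemma} nor any hypothesis on $\va$. For part~\ref{a}, since $\norm{\va-\vb}>1$, the intermediate value theorem gives $t^{*}\in(0,t_{\vb})$ with $\norm{\va-\vc(t^{*})}=1$, whence $\mu(\myangle\va\vo\vb)\geq\mu(\myangle\va\vo\vc(t^{*}))=\pi/3$. For part~\ref{b}, since $\norm{\va-\vb}<1$, the intermediate value theorem applied on $[t_{\vb},1]$ gives $t^{*}\in(t_{\vb},1)$ with $\norm{\va-\vc(t^{*})}=1$, whence $\myangle\va\vo\vb\subseteq\myangle\va\vo\vc(t^{*})$ and $\mu(\myangle\va\vo\vb)\leq\pi/3$.

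Now suppose in addition that $\mu$ is a good Brass measure and that $\mu(\myangle\va\vo\vb)=\pi/3$. In part~\ref{a}, take $t^{*}$ to be the largest $t\in[0,t_{\vb}]$ with $\norm{\va-\vc(t^{*})}=1$ (so $t^{*}<t_{\vb}$, as $\norm{\va-\vb}>1$) and put $\vc=\vc(t^{*})$; then additivity of $\mu$ over the arc cut off by $\myangle\va\vo\vb$ at $\vc$ gives $\mu(\myangle\vc\vo\vb)=\mu(\myangle\va\vo\vb)-\mu(\myangle\va\vo\vc)=0$, and $\myangle\vc\vo\vb$ is non-trivial since $\vc\neq\vb$. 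By goodness the arc of $\bd B_X$ from $\vc$ to $\vb$ lies inside an end of a long segment $L$, so $\vb$ and $\vc$ lie in one end of $L$. Using that $\vc$ lies in the relative interior of $L$ — so the line carrying $L$ supports $B_X$ at $\vc$ — together with $\norm{\va-\vc}=1$ and $\norm{\vo-\va}=\norm{\vo-\vc}=1$, one deduces that $\va$ lies in the opposite end of $L$; the triangle inequality along $L$ then gives $\va\vb\subseteq L$ with neither $\va$ nor $\vb$ an inner endpoint of an end of $L$, since at such an endpoint one would obtain $\norm{\va-\vb}\leq 1$, contradicting the hypothesis. Part~\ref{b} is parallel: now $\mu(\myangle\vb\vo\vc)=0$ with $\vc\neq\vb$ places $\vb$ and $\vc$ in a common end of a long segment $L$, $\va$ in the opposite end, and $\va\vb$ in the relative interior of $L$.

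I expect the inequalities and the additivity bookkeeping to be routine. The main obstacle is the geometry of the equality cases: converting the measure-theoretic statement ``$\myangle\vc\vo\vb$ lies in an end of a long segment'' into the precise conclusion about the positions of $\va$, $\vb$, and $L$, including the behaviour at the inner endpoints of the ends, and checking the finitely many degenerate sub-cases in which the chosen equilateral point $\vc$ is an actual endpoint of $L$ or in which $\va$ lies in the exceptional set $S$ of Lemma~\ref{brass:lemma} (this last situation being vacuous when $\lambda(X^2)\leq 1$, and in general confined to finitely many special positions since, by Lemma~\ref{segmentlemma}, $\bd B_X$ carries only boundedly many long segments).
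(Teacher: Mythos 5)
Your argument for the two inequalities is correct and is essentially the one the paper intends: the paper disposes of this lemma with the single remark that it follows from the monotonicity of $x\mapsto\norm{x-\va}$ on the two arcs of $\bd B_X$ from $\va$ to $-\va$, and your intermediate-value argument combined with the defining property of a Brass measure on equilateral triangles accomplishes the same thing. The equality cases, however, have a genuine gap at their central step. You claim that from ``$\vc$ lies in the relative interior of $L$, the line carrying $L$ supports $B_X$ at $\vc$, $\norm{\va-\vc}=1$, and $\norm{\va}=\norm{\vc}=1$'' one deduces $\va\in L$. These hypotheses do not imply that: if $L$ has norm-length $3/2$ and $\vc$ is its midpoint, every point of $L$ is within distance $3/4$ of $\vc$, yet the intermediate value theorem produces unit vectors at distance exactly $1$ from $\vc$, necessarily off $L$. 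Placing $\va$ on $L$ is precisely the content of the equality statement, not a routine consequence of the facts you list, so as written the equality halves are asserted rather than proved.

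The step can be repaired, but only by combining the measure bookkeeping with the monotonicity lemma you never invoke. With $\vb,\vc$ in the end of $L$ near the endpoint $q$, the identity $\mu(\myangle\va\vo\vc)=\pi/3=\mu(L)$ together with Lemma~\ref{lemma:long} forces the arc between $\va$ and the inner endpoint $p'$ of the opposite end to have measure $0$; goodness then puts $\va$ in that end unless $\va$ lies beyond the outer endpoint $p$, and that last case is excluded because monotonicity of $x\mapsto\norm{x-\vc}$ gives $\norm{\va-\vc}\geq\norm{p-\vc}\geq 1$, with equality only when $\vc$ is the inner endpoint of its end, a degenerate position needing separate treatment. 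Two further points you gloss over also need attention: goodness only places a measure-zero arc in the \emph{union} of the ends of long segments, and since two long segments may share an endpoint you must rule out the arc from $\vc$ to $\vb$ straddling ends of two different long segments; and your chosen $\vc$ may be an endpoint of $L$, where the supporting-line remark is unavailable. Your concluding triangle-inequality argument (once $\va,\vb\in L$ is known) is fine.
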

The proof of the above lemma is straightforward, using the fact that for $a\in\bd B_X$, the function $x\mapsto\norm{x-a}$ is monotone on any of the two arcs of $\bd B_X$ from $a$ to $-a$.
\begin{lemma}\label{brasslemma2}
Let $X^2$ be a normed plane with a Brass measure $\mu$.
Let $\va\vb\vc\vd$ be a quadrilateral with
\[ \norm{\va-\vb}=\norm{\vb-\vc}=\norm{\vc-\vd}=\norm{\vd-\va}\leq\norm{\vb-\vd},\norm{\vc-\va}. \]
Then $abcd$ is convex and $\mu(\myangle\vb)+\mu(\myangle\vc)=\pi$.
\end{lemma}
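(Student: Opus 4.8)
The plan is to prove the two assertions in turn — first that $\va\vb\vc\vd$ is convex, then the identity $\mu(\myangle\vb)+\mu(\myangle\vc)=\pi$ — splitting the second into a few special cases and reducing the remaining case to a single statement about an auxiliary point.

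\emph{Convexity.} If $\va\vb\vc\vd$ were not convex, one of its vertices would lie in the triangle spanned by the other three; by the cyclic symmetry of the hypotheses I may take this vertex to be $\vd$, so $\vd\in\triangle\va\vb\vc$. Two sides of $\triangle\va\vb\vc$ are sides of the quadrilateral and so have length $1$, hence every point of $\triangle\va\vb\vc$ is within distance $1$ of $\vb$; in particular $\norm{\vb-\vd}\leq1$, and together with $\norm{\vb-\vd}\geq1$ this forces $\norm{\vb-\vd}=1$. Then $\vd$ lies both on $\bd B_X+\vb$ and in $\triangle\va\vb\vc\subseteq\{\vx\colon\norm{\vx-\vb}\leq1\}$, so $\vd$ lies on the boundary of $\triangle\va\vb\vc$; the only boundary points of $\triangle\va\vb\vc$ at distance $1$ from $\vb$ are $\va$, $\vc$ and the segment $\va\vc$ when it happens to lie on $\bd B_X+\vb$, so $\vd$ is collinear with $\va$ and $\vc$, a degenerate quadrilateral — contradiction. (This is the same mechanism as the non-convex case in the proof of Lemma~\ref{lemma:angular}(3).)

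\emph{Easy cases and reduction.} If $\norm{\va-\vc}=1$, then $\triangle\va\vb\vc$ and $\triangle\va\vc\vd$ are equilateral, so since equilateral triangles are equiangular for a Brass measure each of $\mu(\myangle\va\vb\vc),\mu(\myangle\vb\vc\va),\mu(\myangle\va\vc\vd)$ equals $\pi/3$, whence $\mu(\myangle\vb)=\pi/3$ and $\mu(\myangle\vc)=2\pi/3$; the case $\norm{\vb-\vd}=1$ is symmetric, and if $\va\vb\vc\vd$ is a parallelogram the identity follows because opposite angles of a parallelogram are equal (as $\mu(A)=\mu(-A)$) and the four angles sum to $2\pi$ by Lemma~\ref{lemma:angular}(1). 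So assume $\norm{\va-\vc},\norm{\vb-\vd}>1$ and $\va\vb\vc\vd$ is not a parallelogram. Then the lines $\Line{\va\vb}$ and $\Line{\vc\vd}$ meet at a point $\vg$, and — running the triangle-inequality argument of the convex case of the proof of Lemma~\ref{lemma:angular}(3), where now all four sides being equal turns the chain of inequalities into a chain of equalities, each equality case again yielding a collinear triple — $\vg$ lies on the same side of $\Line{\va\vd}$ as $\vb$ and $\vc$, so $\vb$ is between $\va$ and $\vg$ and $\vc$ is between $\vd$ and $\vg$. Then $\mu(\myangle\vg\vb\vc)=\pi-\mu(\myangle\vb)$ and $\mu(\myangle\vg\vc\vb)=\pi-\mu(\myangle\vc)$, so the angle sum in $\triangle\vg\vb\vc$ (Lemma~\ref{lemma:angular}(1)) gives $\mu(\myangle\vb)+\mu(\myangle\vc)=\pi+\mu(\myangle\vg)$. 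Since $\widehat{\vb-\vg}=\va-\vb$ and $\widehat{\vc-\vg}=\vd-\vc$ and angles are translation invariant, $\myangle\vg$ is the angle at $\vo$ between the unit vectors $\va-\vb$ and $\vd-\vc$, and everything reduces to showing $\mu(\myangle\vg)=0$.

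\emph{The crux, and the main obstacle.} To get $\mu(\myangle\vg)=0$ I would use that $\vb$, $\vd$ and $\ve:=\va+\vc-\vb$ all lie at distance $1$ from both $\va$ and $\vc$ (for $\ve$, because $\va\vb\vc\ve$ is a parallelogram with unit sides), and that these three points are pairwise distinct since $\va\vb\vc\vd$ is not a parallelogram and has diagonals $>1$. Hence the unit circles $\bd B_X+\va$ and $\bd B_X+\vc$ have at least three common points and therefore share a segment; analysing how $\{\vb,\vd,\ve\}$ can be distributed among the (at most two) common segments, and using that the diagonals of a convex quadrilateral cross — which rules out the parallel configurations — together with Lemma~\ref{segmentlemma}, one should conclude that $\bd B_X$ contains a long segment $M$ with both edge-vectors $\va-\vb$ and $\vd-\vc$ lying in one and the same \emph{end} of $M$. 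Since $\myangle\vg$ is exactly the arc of $\bd B_X$ between $\va-\vb$ and $\vd-\vc$, and ends of long segments are $\mu$-null by Lemma~\ref{lemma:long}, this gives $\mu(\myangle\vg)=0$ and hence $\mu(\myangle\vb)+\mu(\myangle\vc)=\pi$. The hard part is precisely this last geometric step: it is not enough to land the two edge-vectors on a common long segment of $\bd B_X$ — one must control their positions sharply enough to land them inside a single end, so that the arc between them is $\mu$-null even for a Brass measure that is not good; obtaining this needs the location of the shared segment of the two unit circles relative to $\vb,\vd,\ve$ to be pinned down quite precisely.
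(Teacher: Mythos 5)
Your overall strategy coincides with the paper's: establish convexity, dispose of the parallel and short-diagonal cases, and reduce the rest to showing that the angle between the unit vectors $\va-\vb$ and $\vd-\vc$ is $\mu$-null --- which is exactly the paper's $\mu(\myangle\vd\vc\ve)=0$ with $\ve=\va+\vc-\vb$. But the proof is not complete: the decisive step, placing $\vd-\vc$ and $\ve-\vc$ in a single end of a long segment, is only announced (``one should conclude'', ``the hard part is precisely this last geometric step''), so the argument stops where the real work begins. For what it is worth, once the collinearity of $\vb,\vd,\ve$ is established (and that is where the case analysis of the at most two common segments of $\bd B_X+\va$ and $\bd B_X+\vc$ is genuinely needed), the ``single end'' conclusion is less delicate than you fear: $[\vb-\vc,\ve-\vc]$ is then a segment of $\bd B_X$ of length greater than $1$, contained in a maximal long segment $M$, and $\vd-\vc$ lies at distance $\norm{\vb-\vd}\geq 1$ along $M$ from $\vb-\vc$, hence at distance at least $1$ from the endpoint of $M$ nearer to $\vb-\vc$; by the definition of the ends this places both $\vd-\vc$ and $\ve-\vc$ in the opposite end, and Lemma~\ref{lemma:long} finishes.

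There is also a genuine error earlier. You claim that the intersection point $\vg$ of $\Line{\va\vb}$ and $\Line{\vc\vd}$ must lie beyond $\vb$ and $\vc$, justified by ``each equality case [of the triangle inequality] again yielding a collinear triple''. In a general normed plane, equality $\norm{x+y}=\norm{x}+\norm{y}$ does not force $x$ and $y$ to be parallel, so that deduction fails; worse, the claim itself cannot be proved, because the relabelling $(\va,\vb,\vc,\vd)\mapsto(\vb,\va,\vd,\vc)$ preserves all the hypotheses and swaps the two possible positions of $\vg$ relative to the quadrilateral. The correct move is to note that this relabelling turns the desired conclusion into $\mu(\myangle\va)+\mu(\myangle\vd)=\pi$, which is equivalent to the original by the angle sum $2\pi$ from Lemma~\ref{lemma:angular}, so the favourable position of $\vg$ may simply be assumed without loss of generality; the paper makes the analogous normalisation rather than attempting a proof. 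A smaller point: your convexity argument presupposes that the quadrilateral is simple (a self-crossing quadrilateral has no vertex inside the triangle of the other three), whereas the paper first excludes the non-simple case using the triangle inequality and Lemma~\ref{lemma:4points}.
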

\begin{proof}
Suppose that $\norm{\va-\vb}=\norm{\vb-\vc}=\norm{\vc-\vd}=\norm{\vd-\va}=1$.
The quadrilateral $\va\vb\vc\vd$ must be simple, otherwise the triangle inequality would give $\norm{\vb-\vd}=\norm{\va-\vc}=1$, which would contradict Lemma~\ref{lemma:4points}.

Suppose that the simple quadrilateral $\va\vb\vc\vd$ is not convex.
If $\vb\in\interior\triangle\va\vc\vd$, say, then $\norm{\vb-\vd}<\max(\norm{\vd-\va},\norm{\vd-\vc})$, a contradiction.
If $\vb\in\bd\triangle\va\vc\vd$, then $\norm{\vb-\vd}=1$ and $\Segment{\va\vc}$ is a long segment of length $2$ on the unit circle with centre $\vd$.
By Lemma~\ref{segmentlemma}, the unit ball is a parallelogram, which contradicts the existence of~$\mu$.

It follows that $\va\vb\vc\vd$ is convex, with all angles less than $\pi$.
If $\Line{\va\vb}\parallel\Line{\vd\vc}$ or $\Line{\vb\vc}\parallel\Line{\va\vd}$, then the result is obvious.
Assume without loss of generality that $\Line{\va\vb}$ and $\Line{\vc\vd}$ intersect on the side of $\Line{\va\vd}$ opposite $\vb$ and $\vc$, while $bc$ and $da$ intersect on the side of $cd$ opposite $a$ and $b$ (Fig.~\ref{fig1}).
Then, 
letting $e:=a+c-b$, $\ve\va\vb\vc$ is a parallelogram which contains $\vd$ in its interior.
Then the two unit circles with centres $\va$ and $\vc$ both contain $\vb$, $\vd$ and $\ve$ on their boundaries.
It follows that $\vb, \vd, \ve$ are collinear, and $\Segment{\vb\ve}$ is a long segment on both circles, since $\norm{\vb-\vd}\geq 1$.
Let $\vp=\vd+\vc-\vb$.
Then $\triangle cdp$ and $\triangle cep$ are equilateral, hence
$\mu(\myangle\ve\vc\vp)=\mu(\myangle\vd\vc\vp)=\pi/3$ and $\mu(\myangle\vd\vc\ve)=0$.
It follows that $\mu(\myangle\vb)+\mu(\myangle\vc)=\pi-\mu(\myangle\vd\vc\ve)=\pi$.
\end{proof}

\subsection{Applications}
The most striking application of the Brass measure was the original purpose for which Brass introduced it (as mentioned in Subsection~\ref{section:mindistedges}).
The proof, not repeated here, follows Harborth's proof \cite{Harborth1974} for the Euclidean case.
\begin{theorem}[Brass \cite{Brass1996}]
In a normed plane for which the unit ball is not a parallelogram, the number of edges of a minimum distance graph on $n$ points is at most $\lfloor 3n-\sqrt{12n-3}\rfloor$.
\end{theorem}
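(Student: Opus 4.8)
The plan is to follow Harborth's argument for the Euclidean case \cite{Harborth1974}, replacing the Euclidean angle by a \emph{good} Brass measure $\mu$ on $X^2$, which exists by Theorem~\ref{brass:proper} because the unit ball $B$ is not a parallelogram. Let $V$ be a set of $n$ points in $X^2$; scale so that the minimum distance is $1$, and let $G=G_m(V)$ be its minimum-distance graph, with $e$ edges. Recall from Section~\ref{section:mindist} (see \cite{Brass1996}) that $G$ is a planar straight-line graph: no edge meets another edge in its relative interior. In particular $G$ has a well-defined outer face; write $n_b$ for the number of points of $V$ incident to the outer face and $n_i=n-n_b$ for the number of remaining (interior) vertices.

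The first step is an angle count yielding $e\leq 3n-n_b-3$. Around any vertex $\vv$, consecutive edges $\vv\va$ and $\vv\vb$ (in cyclic order) satisfy $\norm{\vv-\va}=\norm{\vv-\vb}=1$ and $\norm{\va-\vb}\geq 1$, so $\mu(\myangle\va\vv\vb)\geq\pi/3$ by the defining property of a Brass measure together with Lemma~\ref{brasslemma1}(\ref{a}). Hence the $d(\vv)$ edges at an interior vertex cut the full angle (of $\mu$-measure $2\pi$) into $d(\vv)$ wedges, each of measure $\geq\pi/3$, and the $d(\vv)$ edges at a boundary vertex cut the interior angle of the outer boundary polygon at $\vv$ into $d(\vv)-1$ such wedges. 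Summing over all vertices and using that the interior angles of a simple closed $n_b$-gon have $\mu$-measures summing to $\pi(n_b-2)$ (first part of Lemma~\ref{lemma:angular}), one obtains $\tfrac{\pi}{3}(2e-n_b)\leq 2\pi n_i+\pi(n_b-2)$, which simplifies to $e\leq 3n-n_b-3$. (When the outer boundary is not a simple polygon — because of cut vertices, or because $G$ is disconnected — one applies this to the blocks or components and adds the resulting inequalities; this only affects the bookkeeping, and in fact Euler's formula is not needed at all.)

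The second, and harder, step is the isoperimetric estimate $n_b\geq\sqrt{12n-3}-3$, equivalently $12n\leq n_b^2+6n_b+12$. The plan is to prove it by peeling off boundary layers: deleting the $n_b$ outer vertices of $G$ leaves a minimum-distance graph $G'$ on $n-n_b$ vertices whose outer boundary has some number $n_b'$ of vertices, and the crux is the inequality $n_b'\leq n_b-6$. Here the Brass measure is again the tool: by Lemma~\ref{brasslemma1}(\ref{a}), the outer boundary polygon of any minimum-distance graph has all edges of norm-length $1$ and all interior angles of $\mu$-measure $\geq\pi/3$, and tracking how the $\mu$-measure accumulates around this polygon forces the next layer inward to lose at least six vertices. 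One must handle reflex boundary angles and the degenerate configurations where $\mu$ vanishes on an arc (i.e.\ long segments) separately — this is precisely why the measure is taken to be \emph{good} and why the equality clause of Lemma~\ref{brasslemma1}(\ref{a}) is needed. Feeding $n_b'\leq n_b-6$ into an induction on $n$, using that $x\mapsto\tfrac1{12}(x^2+6x+12)$ is increasing for $x\geq 0$, gives the claimed inequality (the hexagonal section of the triangular packing shows it is tight).

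Combining the two steps, $e\leq 3n-n_b-3\leq 3n-3-(\sqrt{12n-3}-3)=3n-\sqrt{12n-3}$, and since $e$ is an integer, $e\leq\lfloor 3n-\sqrt{12n-3}\rfloor$, as claimed. I expect the main obstacle to be the layer-peeling isoperimetric inequality $n_b'\leq n_b-6$: the angle count of the first step is a fairly mechanical transcription of the Euclidean computation into the language of Brass measures, whereas this second inequality is genuinely metric and needs the finer structure of good Brass measures — especially the equality analysis in Lemma~\ref{brasslemma1}(\ref{a}) — to exclude the degeneracies that simply do not occur in the strictly convex, let alone the Euclidean, setting.
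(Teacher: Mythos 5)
Your first step is sound: the angle count with a good Brass measure does give $e\leq 3n-n_b-3$, and this is indeed how the Harborth--Brass argument begins (the survey itself only cites the proof, so I am comparing against Harborth's scheme). The gap is in your second step: the unconditional inequality $n_b\geq\sqrt{12n-3}-3$ is \emph{false} for minimum-distance graphs, so no amount of care with long segments or reflex angles will rescue the layer-peeling as you have set it up. In the Euclidean plane, take a regular $m$-gon with sides of length $1$ (consecutive vertices at distance $1$, all other pairs farther apart) and place inside it a triangular lattice of spacing $1+\epsi$, trimmed so that every added point is at distance greater than $1$ from the polygon's vertices. The minimum distance of the whole set is $1$, and its minimum-distance graph is the $m$-cycle together with isolated vertices lying in the bounded face of that cycle; hence $n_b=m$, while the number of interior points is about $\frac{1}{2\sqrt{3}\,\pi}m^2$, and since $\frac{1}{2\sqrt{3}\,\pi}>\frac{1}{12}$ we get $\sqrt{12n-3}-3\approx 1.05\,m>n_b$. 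The same example destroys the peeling inequality $n_b'\leq n_b-6$: the induced graph on the interior points is edgeless, so all $n-m$ of them lie on its outer face. The theorem survives because such configurations have very few edges --- which is precisely the coupling between $e$ and $n_b$ that your two independent inequalities throw away.

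The repair, which is the real content of Harborth's induction, is to use the angle count a second time at the boundary vertices to bound the number of edges \emph{lost} when the boundary $B$ is peeled, rather than the size of the next boundary. Summing $\mu(\myangle\vp_i)\geq(d(\vp_i)-1)\pi/3$ around the outer boundary polygon gives $\sum_{v\in B}(d(v)-1)\leq 3(n_b-2)$, and since at least $n_b$ of the edges so counted lie on the boundary cycle itself (each counted twice in $\sum_{v\in B}d(v)$), at most $3n_b-6$ edges meet $B$. Then one splits into two cases. If $n_b\geq\sqrt{12n-3}-3$, your first inequality already finishes the proof. If $n_b<\sqrt{12n-3}-3$, then $12n>(n_b+3)^2+3$, which is exactly what makes the induction close: $e\leq e(G-B)+3n_b-6\leq\bigl(3(n-n_b)-\sqrt{12(n-n_b)-3}\bigr)+3n_b-6\leq 3n-\sqrt{12n-3}$, the last inequality being equivalent to $\sqrt{12(n-n_b)-3}\geq n_b-3$, i.e.\ to $12n\geq(n_b+3)^2+3$. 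So the missing idea is a case distinction plus an edge-deletion count, not a universal isoperimetric lower bound on $n_b$; the degenerate situations (disconnected graphs, cut vertices and non-simple outer boundaries, isolated or pendant vertices, and the zero-measure angles coming from long segments) then have to be folded into both halves of this argument, and that bookkeeping --- not the inequality $n_b'\leq n_b-6$ --- is where the remaining work lies.
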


The following theorem of Talata can be also proved using the Brass measure.
\begin{theorem}[Talata \cite{Talata2002}]\label{thm:talata}
Let $S$ be a non-empty finite set of points in a two-dimensional normed space that is not isometric to $\ell_\infty^2$.
Then the minimum-distance graph on $S$ has a vertex of degree at most $3$.
If $\card{S}\leq 6$ then $S$ has a vertex of degree at most $2$.
\end{theorem}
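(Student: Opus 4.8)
The plan is to rescale so that the minimum distance of $S$ equals $1$ and to fix a Brass measure $\mu$ on $X^2$; such a $\mu$ exists precisely because the unit ball is not a parallelogram (Theorem~\ref{brass:measure}). The one geometric fact used throughout is the following angle estimate at a vertex $\vv$ of the minimum-distance graph $G$: if $\va$ and $\vb$ are neighbours of $\vv$ that are consecutive in the angular order of the neighbours around $\vv$, then $\mu(\myangle\va\vv\vb)\geq\pi/3$. Indeed, $\norm{\va-\vb}\geq 1=\norm{\vv-\va}=\norm{\vv-\vb}$; when $\norm{\va-\vb}>1$ this is the first part of Lemma~\ref{brasslemma1}, and when $\norm{\va-\vb}=1$ the triangle $\vv\va\vb$ is equilateral, so $\mu(\myangle\va\vv\vb)=\pi/3$ by the defining property of $\mu$.

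For the first assertion, if $S$ lies on a line then $G$ is a disjoint union of paths and has maximum degree at most $2$. Otherwise $\conv(S)$ is a polygon; pick one of its vertices $\vv$, with interior angle $\alpha<\pi$. Every point of $S$, in particular every neighbour of $\vv$, lies in the cone at $\vv$ bounded by the two edges of $\conv(S)$ at $\vv$, so if $\vv$ has degree $d$ then its $d$ neighbours subtend a total angle at most $\alpha$; combined with the estimate this gives $(d-1)\pi/3\leq\alpha<\pi$, hence $d\leq 3$.

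For the second assertion, suppose for contradiction that $\card S\leq 6$ and every vertex of $G$ has degree at least $3$. Then $\card S\geq 4$ and $S$ is not collinear, so $\conv(S)$ is a polygon with $h$ vertices, $3\leq h\leq 6$. As above, the interior angle at each vertex of $\conv(S)$ is at least $2\pi/3$, and by the first part of Lemma~\ref{lemma:angular} these interior angles sum to $(h-2)\pi$; hence $3(h-2)\geq 2h$, which forces $h=6$. So all six points of $S$ are in convex position, say $\vq_1,\dots,\vq_6$ in cyclic order, every interior angle of this hexagon equals exactly $2\pi/3$, and (again by the estimate) every vertex of $G$ has degree exactly $3$, so $G$ has exactly $9$ edges.

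To finish I would invoke the fact, valid because the unit ball is not a parallelogram, that $G$ is drawn with straight edges that pairwise meet only at common endpoints. A non-crossing straight-line graph on $6$ points in convex position has at most $2\cdot 6-3=9$ edges, with equality only for a triangulation of the hexagon; since $G$ has $9$ edges it is such a triangulation, and in particular it contains all six sides $\vq_i\vq_{i+1}$ of the hexagon. Each vertex of $G$ has degree $3$, so exactly one of its three edges is a diagonal, whence the three remaining edges of $G$ form a non-crossing perfect matching of $\vq_1,\dots,\vq_6$ consisting entirely of diagonals. But every non-crossing perfect matching of finitely many points in convex position contains a pair of cyclically consecutive points (among the matched chords, the one cutting off the arc with fewest points of the set must in fact cut off an empty arc), contradicting the previous sentence. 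I expect the passage to the hexagonal case to be routine angle bookkeeping; the crux is this last step, namely recognising that $G$ must be a triangulation of the hexagon, whose diagonals would then have to form an impossible non-crossing perfect matching.
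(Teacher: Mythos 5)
Your overall strategy --- rescale so the minimum distance is $1$, fix a Brass measure, and use the $\pi/3$ lower bound on the angle between consecutive neighbours at a convex-hull vertex --- is exactly the paper's. The one step that does not hold up as written is in the first assertion: you pick a single hull vertex $\vv$ and assert that its interior angle satisfies $\alpha<\pi$, where $\alpha$ has to be the \emph{Brass measure} of that angle for the chain $(d-1)\pi/3\leq\alpha$ to make sense. The geometric angle at a hull vertex is of course less than a straight angle, but a Brass measure need not be proper: when $\lambda(X^2)>1$, a good Brass measure assigns measure $0$ to non-trivial arcs (the ends of long segments), so a geometric angle strictly less than $\pi$ can still have $\mu$-measure exactly $\pi$ (both complementary arcs may sit inside measure-zero ends, and $\mu(A)=\mu(-A)$ then forces $\mu(A)=\pi$). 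Your pointwise argument then only yields $d\leq 4$ at that vertex. The repair is the computation you already use in the second part: sum the interior angles over all $k$ hull vertices; by the first part of Lemma~\ref{lemma:angular} the total is $\pi(k-2)<\pi k$, so some hull vertex has interior $\mu$-measure $<\pi$ and hence degree at most $3$. This is precisely how the paper argues, and it is also why the theorem claims only that \emph{some} vertex has degree $\leq 3$ rather than every hull vertex.

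The second assertion is handled correctly. Forcing $h=6$, convex position, all interior angles exactly $2\pi/3$ and all degrees exactly $3$ is the same bookkeeping as in the paper (and here the pinning of each angle at $2\pi/3<\pi$ removes the issue above). Your endgame differs from the paper's: you count $9=2\cdot 6-3$ edges to conclude that $G$ is a triangulation of the hexagon, so its three diagonals would form a non-crossing perfect matching with no edge between cyclically consecutive points, which is impossible; the paper instead argues directly that a short diagonal $\vp_i\vp_{i+2}$ would trap $\vp_{i+1}$ at degree $\leq 2$, so every vertex must use a long diagonal, and two long diagonals cross. Both versions rest on the same fact that edges of a minimum-distance graph in a plane not isometric to $\ell_\infty^2$ cannot cross in their relative interiors, and both are correct; yours is a touch slicker.
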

\begin{proof}
We assume that $S$ is not collinear, otherwise the result is trivial.
Then $\conv(S)$ is a polygon $\vp_1\vp_2\dots\vp_k$, $k\geq 3$.
Denote the internal angle of $\vp_i$ by $\myangle\vp_i$.
Let $\mu$ be any Brass measure.
If $\vp_i$ has degree $d$, then by Lemma~\ref{brasslemma1}, $\mu(\myangle\vp_i)\geq(d-1)\pi/3$.
It follows that if each $\vp_i$ has degree at least $4$, then $\pi(k-2)=\sum_{i=1}^k\mu(\myangle\vp_i)\geq \pi k$, a contradiction.
Therefore, some $\vp_i$ has degree at most $3$.

Next, suppose that each $\vp_i$ has degree at least $3$ and that $\card{S}\leq 6$.
Then $\mu(\myangle\vp_i)\geq2\pi/3$ for all $i$, giving a total angle of $\pi(k-2)\geq 2\pi k/3$, hence $k\geq 6$.
It follows that $S=\set{\vp_1,\vp_2,\dots,\vp_6}$ and each $\vp_i$ has degree exactly $3$.
As mentioned in Section~\ref{section:mindist}, the minimum-distance graph is planar if the space is not isometric to $\ell_\infty^2$.
If $\vp_i$ is joined to $\vp_{i+2}$, then $\vp_{i+1}$ can only be joined to $\vp_i$ and $\vp_{i+2}$ without creating crossing edges.
This contradicts that $\vp_{i+1}$ has degree $3$.
Therefore, no $\vp_i$ is joined to $\vp_{i\pm 2}$.
Hence $\vp_i$ has to be joined to $\vp_{i\pm 1}$ and $\vp_{i+3}$.
However, the diagonals $\vp_i\vp_{i+3}$ and $\vp_{i+1}\vp_{i+4}$ of the hexagon intersect, a contradiction.
\end{proof}
With some more case analysis, the following can also be shown.
\begin{theorem}\label{thm:mindeg}
Let $\delta_n(X^2)$ denote the maximum value of the minimum degree $\delta(G)$ of a minimum-distance graph $G$ of $n$ points in the two-dimensional normed space $X^2$.
If $X^2$ is not isometric to $\ell_\infty^2$, then
\[ \delta_n(X^2) = 
   \left\{\begin{array}{ll} 2, & \text{if }3\leq n\leq 6 \text{ or } n=8, 9,\\
                          3, & \text{if }n=7\text{ or } n\geq 10.
    \end{array}\right.\]
Also,
\[ \delta_n(\ell_\infty^2) = 
   \left\{\begin{array}{ll} 3, & \text{if }4\leq n\leq 11 \text{ or } n=13,14,15,\\
                          4, & \text{if }n=12 \text{ or } n\geq 16.
    \end{array}\right.\]
\end{theorem}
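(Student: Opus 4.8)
The plan is to prove matching upper and lower bounds for each of the two families of planes. Theorem~\ref{thm:talata} already yields $\delta_n(X^2)\le 3$ for all $n$ and $\delta_n(X^2)\le 2$ for $3\le n\le 6$ when $X^2\not\cong\ell_\infty^2$, so for the general case the only new upper bounds are $\delta_n(X^2)\le 2$ for $n\in\{8,9\}$; equivalently, no minimum-distance graph on $8$ or $9$ points has minimum degree $\ge 3$. Fix a good Brass measure $\mu$ (Theorem~\ref{brass:proper}), normalise the minimum distance to $1$, and let $G$ be such a graph on a point set $S$. Exactly as in the proof of Theorem~\ref{thm:talata}, at a vertex $\vp_i$ of $\conv(S)$ of degree $d$ the $d-1$ angles subtended by consecutive neighbours each have $\mu$-measure at least $\pi/3$ by Lemma~\ref{brasslemma1}, so $\mu(\myangle\vp_i)\ge 2\pi/3$; Lemma~\ref{lemma:angular}(1) then forces $\conv(S)$ to have at least $6$ vertices, hence at most $n-6\le 3$ points of $S$ in the interior. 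One then runs a finite case analysis over the hull size $k\in\{6,7,8,9\}$ and the positions of the $\le 3$ interior points, pruning configurations with (i) planarity of $G$ and the fact that no edge meets another in its relative interior, (ii) the equilaterality, hence $\mu$-equiangularity, of every triangle of $G$, and (iii) the monotonicity of $\vx\mapsto\norm{\vx-\va}$ along the two arcs of $\bd B$ from $\va$ to $-\va$, to reach a contradiction in each case.

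For $\ell_\infty^2$, since $\lambda(\ell_\infty^2)=2>1$ we have $H_{+}^{o}(\ell_\infty^2)=4$, and the bound $\delta(X)\le H_{+}^{o}(X)$ gives $\delta_n(\ell_\infty^2)\le 4$ for all $n$. For the refinement $\delta_n(\ell_\infty^2)\le 3$ when $4\le n\le 11$ or $n\in\{13,14,15\}$ we use the theorem of L.~Fejes T\'oth and Sauer \cite{FT-Sauer} (see also Brass \cite{Brass1996}) that any minimum-distance graph in $\ell_\infty^2$ is isomorphic to a subgraph of the king-lattice graph on $\bZ^2$, hence to a subgraph of the king-lattice subgraph induced on at most $n$ integer points. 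A graph of minimum degree $\ge 4$ is connected, so its point set lies in an $n\times n$ box; one then argues combinatorially --- examining a corner of the bounding rectangle of the point set, which is deficient in king-neighbours, and counting how the $2\times 2$ king-cliques must overlap to give every vertex four neighbours --- that at least $12$ points are needed, and that the $12$-point extremizer (for example, a $4\times 4$ block with its four corner cells deleted) is rigid enough that $n\in\{13,14,15\}$ are also excluded.

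For the lower bounds, explicit small configurations (unions of a few equilateral triangles sharing edges) give $\delta_n(X^2)\ge 2$ for $3\le n\le 9$, while $\delta_n(X^2)\ge 3$ for $n=7$ is witnessed by an affine-regular hexagon inscribed in $\bd B$ together with its centre: choose $\vv_1\in\bd B$; by the intermediate value theorem choose $\vv_3\in\bd B$ with $\vv_1+\vv_3\in\bd B$; put $\vv_2=\vv_1+\vv_3$, $\vv_4=-\vv_1$, $\vv_5=-\vv_2$, $\vv_6=-\vv_3$, and adjoin $o$. Consecutive $\vv_i$, and each $\vv_i$ with $o$, are at distance $1$; from $2\vv_1=\vv_2+(\vv_1-\vv_3)$ and $\norm{\vv_1}=\norm{\vv_2}=1$ one gets $\norm{\vv_i-\vv_{i+2}}\ge 1$ (and similarly for the other such pairs), while $\norm{\vv_i-\vv_{i+3}}=2$, and in fact $\norm{\vv_i-\vv_{i+2}}>1$ because equality would produce four equidistant points, contradicting Lemma~\ref{lemma:4points} as $B$ is not a parallelogram; hence the minimum-distance graph is exactly the $6$-cycle with its centre, of minimum degree $3$. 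For $n\ge 10$ one adjoins further pieces of the triangular lattice generated by $\vv_1$ and $\vv_2$ (realizable in $X^2$ by similar reasoning), a union of two consecutive hexagonal rings of this lattice having minimum degree $3$, together with a finite list of sporadic configurations covering $10\le n\le 17$. For $\ell_\infty^2$, the integer $2\times m$ grids, and such a grid with one extra cell in the middle of a short side, give $\delta_n(\ell_\infty^2)\ge 3$ for all $n\ge 4$, and the $4\times 4$-block-minus-corners configuration together with its enlarged and overlapping variants gives $\delta_n(\ell_\infty^2)\ge 4$ for $n=12$ and all $n\ge 16$.

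The main obstacle is, in both settings, the finite case analysis for the upper bounds --- ruling out minimum degree $3$ at $n=8,9$ for general normed planes, and minimum degree $4$ at $n\in\{4,\dots,11,13,14,15\}$ for $\ell_\infty^2$ --- and, dually, pinning down the sporadic extremal configurations at the threshold values of $n$, verifying that each can be realised in every admissible plane, and certifying a single family that attains the target minimum degree for \emph{all} sufficiently large $n$ rather than only for a congruence class. In the $\ell_\infty^2$ case this last point is the most delicate: the minus-corners constructions naturally produce only certain residues of $n$, so one must patch them together to cover $n\in\{16,17,18,19\}$ and then argue inductively for $n\ge 20$.
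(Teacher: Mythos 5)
The paper offers no proof of this theorem: it is stated after Theorem~\ref{thm:talata} with only the remark ``with some more case analysis, the following can also be shown,'' followed by a description of the lower-bound examples (pieces of a triangular lattice, the sporadic $11$-point configuration of Fig.~\ref{fig:mindeg}, and the square lattice for $\ell_\infty^2$). Your framework agrees with what the paper indicates --- Brass-measure angle counting at hull vertices for the general upper bounds, the Fejes T\'oth--Sauer reduction to the king graph on $\bZ^2$ for $\ell_\infty^2$, and lattice constructions for the lower bounds --- and your hexagon-plus-centre argument for $\delta_7(X^2)\geq 3$ is correct and complete. But the entire content of the theorem beyond Theorem~\ref{thm:talata} lies in the finite case analyses you only announce: excluding minimum degree $3$ on $8$ and $9$ points (note that the angle count alone does not suffice, e.g.\ eight hull vertices each of angle $2\pi/3$ fit comfortably into the total $6\pi$, so one genuinely needs the planarity/outerplanarity and equiangularity constraints you list), and excluding minimum degree $4$ in the king graph on $n\leq 11$ and $n\in\{13,14,15\}$ points. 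As it stands the proposal is a plan with the decisive steps deferred, so it cannot be certified correct.

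Two concrete slips in what you do write out. First, ``a graph of minimum degree $\geq 4$ is connected'' is false (take two disjoint dense components); what you need is that a smallest component already has $\geq 12$ vertices, which disposes of disconnected examples for $n\leq 23$. Second, your lower-bound construction for $\ell_\infty^2$ at $n=5$ fails: no lattice cell is king-adjacent to three cells of a $2\times 2$ block (every external cell sees at most two of them), so ``$2\times 2$ grid plus one extra cell'' never has minimum degree $3$; the cross pentomino $\{(0,0),(\pm 1,0),(0,\pm 1)\}$ works instead, each arm being adjacent to the centre and to the two perpendicular arms. Relatedly, for odd $n\geq 7$ the extra cell must be placed against a \emph{long} side of the $2\times m$ grid (adjacent to three consecutive cells), not a short one. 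These are patchable, but they illustrate that even the ``easy'' half of the theorem needs the configurations checked case by case.
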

Examples demonstrating the lower bounds in Theorem~\ref{thm:mindeg} can be found on a triangular lattice based on an equilateral triangle when $X^2$ is not isometric to $\ell_\infty^2$, except when $n=11$, where an example is shown in Fig.~\ref{fig:mindeg}.
Examples for $\ell_\infty^2$ can be found on the square lattice $\bZ^2$.
\begin{figure}
\centering
\begin{tikzpicture}[line cap=round,line join=round,scale=1]
\clip(4.902854847103256,4.08883571740275) rectangle (8.324341318130985,6.239968300672657);
\draw(7.972618686693036,4.5605594704481565) -- (8.183862848813968,5.4734418749487315) -- (7.287661414788179,5.199943483496331) -- cycle;
\draw(7.972618686693037,4.560559470448155) -- (7.07641725266725,4.287061078995752) -- (7.287661414788178,5.19994348349633) -- cycle;
\draw(7.498905576909117,6.112825887996911) -- (8.183862848813968,5.473441874948733) -- (7.287661414788178,5.199943483496332) -- cycle;
\draw(7.498905576909117,6.112825887996912) -- (6.6027041428833275,5.839327496544514) -- (7.287661414788177,5.199943483496332) -- cycle;
\draw(6.139417447211106,4.283939372397933) -- (5.922095327750581,5.1953939663685365) -- (5.241413554706269,4.551460193126143) -- cycle;
\draw(5.024091435245737,5.462914787096755) -- (5.922095327750574,5.195393966368544) -- (5.241413554706261,4.551460193126152) -- cycle;
\draw(5.024091435245737,5.462914787096755) -- (5.922095327750581,5.1953939663685365) -- (5.704773208290058,6.106848560339151) -- cycle;
\draw(5.704773208290058,6.106848560339151) -- (5.922095327750581,5.1953939663685365) -- (6.602777100794906,5.839327739610932) -- cycle;
\draw (6.139417447211106,4.283939372397933)-- (7.07641725266725,4.287061078995752);
\draw (6.139417447211106,4.283939372397933)-- (5.922095327750581,5.1953939663685365);
\begin{scriptsize}
\draw [fill=black] (6.139417447211106,4.283939372397933) circle (1.5pt);
\draw [fill=black] (7.07641725266725,4.287061078995752) circle (1.5pt);
\draw [fill=black] (6.139417447211106,4.283939372397933) circle (1.5pt);
\draw [fill=black] (5.922095327750581,5.1953939663685365) circle (1.5pt);
\draw [fill=black] (7.498905576909117,6.112825887996911) circle (1.5pt);
\draw [fill=black] (6.6027041428833275,5.839327496544514) circle (1.5pt);
\draw [fill=black] (5.922095327750581,5.195393966368541) circle (1.5pt);
\draw [fill=black] (5.922095327750579,5.195393966368542) circle (1.5pt);
\draw [fill=black] (5.922095327750581,5.1953939663685444) circle (1.5pt);
\draw [fill=black] (5.70477320829006,6.106848560339145) circle (1.5pt);
\draw [fill=black] (5.704773208290059,6.106848560339154) circle (1.5pt);
\draw [fill=black] (5.704773208290052,6.106848560339148) circle (1.5pt);
\draw [fill=black] (5.922095327750582,5.195393966368542) circle (1.5pt);
\draw [fill=black] (5.024091435245735,5.462914787096755) circle (1.5pt);
\draw [fill=black] (5.92209532775057,5.1953939663685444) circle (1.5pt);
\draw [fill=black] (5.922095327750578,5.1953939663685516) circle (1.5pt);
\draw [fill=black] (5.024091435245736,5.462914787096759) circle (1.5pt);
\draw [fill=black] (5.0240914352457375,5.462914787096758) circle (1.5pt);
\draw [fill=black] (5.922095327750578,5.195393966368541) circle (1.5pt);
\draw [fill=black] (5.9220953277505775,5.195393966368549) circle (1.5pt);
\draw [fill=black] (5.922095327750583,5.195393966368541) circle (1.5pt);
\draw [fill=black] (5.241413554706265,4.551460193126141) circle (1.5pt);
\draw [fill=black] (5.241413554706253,4.551460193126148) circle (1.5pt);
\draw [fill=black] (5.241413554706264,4.551460193126145) circle (1.5pt);
\draw [fill=black] (7.972618686693037,4.560559470448158) circle (1.5pt);
\draw [fill=black] (7.972618686693036,4.560559470448156) circle (1.5pt);
\draw [fill=black] (7.287661414788173,5.199943483496325) circle (1.5pt);
\draw [fill=black] (7.287661414788181,5.199943483496328) circle (1.5pt);
\draw [fill=black] (7.9726186866930355,4.560559470448158) circle (1.5pt);
\draw [fill=black] (7.287661414788175,5.199943483496336) circle (1.5pt);
\draw [fill=black] (8.183862848813968,5.473441874948733) circle (1.5pt);
\draw [fill=black] (8.183862848813968,5.473441874948731) circle (1.5pt);
\draw [fill=black] (8.183862848813975,5.473441874948729) circle (1.5pt);
\draw [fill=black] (7.287661414788178,5.199943483496328) circle (1.5pt);
\draw [fill=black] (7.287661414788181,5.199943483496329) circle (1.5pt);
\draw [fill=black] (7.287661414788179,5.19994348349633) circle (1.5pt);
\end{scriptsize}
\end{tikzpicture}
\caption{A minimum-distance graph on $11$ points with minimum degree $3$}\label{fig:mindeg}
\end{figure}
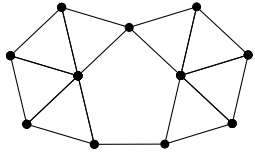

Next we use Brass measures to give simple proofs of results on the various Hadwiger and blocking numbers of convex discs that are not parallelograms.
We also show some related results that would also need elaborate proofs without using Brass measures.
The assertions in the next proposition were discussed in Section~\ref{section:Hadwiger}.
\begin{proposition}\label{hadwprop}
Let $C$ be a convex disc in the plane.
The Hadwiger number $H(C)$, strict Hadwiger number $H'(C)$, and one-sided Hadwiger number $H_{+}(C)$ of $C$ are given in the following table.
\begin{center}
\begin{tabular}{c|ll}
& non-parallelogram & parallelogram  \\ \hline
$H(C)$         & \quad\qquad $6$ \qquad \textup{\cite{Grunbaum1961}}  & \qquad $8$ \qquad \textup{\cite{Hadwiger}} \\
$H'(C)$        & \quad\qquad $5$ \qquad \textup{\cite{Doyle1992}}     & \qquad $4$ \qquad \textup{\cite{Robins1995}}\\
$H_{+}(C)$     & \quad\qquad $4$ & \qquad $5$ 
\end{tabular}
\end{center}
The open one-sided Hadwiger number of $C$ is $H_{+}^{o}(C)=4$ if $\lambda(\frac12(C-C))>1$, and $H_{+}^{o}(C)=3$ otherwise.
\end{proposition}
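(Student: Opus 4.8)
The plan is to reduce everything, via the Minkowski correspondence recalled in Section~\ref{subsection:Hadwiger}, to the central symmetral $B=\frac12(C-C)$, which I regard as the unit ball of a normed plane $X^2$. Under $\vv_i+B\mapsto\vv_i$, a Hadwiger family is exactly a finite set of unit vectors with pairwise distances $\geq 1$; a strict Hadwiger family the same with pairwise distances $>1$; and the (open) one-sided versions add that these unit vectors lie in a closed (open) half-plane whose bounding line passes through $\vo$. So $H(C),H'(C),H_{+}(C),H_{+}^{o}(C)$ become four extremal problems about configurations of unit vectors. If $B$ is a parallelogram then $X^2\cong\ell_\infty^2$, there is no Brass measure, and I would dispose of the four values by short ad hoc arguments: the eight unit vectors $(\pm1,0),(0,\pm1),(\pm1,\pm1)$, or suitable subcollections (five consecutive ones for $H_{+}$, the four corners $(\pm1,\pm1)$ for $H'$, four consecutive ones straddling an edge for $H_{+}^{o}$), give the lower bounds, and the matching upper bounds are Hadwiger's $H\leq 3^2-1=8$ \cite{Hadwiger}, $H'\leq 4$ \cite{Robins1995}, and $H_{+}\leq 2\cdot 3^{d-1}-1=5$ \cite{Bezdek2003}. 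From here on I assume $B$ is not a parallelogram and fix a \emph{good} Brass measure $\mu$ (Theorem~\ref{brass:proper}).

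For the upper bounds I would list the unit vectors in cyclic order $\vv_1,\dots,\vv_n$ around $\vo$ and control the consecutive $\mu$-angles $\mu(\myangle\vv_i\vo\vv_{i+1})$. Since consecutive distances are $\geq 1$, each such angle is $\geq\pi/3$: if the distance is $1$ the triangle $\vo\vv_i\vv_{i+1}$ is equilateral and the angle equals $\pi/3$, and if it is $>1$ this is the first part of Lemma~\ref{brasslemma1}. For a Hadwiger family the $n$ consecutive angles sum to $\mu(\bd B)=2\pi$, so $n\leq 6$. For a one-sided family the vectors span an arc of $\mu$-measure at most $\frac12\mu(\bd B)=\pi$ (using $\mu(A)=\mu(-A)$ and $\mu(\{\vp\})=0$) and only $n-1$ consecutive angles occur, so $n\leq 4$; and if in addition $\lambda(X^2)\leq 1$ then $\mu$ is proper (Lemma~\ref{lemma:good}), so $n=4$ is impossible, because it would force the arc from $\vv_1$ to $\vv_4$ to carry the full measure $\pi$ of the semicircle while being a proper subarc of it, which for a proper measure pushes $\vv_1$ or $\vv_4$ onto the bounding line of the open half-plane, a contradiction; hence $H_{+}^{o}(C)\leq 3$ there. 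The delicate case is $H'(C)\leq 5$: the $n$ consecutive angles again sum to $2\pi$, so if $n=6$ they are all exactly $\pi/3$; since each consecutive distance is now $>1$, the equality clause of the first part of Lemma~\ref{brasslemma1} forces each edge $\vv_i\vv_{i+1}$ to lie inside a long segment $L_i$ of $\bd B$, with $\vv_i,\vv_{i+1}$ in its two ends (away from the inner endpoints). Then $[\vv_i,\vv_{i+1}]$ contains the whole portion of $L_i$ between its inner endpoints, which by Lemma~\ref{lemma:long} already has $\mu$-measure $\pi/3$, so $[\vv_i,\vv_{i+1}]$ equals $L_i$ up to a $\mu$-null set; as the six arcs $[\vv_i,\vv_{i+1}]$ are non-overlapping, the $L_i$ are six \emph{distinct} long segments, contradicting the at-most-two-parallel-pairs bound of Lemma~\ref{segmentlemma}. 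So $n\leq 5$. I expect this equality analysis to be the main technical obstacle.

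For the lower bounds I would use the bijection $f$ of $\bd B$ from Lemma~\ref{brass:lemma}, with $f\circ f\circ f=-\mathrm{id}$. Starting from any $\vv_1\notin S$, the orbit $\vv_1,f(\vv_1),f^2(\vv_1),-\vv_1,-f(\vv_1),-f^2(\vv_1)$ is a set of six distinct unit vectors at $\mu$-angles $0,\pi/3,\dots,5\pi/3$ (each triangle $\vo x f(x)$ is equilateral), with consecutive distances exactly $1$ and every other distance $\geq 1$ by the second part of Lemma~\ref{brasslemma1}; this gives $H(C)\geq 6$, while four, respectively three, consecutive orbit vectors span $\mu$-angle $\pi$, respectively $2\pi/3<\pi$, hence fit into a closed, respectively open, half-plane, giving $H_{+}(C)\geq 4$ and $H_{+}^{o}(C)\geq 3$. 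For $H'(C)\geq 5$ I would instead place unit vectors at $\mu$-angles $0,2\pi/5,4\pi/5,6\pi/5,8\pi/5$ (possible since $\mu$ is atomless); every pairwise $\mu$-angle is $2\pi/5$ or $4\pi/5$, both $>\pi/3$, so Lemma~\ref{brasslemma1} gives all pairwise distances $\geq 1$, and equality there would force the $\mu$-angle to be $\pi/3$, so in fact all distances are $>1$. The remaining point is $H_{+}^{o}(C)\geq 4$ when $\lambda(X^2)>1$: here $\bd B$ carries a long segment whose ends are $\mu$-null (Lemma~\ref{lemma:long}), and I would construct a four-element strict one-sided family directly, choosing the open half-plane so that two of the vectors sit near the extremities of a long segment (and its antipode) so that the spanned arc reaches full measure $\pi$ while staying strictly inside the open semicircle---precisely the configuration that the properness argument excludes when $\lambda\leq 1$. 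This construction, together with the bookkeeping for the parallelogram case, I expect to be routine; assembling all the inequalities then yields the table and the stated value of $H_{+}^{o}(C)$.
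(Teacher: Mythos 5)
Your proposal is correct and follows essentially the same route as the paper: reduce to the central symmetral via Minkowski's observation, fix a good Brass measure, obtain all upper bounds by summing consecutive angles using Lemma~\ref{brasslemma1} (with Lemma~\ref{segmentlemma} ruling out six points for $H'$, and properness versus long segments governing the $H_{+}^{o}$ dichotomy), and obtain the lower bounds from inscribed configurations at prescribed Brass angles. The only deviations are cosmetic --- you build the hexagon from the orbit of the map $f$ of Lemma~\ref{brass:lemma} rather than by prescribing angles $i\pi/3$, and you derive the contradiction in the $H'(C)\leq 5$ step by a measure comparison rather than by counting parallel pairs of long segments directly --- neither of which changes the substance of the argument.
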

\begin{proof}
By the observation of Minkowski mentioned in Section~\ref{subsection:Hadwiger}, two translates $\vv+C$ and $\vw+C$ overlap, touch, or are disjoint iff the same holds for the corresponding translates $\vv+\frac12(C-C)$ and $\vw+\frac12(C-C)$ of the central symmetral of $C$, so we may assume without loss of generality that $C$ is $o$-symmetric and is the unit ball of the normed plane $X^2$.
We may then reformulate each of these quantities in terms of points on the unit circle.
For instance, the Hadwiger number is the largest number of points on the unit circle that are pairwise at distance at least $1$.

Note that a convex disc is a parallelogram iff its central symmetral is a parallelogram.
The proofs for $C$ a parallelogram, equivalently, when $X^2$ is isometric to $\ell_\infty^2$, are straightforward, and we only give the proofs for the case when $C$ is not a parallelogram.
Let $\mu$ be a good Brass measure for $X^2$.

If there are $7$ points on $\bd C$ at mutual distances at least $1$, then by Lemma~\ref{brasslemma1}, the sum of the angles spanned at $\vo$ by consecutive points is at least $7\pi/3>2\pi$, a contradiction.
Therefore, $H(C)\leq 6$.
Similarly, $H_+^o(C)\leq H_+(C)\leq 4$.

The existence of $6$ points on $\bd C$ at pairwise distance $\geq 1$ may be established using the well-known continuity argument, or by using a good Brass measure as follows:
Choose a point $\vx_0\in\bd C$ not on a long segment of $C$, nor with $ox_0$ parallel to a long segment of $C$ (there are infinitely many such points).
Then choose $\vx_i\in\bd C$ such that $\mu(\myangle\vx_0\vo\vx_i)=i\pi/3$, for $i=1,\dots,5$.
By Lemma~\ref{brasslemma1}, the distance between any two points is at least $1$, which shows $H(C)\geq 6$.
Similarly, the distance between any two points in $\set{x_0,x_1,x_2,-x_0}$ is at least $1$, which shows that $H_+(C)\geq 4$ and $H_+^o(C)\geq 3$.

Suppose that $H_+^o(C)\geq 4$.
Then there are $4$ points on $\bd C$ in an open half plane bounded by a line through $o$, at pairwise distance at least $1$.
It follows that the Brass measure is not proper, and by Lemma~\ref{brass:proper}, $\bd C$ contains a long segment.
Conversely, if $\bd C$ contains a long segment, then it is easy to find $4$ points on $\bd C$ in an open half plane with pairwise distances at least $1$.

We show that $H'(C)=5$ as follows.
Suppose there are at least $6$ points on $\bd C$ at distance $>1$.
By Lemma~\ref{brasslemma1}, the angle spanned at $\vo$ by consecutive points is $\geq\pi/3$, hence exactly $\pi/3$.
Again by Lemma~\ref{brasslemma1}, the line through any two consecutive points is parallel to a long segment.
Therefore, there are at least three parallel pairs of long segments on the unit circle, which contradicts Lemma~\ref{segmentlemma}, hence $H'(C)\leq 5$.
To find $5$ points on $\bd C$ at distance $>1$, choose any $\vx_1,\dots,\vx_5\in\bd C$ such that $\mu(\myangle\vx_i\vo\vx_{i+1})=2\pi/5>\pi/3$ for all $i=1,2,3,4,5$, and apply Lemma~\ref{brasslemma1}.
\end{proof}

A collection $\setbuilder{\vv_i+C}{i\in I}$ of translates of a convex disc $C$ that all touch $C$ has a natural cyclic ordering determined by the cyclic ordering of the translation vectors $\setbuilder{\vv_i}{i\in I}\subset\bd(\frac12(C-C))$.
We define a \define{dual Hadwiger family} of $C$ to be a collection of translates $C+\vx_i$ of $C$, all touching $C$, and such that any two consecutive (in the natural ordering) translates are not disjoint (i.e.\ they either touch or overlap), and furthermore, $\vo$ is in the convex hull of the translation vectors $\vx_i$.
The last condition is to exclude trivialities.
A \define{dual strict Hadwiger family} of $C$ is a collection of translates of $C$, all touching $C$, and such that any two consecutive translates overlap, and furthermore, $\vo$ is in the convex hull of the translation vectors.
The \define{dual Hadwiger number} $I(C)$ of $C$ is the minimum size of a dual Hadwiger family of $C$.
The \define{dual strict Hadwiger number} $I'(C)$ of $C$ is the minimum size of a dual strict Hadwiger family of $C$.
As before, the dual Hadwiger number and its strict version have equivalent definitions in terms of the norm $\norm{\cdot}$ with unit ball $B=\frac12(C-C)$.
The dual [strict] Hadwiger number equals the smallest number of points on $\bd B$ containing $\vo$ in their convex hull and such that consecutive points are at distance $\leq1$ [$<1$, respectively].
Dual Hadwiger families in the plane were considered by Gr\"unbaum \cite{Grunbaum1961}, where the first part of the next proposition appears without proof.
\begin{proposition}\label{blockingprop}
Let $C$ be a convex disc in the plane.
The dual Hadwiger number $I(C)$ and dual strict Hadwiger number $I'(C)$ are given by the following table.
\begin{center}
\begin{tabular}{c|cc}
& non-parallelogram & parallelogram \\ \hline
$I(C)$ & $6$ & $4$ \\
$I'(C)$ & $7$ & $8$
\end{tabular}
\end{center}
\end{proposition}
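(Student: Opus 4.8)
The plan follows Proposition~\ref{hadwprop}: pass to the normed plane $X^2$ with unit ball $B=\frac12(C-C)$, so that (as recalled just before the statement) $I(C)$, respectively $I'(C)$, is the least $k$ for which there are points $\vx_1,\dots,\vx_k\in\bd B$, in cyclic order, with $\vo\in\conv\{\vx_1,\dots,\vx_k\}$ and $\norm{\vx_i-\vx_{i+1}}\leq1$, respectively $<1$, for all $i$ (indices modulo $k$). First I would note that $\vo$ always lies in the \emph{interior} of $\conv\{\vx_i\}$: if not, all $\vx_i$ lie in a closed half-plane with $\vo$ on the bounding line, which forces an antipodal pair $\vx_a=-\vx_b$ to occur among them with no other $\vx_j$ on one of the two arcs between them, so that $\vx_a,\vx_b$ are consecutive and $\norm{\vx_a-\vx_b}=2$, a contradiction. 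Hence the $\vx_i$ run once around $\bd B$, every arc between consecutive points has angular measure below $\pi$, and $\sum_{i=1}^k\mu(\myangle\vx_i\vo\vx_{i+1})=2\pi$ for every angular measure $\mu$.

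\textbf{Non-parallelogram case.} Fix a good Brass measure $\mu$ (Theorem~\ref{brass:proper}). For $I(C)$: if $\norm{\vx_i-\vx_{i+1}}\leq1$ then $\mu(\myangle\vx_i\vo\vx_{i+1})\leq\pi/3$ (with equality when the distance is $1$, since $\triangle\vo\vx_i\vx_{i+1}$ is then equilateral, and $\leq\pi/3$ when it is smaller, by Lemma~\ref{brasslemma1}), so summing gives $k\geq6$. For the reverse inequality take $\vx_0\in\bd B$ lying, together with the six points $\vx_i$ determined by $\mu(\myangle\vx_0\vo\vx_i)=i\pi/3$, outside the set $S$ of Lemma~\ref{brass:lemma} of ends of long segments and directions parallel to long segments; $S$ has $\mu$-measure $0$ by Lemma~\ref{lemma:long}, so such an $\vx_0$ exists, and the refined part of Lemma~\ref{brasslemma1} then excludes $\norm{\vx_i-\vx_{i+1}}>1$, giving $I(C)=6$. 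For $I'(C)$: the same angle count gives $k\geq6$, and if $k=6$ every gap equals $\pi/3$ while every consecutive distance is $<1$, so the refined part of Lemma~\ref{brasslemma1} puts each edge $\vx_i\vx_{i+1}$ in the relative interior of a long segment $L_i$ of $\bd B$; a short computation with $\mu(L_i)=\pi/3$ (Lemma~\ref{lemma:long}) forces $L_i\neq L_j$ for $i\neq j$ — if $L_i=L_j$, an arc of $\bd B$ of $\mu$-measure at least $2\pi/3$ would lie inside a segment of $\mu$-measure $\pi/3$ — so $\bd B$ would carry six distinct long segments, contradicting Lemma~\ref{segmentlemma}(1) with the central symmetry of $B$ (which together permit at most four). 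Hence $k\geq7$, while seven points with $\mu(\myangle\vx_i\vo\vx_{i+1})=2\pi/7<\pi/3$ are at consecutive distances $<1$, so $I'(C)=7$.

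\textbf{Parallelogram case.} Now $X^2=\ell_\infty^2$ and $B=[-1,1]^2$. Points on opposite sides of the square are at $\ell_\infty$-distance $\geq2$, so consecutive $\vx_i$ lie on a common or an adjacent side, and since the $\vx_i$ run once around $\bd B$ they meet all four sides in cyclic order. For $I(C)$: at most two points cannot enclose $\vo$ with consecutive distance $\leq1$ (the antipodal obstruction again), while three points of pairwise $\ell_\infty$-distance $\leq1$ lie in an axis-parallel square of side $\leq1$, which, if it contains $\vo$ in its interior, is contained in the \emph{open} square $(-1,1)^2$ and so has no vertices on $\bd B$; with the quadruple $(\pm1,0),(0,\pm1)$ this gives $I(C)=4$. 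For $I'(C)$ I would give each $\vx_i$ weight $1$ if it lies in the relative interior of a side and weight $\frac12$ towards each of its two sides if it is a corner, so that $k$ equals the sum of the weights borne by the four sides; examining the two corner-crossings that bound the block of $\vx_i$'s lying on a fixed side (the strict-distance condition forces such a $\vx_i$ strictly above the midpoint of that side and another strictly below it, and forces an extra relative-interior point on the side whenever one of its corners is itself some $\vx_j$) shows every side bears weight at least $2$, so $k\geq8$. The octagon $(\pm1,\pm t),(\pm t,\pm1)$ with $0<t<1$ shows $I'(C)=8$.

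The main obstacle will be the lower bound $I'(C)\geq8$ in the parallelogram case: it is the one bound among the six not obtained from an angle sum, and the corners of the square (which are far apart in the $\ell_\infty$-metric) have to be absorbed by the weighting argument above. Checking that the long segments $L_i$ are pairwise distinct in the non-parallelogram $I'$-bound is the other step that needs care rather than a new idea.
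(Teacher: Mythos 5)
Your non-parallelogram argument is essentially the paper's: the lower bounds come from summing Brass-measure angles of consecutive gaps (using Lemma~\ref{brasslemma1} and, for $I'$, the fact that six gaps of measure exactly $\pi/3$ with distances $<1$ would force too many long segments, contradicting Lemma~\ref{segmentlemma}), and the upper bounds come from inscribing a hexagon with unit sides, respectively seven points with consecutive angles $<\pi/3$. The paper simply omits the parallelogram case as easy, so your bounding-box argument for $I(C^2)=4$ and the side-weighting argument for $I'(C^2)=8$ are a legitimate filling-in rather than a divergence, and both are sound; your explicit observation that the distance conditions force $\vo$ into the \emph{interior} of the convex hull (so the consecutive angles genuinely partition $2\pi$) is a point the paper leaves implicit. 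One small correction: in the octagon $(\pm1,\pm t),(\pm t,\pm1)$ the two points on a common side are at $\ell_\infty$-distance $2t$, so you need $0<t<\tfrac12$, not $0<t<1$.
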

\begin{proof}
The parallelogram case is easy to prove and we omit it.
Without loss of generality, $C$ is $o$-symmetric.
Let $\mu$ be a good Brass measure on the plane $X^2$ with unit ball $C$.
Suppose that there exist $5$ points on $\bd C$ with consecutive distances $\leq 1$.
Then, by Lemma~\ref{brasslemma1}, the angles between consecutive vectors are all $\leq\pi/3$, a contradiction.
Therefore, $I(C)\geq 6$.
Equality is shown as before by inscribing a hexagon with sides of unit length to the unit circle.

Suppose there exist $6$ unit vectors with consecutive distances $<1$.
Then, by Lemma~\ref{brasslemma1}, all angles are $\leq\pi/3$, hence $=\pi/3$.
As in the proof that $H'(C)\leq 5$, we obtain at least three parallel pairs of long segments, contradicting Lemma~\ref{segmentlemma}.
This shows that $I'(C)\geq 7$.
To obtain $7$ unit vectors with consecutive distances $<1$, choose $7$ unit vectors with consecutive angles all $<\pi/3$, and apply Lemma~\ref{brasslemma1}.
\end{proof}

Next, we give a simple proof of Zong's result on the blocking number of convex discs.
Zong did not assume that the translates of the convex disc are non-overlapping, nor that they touch $C$, only that they do not overlap $C$.
We prove this stronger result.
\begin{lemma}[Zong \cite{Zong1993}]\label{lemma:zong}
Let $C_1,\dots,C_m$ be translates of a convex disc $C$ in the plane, not overlapping $C$, such that any translate of $C$ that touches $C$ overlaps with some $C_i$.
Then $m\geq 4$.
\end{lemma}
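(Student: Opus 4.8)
The plan is to reduce to the $o$-symmetric case and then count angular measure. By Minkowski's observation (as used in the proof of Proposition~\ref{hadwprop}), two translates $\vu+C$ and $\vv+C$ overlap, touch, or are disjoint exactly when the translates $\vu+B$ and $\vv+B$ of the central symmetral $B=\tfrac12(C-C)$ do; so one may assume $C=B$ is the unit ball of a normed plane $X^2$. Writing $C_i=B+\vv_i$: the hypothesis that $C_i$ does not overlap $B$ reads $\norm{\vv_i}\geq 2$; a translate $B+\vw$ touches $B$ iff $\vw=2\vu$ with $\vu\in\bd B$; and $B+2\vu$ overlaps $B+\vv_i$ iff $\norm{\vu-\vp_i}<1$, where $\vp_i:=\tfrac12\vv_i$ and $\norm{\vp_i}\geq 1$. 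Thus the hypothesis becomes: the open sets $A_i:=\setbuilder{\vx\in\bd B}{\norm{\vx-\vp_i}<1}$ cover $\bd B$, and the goal is $m\geq 4$. If $B$ is a parallelogram (equivalently $X^2\cong\ell_\infty^2$), its four vertices are pairwise at distance $2$ while any $\vp_i+\interior B$ has diameter $<2$, so the four vertices need four distinct indices and $m\geq 4$. Otherwise I fix a good Brass measure $\mu$ on $X^2$ (Theorem~\ref{brass:proper}).

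The key estimate is that each $A_i$ lies in an arc of $\mu$-measure $2\pi/3$. Since $\un{\vp_i}=\tfrac{1}{\norm{\vp_i}}\vp_i+(1-\tfrac{1}{\norm{\vp_i}})\vo$ is a convex combination and $\norm{\vx}=1$ on $\bd B$, the triangle inequality gives $\norm{\vx-\un{\vp_i}}\leq\tfrac{1}{\norm{\vp_i}}\norm{\vx-\vp_i}+(1-\tfrac{1}{\norm{\vp_i}})<1$ whenever $\norm{\vx-\vp_i}<1$; hence $A_i\subseteq N(\un{\vp_i})$, where $N(\va):=\setbuilder{\vx\in\bd B}{\norm{\vx-\va}<1}$ for $\va\in\bd B$. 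Moreover $N(\va)$ is a connected open arc of $\mu$-measure $2\pi/3$: on each of the two arcs of $\bd B$ from $\va$ to $-\va$ the function $\vx\mapsto\norm{\vx-\va}$ increases from $0$ to $2$ (the monotonicity noted after Lemma~\ref{brasslemma1}), so $N(\va)$ is the open arc from $\va_-$ through $\va$ to $\va_+$ with $\norm{\va_\pm-\va}=1$; as $\triangle\vo\va\va_\pm$ is equilateral and $\mu$ is a Brass measure, each half of $N(\va)$ subtends the angle $\mu(\myangle\va\vo\va_\pm)=\pi/3$.

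Now suppose $m\leq 3$. Then $2\pi=\mu(\bd B)\leq\sum_i\mu(A_i)\leq\sum_i\mu(N(\un{\vp_i}))\leq 2\pi$, and any $A_i$ with $\norm{\vp_i}\geq 2$ is empty (for $\vx\in\bd B$, $\norm{\vx-\vp_i}\geq\norm{\vp_i}-1\geq 1$); so $m=3$, every $\norm{\vp_i}\in[1,2)$, and equality holds throughout, giving $\mu(A_i)=\mu(N(\un{\vp_i}))=2\pi/3$ and $\mu(A_i\cap A_j)=0$ for $i\neq j$. When $\mu$ is proper (in particular when $\lambda(X^2)\leq 1$) this is already absurd: the $A_i$ would be pairwise disjoint nonempty open sets covering the connected set $\bd B$. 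In general one deletes from $\bd B$ the finitely many long segments (at most two parallel pairs, by Lemma~\ref{segmentlemma}); off these $\mu$ is positive on subarcs, so there the $A_i$ are disjoint, and the surviving cases are ruled out using that each $A_i\subseteq N(\un{\vp_i})$ sits inside a single arc of $\mu$-measure $2\pi/3<\pi$ — such an arc cannot span two antipodal long segments, and three of them cannot jointly cover the $\mu$-null junctions that necessarily arise between consecutive arcs. This contradiction yields $m\geq 4$.

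The main obstacle is precisely this equality case: the measure count is tight, so one is forced to analyse the configuration in which the three arcs $N(\un{\vp_i})$ almost tile $\bd B$, and when $\bd B$ carries long segments the $\mu$-null ``junctions'' between consecutive arcs must be excluded using the structural constraint of Lemma~\ref{segmentlemma} together with the explicit shape of the arcs $N(\va)$. Everything else — the reduction, the parallelogram case, and the two estimates above — is routine by comparison.
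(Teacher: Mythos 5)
Your reduction to the $o$-symmetric case, your treatment of the parallelogram, the containment $A_i\subseteq N(\un{\vp_i})$ via the triangle inequality, and the computation $\mu(N(\va))=2\pi/3$ are all correct, and the measure count does force $m=3$, $\mu(A_i)=2\pi/3$ and $\mu(A_i\cap A_j)=0$, disposing of the case of a proper Brass measure. The gap is that you stop exactly where the real work begins. When $\bd B$ contains long segments, ``pairwise intersections of $\mu$-measure zero'' is compatible with the sets $A_i$ genuinely overlapping inside the $\mu$-null ends of long segments, and your closing paragraph only \emph{asserts} that ``the surviving cases are ruled out''. The two facts you invoke do not suffice: an arc of measure $2\pi/3$ \emph{can} meet two antipodal long segments (the arc from the far end of a long segment $S$ to the near end of $-S$ has measure exactly $2\pi/3$), and the three junction points, being pairwise at angle $2\pi/3$, merely have to lie in three \emph{distinct} long segments, which is perfectly consistent with the up to four long segments allowed by Lemma~\ref{segmentlemma}. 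So no contradiction follows from the counting you describe, and the equality case is left unproved.

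What the equality case actually needs is an explicit uncovered point, not a further measure count. From your equality conditions one gets $\mu(\myangle\un{\vp_i}\vo\un{\vp_j})=2\pi/3$ for $i\neq j$, and then one tests $-\un{\vp_3}$ and $-\un{\vp_2}$. If $-\un{\vp_3}$ is covered, say $\norm{-\un{\vp_3}-\un{\vp_1}}<1$, then since $\mu(\myangle\un{\vp_1}\vo(-\un{\vp_3}))=\pi/3$, part (2) of Lemma~\ref{brasslemma1} forces $-\un{\vp_3}$ and $\un{\vp_1}$ into the relative interior of a common long segment $S$; if $-\un{\vp_2}$ were also within distance $<1$ of $\un{\vp_1}$ (or $-\un{\vp_3}$ of $\un{\vp_2}$), the same argument would place the relevant pair in a long segment that must again be $S$, so $S$ would contain an arc of measure $2\pi/3$, contradicting $\mu(S)=\pi/3$ (Lemma~\ref{lemma:long}). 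Hence one of $-\un{\vp_2}$, $-\un{\vp_3}$ is at distance $\geq1$ from all three $\un{\vp_i}$, and your own triangle-inequality step upgrades this to distance $\geq1$ from the $\vp_i$ themselves. This pinning-down of a concrete uncovered point is the missing step; without it your proposal does not prove the lemma.
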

\begin{proof}
We again omit the case where $C$ is a parallelogram.
As before, we may assume without loss of generality that $C$ is $o$-symmetric, not a parallelogram and the unit ball of the normed plane $X^2$.
The statement of the lemma is equivalent to the following.
\begin{quote}
Suppose that there exist points $\vx_1,\dots,\vx_m\in X^2$ such that
\begin{enumerate}
\item $\norm{\vx_i}\geq 1$ for all $i=1,\dots,m$,
\item and for all $\vx\in\bd C$ there exists an $i=1,\dots,m$ such that $\norm{\vx-\vx_i}<1$.
\end{enumerate}
Then $m\geq 4$.
\end{quote}
Consider any $x_1,x_2,x_3\in X^2$ such that $\norm{x_1},\norm{x_2},\norm{x_3}\geq 1$.
To prove the lemma, it is sufficient to find an $x\in\bd C$ such that $\norm{x-x_1},\norm{x-x_2},\norm{x-x_3}\geq 1$.

Let $\mu$ be a good Brass measure on $X^2$.
Let $\un{\vx}_i:=\frac{1}{\norm{\vx_i}}\vx_i$ ($i=1,2,3$).
Then $\un{\vx}_1$, $\un{\vx}_2$, $\un{\vx}_3$ subdivide $\bd C$ into three arcs  $\un{\vx}_1\un{\vx}_2$, $\un{\vx}_2\un{\vx}_3$, $\un{\vx}_3\un{\vx}_1$.

Suppose that one of these arcs has Brass measure $>2\pi/3$, say $\mu(\myangle\vx_2\vo\vx_3)>2\pi/3$.
There exists $x\in\un{\vx}_2\un{\vx}_3$ such that $\mu(\myangle\vx_2\vo\vx)=\mu(\myangle\vx_3\vo\vx)>\pi/3$.
The angle $\myangle x_1ox$ contains either $\myangle x_2ox$ or $\myangle x_3ox$, hence $\mu(x_1ox)>\pi/3$.
By Lemma~\ref{brasslemma1}, $\norm{\vx-\un{\vx}_i}\geq 1$ ($i=1,2,3$).
By the triangle inequality,
\begin{align*}
1 \leq \norm{x-\un{x}_i} &= \norm{\frac{1}{\norm{x_i}}(x-x_i)+(1-\frac{1}{\norm{x_i}})x}\\
&\leq \frac{1}{\norm{x_i}}\norm{x-x_i} + 1 - \frac{1}{\norm{x_i}}.
\end{align*}
It follows that $\norm{\vx-\vx_i}\geq 1$, $i=1,2,3$.

In the remaining case, $\mu(\myangle x_iox_{i+1})=2\pi/3$, $i=1,2,3$ (modulo $3$).
If $\norm{-\un{x}_3-\un{x}_1}\geq 1$ and $\norm{-\un{x}_3-\un{x}_2}\geq 1$, then we have $\norm{-\un{x}_3-\un{x}_i}\geq 1$ for each $i=1,2,3$, and it follows from the triangle inequality as before that $\norm{-\un{x}_3-x_i}\geq 1$ ($i=1,2,3$).
Thus, we may assume without loss of generality that $\norm{-\un{x}_3-\un{x}_1} < 1$.
Since $\mu(\myangle x_1ox_3)=2\pi/3$, $\mu(\myangle x_1o(-x_3))=\pi/3$, and by Lemma~\ref{brasslemma1}, $-\un{x}_3$ and $\un{x}_1$ are in the relative interior of a long segment $S$.

Suppose that $\norm{-\un{x}_2-\un{x}_1} < 1$.
Then, similarly, $-\un{x}_2$ and $\un{x}_1$ are in the relative interior of a long segment.
This long segment is necessarily $S$.
However, since $S$ contains the arc from $-\un{x}_2$ to $-\un{x}_3$, it follows that $\mu(S)\geq 2\pi/3$, which contradicts Lemma~\ref{lemma:long}.
Therefore, $\norm{-\un{x}_2-\un{x}_1} \geq 1$, and similarly, $\norm{-\un{x}_3-\un{x}_2} \geq 1$.
It follows that $\norm{-\un{x}_2-\un{x}_i}\geq 1$ for each $i=1,2,3$, and we are done as before.

We conclude that $m\geq 4$.
\end{proof}

\begin{proposition}[Zong \cite{Zong1993}]
Let $C$ be a convex disc in the plane.
The blocking number $B(C)$ and the strict blocking number $B'(C)$ are given by the following table.
\begin{center}
\begin{tabular}{c|cc}
& non-parallelogram & parallelogram \\ \hline
$B(C)$ & $4$ & $4$ \\
$B'(C)$ & $3$ & $2$
\end{tabular}
\end{center}
\end{proposition}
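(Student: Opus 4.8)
The plan is as follows. The parallelogram case is straightforward: for the square $C=[-1,1]^2$ the pair of translates corresponding to $\{(1,0),(-1,0)\}$ realises $B'(C)=2$, the four translates by $(1,t),(-t,1),(-1,-t),(t,-1)$ with $0<t<1$ realise $B(C)\le 4$, and $B(C)\ge 4$ holds for every convex disc by Lemma~\ref{lemma:zong}; so I concentrate on $C$ not a parallelogram. Here I would argue entirely in terms of a good Brass measure, as in the proofs of Propositions~\ref{hadwprop} and \ref{blockingprop}: pass to the central symmetral, assume $C$ is $\vo$-symmetric and the unit ball of a plane $X^2$ not isometric to $\ell_\infty^2$, and fix a good Brass measure $\mu$ (Theorem~\ref{brass:proper}). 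Then a Hadwiger family of $C$ is a set of points on $\bd C$ with pairwise distances $\ge 1$, and it is maximal iff every point of $\bd C$ lies at distance $<1$ from a member; likewise $B'(C)$ is the least cardinality of a set $T\subseteq\bd C$ with pairwise distances $>1$ such that every point of $\bd C$ lies at distance $\le 1$ from a member of $T$. The one extra fact I need, combining Lemma~\ref{brasslemma1} with the defining property of $\mu$, is that for unit vectors $\vu,\vu'$: if $\mu(\myangle\vu\vo\vu')<\pi/3$ then $\norm{\vu-\vu'}<1$; if $\mu(\myangle\vu\vo\vu')>\pi/3$ then $\norm{\vu-\vu'}>1$; and if $\norm{\vu-\vu'}=1$ then $\triangle\vo\vu\vu'$ is equilateral, so $\mu(\myangle\vu\vo\vu')=\pi/3$. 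In particular the set $\{\vu\in\bd C:\norm{\vu-\vu'}\le 1\}$ is contained in the closed arc of $\mu$-radius $\pi/3$ about $\vu'$, which has $\mu$-measure $2\pi/3$.

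For $B(C)\ge 4$ there is nothing to do, since a blocking family satisfies the hypotheses of Lemma~\ref{lemma:zong}. For $B(C)\le 4$ I would exhibit a maximal Hadwiger family of size $4$: take any unit vector $\vu_1$, set $\vu_3:=-\vu_1$, choose $\vu_2$ with $\mu(\myangle\vu_1\vo\vu_2)=\pi/2$, and set $\vu_4:=-\vu_2$; by the symmetry $\mu(A)=\mu(-A)$ all four consecutive gaps have $\mu$-measure $\pi/2$. Consecutive members are then at distance $>1$ and opposite members are antipodal, at distance $2$, so this is a Hadwiger family; and it is maximal because a point of $\bd C$ in the gap from $\vu_i$ to $\vu_{i+1}$ lies at $\mu$-angle $s$ from $\vu_i$ and $\pi/2-s$ from $\vu_{i+1}$ for some $s\in[0,\pi/2]$, and at least one of $s,\pi/2-s$ is $<\pi/3$.

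For $B'(C)\ge 3$: a one-element family is never maximal (adjoin $-\vu_1$), and for a two-element family $\{\vu_1,\vu_2\}$ the two arcs $\{\vu\in\bd C:\norm{\vu-\vu_i}\le 1\}$ have total $\mu$-measure at most $4\pi/3<2\pi=\mu(\bd C)$, so they do not cover $\bd C$. For $B'(C)\le 3$ I would again use an equally spaced configuration: choose $\vu_1,\vu_2,\vu_3$ on $\bd C$ with consecutive $\mu$-gaps all equal to $2\pi/3$. Then each pairwise $\mu$-angle equals $2\pi/3>\pi/3$, so the three pairwise distances exceed $1$; and a point of $\bd C$ in the gap from $\vu_i$ to $\vu_{i+1}$ at $\mu$-angle $s$ from $\vu_i$ lies at distance $<1$ from $\vu_i$ if $s<\pi/3$ and from $\vu_{i+1}$ if $s>\pi/3$. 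The only borderline point is the midpoint $\vm_i$ of the gap, at $\mu$-angle exactly $\pi/3$ from both $\vu_i$ and $\vu_{i+1}$.

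The main obstacle is thus to arrange that every $\vm_i$ lies at distance $\le 1$ from $\vu_i$ or $\vu_{i+1}$. If $\norm{\vm_i-\vu_i}>1$ and $\norm{\vm_i-\vu_{i+1}}>1$, then Lemma~\ref{brasslemma1}, applied to the triangles $\triangle\vu_i\vo\vm_i$ and $\triangle\vm_i\vo\vu_{i+1}$ (whose apex angles have measure $\pi/3$), forces the segments $\vu_i\vm_i$ and $\vm_i\vu_{i+1}$ to lie on long segments $L_1,L_2$ of $\bd C$, each of $\mu$-measure $\pi/3$ by Lemma~\ref{lemma:long}, with $\vm_i$ in an end of each; since the arc $\vu_i\vm_i\vu_{i+1}$ has $\mu$-measure $2\pi/3$ we get $L_1\ne L_2$, so $\vm_i$ is a point where two distinct long segments of $\bd C$ meet. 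By Lemma~\ref{segmentlemma} there are at most two antipodal pairs of long segments, hence only finitely many such points; and since $\vm_1,\vm_2,\vm_3$ are the points at $\mu$-angles $\pi/3,\pi,5\pi/3$ from $\vu_1$, only finitely many choices of $\vu_1$ are excluded. Any admissible choice of $\vu_1$ then produces a maximal strict Hadwiger family of size $3$, so $B'(C)\le 3$; when $\lambda(\frac12(C-C))\le 1$ there are no long segments and every $\vu_1$ works. Apart from this finite bookkeeping the argument parallels the proofs of Propositions~\ref{hadwprop} and \ref{blockingprop}, and I would expect no further difficulties.
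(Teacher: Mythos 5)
Your proof is correct and follows essentially the same route as the paper: reduce to an $o$-symmetric non-parallelogram unit ball carrying a good Brass measure, obtain $B(C)\geq 4$ from Lemma~\ref{lemma:zong}, realise $B(C)\leq 4$ with four unit vectors at consecutive $\mu$-angles $\pi/2$, and realise $B'(C)=3$ with three unit vectors at mutual $\mu$-angle $2\pi/3$. The only substantive differences are refinements where the paper is terse: your measure-counting argument for $B'(C)\geq 3$ (two sets of $\mu$-measure at most $2\pi/3$ cannot cover $\bd C$) cleanly produces a third point at distance strictly greater than $1$ from both given ones, and your genericity argument for the borderline points at $\mu$-angle exactly $\pi/3$ from two consecutive $\vu_i$ (such a point would have to lie in the ends of two distinct long segments, of which there are only finitely many by Lemma~\ref{segmentlemma}) makes explicit a degenerate case that the paper's construction via alternate vertices of an inscribed affine regular hexagon leaves implicit.
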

\begin{proof}
As before, we assume that $C$ is $o$-symmetric and not a parallelogram, and $\mu$ is a good Brass measure in the normed plane $X^2$ with unit ball $C$.
By Lemma~\ref{lemma:zong}, $B(C)\geq 4$.
Next, we show that this is sharp.
Choose any $x_1,x_2\in\bd C$ such that $\mu(\myangle\vx_1\vo\vx_2) = \pi/2$.
Let $x_3=-x_1$ and $x_4=-x_2$.
Then for any $\vx\in\bd C$ there is an $i\in\{1,2,3,4\}$ such that $\mu(\myangle\vx\vo\vx_i)\leq\pi/4<\pi/3$.
Hence, $\norm{\vx-\vx_i}<1$.
Thus, $\{C+\vx_i:i=1,\dots,4\}$ is a maximal Hadwiger family of $C$, and we conclude that $B(C)=4$.

Given any two points $x_1,x_2\in\bd C$ at distance $1$, there exists a point $x_3$ outside the angular domain $\myangle x_1ox_2$ with $\mu(\myangle x_1ox_2)\leq\pi$ such that $\mu(\myangle x_3ox_1)>\pi/3$ and $\mu(\myangle x_3ox_2)>\pi/3$.
As before, $\norm{x_1-x_3},\norm{x_2-x_3}\geq 1$.
Thus, $B'(C)\geq 3$.

To find three points on $\bd C$, we take the vertices $x_0,x_2,x_4$ of the affine regular hexagon $x_0\cdots x_5$ from the proof of $H(C)\geq 6$ in the proof of Proposition~\ref{hadwprop}, inscribed in the unit circle.
Then the three angles $\myangle x_0ox_2$, $\myangle x_2ox_4$, $\myangle x_4ox_0$ each has Brass measure $2\pi/3$.
If we take any $\vx\in\bd C$, it will have an angle of at most $\pi/3$, hence a distance of at most $1$, to one of $x_0,x_2,x_4$.
\end{proof}

We now define the dual blocking number and its strict analogue.
In the definitions of the dual blocking and strict blocking numbers we again make use of the natural ordering of the translates of $C$ that touch $C$.
The \define{dual blocking number} $A(C)$ of a convex disc $C$ is the maximum size of a minimal dual Hadwiger family of $C$.
The \define{strict dual blocking number} $A'(C)$ of $C$ is the maximum size of a minimal strict dual Hadwiger family of $C$.
Note that for the dual notions we do not need the non-triviality requirement that $\vo$ is in the convex hull of the translation vectors, since such trivial collections of translates will not have the maximum size.
\begin{proposition}
Let $C$ be a convex disc in the plane.
The dual blocking number $A(C)$ and the dual strict blocking number $A'(C)$ are given by the following table.
\begin{center}
\begin{tabular}{c|cc}
& non-parallelogram & parallelogram \\ \hline
$A(C)$ & $11$ & $8$ \\
$A'(C)$ & $12$ & $12$
\end{tabular}
\end{center}
\end{proposition}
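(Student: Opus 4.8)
The plan is to mimic the proofs of Propositions~\ref{hadwprop} and~\ref{blockingprop}: by Minkowski's observation we may assume $C$ is $\vo$-symmetric and is the unit ball $B$ of a normed plane $X^2$, and we fix a good Brass measure $\mu$ on $X^2$ (Theorem~\ref{brass:proper}), restating everything in terms of points on $\bd B$. A dual Hadwiger family becomes a cyclic sequence $x_1,\dots,x_m\in\bd B$ with $\vo\in\conv\{x_1,\dots,x_m\}$ and $\norm{x_i-x_{i+1}}\leq 1$ for all $i$ (indices mod $m$); in the strict version the last inequality is strict. I will use throughout that if $x,y\in\bd B$ and $\norm{x-y}=1$ then $\triangle\vo xy$ is equilateral, hence $\mu(\myangle x\vo y)=\pi/3$. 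Put $\alpha_i:=\mu(\myangle x_i\vo x_{i+1})$; by Lemma~\ref{brasslemma1}(b) (together with this remark) $\alpha_i\leq\pi/3$, and since $\norm{x_i-x_{i+1}}\leq 1<2$ consecutive points are never antipodal, so $\vo$ lies in the interior of $\conv\{x_i\}$, the arcs between consecutive $x_i$ partition $\bd B$, and $\sum_i\alpha_i=2\pi$. The parallelogram case ($X^2$ isometric to $\ell_\infty^2$) is a separate elementary computation on the square unit ball, which I omit.

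For the upper bounds, let $\{x_1,\dots,x_m\}$ be a minimal (dual, resp.\ strict dual) Hadwiger family. Deleting $x_i$ destroys the property, so either $\vo$ leaves the hull --- equivalently $\alpha_{i-1}+\alpha_i>\pi$, in which case trivially $\alpha_{i-1}+\alpha_i\geq\pi/3$ --- or the new neighbours $x_{i-1},x_{i+1}$ become disjoint, resp.\ non-overlapping, i.e.\ $\norm{x_{i-1}-x_{i+1}}\geq 1$, which by Lemma~\ref{brasslemma1}(a) and the equilateral remark gives $\mu(\myangle x_{i-1}\vo x_{i+1})\geq\pi/3$; as $\alpha_{i-1}+\alpha_i\leq 2\pi/3<\pi$ this quantity equals $\alpha_{i-1}+\alpha_i$. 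Hence $\alpha_{i-1}+\alpha_i\geq\pi/3$ for every $i$, and $4\pi=2\sum_i\alpha_i=\sum_i(\alpha_{i-1}+\alpha_i)\geq m\pi/3$, i.e.\ $m\leq 12$. To improve this to $A(C)\leq 11$, suppose a minimal \emph{dual} Hadwiger family has $m=12$. Then equality holds throughout, $\alpha_{i-1}+\alpha_i=\pi/3$ for all $i$; since this is $<\pi$, deleting $x_i$ never removes $\vo$ from the hull, so we must have $\norm{x_{i-1}-x_{i+1}}>1$ for every $i$. Now $\mu(\myangle x_{i-1}\vo x_{i+1})=\pi/3$ and $\mu$ is good, so the equality case of Lemma~\ref{brasslemma1}(a) puts the chord $x_{i-1}x_{i+1}$ in a long segment $S_i$ of $\bd B$ with $x_{i-1},x_{i+1}$ in its two ends; since $x_i$ then lies on $[x_{i-1},x_{i+1}]\subseteq S_i$, it lies in the relative interior of $S_i$. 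Running this for $i$ and for $i+1$ shows $x_i$ is in the relative interior of both $S_i$ and $S_{i+1}$, whence $S_i=S_{i+1}$ (two maximal segments of $\bd B$ sharing a relative interior point coincide). Hence all $S_i$ equal a single long segment $S$, so every $x_i\in S\subseteq\bd B$ and $\conv\{x_1,\dots,x_m\}\subseteq S$, contradicting $\vo\in\conv\{x_i\}$. So $A(C)\leq 11$; note that this argument does \emph{not} exclude $m=12$ in the strict case, where deleting $x_i$ only has to produce a \emph{touching} pair, $\norm{x_{i-1}-x_{i+1}}=1$, and Lemma~\ref{brasslemma1}(a) then gives no long-segment conclusion.

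For the lower bounds I exhibit minimal families of the claimed sizes. For $A(C)\geq 11$ take $x_1,\dots,x_{11}\in\bd B$ with $\mu(\myangle x_i\vo x_{i+1})=2\pi/11$ for all $i$; since $2\pi/11<\pi/3$ and no angle equals $\pi/3$, each $\norm{x_i-x_{i+1}}<1$, so this is a dual Hadwiger family, and any proper subfamily has a consecutive gap of $\mu$-measure a multiple of $2\pi/11$ that is at least $4\pi/11>\pi/3$, hence a disjoint pair (Lemma~\ref{brasslemma1}(b) and the equilateral remark, when the gap is $<\pi$) or else $\vo$ outside its hull --- so the family is minimal. For $A'(C)\geq 12$ take $x_1,\dots,x_{12}$ with $\mu(\myangle x_i\vo x_{i+1})=\pi/6$, choosing the overall position of the configuration so that none of the twelve arcs $[x_{i-1},x_{i+1}]$, each of $\mu$-measure $\pi/3$, coincides with one of the long segments of $\bd B$; this is possible because every long segment has $\mu$-measure exactly $\pi/3$ (Lemma~\ref{lemma:long}) and there are at most four of them (Lemma~\ref{segmentlemma}), so only finitely many positions are excluded, and an arc of $\mu$-measure $\pi/3$ inside a long segment of $\mu$-measure $\pi/3$ must equal it. As before this is a strict dual Hadwiger family, and for each $i$ we have $\mu(\myangle x_{i-1}\vo x_{i+1})=\pi/3$ with the chord $x_{i-1}x_{i+1}$ not in the relative interior of any long segment, so the equality case of Lemma~\ref{brasslemma1}(b) gives $\norm{x_{i-1}-x_{i+1}}\geq 1$. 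Hence deleting any $x_i$, and likewise passing to any proper subfamily, yields a non-overlapping consecutive pair (or removes $\vo$ from the hull), so the family is minimal; thus $A'(C)=12$.

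The main obstacle is the borderline case $m=12$: one must see precisely why it cannot occur for the non-strict dual blocking number --- tracking that the extremal twelve-gon forces each ``diagonal'' $x_{i-1}x_{i+1}$ into a long segment, and that these long segments then all coincide, pushing $\vo$ off the unit circle --- while for the strict version $m=12$ genuinely occurs and has to be realised by a regular twelve-gon placed so as to dodge the at most four long segments of $\bd B$. Keeping careful track of the \emph{ends} of long segments in Lemma~\ref{brasslemma1}, and of the (routine) parallelogram case, is the only delicate book-keeping.
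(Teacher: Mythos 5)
Your overall framework is the paper's: pass to the $o$-symmetric unit ball, fix a good Brass measure, and translate everything into Brass angles via Lemma~\ref{brasslemma1}. The lower bound $A(C)\geq 11$ (eleven points at consecutive angles $2\pi/11$) and the upper bound $A'(C)\leq 12$ (pigeonhole/averaging on consecutive angle sums) are essentially identical to the paper's. Your proof of $A(C)\leq 11$ takes a genuinely different route: the paper argues that a minimal $12$-point family forces every second point into a $6$-point configuration with consecutive distances $>1$, contradicting $H'(C)=5$ from Proposition~\ref{hadwprop}; you instead prove $m\leq 12$ by averaging and then kill $m=12$ by chasing the equality case of Lemma~\ref{brasslemma1}, concluding that all twelve points lie on a single long segment and hence cannot surround $\vo$. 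Both arguments are correct and of comparable length; yours is more self-contained, the paper's reuses earlier work.

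The one genuine flaw is in your construction for $A'(C)\geq 12$. The assertion that ``an arc of $\mu$-measure $\pi/3$ inside a long segment of $\mu$-measure $\pi/3$ must equal it'' is false: by Lemma~\ref{lemma:long} the two ends of a long segment have $\mu$-measure $0$, so the proper subarc between the inner endpoints of the ends already has full measure $\pi/3$. Consequently ``only finitely many positions are excluded'' is unjustified. The bad event is exactly the equality case of Lemma~\ref{brasslemma1}(b) --- $x_{i-1}$ and $x_{i+1}$ landing in the two ends of a common long segment with the chord in its relative interior --- and the set of positions realising this is in general an infinite (though $\mu$-null) union of arcs, so a configuration dodging only the finitely many ``exact coincidences'' can still contain a skip-one pair at distance $<1$, destroying minimality. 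The construction is repairable: it suffices to place the twelve points so that none lies in an end of a long segment (the ends occupy finitely many cumulative-measure coordinates, so only finitely many values of the rotation parameter are excluded), after which the equality case of Lemma~\ref{brasslemma1}(b) cannot occur and every skip-one distance is $\geq 1$. The paper sidesteps the issue entirely with a cleaner construction: two inscribed affine regular hexagons with sides of length exactly $1$, interleaved at Brass angle $\pi/6$, so that every skip-one distance equals $1$ by construction and minimality is immediate. You should either adopt that construction or patch your genericity argument as indicated.
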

\begin{proof}
As before, we assume that $C$ is $o$-symmetric and not a parallelogram, and $\mu$ is a good Brass measure in the normed plane $X^2$ with unit ball $C$.
Suppose that we are given a set of $12$ points on $\bd C$ such that consecutive points are at distance $\leq 1$.
If this set is minimal, then the distance between every second vector is $>1$, which gives a strict Hadwiger family of $6$ points, contradicting Proposition~\ref{hadwprop}.
Thus, $A(C)\leq 11$.

To find $11$ points on $\bd C$ such that consecutive points are at distance $\leq 1$ and non-consecutive points at distance $>1$, choose any $11$ points with consecutive angles $2\pi/11$ according to a Brass measure.
Since $2\pi/11<\pi/3$, the distance between consecutive points is $<1$.
Since $2\cdot2\pi/11>\pi/3$, the distance between non-consecutive points is $>1$, and we have obtained a minimal dual Hadwiger family.
We conclude that $A(C)=11$.

Next, suppose that we are given a set of $13$ points on $\bd C$ such that consecutive points are at distance $< 1$.
For some two consecutive angles the sum of the angular measures is $\leq 2\cdot2\pi/13<\pi/3$.
It follows that we may remove the point shared among the two angles and still have all consecutive distances $<1$.
Therefore, the $13$ points do not form a minimal strict dual Hadwiger family, hence $A'(C)\leq 12$.

To find $12$ points on $\bd C$ with consecutive points at distance $< 1$ and non-consecutive points at distance $\geq 1$, choose any $\vx_1$, let $\vx_i$, $i=1,\dots,6$, be the vertices of an inscribed affine regular hexagon with sides of length $1$, let $\vy_1$ be such that $\mu(\myangle\vx_1\vo\vy)=\mu(\myangle\vx_2\vo\vy)=\pi/6$, and let $\vy_i$, $i=1,\dots,6$, be the vertices of an inscribed affine regular hexagon with sides of length $1$.
Then $\{x_i\}\cup\{y_i\}$ is a minimal strict dual Hadwiger family, and it follows that $A'(C)=12$.
\end{proof}

\section{Few-distance sets and thin cones}\label{section:thincone}
Erd\H{o}s \cite{Erdos} asked for the minimum number $g(n)$ of distinct distances that can occur in a set of $n$ points in the plane.
We can equivalently ask for the largest number of points in a given space in which only $k$ non-zero distances occur.
We say that a subset $S$ of a finite-dimensional normed space $X$ is a \define{$k$-distance set} if \[\card{\setbuilder{\norm{x-y}}{x,y\in S, x\neq y}}\leq k.\]
We have encountered $1$-distance sets in Section~\ref{section:equilateral} as equilateral sets.
Let \[f(k,X)=\max\setbuilder{\card{S}}{\text{$S$ is a $k$-distance subset of $X$}}.\]
Thus, $f(1,X)=e(X)$.

For the Euclidean plane, Erd\H{o}s \cite{Erdos} conjectured that $f(k,\bE^2)=\mathrm{O}(k^{1+\epsi})$ and showed that a square piece of the integer lattice gives $f(k,\bE^2)=\upOmega(k\sqrt{\log k})$.
Recently, Guth and Katz \cite{GuthKatz2015} used a striking combination of classical algebraic geometry and topological and combinatorial methods to show that $f(k,\bE^2)=\mathrm{O}(k\log k)$.
In higher dimensions, Erd\H{o}s observed that $c_1k^{d/2} < f(k,\bE^d) < c_2 k^d$.
It is conjectured that $f(k,\bE^d) = \mathrm{O}(k^{d/2+\epsi}))$.
The current best results are due to Solymosi and Vu \cite{Solymosi2008}, which, when combined with the result of Guth and Katz, are $f(k,\bE^3)=\mathrm{O}(k^{5/3+\mathrm{o}(1)})$ and $f(k,\bE^d)=\mathrm{O}(k^{(d^2+d-2)/(2d)+\mathrm{o}(1)})$ for fixed $d$.
Bannai, Bannai and Stanton \cite{BBS} and Blokhuis~\cite{Blokhuis1984} showed that $f(k,\bE^d)\leq\binom{k+d}{k}$, which is a useful bound if $d$ is large compared to $k$.

For general $2$-dimensional spaces $X^2$ we have the bound $f(2,X^2)\leq 9$, with equality iff $X^2$ is isometric to $\ell_\infty^2$ \cite{Swanepoel1999}.
D\"uvelmeyer \cite{Duvelmeyer2008} made a computer-assisted classification of all $2$-distance sets in all $2$-dimensional normed spaces.
This classification is quite involved, but the following general statements can be inferred from his results.
\begin{theorem}[D\"uvelmeyer \cite{Duvelmeyer2008}]
Let $X^2$ be a normed plane.
\begin{enumerate}
\item
If the unit ball of $X^2$ is not a polygon, then $f(2,X^2)\leq 5$.
\item If the unit ball of $X^2$ is not a polygon and $f(2,X^2)=5$, then any $2$-distance set of $5$ points is the vertex set of an affine regular pentagon, the ratio between the two distances is the golden ratio $(1+\sqrt{5})/2$, and the unit ball of $X^2$ has an inscribed affine regular decagon.
\item If $f(2,X^2)\geq 8$, then $X^2$ is isometric to $\ell_\infty^2$ and the $2$-distance set of eight points corresponds to a subset of $\{0,1,2\}^2$ in $\ell_\infty^2$.
\end{enumerate}
\end{theorem}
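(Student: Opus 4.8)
The plan is to combine the complementary structure of the minimum-distance and diameter graphs of a $2$-distance set with Brass's angular measure, reducing everything to a bounded list of small cases whose classification is the computer-assisted part of D\"uvelmeyer's argument. If the unit ball $B$ of $X^2$ is a parallelogram then $X^2$ is isometric to $\ell_\infty^2$, where $f(2,\ell_\infty^2)=9$ is already recorded \cite{Swanepoel1999}, the decagon assertion is vacuous, and a direct check shows every $2$-distance set of at least $8$ points in $\ell_\infty^2$ is affinely equivalent to a subset of $\{0,1,2\}^2$; this settles the parallelogram side of all three statements. So assume $B$ is not a parallelogram, fix a good Brass measure $\mu$ (Theorem~\ref{brass:proper}), and let $S$ be a $2$-distance set with $n:=\card{S}$ points and distance values $1<d$, scaled so the smaller distance is $1$. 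Let $G_1$ and $G_d$ be the graphs on $S$ whose edges are, respectively, the pairs at distance $1$ and the pairs at distance $d$; then $G_1$ is exactly the minimum-distance graph of $S$, $G_d$ is exactly its diameter graph, and the two are complementary, so $\binom{n}{2}=\card{E(G_1)}+\card{E(G_d)}$. By Brass's edge bounds (Sections~\ref{section:mindistedges} and~\ref{section:unitdistance}), $\card{E(G_1)}\leq 3n-\sqrt{12n-3}$ and $\card{E(G_d)}\leq\lfloor n^2/4\rfloor+1$, with $\card{E(G_d)}\leq n$ when $X^2$ is strictly convex. Substituting, $\binom{n}{2}\leq(3n-\sqrt{12n-3})+\lfloor n^2/4\rfloor+1$ forces $n\leq 10$, and in the strictly convex case $\binom{n}{2}\leq 4n-\sqrt{12n-3}$ forces $n\leq 6$.

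The next step is to show that the surviving cases $n\in\{6,\dots,10\}$ force $B$ to be a polygon, and for $n\geq 8$ even a parallelogram. The mechanism: a diameter graph on $n$ planar points with close to $\lfloor n^2/4\rfloor$ edges must be close to complete bipartite, and such a large bipartite-like diameter graph is possible only if $\partial B$ carries a long segment transverse to the separating direction --- this is the phenomenon behind Brass's determination of $D(n,X^2)$. Since $\card{E(G_d)}\geq\binom{n}{2}-(3n-\sqrt{12n-3})$ grows quadratically in $n$, once $n$ lies in our range this already forces long segments on $\partial B$; pushing the accounting to $n\geq 8$ forces so many long segments that the bound of at most two parallel pairs of long segments, each of length $<2$ (Lemma~\ref{segmentlemma}), can be respected only if the long segments form two crossing families, i.e.\ $B$ is a parallelogram, contrary to assumption. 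Hence $f(2,X^2)\leq 5$ when $B$ is not a polygon, and $f(2,X^2)\leq 7$ when $B$ is not a parallelogram, so that $f(2,X^2)\geq 8$ forces $B$ to be a parallelogram, i.e.\ $X^2\cong\ell_\infty^2$. The honest statement is that making ``forces enough long segments'' quantitatively precise, and disposing of the residual small configurations ($n\in\{6,7\}$ over non-parallelogram polygons, $n\in\{8,9\}$ over $\ell_\infty^2$), is exactly the exhaustive, computer-assisted part of D\"uvelmeyer's proof.

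For the structure of $5$-point $2$-distance sets $S$ when $B$ is not a polygon, I would first rule out an interior point of $\conv(S)$: if some $\vv\in S$ lies in the interior of $\conv(S\setminus\{\vv\})$, the four remaining points form a convex $2$-distance quadrilateral, and combining Lemma~\ref{brasslemma2} (which fixes the angle structure of an equilateral quadrilateral) with the constraint that $\vv$ be at distance $1$ or $d$ from each vertex --- plus a rigidity argument on where the unit circle through $\vv$ can sit --- yields either a contradiction or a polygonal $B$. So $S$ is in convex position, $\conv(S)=\vp_1\cdots\vp_5$ is a pentagon with $\sum_{i=1}^5\mu(\myangle\vp_i)=3\pi$ by Lemma~\ref{lemma:angular}(1). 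A finite check on which of the five sides $\vp_i\vp_{i+1}$ and five diagonals $\vp_i\vp_{i+2}$ carry which distance --- using Lemma~\ref{brasslemma1} to lower-bound each interior angle in terms of the incident side/diagonal lengths --- eliminates every ``mixed'' pattern, leaving all five sides equal to $1$ and all five diagonals equal to $d$; one then checks that a convex pentagon that is equilateral with equal diagonals in a norm must be affine-regular. The diagonal-to-side ratio of an affine-regular pentagon is the affine invariant $(1+\sqrt5)/2$, and the ten normalised side- and diagonal-directions lie on $\partial B$ and form an affine-regular decagon, so the unit ball of $X^2$ has an inscribed affine-regular decagon. The parallel, but much longer, analysis for $n\in\{6,7,8\}$ over polygonal non-parallelogram $B$ and for $n\in\{8,9\}$ over $\ell_\infty^2$ pins down the extremal configurations of $\ell_\infty^2$, and is once again the content of D\"uvelmeyer's computer search.

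The hard part is precisely this residual casework. The clean inputs are the complementary-graph count of the first step and the general principle that a large planar diameter graph forces long segments; what makes the classification genuinely delicate is that the naive alternative --- bounding the interior angles of $\conv(S)$ directly by the Brass measure --- does not close, because the affine-regular pentagon itself has interior angles $3\pi/5$, split into three consecutive-gap angles of $\pi/5<\pi/3$ each as seen from a vertex, so the Brass-angle estimate degrades exactly when a short edge meets a long edge. One must therefore keep careful track of the long segments of $\partial B$ and then grind through the remaining low-cardinality configurations over the various polygonal unit-ball shapes --- which is where the real effort, and the computer, are needed.
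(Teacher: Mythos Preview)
The paper does not prove this theorem; it is quoted as a consequence of D\"uvelmeyer's computer-assisted classification, so there is no ``paper's own proof'' to compare against. Your proposal is therefore really an attempt at an independent argument, and it has a structural gap that prevents it from reaching the stated conclusions.

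The gap is the slide from ``unit ball not a polygon'' to ``strictly convex''. Your bound $\card{E(G_d)}\leq n$ is Brass's diameter bound for \emph{strictly convex} planes ($\lambda(X^2)=0$), not for ``non-polygonal'' planes. A unit ball can perfectly well fail to be a polygon yet carry a flat arc (a stadium, say), and then Brass gives $D(n,X^2)=\lfloor n^2/4\rfloor$, not $n$; with that bound your inequality $\binom{n}{2}\leq 4n-\sqrt{12n-3}$ collapses and you are back to $n\leq 10$ rather than $n\leq 6$. The subsequent paragraph tries to recover by arguing that many diameter edges ``force long segments'' and that enough long segments ``force $B$ to be a polygon'', but neither implication holds: a single pair of long segments (or even two pairs) is compatible with $\partial B$ being a $C^\infty$ curve elsewhere, and Lemma~\ref{segmentlemma} only caps the number of long-segment directions, it says nothing about polygonality. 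So the mechanism you invoke cannot separate ``polygon'' from ``non-polygon with a flat side'', which is exactly the distinction the theorem is about.

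Your honest acknowledgement that the residual casework ``is exactly the exhaustive, computer-assisted part of D\"uvelmeyer's proof'' is fair, but note that this means the proposal is not a proof so much as a reduction \emph{to} D\"uvelmeyer's classification --- and because of the gap above, even that reduction is incomplete for part~(1). The pentagon sketch in your third paragraph is more promising in outline, but the step ``a convex pentagon that is equilateral with equal diagonals in a norm must be affine-regular'' is asserted, not argued, and is itself nontrivial without strict convexity.
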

For general $d$-dimensional normed spaces $X^d$, the following conjecture was made in \cite{Swanepoel1999}.
\begin{conjecture}[\cite{Swanepoel1999}]
For all $k\geq 1$ and $d\geq 1$, for any $d$-dimensional normed space $X^d$ we have $f(k,X^d)\leq(k+1)^d$.
\end{conjecture}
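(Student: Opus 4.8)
The plan is to attack the conjecture through a multiscale covering of directions by thin cones (Section~\ref{section:thincone}), turning each part of a $k$-distance set into a bounded ``monotone chain'', and then trying to force a product structure matching the extremal example $\{0,1,\dots,k\}^d\subset\ell_\infty^d$. First reduce: by scaling the norm, assume the distances realised by $S$ are $1=\delta_1<\delta_2<\dots<\delta_k$. The case $k=1$ is exactly the Danzer--Gr\"unbaum bound $e(X^d)\le 2^d$ recorded above, so take $k\ge 2$; note also that the linear-algebraic method giving $f(k,\bE^d)\le\binom{k+d}{k}$ is unavailable here, since there is no inner product and for general norms the truth is exponential in $d$, so a genuinely metric argument is needed.

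Call a convex cone $C\subseteq X^d\setminus\{o\}$ \emph{thin} if the norm-distance strictly increases along $C$: whenever $\vb-\va\in C$ and $\vc-\vb\in C$, one has $\norm{\vc-\va}>\norm{\vb-\va}$ and $\norm{\vc-\va}>\norm{\vc-\vb}$. From the theory I would use two facts: (i) $\bd B_{X^d}$ can be covered by $N=N(d)$ thin cones, with $N$ depending only on the dimension; and, for a sharp result, ideally (ii) $N$ can be taken as small as $2^d$, the number matching $\ell_\infty^d$. Given a cover $C_1,\dots,C_N$, each $C_j$ induces a strict partial order $\vp\prec_j\vq\iff\vq-\vp\in C_j$ on $S$; since $C_j+C_j=C_j$ for a convex cone, thinness forces the distances from the least element of any $\prec_j$-chain to be strictly increasing, hence pairwise distinct, so every $\prec_j$-chain has at most $k+1$ points. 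Combining this (via Mirsky's theorem) with a bound on $\prec_j$-antichains --- for which one would hope that each antichain lies in a translate of a lower-dimensional section, controlled by an induction on $d$ --- should bound $\card{S}$ cone by cone; one then wants to multiply the contributions of the $d$ essentially independent ``axis'' cones to reach $(k+1)^d$.

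The main obstacle is that every easy way of aggregating the pieces loses too much. A volume/packing argument is hopeless: a $k$-distance set with minimum distance $1$ can have arbitrarily large diameter (for instance $\{0,1,M,M+1\}$ on a line is a $4$-distance set of four points with diameter $M+1$), so the volume of $\conv(S)+\frac12 B_{X^d}$ is not controlled in terms of $k$. A plain union bound over the $N$ thin cones only yields $\card{S}\le (k+1)$ times the largest antichain, which gives the wrong exponent of $k+1$ in all but the smallest dimensions. The real task is to replace ``cover plus union bound'' by a genuine embedding of $S$ into a product of $d$ chains of length $\le k+1$ --- that is, to establish, for an arbitrary norm, the rigidity underlying $\{0,\dots,k\}^d\subset\ell_\infty^d$ --- and the thin-cone machinery does not yet produce this. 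A realistic intermediate outcome, in the spirit of Theorem~\ref{thm:kdist}, is the weaker bound $f(k,X^d)\le (Ck)^d$ (or $c_d k^d$) with an explicit constant, together with the sharp $(k+1)^d$ for norms close to $\ell_\infty^d$ by perturbing the Danzer--Gr\"unbaum extremal analysis, leaving the general sharp statement open.
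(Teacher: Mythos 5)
The statement you are addressing is a \emph{conjecture}, and the paper offers no proof of it: the only cases recorded as known are $k=1$ for all $d$ (the Petty--Soltan bound $e(X^d)\leq 2^d$ via Danzer--Gr\"unbaum) and $d=2$ for all $k$, together with the matching example $\set{0,1,\dots,k}^d\subset\ell_\infty^d$. You correctly stop short of claiming a proof, and your identification of the thin-cone machinery of Section~\ref{section:thincone} as the relevant tool, and of the aggregation step as the obstruction, is essentially accurate. However, two points in your sketch are off. First, the paper's aggregation is neither a union bound nor a Mirsky/antichain argument: Theorem~\ref{thm:thincone} shows that the height vector $x\mapsto\sequencebuilder{h(x;S,{\leq_P})}{P\in\CP}$ is \emph{injective}, whence $\card{S}\leq\prod_{P\in\CP}h(S,{\leq_P})\leq(k+1)^{\card{\CP}}$ for a separating family $\CP$ of thin cones. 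This is exactly the multiplicative structure you say you want, but the exponent it produces is $\card{\CP}$, and the paper shows this cannot be taken polynomial in $d$: already for $\bE^d$ any separating family of thin cones has at least $\tfrac12(\sqrt2)^d$ members, so the route via Corollary~\ref{cor:thincone} is capped at $(k+1)^{5^{d+\mathrm{o}(d)}}$ (Theorem~\ref{thm:kdist}); your hope that $N=2^d$ cones suffice is both unsupported and, even if true, would give exponent $2^d$, not $d$.

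Second, your proposed ``realistic intermediate outcome'' $f(k,X^d)\leq(Ck)^d$, or even $c_dk^d$, is not an intermediate outcome at all --- it is the conjecture up to the constant in the base, and nothing of that strength is known. The best general bounds in the paper are $f(k,X^d)\leq 2^{kd}$ (good for $k$ fixed, $d$ large) and $(k+1)^{5^{d+\mathrm{o}(d)}}$ (for $d$ fixed, $k$ large), with Polyanskii's recent improvement to $k^{\mathrm{O}(d3^d)}$; in every case the exponent is exponential in $d$. So the genuine gap is the one you half-name but underestimate: there is no known mechanism forcing a $k$-distance set in an arbitrary norm to embed into a product of only $d$ chains of length $k+1$, and reducing the number of cones (equivalently, of monotone partial orders) from exponential to linear in $d$ is precisely the open problem, not a refinement of the existing argument.
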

This conjecture is known to hold for $k=1$ and arbitrary $d$ (by Petty and Soltan's result on equilateral sets) and for all $k$ and all $2$-dimensional spaces \cite{Swanepoel1999}.
It is not difficult to show that $f(k,\ell_\infty^d)=(k+1)^d$, with the section $\set{0,1,\dots,k}^d\subset\ell_\infty^d$ of the integer lattice giving the lower bound~\cite{Swanepoel1999}.

We can partition a $k$-distance set in $X^d$ into $b_f(X^d)$ many $(k-1)$-distance sets, hence $f(k,X^d)\leq b_f(X^d)f(k-1,X^d)$.
By induction, we obtain that $f(k,X^d)\leq e(X^d)b_f(X^d)^{k-1}\leq(2+\mathrm{o}_k(1))^{kd}$.
With a different inductive argument that involves the triangle inequality we can show that $f(k,X^d)\leq 2^{kd}$ \cite{Swanepoel1999}.
This is the best general upper bound known for fixed $k$ and large $d$.
Next, we present a bound for fixed $d$ and large $k$.
\begin{theorem}\label{thm:kdist}
For any $d$-dimensional normed space $X^d$, $f(k,X^d)\leq (k+1)^{5^{d+\mathrm{o}(d)}}$.
\end{theorem}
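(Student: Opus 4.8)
The plan is to give every point of $S$ a \emph{fingerprint}: a tuple of at most $5^{d+\mathrm{o}(d)}$ coordinates, each of which records one of the $k$ realized distances of $S$ (or is $0$), arranged so that distinct points of $S$ receive distinct fingerprints. This immediately gives $\card{S}\le(k+1)^{5^{d+\mathrm{o}(d)}}$. The tool is the notion of a \emph{thin cone} from \cite{Swanepoel1999, Furedi2005}: a cone $C$ with apex $\vo$ such that $\norm{\vx-\vy}<\max(\norm{\vx},\norm{\vy})$ for all distinct $\vx,\vy\in C$. Its decisive feature is the domination property: among points of $C$ lying on a common sphere of radius $r$ centred at $\vo$, every pairwise distance is strictly smaller than $r$. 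The exponent $5^{d+\mathrm{o}(d)}$ enters through the number of thin cones needed to cover $X^d$, which one bounds by the same volumetric packing estimate that yields $\vartheta(X^d)\le 5^d$ in Section~\ref{section:othergraphs} (after a harmless refinement of the cover).

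To build the fingerprint I would iterate the following reduction. Suppose $T$ is a $j$-distance subset of $X^d$ that is contained in a thin cone; initially $T=S$, $j=k$, and the cone is all of $X^d$. Fix a point $\vp\in T$ and translate a cover of $X^d$ by at most $5^{d+\mathrm{o}(d)}$ thin cones so that $\vp$ becomes the common apex. This partitions $T\setminus\{\vp\}$ into cells, indexed by the value $\norm{\vx-\vp}$ (one of the $\le j$ distances) together with the index of the thin cone containing $\vx-\vp$. By the domination property the points of a single cell all lie at one fixed distance $d_i$ from $\vp$ and in a single thin cone, so every pairwise distance among them is $<d_i$; hence each cell is again a few-distance set, with strictly fewer distances, that is still confined to a thin cone, and the reduction applies to it recursively. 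Recording, for each $\vx\in S$ and at every stage of the recursion, the cell into which $\vx$ falls is what constitutes its fingerprint; one then argues that distinct points are eventually placed in distinct cells, so the fingerprints separate $S$.

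The delicate point — and the reason this requires Füredi's idea rather than a bare induction — is to keep the number of coordinates in a fingerprint (equivalently, the depth of the recursion) bounded by a function of $d$ alone, not by $k$. Stratifying merely by distance to $\vp$ costs a factor of order $k$ at each stage and leads to a useless bound of order $k^{k}$; the thin cones must be exploited at \emph{every} stage, simultaneously to bound the number of cells independently of $k$ and to force each cell to be genuinely simpler, so that the process closes after only $5^{d+\mathrm{o}(d)}$ stages. Making this count honest is the principal obstacle. The additive and lower-order terms that accumulate along the way (in particular the factor $5^{d+\mathrm{o}(d)}$ per stage) are absorbed into the $\mathrm{o}(d)$ in the exponent precisely because $k$ is taken very large compared with $d$, as the statement stipulates. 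The exponent was subsequently improved by Polyanskii \cite{Polyanskii2016}.
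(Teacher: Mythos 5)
There is a genuine gap, and it is exactly the one you flag as ``the principal obstacle'': nothing in your construction bounds the depth of the recursion by a function of $d$ alone. Each stage of your reduction only guarantees that a cell realizes \emph{strictly fewer} distances than its parent (because all pairwise distances inside the cell at distance $d_i$ from $\vp$ are $<d_i$), so the recursion can run for up to $k$ stages, and at each stage the branching factor is about $k\cdot 5^{d+\mathrm{o}(d)}$. Solving the resulting recurrence gives a bound of the shape $C_d^{\,k}$ --- essentially the $2^{kd}$-type bound already quoted in Section~\ref{section:thincone} as the right estimate for fixed $k$ and large $d$ --- and not $(k+1)^{5^{d+\mathrm{o}(d)}}$, which requires a fingerprint with $5^{d+\mathrm{o}(d)}$ coordinates each taking at most $k+1$ values. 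Asserting that ``the process closes after only $5^{d+\mathrm{o}(d)}$ stages'' is not a lower-order bookkeeping issue to be absorbed into the $\mathrm{o}(d)$; it is the theorem.

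The fix is to abandon the recursion altogether. Fix once and for all a separating family $\CP$ of $5^{d+\mathrm{o}(d)}$ thin cones (Corollary~\ref{cor:thincone}); each $P\in\CP$ induces a monotone partial order $\leq_P$ on $X^d$, and monotonicity forces every chain $x_1>_Px_2>_P\dots>_Px_h$ in $S$ to have $h\leq k+1$, since the distances $\norm{x_i-x_h}$, $i=1,\dots,h-1$, are strictly decreasing and hence pairwise distinct. The fingerprint of $x\in S$ is then the single global vector $\bigl(h(x;S,\leq_P)\bigr)_{P\in\CP}$ of its heights in these posets; the separating property makes this map injective (if $y-x\in P\setminus\{o\}$ then $y$ sits strictly above $x$ in $\leq_P$, so their $P$-coordinates differ), giving $\card{S}\leq(k+1)^{\card{\CP}}$ in one step (Theorem~\ref{thm:thincone}). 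Your covering estimate and your ``domination property'' (which is the content of Lemma~\ref{lemma:sufficient} and Theorem~\ref{theorem:hull}) are the right ingredients for constructing $\CP$, but the combinatorial assembly must be this product-of-chain-heights injection rather than a cell decomposition iterated point by point.
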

The basic idea of the proof is from \cite{Swanepoel1999} and refined by building on an idea of F\"uredi \cite{Furedi2005}.
In the next section we present a proof of this theorem, after introducing thin cones and their basic properties.
(We note that very recently Polyanskii \cite{Polyanskii2016} showed that $f(k,X^d)\leq k^{\mathrm{O}(d3^d)}$.)

\subsection{Thin cones}
We recall that an \define{ordered vector space} $(V,\leq)$ is a vector space with a partial order compatible with the vector space structure in the following sense:
If $a\leq b$ then $a+x\leq b+x$ and $\lambda a\leq \lambda b$ for all $a,b,x\in V$ and $\lambda\geq 0$.
We also recall that a subset $P$ of the vector space $V$ is a \define{convex cone} if $x+y, \lambda x\in P$ whenever $x,y\in P$ and $\lambda\geq 0$, and that a convex cone $P$ is called \define{proper} if $P\cap(-P)=\set{o}$.
We then have the well-known correspondence between partial orders on $V$ and proper convex cones in $V$:
If $\leq$ is a partial order then its \define{positive cone} $P_{\leq}=\setbuilder{v\in V}{v\geq o}$ is a proper convex cone, and conversely, if $P$ is a proper convex cone $V$, we can define $a\leq_P b$ by $b-a\in P$, and $(V,\leq_P)$ will be an ordered vector space.

Note that we do not assume that the positive cones of our partial orders are closed, and so cannot deduce from $a_n\leq b$ and $\lim_n a_n=a$ that $a\leq b$.
(For example, the cones defined in the proof of Theorem~\ref{thm:planarcone} below are not necessarily closed.)

We now connect the norm with the partial order.
We say that a partial order $\leq$ on a normed space $X$ is \define{monotone} if
$\norm{x+y}>\norm{x}$ for all $x,y\geq o$ with $y\neq o$.
A proper convex cone $P$ in a normed space $(X,\norm{\cdot})$ is called a \define{thin cone} if $\left((P\cap \bd B_X)-(P\cap \bd B_X)\right) \cap P=\{o\}$, or equivalently, if $a-b\notin P$ for any chord $ab$ of the unit sphere inside the cone $P$.
Thin cones were introduced in \cite{Swanepoel1999} and independently in \cite{Furedi2005}.
\begin{lemma}
A proper convex cone $P$ in $X$ is thin iff $\leq_P$ is a monotone partial order.
\end{lemma}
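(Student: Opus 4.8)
The plan is to prove the two implications separately, in each case translating between the order-theoretic condition on $\leq_P$ and the geometric condition on chords of the unit sphere (recall that $P$ thin means $a-b\notin P$ for any chord $ab$ of $\bd B_X$ with $a,b\in P$, $a\neq b$).

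For the direction \emph{thin $\Rightarrow$ monotone}, I would take $x\geq_P o$ and $y\geq_P o$ with $y\neq o$ and show $\norm{x+y}>\norm{x}$. If $x=o$ this is just $\norm{y}>0$, so assume $x\neq o$. First note $x+y\neq o$: otherwise $y=-x$ would lie in $P\cap(-P)=\{o\}$ while $y\neq o$. Hence $\un{x}$ and $\un{x+y}$ are well defined, and since $P$ is a cone both lie in $P\cap\bd B_X$. Suppose toward a contradiction that $\norm{x+y}\leq\norm{x}$, and put $s=1/\norm{x+y}$, $t=1/\norm{x}$, so $s\geq t>0$. Then
\[ \un{x+y}-\un{x}=s(x+y)-tx=(s-t)x+sy\in P,\]
being a nonnegative combination of $x,y\in P$. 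Also $\un{x+y}\neq\un{x}$, since equality would give $x+y=\mu x$ with $\mu=\norm{x+y}/\norm{x}\leq1$, hence $y=(\mu-1)x$, forcing $y\in P\cap(-P)=\{o\}$ if $\mu<1$ and $y=o$ if $\mu=1$ — both impossible. So $\un{x}$ and $\un{x+y}$ are distinct unit vectors in $P$ whose difference lies in $P$, contradicting thinness.

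For the converse, \emph{monotone $\Rightarrow$ thin}, I would take $a,b\in P\cap\bd B_X$ with $a-b\in P$ and show $a=b$, which is exactly the required equality $((P\cap\bd B_X)-(P\cap\bd B_X))\cap P=\{o\}$. If $a\neq b$, set $y:=a-b\in P\setminus\{o\}$ and $x:=b\geq_P o$; monotonicity then gives $\norm{a}=\norm{b+y}>\norm{b}$, i.e.\ $1>1$, a contradiction. Hence $a=b$.

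I do not anticipate a genuine obstacle: the only points needing a little care are the two appeals to properness of $P$ (used to see $x+y\neq o$ and $\un{x+y}\neq\un{x}$), and beyond these the argument is just a rewriting of the definitions.
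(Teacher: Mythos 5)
Your proof is correct and follows essentially the same route as the paper's: the monotone~$\Rightarrow$~thin direction is verbatim the same substitution $x=b$, $y=a-b$, and the thin~$\Rightarrow$~monotone direction normalizes $x$ and $x+y$ and exhibits their difference as a nonnegative combination of $x$ and $y$ lying in $P$, exactly as in the paper (your explicit check that $\un{x+y}\neq\un{x}$ replaces the paper's equivalent step of deriving a contradiction with properness after concluding the difference is $o$). No gaps.
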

\begin{proof}
Suppose that $\leq_P$ is monotone.
Let $a,b\in P\cap\bd B_X$ such that $a-b\in P$.
Let $x=b$ and $y=a-b$.
If $y\neq o$, then 
$\norm{a}=\norm{x+y}>\norm{x}=\norm{b}$, which contradicts $\norm{a}=\norm{b}=1$.
Therefore, $y=a-b=o$, which shows that the cone $P$ is thin.

Conversely, suppose that $P$ is a thin cone.
Let $x,y\in P$ with $y\neq o$ and suppose that $\norm{x+y}\leq\norm{x}$.
Then $x\neq o$ and $\lambda = \frac{\norm{x+y}}{\norm{x}}\in[0,1]$.
Also, $x+y\neq o$, otherwise $x\in P\cap(-P)=\{o\}$, a contradiction.
Let $a=\frac{1}{\norm{x}}x$ and $b=\frac{1}{\norm{x+y}}(x+y)$.
Since $P$ is a convex cone, $a,b,b-a=\frac{1}{\norm{x+y}}((1-\lambda)x+y)\in P$.
It follows that $b-a\in ((P\cap\bd B_X)-(P\cap\bd B_X))\cap P$, and since $P$ is thin, $b-a=o$.
However, then $(1-\lambda)x=-y\in P\cap -P$, which contradicts that $P$ is a proper cone.
Therefore, $\norm{x+y}>\norm{x}$.
\end{proof}

We call a family $\CP$ of proper convex cones in a vector space $V$ \define{separating} if
\[ \bigcup_{P\in\CP} \bigl(P\cup (-P)\bigr) = V.\]
\begin{lemma}
A family $\CP$ of proper convex cones in the vector space $V$ is separating iff for all $x,y\in V$ there exists $P\in\CP$ such that $x$ and $y$ are comparable in $\leq_P$.
\end{lemma}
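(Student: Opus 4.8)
The plan is simply to unwind both definitions and observe that each is a restatement of the same condition on difference vectors. First I would record the translation: by definition of the induced order, $x\leq_P y$ means $y-x\in P$, so $x$ and $y$ are comparable in $\leq_P$ exactly when $y-x\in P$ or $x-y\in P$, i.e.\ when $y-x\in P\cup(-P)$. Thus the right-hand side of the claimed equivalence says precisely that for every pair $x,y\in V$ there is some $P\in\CP$ with $y-x\in P\cup(-P)$.

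For the forward implication, assume $\CP$ is separating, so $\bigcup_{P\in\CP}\bigl(P\cup(-P)\bigr)=V$. Given any $x,y\in V$, the vector $y-x$ lies in $V$, hence in $P\cup(-P)$ for some $P\in\CP$; by the translation above this means $x$ and $y$ are comparable in $\leq_P$. For the converse, suppose every pair of elements of $V$ is comparable in some $\leq_P$. Take an arbitrary $v\in V$ and apply the hypothesis to the pair $x=o$, $y=v$ (using that $o\in P$ for every convex cone): there is $P\in\CP$ with $v=v-o\in P\cup(-P)$. Since $v$ was arbitrary, $V\subseteq\bigcup_{P\in\CP}\bigl(P\cup(-P)\bigr)$, and the reverse inclusion is immediate, so $\CP$ is separating.

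There is no genuine obstacle here; the proof is a direct bookkeeping argument. The only point needing a moment's care is the correct translation of ``comparable in $\leq_P$'' into ``$y-x\in P\cup(-P)$'', which relies solely on the definition $a\leq_P b\iff b-a\in P$ together with $o\in P$, and on the symmetric reduction of the general case $(x,y)$ to the case $(o,\,y-x)$ by translation invariance of the comparability relation.
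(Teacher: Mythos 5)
Your proof is correct; the paper explicitly omits the proof as straightforward, and your unwinding of the two definitions via the observation that comparability of $x$ and $y$ in $\leq_P$ is equivalent to $y-x\in P\cup(-P)$ is exactly the intended argument. (The parenthetical appeal to $o\in P$ is not actually needed -- comparability of $o$ and $v$ already reads $v\in P\cup(-P)$ directly -- but this is harmless.)
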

We omit the straightforward proof.
We also say that a family $\CO$ of partial orders on $V$ is \define{separating} if $\setbuilder{P_\leq}{\leq\in\CO}$ is a separating family of cones.
We are particularly interested in separating families of thin cones.
The space $\ell_\infty^d$ has a separating family of $d$ thin cones, namely the cones generated by any $d$ pairwise non-opposite facets of the unit ball $B_\infty^d$.
More generally, if the unit ball of $X$ is a polytope with $2f$ facets, then $X$ has a separating family of $f$ thin cones.

\begin{theorem}[\cite{Swanepoel1999}]\label{thm:planarcone}
Any two-dimensional normed space has a separating family of two thin cones.
\end{theorem}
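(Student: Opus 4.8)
The plan is to argue entirely in terms of the unit ball $B=B_X$, an $o$-symmetric convex body in the plane, and to turn the statement into a combinatorial covering problem on $\bd B$. The starting observations are: (a) every proper convex cone $P\subseteq\bR^2$ equals $\pos(A)$ for the arc $A=P\cap\bd B$ of the unit circle that it subtends; (b) a pair $\{P_1,P_2\}$ is separating exactly when $A_1\cup A_2\cup(-A_1)\cup(-A_2)=\bd B$; and (c) unravelling the definition, $\pos(A)$ is thin iff no chord of $\bd B$ with both endpoints on $A$ points in a direction that lies on $A$, i.e.\ $(A-A)\cap\pos(A)=\{o\}$. So it suffices to cover $\bd B$ by four arcs forming two antipodal pairs, say $A_1$ (running counter‑clockwise from a point $p$ to a point $q$), $A_2$ (from $q$ to $-p$), $-A_1$, $-A_2$, such that $\pos(A_1)$ and $\pos(A_2)$ are both thin.

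The second ingredient is a geometric lemma describing, for an arc $A$ of $\bd B$ subtending strictly less than a half‑turn of directions, the set of directions realised by its chords. Parametrising $\bd B$ and sliding one endpoint of a chord towards the other (using convexity for monotonicity) shows this set is again an arc, running from the direction in which $\bd B$ leaves $p$ to the direction in which $\bd B$ enters $q$, together with the antipodal arc. Hence $\pos(A)$ is thin iff (i) the forward semitangent line at $p$ reaches the ray through $q$ or lies beyond it, and (ii) symmetrically the backward semitangent at $q$ does not reach past $-p$ — with the caveat that the two extreme chord‑directions are \emph{attained} exactly when $\bd B$ contains a segment issuing forward from $p$, resp.\ entering $q$, so the borderline instances of (i) and (ii) are harmless unless such a segment is present.

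For the construction I would choose $p,q$ to be a pair of conjugate semidiameters of $B$, i.e.\ points of $\bd B$ for which a supporting line at $p$ is parallel to $oq$ and one at $q$ is parallel to $op$; such a pair exists by compactness (for instance, the two pairs of parallel sides of a minimum‑area parallelogram circumscribed about $B$ touch $\bd B$ in conjugate semidiameters). In affine coordinates with $p=(1,0)$, $q=(0,1)$ one gets $B\subseteq[-1,1]^2$, $(\pm1,0),(0,\pm1)\in\bd B$, the arc $A_1$ from $p$ to $q$ lying in the first quadrant with $\pos(A_1)$ contained in it, and conditions (i),(ii) for $A_1$ (and, by central symmetry, for $A_2$) holding as non‑strict inequalities straight from conjugacy. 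When $B$ is smooth and strictly convex this already finishes the proof, since then $\bd B$ has no segments and the borderline cases are vacuous. For a general $B$ I would approximate it by smooth strictly convex bodies, pass to a subsequential limit of the resulting conjugate semidiameters, and then perturb the four split directions slightly — away from the directions of the (at most countably many, each subtending a short arc of directions) maximal segments of $\bd B$ — so as to make (i),(ii) strict for both cones while keeping the four arcs a cover of $\bd B$.

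The step I expect to be the real obstacle is precisely this last, degenerate, case. When $\bd B$ is not strictly convex a chord lying along a boundary segment can point along one of the very rays one wants to split $\bd B$ along, so a naive symmetric "quadrant" split at a conjugate pair can fail; this already happens for $\ell_\infty^2$ if one takes the conjugate semidiameters at the edge‑midpoints rather than at the vertices. Pinning down the choice of the four split directions — so that both cones are honestly thin and their antipodal orbit still covers $\bd B$ — is the technical heart of the argument, while everything else is bookkeeping with the tangent‑direction description of chords.
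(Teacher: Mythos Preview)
Your approach coincides with the paper's in the smooth, strictly convex case: choosing conjugate semidiameters $p,q$ is exactly the paper's maximisation of the area of $\triangle oab$, and both yield $B\subseteq[-1,1]^2$ after an affine change of coordinates, from which thinness of the two quadrant cones follows by the monotonicity of coordinates along each arc.

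Where you diverge is in the degenerate case, and there your plan is the weak point. Approximating by smooth bodies and passing to a limit only lands you back at a conjugate pair for the original $B$, which may be a bad one; and ``perturbing the four split directions slightly'' is exactly what fails in general, since moving the split at $q$ destroys the conjugacy that guaranteed the tangent condition at $p$, and vice versa. The paper's remedy is different and more direct: it does not perturb directions at all. Instead it (i) slides $b$ to an \emph{endpoint} of the maximal boundary segment parallel to $oa$ (and then $a$ to an endpoint of any segment parallel to the new $ob$), so that each offending segment lies entirely in exactly one of the two closed quadrant cones, and (ii) deletes the open boundary ray $\{\lambda a:\lambda>0\}$ or $\{\lambda b:\lambda>0\}$ from that cone alone. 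The resulting cones are half-open --- recall the paper explicitly allows non-closed cones --- hence thin, and since each ray is removed from at most one of $P_1,P_2$, the family remains separating. Your chord-direction lemma is correct and makes a nice conceptual picture, but the clean fix is this ray-removal trick rather than approximation.
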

\begin{proof}
Let $a,b\in\bd B$ be chosen such that the area of the triangle $\triangle oab$ is maximized.
Then $B$ is contained in the parallelogram with vertices $\pm a\pm b$.
Let $P_1$ be the cone generated by $\set{a,b}$, and $P_2$ the cone generated by $\set{-a,b}$.
If $\bd B$ does not contain a line segment parallel to $oa$ or $ob$, then $P_1$ and $P_2$ are both thin cones.

If, on the other hand, $\bd B$ contains line segments parallel to $oa$ or $ob$, then we show that $a$ and $b$ can be chosen in such a way that no line segment on $\bd B$ parallel to $oa$ or $ob$ will intersect the interiors of both $P_1$ and $P_2$.
Indeed, if $\bd B$ contains a maximal line segment $cd$ parallel to $oa$, then we can replace $b$ by either endpoint of $cd$ without changing the area of $\triangle aob$.
If $\bd B$ furthermore contains a maximal line segment $ef$ parallel to the new $ob$, then we can similarly replace $a$ by either endpoint of this maximal segment without changing the area of the triangle.
Note that changing $a$ in this way does not create a line segment parallel to the new $oa$ with $b$ in its interior, since then $b$ would be a smooth point of $B$, and we would also have two different lines through $b$ that support $B$, namely the lines parallel to the old $oa$ and the new $oa$.
It follows that no line segment on $\bd B$ parallel to the new $oa$ or the new $ob$ will intersect the interiors of both $P_1$ and $P_2$.

We next modify $P_1$ and $P_2$ so that they become thin.
If $\bd B$ contains a line segment parallel to $oa$ inside $P_i$, then we remove the set $\setbuilder{\lambda a}{\lambda>0}$ from $P_i$.
And if $\bd B$ contains a line segment parallel to $ob$ inside $P_i$, then we remove $\setbuilder{\lambda b}{\lambda>0}$ from $P_i$.
The family $\{P_1,P_2\}$ will stay a separating family, since we never remove the same set from both $P_1$ and~$P_2$.
\end{proof}

Unfortunately, there are $d$-dimensional spaces for which any separating family of thin cones will have size exponential in $d$.
A simple example is the Euclidean space $\bE^d$.
It is easily seen that a proper convex cone $P$ in $\bE^d$ is thin iff $\ipr{x}{y}\geq 0$ for all $x, y\in P$.
The orthants generate a separating family of $2^{d-1}$ thin cones for $\bE^d$.
Heppes \cite{Heppes1998} has shown that if the Euclidean unit sphere in $\bR^3$ is partitioned into parts of angular diameter at most $\pi/2$, then at least $8$ parts are needed.
Therefore, any separating family of thin cones in $\bE^3$ will contain at least $4$ cones.
In higher dimensions, we can make the following simple estimate.
By the isodiametric inequality for the Euclidean sphere, any thin cone in $\bE^d$ intersects the unit sphere in a set of surface measure at most that of a spherical cap of angular diameter $\pi/2$.
Since such a spherical cap is easily seen to be contained in a Euclidean ball of radius $1/\sqrt{2}$, which moreover covers the convex hull of the spherical cap and the centre of the ball, it follows that any separating family of thin cones for $\bE^d$ will contain at least $\frac12(\sqrt{2})^{d}$ cones.

The following result gives a simple sufficient condition for a convex cone to be thin.
\begin{lemma}\label{lemma:sufficient}
A convex cone $P$ in $X$ is thin if $\norm{x-y}<1$ for all $x,y\in P\cap\bd B_X$.
\end{lemma}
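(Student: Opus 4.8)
The plan is to prove the contrapositive: if $P$ is a convex cone that is \emph{not} thin, then $P\cap\bd B_X$ contains two points at distance at least $1$, so the hypothesis of the lemma fails. Unfolding the definition of thinness, ``$P$ not thin'' means there exist $\va,\vb\in P\cap\bd B_X$ with $\va\neq\vb$ and $\va-\vb\in P$. I would first dispose of the trivial case $\norm{\va-\vb}\geq 1$, in which $\va$ and $\vb$ are already the required pair; so from now on assume that $\delta:=\norm{\va-\vb}$ satisfies $0<\delta<1$.

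The main idea is to pass from the chord $\va-\vb$ to the unit vector $\vu:=\frac1\delta(\va-\vb)$ in its direction. Since $P$ is a cone and $\va-\vb\in P$, we get $\vu\in P$, and clearly $\norm{\vu}=1$, so $\vu\in P\cap\bd B_X$. It then remains to bound $\norm{\va-\vu}$ from below, and here I would simply rewrite $\va-\vu=\frac1\delta\bigl(\delta\va-(\va-\vb)\bigr)=\frac1\delta\bigl(\vb-(1-\delta)\va\bigr)$ and apply the reverse triangle inequality, using $1-\delta\geq 0$, to obtain $\norm{\va-\vu}\geq\frac1\delta\bigl(\norm{\vb}-(1-\delta)\norm{\va}\bigr)=\frac1\delta\bigl(1-(1-\delta)\bigr)=1$. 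Thus $\va$ and $\vu$ are two points of $P\cap\bd B_X$ at distance at least $1$ (in particular distinct), contradicting the assumption $\norm{\vx-\vy}<1$ for all $\vx,\vy\in P\cap\bd B_X$. Hence no such pair $(\va,\vb)$ exists and $P$ is thin.

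I do not anticipate a genuine obstacle: the whole argument uses only that $P$ is a cone, that points of $P\cap\bd B_X$ have norm $1$, and the (reverse) triangle inequality; in particular it does not require $P$ to be closed, which is consistent with the remark preceding Theorem~\ref{thm:planarcone}. The only point that needs a little care is the case split on $\norm{\va-\vb}$: the normalization trick only delivers $\norm{\va-\vu}\geq 1$ when $\norm{\va-\vb}\leq 1$, so the (easy) case $\norm{\va-\vb}\geq 1$ must be separated out first.
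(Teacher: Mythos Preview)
Your proof is correct and follows essentially the same approach as the paper: normalize $\va-\vb$ to obtain a unit vector $\vu\in P\cap\bd B_X$ and then use the reverse triangle inequality to exhibit two points of $P\cap\bd B_X$ at distance $\geq 1$. The paper computes $\norm{\vb-\vu}$ rather than your $\norm{\va-\vu}$, which gives $\geq 1$ directly for every $\delta>0$ and so avoids the case split; it also remarks at the outset that the hypothesis forces $P$ to be proper, a point your contrapositive handles implicitly.
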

\begin{proof}
The hypothesis immediately implies that $P$ is a proper cone.

Let $a,b\in P\cap\bd B_X$ such that $a-b\in P$.
Suppose that $a-b\neq o$.
Then \[\un{a-b}:=\frac{1}{\norm{a-b}}(a-b)\in P\cap \bd B_X.\]
By hypothesis, $\norm{b-\un{a-b}}<1$.
However,
\begin{align*}
\norm{b-\un{a-b}} &= \norm{b-\frac{1}{\norm{a-b}}(a-b)}\\
&=\norm{\left(1+\frac{1}{\norm{a-b}}\right)b - \frac{1}{\norm{a-b}}a}\\
&\geq \norm{\left(1+\frac{1}{\norm{a-b}}\right)b}-\norm{\frac{1}{\norm{a-b}}a}\\
&= 1+\frac{1}{\norm{a-b}}-\frac{1}{\norm{a-b}}=1,
\end{align*}
a contradiction.
Therefore, $a-b=o$, and $P$ is a thin cone.
\end{proof}

Suppose that $S$ is a subset of the unit sphere of a $d$-dimensional normed space $X$ contained in some open ball of radius $1/2$.
Does it follow that for any $p,q\in\conv(S)$, $\norm{\un{p}-\un{q}}<1$?
By the above lemma, a positive answer would imply that the convex cone generated by the intersection of an open ball of radius $1/2$ and the unit sphere is thin.
However, this conclusion is false when $d\geq 3$ under the weaker assumption that $\norm{x-y}<1$ for all $x,y\in S$, as the following example shows.

Let $X$ be the $d$-dimensional subspace $\setbuilder{(\alpha_1,\dots,\alpha_d,\beta)}{\alpha_i,\beta\in\bR,\sum_{i=1}^d\alpha_i=0}$ of $\ell_1^{d+1}$.
Write $e_1,\dots,e_{d+1}$ for the standard basis of $\ell_1^{d+1}$.
Fix $\epsi\in(0,1)$.
For each $i=1,\dots,d$, define
\[ x_i = \frac{d(1-\epsi)}{2(d-1)} e_i - \frac{1-\epsi}{2(d-1)}\sum_{j=1}^d e_j + \epsi e_{d+1}.\]
Then simple calculations show that $S:=\set{x_1,\dots,x_d}$ is an equilateral set of unit vectors in $X$ where the distance between any two is \[\norm{x_i-x_j}_1= (1-\epsi)d/(d-1), \quad 1\leq i < j\leq d.\]
Also, if we let $p=\frac{1}{d-1}\sum_{i=1}^{d-1} x_i$ and $q=x_d$,
then $p,q\in\conv(S)$ and a calculation shows that $\norm{p}_1=\frac{1+(d-2)\epsi}{d-1}$, $\norm{q}_1=1$, and \[\norm{\un{p}-\un{q}}_1 = 2(1-\epsi)=\frac{2(d-1)}{d}\diam(S).\]
Thus, the diameter of $\setbuilder{\un{p}}{p\in\conv(S)}$ is almost double that of $S$.
This example is almost worst possible, at least for diameters up to about $1/2$, as the following theorem shows.
We first estimate the distance to the origin from the convex hull of a set of unit vectors.
This lemma is essentially Lemma~37 in \cite{SwanepoelTAMS}; see also the remark after the proof there.
\begin{lemma}\label{lemma:length}
Let $X$ be a $d$-dimensional normed space and $S\subseteq\bd B_X$ with $\diam(S) < 1+1/d$.
Then $\norm{p} \geq 1 - (1-1/d)\diam(S)$ for all $p\in\conv(S)$.
\end{lemma}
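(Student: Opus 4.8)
The plan is to reduce the statement to a lower bound on $\norm{p_0}$, where $p_0$ is a point of $\conv(S)$ of smallest norm. Replacing $S$ by its closure (this changes neither $\diam(S)$ nor the conclusion, and only enlarges $\conv(S)$) we may assume $S$, and hence $\conv(S)$, is compact, so such a $p_0$ exists; since $\norm{p_0}\leq\norm{p}$ for every $p\in\conv(S)$, it is enough to prove $\norm{p_0}\geq 1-(1-\tfrac1d)\delta$, where $\delta:=\diam(S)$.

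The first step is to rule out $p_0=o$ by showing $o\notin\conv(S)$. If $o\in\conv(S)$, then by Carath\'eodory's theorem $o=\sum_{i=0}^{d}\mu_i x_i$ with $x_i\in S$, $\mu_i\geq 0$, $\sum_i\mu_i=1$; taking $\mu_0=\max_i\mu_i\geq\frac{1}{d+1}$ and using the triangle inequality, $1=\norm{x_0}=\norm{\sum_{i\geq 1}\mu_i(x_0-x_i)}\leq(1-\mu_0)\delta\leq\frac{d}{d+1}\delta$, so $\delta\geq 1+\frac1d$, contradicting the hypothesis. (Applying the same computation to a Carath\'eodory representation of an arbitrary $p\in\conv(S)$ already gives the weaker estimate $\norm{p}\geq 1-\frac{d}{d+1}\delta$; the content of the theorem is the improvement of $\frac{d}{d+1}$ to $\frac{d-1}{d}$.)

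Next I would exploit a supporting functional at the minimizer. Since $\conv(S)$ is disjoint from the open ball $\{x:\norm{x}<\norm{p_0}\}$, a standard separation theorem yields $\phi\in X^{*}$ of dual norm $1$ with $\phi(p_0)=\norm{p_0}$ and $\phi(y)\geq\norm{p_0}$ for all $y\in\conv(S)$. Now write $p_0=\sum_{i=0}^{k}\lambda_i x_i$ as a convex combination of points of $S$ with all $\lambda_i>0$ and with the smallest possible number of terms, so that $x_0,\dots,x_k$ are affinely independent. From $\norm{p_0}=\phi(p_0)=\sum_i\lambda_i\phi(x_i)$ together with $\phi(x_i)\geq\norm{p_0}$ for each $i$, equality is forced throughout, so $\phi(x_i)=\norm{p_0}$ for every $i$; hence $x_0,\dots,x_k$ all lie in the affine hyperplane $\{\phi=\norm{p_0}\}$, and being affinely independent points of a $(d-1)$-dimensional affine space they number at most $d$, i.e.\ $k\leq d-1$.

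Finally, choose $i^{*}$ with $\lambda_{i^{*}}=\max_i\lambda_i\geq\frac{1}{k+1}\geq\frac1d$. Then
\[ \norm{p_0-x_{i^{*}}}=\norm{\sum_{i\neq i^{*}}\lambda_i(x_i-x_{i^{*}})}\leq(1-\lambda_{i^{*}})\delta\leq\Bigl(1-\tfrac1d\Bigr)\delta , \]
whence $\norm{p_0}\geq\norm{x_{i^{*}}}-\norm{p_0-x_{i^{*}}}\geq 1-(1-\tfrac1d)\delta$. The one delicate step is trimming the representation of $p_0$ from the $d+1$ terms supplied by Carath\'eodory down to $d$: this is precisely what the supporting functional provides, by confining all vectors supporting $p_0$ to a common hyperplane, and it is exactly this saving that upgrades the easy $\frac{d}{d+1}$ bound to the asserted one.
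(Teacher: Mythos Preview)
Your proof is correct and follows essentially the same strategy as the paper: pass to a norm-minimizer in $\conv(S)$, use the diameter hypothesis to show $o\notin\conv(S)$, trim the Carath\'eodory representation of the minimizer from $d+1$ terms to $d$, and then exploit $\lambda_{\max}\geq 1/d$. The one stylistic difference is in the trimming step: you invoke a supporting functional explicitly to force all supporting $x_i$ onto a hyperplane, whereas the paper phrases the same fact geometrically (``$p$ lies on a facet of $C$ or $C$ lies in a hyperplane''); and in the final step the paper sums $1-\norm{p}\leq(1-\lambda_i)D$ over all $i$ rather than picking out the maximal coefficient, but this is equivalent.
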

\begin{proof}
By Carath\'eodory's Theorem, it is sufficient to prove the lemma for finite $S$.
Thus, without loss of generality, $S$ is finite and $p$ is an element of $\conv(S)$ of minimum norm.
By Carath\'eodory's Theorem, $p$ is in the convex hull of $k\leq d+1$ points from $S$. 
Write $p=\sum_{i=1}^{k}\lambda_i x_i$ where $\sum_{i=1}^{k}\lambda_i=1$, $\lambda_i > 0$ and $x_i\in S$.
Then for any $i=1,\dots,k$, with $D:=\diam(S)$,
\begin{align*}
1-\norm{p} &= \norm{x_i}-\norm{p} \leq \norm{x_i-p}=\norm{\sum_{j=1}^k\lambda_j(x_i-x_j)}\\
&\leq \sum_{j\neq i}\lambda_j\norm{x_i-x_j} \leq (1-\lambda_i)D.
\end{align*}
In particular, since $D < 1+1/d$, $p\neq o$.
Since $p$ minimizes the norm of all points from $C:=\conv\set{x_1,\dots,x_k}$, it follows that $o\notin C$ and either $p$ is in some facet of $C$ or $C$ lies in a hyperplane of $X$.
Therefore, $p$ is in the convex hull of at most $d$ of these points, and we may suppose that $k\leq d$.
If we sum the inequality $1-\norm{p}\leq (1-\lambda_i)D$ over $i=1,\dots,k$, we obtain $k(1-\norm{p})\leq(k-1)D$, hence $\norm{p}\geq 1-(1-1/k)D\geq 1-(1-1/d)D$.
\end{proof}
\begin{theorem}\label{theorem:hull}
Let $S$ be a subset of the unit sphere of a $d$-dimensional normed space \textup{(}$d\geq 2$\textup{)} and let $0< D \leq d/(2d-1)$ be given such that $\norm{x-y}<D$ for all $x,y\in S$.
Then $\norm{\un{p}-\un{q}}<(2-\frac{1}{d})D$ for any $p,q\in\conv(S)$.
\end{theorem}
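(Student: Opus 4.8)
The plan is to reduce the statement to two ingredients: a uniform lower bound for $\norm{p}$ when $p\in\conv(S)$, supplied by Lemma~\ref{lemma:length}, and a single application of the triangle inequality organised so as not to lose a factor of~$2$.

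First I would record that $\norm{p-q}<D$ for all $p,q\in\conv(S)$: writing $p=\sum_i\lambda_i x_i$ and $q=\sum_j\mu_j y_j$ as finite convex combinations of points of $S$, one has $p-q=\sum_{i,j}\lambda_i\mu_j(x_i-y_j)$, so $\norm{p-q}\leq\sum_{i,j}\lambda_i\mu_j\norm{x_i-y_j}<D$. Next, since $\diam(S)\leq D\leq d/(2d-1)<1+1/d$, Lemma~\ref{lemma:length} applies and gives $\norm{p}\geq 1-(1-1/d)\diam(S)\geq r$ for every $p\in\conv(S)$, where I abbreviate $r:=1-(1-1/d)D$. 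A one-line computation shows that for $d\geq 2$ the hypothesis $D\leq d/(2d-1)$ is equivalent to $r\geq d/(2d-1)$; in particular
\[ \norm{p},\norm{q}\;\geq\; r \;\geq\; \tfrac{d}{2d-1}\;\geq\; D \;>\; \norm{p-q}. \]
The strict inequality $\norm{p-q}<\norm{p}$ that this produces is the crux of the argument.

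Then I would fix $p,q\in\conv(S)$ and assume without loss of generality that $s:=\norm{p}\leq\norm{q}=:t$. The vector $p_0:=(t/s)\,p$ satisfies $\norm{p_0}=t$ and $\un{p_0}=\un{p}$, so $\un{p}-\un{q}=(p_0-q)/t$, and the triangle inequality gives
\[ \norm{\un{p}-\un{q}}\;\leq\;\frac{\norm{p_0-p}+\norm{p-q}}{t}\;=\;\frac{(t-s)+\norm{p-q}}{t}\;=\;1+\frac{\norm{p-q}-s}{t}. \]
Since $\norm{p-q}-s<0$ and $0<t\leq1$, the last fraction is at most $\norm{p-q}-s$ (for a negative number $a$ and $0<t\leq1$ one has $a/t\leq a$); and since $s\geq r$ and $\norm{p-q}<D$, this is strictly less than $D-r=(2-1/d)D-1$. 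Adding $1$ yields the claimed bound $\norm{\un{p}-\un{q}}<(2-1/d)D$.

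I expect the only real obstacle to be the bookkeeping in this last step. The naive route --- bounding $\norm{\un p-\un q}\leq(\norm{p-q}+\abs{\norm p-\norm q})/\min\{\norm p,\norm q\}$ and then using $\abs{\norm p-\norm q}\leq\norm{p-q}$ --- only gives $\norm{\un p-\un q}\leq 2D/r$, which is never at most $(2-1/d)D$ because $r\leq1$. One must instead keep the \emph{negative} quantity $\norm{p-q}-\norm p$ intact and divide it by $\norm q\leq1$ rather than estimate the two summands of $p_0-q$ separately; the hypothesis $D\leq d/(2d-1)$ is exactly what makes $\norm{p-q}-\norm p$ negative, through the lower bound of Lemma~\ref{lemma:length}.
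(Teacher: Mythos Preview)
Your proof is correct and is essentially the paper's own argument. Introducing the auxiliary point $p_0=(t/s)p$ is just a repackaging of the decomposition $\un{q}-\un{p}=\frac{1}{\norm{q}}(q-p)-\bigl(\frac{1}{\norm{p}}-\frac{1}{\norm{q}}\bigr)p$ that the paper uses; both proofs then exploit that the combined term $\norm{p-q}-\norm{p}$ (equivalently $D-\norm{p}$) is negative so that dividing by $\norm{q}\leq 1$ can only help, and both invoke Lemma~\ref{lemma:length} for the lower bound on $\norm{p}$.
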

\begin{proof}
Without loss of generality, $\norm{p}\leq\norm{q}$.
Also, since $p,q\in\conv(S)$, $\norm{p-q}<D$ and $\norm{q}\leq 1$.
Lemma~\ref{lemma:length} and the given bound on $D$ imply that $\norm{p}\geq 1-(1-1/d)D\geq D$.
The triangle inequality then gives
\begin{align*}
\norm{\un{q}-\un{p}} &= \norm{\frac{1}{\norm{q}}(q-p)-\left(\frac{1}{\norm{p}}-\frac{1}{\norm{q}}\right)p}\\
&\leq \frac{1}{\norm{q}}\norm{q-p} + \left(\frac{1}{\norm{p}}-\frac{1}{\norm{q}}\right)\norm{p}\\
&< \frac{D}{\norm{q}} + 1 - \frac{\norm{p}}{\norm{q}}
\leq \frac{D-\norm{p}}{1} + 1\\
&\leq D-\left(1-\left(1-\frac{1}{d}\right)D\right)+1 = \left(2-\frac{1}{d}\right)D. \qedhere
\end{align*}
\end{proof}
The same proof shows that for $D$ up to $d/(d-1)$ we have $\norm{\un{p}-\un{q}}\leq D/(1-(1-1/d)D)$, which is non-trivial for $D$ up to about $2/3$.
We do not know whether the bound of this theorem still holds for $D$ larger than $1/2$ or if there are better counterexamples than the one described before Lemma~\ref{lemma:length}.
We need the theorem only for the case of $D=1/2$, in the proof of the next corollary.
\begin{corollary}\label{cor:thincone}
Any $d$-dimensional normed space has a separating family of $\mathrm{O}(5^{d +\mathrm{o}(d)})$ thin cones.
\end{corollary}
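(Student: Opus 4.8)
The plan is to cover the unit sphere $\bd B_X$ by a controlled number of sets of diameter at most $\tfrac12$, and to show that the convex cone generated by each such set is thin; since every non-zero $\vx\in X$ has its normalization $\un{\vx}$ lying in one of these sets, hence in the corresponding cone, the family of cones will automatically be separating. We may assume $d\geq 2$, the case $d=1$ being trivial (the single cone $\setbuilder{\lambda\vx}{\lambda\geq 0}$ already does the job).

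For the covering, regard $X$ as $\bR^d$. Because $B_X$ is $\vo$-symmetric, the Rogers--Zong covering bound \cite{Rogers-Zong} shows that $B_X$ can be covered by at most
\[\frac{\lambda_d\bigl(B_X+\tfrac14 B_X\bigr)}{\lambda_d\bigl(\tfrac14 B_X\bigr)}\cdot\mathrm{O}(d\log d)=\frac{\lambda_d\bigl(\tfrac54 B_X\bigr)}{\lambda_d\bigl(\tfrac14 B_X\bigr)}\cdot\mathrm{O}(d\log d)=5^{d}\cdot\mathrm{O}(d\log d)=5^{d+\mathrm{o}(d)}\]
translates $\vv_1+\tfrac14 B_X,\dots,\vv_N+\tfrac14 B_X$ of $\tfrac14 B_X$. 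Setting $S_i:=\bd B_X\cap(\vv_i+\tfrac14 B_X)$ for each $i$, we obtain $\diam(S_i)\leq\tfrac12$ and $\bigcup_{i=1}^N S_i=\bd B_X$.

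Next I would verify, for each $i$ with $S_i\neq\emptyset$, that the cone $P_i:=\pos(S_i)$ is thin. Lemma~\ref{lemma:length}, applied with $\diam(S_i)\leq\tfrac12<1+\tfrac1d$, gives $\norm{\vp}\geq 1-(1-\tfrac1d)\tfrac12>0$ for all $\vp\in\conv(S_i)$, so $\vo\notin\conv(S_i)$, the cone $P_i$ is proper, and $P_i\cap\bd B_X=\setbuilder{\un{\vp}}{\vp\in\conv(S_i)}$. Since $\tfrac12<\tfrac{d}{2d-1}$, Theorem~\ref{theorem:hull} applies with $D=\tfrac{d}{2d-1}$ (indeed $\norm{\vx-\vy}\leq\tfrac12<D$ for all $\vx,\vy\in S_i$), and it yields
\[\norm{\un{\vp}-\un{\vq}}<\Bigl(2-\tfrac1d\Bigr)\frac{d}{2d-1}=1\qquad\text{for all }\vp,\vq\in\conv(S_i).\]
Hence any two points of $P_i\cap\bd B_X$ are at distance strictly less than $1$, so $P_i$ is thin by Lemma~\ref{lemma:sufficient}. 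Finally, $\setbuilder{P_i}{S_i\neq\emptyset}$ is separating, because for $\vx\in X\setminus\set{\vo}$ we have $\un{\vx}\in\bd B_X=\bigcup_i S_i$, so $\un{\vx}\in S_i\subseteq P_i$ for some $i$, whence $\vx\in P_i$; and there are at most $N=5^{d+\mathrm{o}(d)}$ such cones.

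This argument is essentially routine once Theorem~\ref{theorem:hull} is available; the one delicate point is calibrating the covering radius. Choosing it to be $\tfrac14$ is exactly what forces each piece $S_i$ to have diameter $\tfrac12$, which lies just below the threshold $\tfrac{d}{2d-1}$ at which Theorem~\ref{theorem:hull} still returns a bound strictly less than $1$ --- precisely the hypothesis of Lemma~\ref{lemma:sufficient}. A smaller radius would also work but worsen the constant, while a larger one risks destroying thinness; so I expect the main care to go into this bookkeeping rather than into any substantive new estimate.
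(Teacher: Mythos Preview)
Your proof is correct and follows essentially the same route as the paper's: cover the unit sphere by $5^{d+\mathrm{o}(d)}$ balls of radius $\tfrac14$, take the cones over the pieces, and invoke Theorem~\ref{theorem:hull} together with Lemma~\ref{lemma:sufficient} to certify thinness. The only minor differences are that the paper uses open balls (so it may apply Theorem~\ref{theorem:hull} directly with $D=\tfrac12$, whereas you compensate for closed balls by taking $D=\tfrac{d}{2d-1}$), and the paper verifies properness of each cone by first passing to a minimal cover so that $\vo$ is not in any covering ball, while you instead appeal to Lemma~\ref{lemma:length}; both variants are fine.
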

\begin{proof}
It is well known that the unit sphere $\bd B_X$ can be covered by $\mathrm{O}(5^d d\log d)$ open balls $B_i$ ($i=1,\dots,n$) of radius $1/4$ (see \cite[Eq.~(3)]{Rogers-Zong}).
We may assume that the collection $\set{B_i}$ is minimal, hence $o\notin B_i$.
Therefore, the convex cone $C_i$ generated by $B_i\cap\bd B_X$ is proper, and $\setbuilder{C_i}{i=1,\dots,n}$ is a separating family of cones.

We next show that $C_i$ is a thin cone for each $i=1,\dots,n$.
Let $a,b\in C_i\cap \bd B_X$.
Then $a=\un{p}$ and $b=\un{q}$ for some $p,q\in\conv(B_i\cap\bd B_X)$.
Theorem~\ref{theorem:hull}, applied to $S=B_i\cap\bd B_X$, $D=1/2$ and $p,q$, gives that $\norm{a-b}<1$.
By Lemma~\ref{lemma:sufficient}, $C_i$ is a thin cone.
\end{proof}
Let $P$ be a convex, proper cone and $S$ a finite subset of the vector space $V$.
Then $\leq_P$ restricted to $S$ gives a finite poset with \define{height} $h(S,{\leq_P})$ defined to be the largest cardinality of a chain in $(S,{\leq_P})$.
\begin{theorem}\label{thm:thincone}
Let $X$ be a finite-dimensional normed space with a finite separating family $\CP$ of thin cones.
Let $S$ be a finite subset of $X$.
Then \[\card{S}\leq \prod_{P\in\CP} h(S,{\leq_P}).\]
\end{theorem}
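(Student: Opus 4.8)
The plan is to build an explicit injection from $S$ into the finite set $\prod_{P\in\CP}\{1,\dots,h(S,\leq_P)\}$, which has cardinality $\prod_{P\in\CP}h(S,\leq_P)$; the theorem follows the moment such an injection is produced. (The case $S=\emptyset$ is trivial, so assume $S\neq\emptyset$, so that each factor $h(S,\leq_P)\geq 1$.)

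First I would attach to each cone $P\in\CP$ a rank function in the spirit of Mirsky's theorem. Since $P$ is a proper cone, $\leq_P$ is a genuine (antisymmetric) partial order, so $(S,\leq_P)$ is a finite poset. Define $r_P\colon S\to\bN$ by letting $r_P(s)$ be the largest number of elements in a chain of $(S,\leq_P)$ having $s$ as its maximum element. Then $1\leq r_P(s)\leq h(S,\leq_P)$ for every $s\in S$. The key property is that $r_P$ is strictly monotone: if $x\leq_P y$ with $x\neq y$ (write $x<_P y$), then a longest chain with maximum $x$ can be enlarged by adjoining $y$ — transitivity of $\leq_P$ ensures every element of that chain is $\leq_P x<_P y$ — producing a chain with maximum $y$ that is one element longer, whence $r_P(x)<r_P(y)$.

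Next I would set $\Phi\colon S\to\prod_{P\in\CP}\{1,\dots,h(S,\leq_P)\}$ by $\Phi(s)=(r_P(s))_{P\in\CP}$ and verify injectivity. Given distinct $x,y\in S$, the separating property of $\CP$ yields some $P\in\CP$ in which $x$ and $y$ are $\leq_P$-comparable; antisymmetry of $\leq_P$ then forces $x<_P y$ or $y<_P x$, and in either case the strict monotonicity of $r_P$ shows $r_P(x)\neq r_P(y)$, hence $\Phi(x)\neq\Phi(y)$. Injectivity of $\Phi$ then gives $\card{S}\leq\card{\prod_{P\in\CP}\{1,\dots,h(S,\leq_P)\}}=\prod_{P\in\CP}h(S,\leq_P)$, where finiteness of $\CP$ is what makes this product finite.

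I do not anticipate a genuine obstacle: the only point needing a moment's care is the strict monotonicity of $r_P$, which rests solely on transitivity of $\leq_P$ and on $P$ being proper (so that distinct comparable elements are strictly comparable). It is worth remarking that thinness of the cones plays no role in this inequality at all; it enters only when one bounds the individual factors $h(S,\leq_P)$ — for instance by $k+1$ when $S$ is a $k$-distance set — by exploiting that the norm increases strictly along any $\leq_P$-chain inside a thin cone.
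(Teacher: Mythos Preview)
Your proof is correct and is essentially identical to the paper's: the paper defines $h(x;S,\leq_P)$ to be the length of the longest chain in $(S,\leq_P)$ with top element $x$ (written as a descending chain $x=x_1>_P\cdots>_P x_h$), which is exactly your $r_P(x)$, and then verifies injectivity of $x\mapsto(h(x;S,\leq_P))_{P\in\CP}$ by the same separating-plus-strict-monotonicity argument. Your closing remark that thinness is irrelevant to this particular inequality is also accurate.
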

\begin{proof}
For each $x\in S$ and $P\in\CP$, let $h(x;S,{\leq_P})$ denote the largest $h$ such that there exist $x_1,\dots,x_h\in S$ such that $x =x_1 >_P x_2 >_P \dots >_P x_h$.
Then the mapping \[\eta\colon S\to \prod_{P\in\CP} \{1,2\dots,h(S,{\leq P})\}\] defined by $h(x)=\sequencebuilder{h(x,S,{\leq_P})}{P\in\CP}$ is injective.
Indeed, for any distinct $x,y\in X$ there exists $P\in\CP$ such that $y-x\in P\cup(-P)$.
Without loss of generality, $y-x\in P\setminus\{o\}$.
Let $H=h(x;S,{\leq_P})$.
There exist $x_1,\dots,x_H\in S$ such that $x=x_1 >_P x_2 >_P \dots >_P x_H$.
However, then $y > x_1 >_P x_2 >_P \dots >_P x_H$, hence $h(y;S,{\leq_P}) > H=h(x;S,{\leq_P})$ and $\eta(x)\neq \eta(y)$.
\end{proof}

\subsection{Applications}
We can now prove the upper bound on the size of a $k$-distance set.
\begin{proof}[Proof of Theorem~\ref{thm:kdist}]
For any $k$-distance set $S$ in $X^d$ and any thin cone $P$, $h(S,{\leq_P})\leq k+1$.
Now apply Corollary~\ref{cor:thincone} and Theorem~\ref{thm:thincone} to obtain the result.
\end{proof}

As a second application, we obtain an upper bound on the length of a sequence of spheres $p_i + r_i \bd B_X$, $i=1,\dots,n$, such that $p_{i+1},\dots,p_n\in p_i + r_i \bd B_X$ for each $i=1,\dots,n-1$.
\begin{theorem}
Let $\vp_1,\vp_2,\dots,\vp_n\in X^d$ such that $\norm{\vp_i-\vp_j}=\norm{\vp_i-\vp_k}$ whenever $i<j<k$.
Then $\displaystyle n\leq 2^{5^{d+\mathrm{o}(d)}}$.
\end{theorem}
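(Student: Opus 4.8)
The plan is to recognise this as an instance of the thin-cone method. The hypothesis says that the points sit on \emph{nested spheres}: $\vp_{i+1},\dots,\vp_n\in\vp_i+r_i\bd B_X$ where $r_i:=\norm{\vp_i-\vp_{i+1}}$, equivalently $\norm{\vp_i-\vp_j}=r_i$ for all $j>i$. I would exploit this to bound the height of $S:=\set{\vp_1,\dots,\vp_n}$ in every thin-cone order. Concretely, I would fix (by Corollary~\ref{cor:thincone}) a separating family $\CP$ of thin cones in $X^d$ with $\card{\CP}=5^{d+\mathrm{o}(d)}$, apply Theorem~\ref{thm:thincone} to get $n=\card{S}\le\prod_{P\in\CP}h(S,{\leq_P})$, and then prove the key claim: for every thin cone $P$, $h(S,{\leq_P})\le 3$. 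Granting the claim, $n\le 3^{\card{\CP}}=3^{5^{d+\mathrm{o}(d)}}=2^{5^{d+\mathrm{o}(d)}}$, the factor $\log_2 3$ disappearing into the $\mathrm{o}(d)$ in the exponent, which is the theorem. (A harmless preliminary reduction lets me assume the $\vp_i$ are pairwise distinct: if $\vp_i=\vp_j$ with $i<j$, the hypothesis forces $\norm{\vp_i-\vp_{i+1}}=0$, hence $\vp_i=\vp_{i+1}=\dots=\vp_n$, and I may truncate to the shorter distinct sequence without decreasing $n$ below its true value. With the $\vp_i$ distinct we also have $r_i>0$.)

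To establish the claim I would take a chain $\vq_1>_P\vq_2>_P\dots>_P\vq_h$ in $(S,{\leq_P})$, write each $\vq_a=\vp_{m_a}$, and let $b$ be the position of the element of smallest index, $m_b=\min_a m_a$. The point is that both ``halves'' of the chain around this distinguished element are tiny: I would show there is at most one $a<b$ and at most one $a>b$, which gives $h\le 3$. For two positions $b<a<a'$, transitivity of $\leq_P$ makes all of $\vq_b-\vq_a$, $\vq_b-\vq_{a'}$, $\vq_a-\vq_{a'}$ nonzero members of $P$; since $m_a,m_{a'}>m_b$, the nested-sphere hypothesis gives $\norm{\vq_b-\vq_a}=\norm{\vq_b-\vq_{a'}}=r_{m_b}$, so $\un{\vq_b-\vq_a}$ and $\un{\vq_b-\vq_{a'}}$ both lie in $P\cap\bd B_X$, and their difference equals $r_{m_b}^{-1}(\vq_a-\vq_{a'})\in P$. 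Thinness of $P$ then forces $\un{\vq_b-\vq_a}=\un{\vq_b-\vq_{a'}}$, i.e.\ $\vq_a=\vq_{a'}$ --- a contradiction. The case of two positions $a<a'<b$ is entirely symmetric, now using $\norm{\vq_a-\vq_b}=\norm{\vq_{a'}-\vq_b}=r_{m_b}$ and the difference $r_{m_b}^{-1}(\vq_a-\vq_{a'})\in P$.

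The main obstacle --- really the only step that is not bookkeeping --- is this bound $h(S,{\leq_P})\le 3$. It is what makes the product in Theorem~\ref{thm:thincone} an absolute constant raised to the power $\card{\CP}$, rather than a quantity growing with $n$, and it genuinely uses both ingredients: the thinness of $P$, and the nested-sphere structure (so that any two of the $\vp_i$ realise the distance $r_m$ with $m$ the smaller of their indices, which is what lets me normalise two chain-differences to unit vectors in $P$). Once the claim is in hand, the rest is a direct application of Corollary~\ref{cor:thincone} and Theorem~\ref{thm:thincone}.
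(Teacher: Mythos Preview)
Your proof is correct and follows the paper's strategy exactly: invoke Corollary~\ref{cor:thincone} and Theorem~\ref{thm:thincone}, with the work going into bounding the height of $S$ in each thin-cone order. The only difference is the height bound itself. The paper asserts $h(S,{\leq_P})\le 2$ without argument; you prove $h(S,{\leq_P})\le 3$. In fact the paper's assertion is \emph{false}: take $\vp_1=\vo$, $\vp_2=\ve$, $\vp_3=-\ve$ for any unit vector $\ve$ (the hypothesis holds since $\norm{\vp_1-\vp_2}=\norm{\vp_1-\vp_3}=1$), and any thin cone $P$ with $\ve\in P$; then $\vp_2>_P\vp_1>_P\vp_3$ is a chain of length~$3$. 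Your bound $h\le 3$ is therefore sharp, and your argument --- pivoting on the chain element of smallest original index and using thinness to forbid two neighbours on either side --- is exactly what is needed. As you note, $3^{5^{d+\mathrm{o}(d)}}=2^{(\log_2 3)5^{d+\mathrm{o}(d)}}=2^{5^{d+\mathrm{o}(d)}}$, so the stated conclusion is unaffected.

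One small remark on your preliminary reduction: the theorem is simply false without distinctness (take all $\vp_i$ equal and $n$ arbitrary), so distinctness must be read as part of the hypothesis; your truncation does not bound the original~$n$ but rather confirms that the interesting case is the distinct one.
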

\begin{proof}
For $S=\set{\vp_1,\vp_2,\dots,\vp_n}$ and any thin cone $P$, we have $h(S,{\leq_P})\leq 2$.
Then apply Corollary~\ref{cor:thincone} and Theorem~\ref{thm:thincone}.
\end{proof}
Using a different technique, Nasz\'odi, Pach and Swanepoel \cite{NPS2016b} recently obtained the much better upper bound of $\mathrm{O}(6^d d^2\log^2 d)$ in the above theorem, which was subsequently improved to $\mathrm{O}(3^d d)$ by Polyanskii \cite{Polyanskii2016}.

\section*{Acknowledgements}
We thank Tomasz Kobos, Istv\'an Talata and a very thorough anonymous referee for providing corrections to a previous version.
{\small
\bibliographystyle{amsplain}

\providecommand{\MR}{MR}
\providecommand{\MRhref}[2]{%
\href{http://www.ams.org/mathscinet-getitem?mr=#1}{#2}
}

}
\end{document}